\newtheorem{Theorem}{Theorem}[section]
\newtheorem{Definition}[Theorem]{Definition}
\newtheorem{Proposition}[Theorem]{Proposition}
\newtheorem{Lemma}[Theorem]{Lemma}
\newtheorem{Corollary}[Theorem]{Corollary}
\newtheorem{Remark}[Theorem]{Remark}
\newtheorem{Example}[Theorem]{Example}
\begin{document}

\title{Dimension of representation and character varieties\\ for two and three-orbifolds}

\author{Joan Porti\footnote{Partially supported by the 
Micinn/FEDER  grant
PGC2018-095998-B-I00}} 

\date{\today}
\maketitle

\begin{abstract}
We consider varieties of representations and characters of 2 and 3-dimensional orbifolds in semisimple Lie groups, and we focus on computing their dimension. For hyperbolic 3-orbifolds,
we consider the component
of the variety of characters 
that contains the holonomy composed with the principal representation, 
we show that its dimension equals half the dimension of the
variety of characters of the boundary. We also show that this is a lower bound for the dimension of generic components.
We furthermore provide tools for computing dimensions of varieties of characters of 2-orbifolds, including the Hitchin component.
We apply this computation  to the dimension growth  of varieties of characters of some 3-dimensional manifolds in 
$\mathrm{SL}(n,\mathbb{C})$.
\end{abstract}

\section{Introduction}

In this paper we are interested in varieties of representations 
and characters
of the fundamental group of two and three-dimensional orbifolds $\mathcal O$ 
in complex semi-simple algebraic Lie groups $G$, denoted respectively by 
$$
R(\mathcal O,G)=\hom(\pi_1(\mathcal O), G) 
\qquad\textrm{ and  }\qquad
X(\mathcal O, G)=R(\mathcal O,G)/\!\!/ G.
$$
Since those are not irreducible, we use $\dim_\rho R(\mathcal O, G)$ and $\dim_{[\rho]} X(\mathcal O, G)$ to denote the dimension 
of a component  that contains $\rho$.

We discuss first a particular representation for oriented hyperbolic 3-orbifolds, with holonomy in 
$\mathrm{Isom}^+(\mathbb{H}^3)\cong \mathrm{PSL}(2,\mathbb C)$.
Let $G$ be an adjoint complex simple Lie group, eg.~$\mathrm{PSL}(n,\mathbb C)$, $\mathrm{PO}(n,\mathbb C)$ or
$\mathrm{PSp}(2m,\mathbb C)$.
Let 
$$\tau\colon \mathrm{PSL}(2,\mathbb C)\to G
$$ 
be the principal representation
(see \S\ref{subsection:principal} for the definition), eg.~$\operatorname{Sym}^{n-1} 
\colon \mathrm{PSL}(2,\mathbb C)\to \mathrm{PSL}(n,\mathbb C)$, 
whose image is contained in  $\mathrm{PSp}(2m,\mathbb C)$ for $n=2m$ or in
$\mathrm{PO}(2m+1,\mathbb C)$ for $n=2m+1$.
When $G_{\mathbb{R}}$ is a 
split real form of $G$, it restricts to 
$\tau\colon \mathrm{PGL}(2,\mathbb R)\to G_{\mathbb{R}}$, so that the Hitchin component of a surface or a 2-orbifold
is   the component of the variety of representations that contains the composition of $\tau$ with a Fuchsian representation.
Here we consider orientable hyperbolic 3-orbifolds and we compose the holonomy  in
$\mathrm{PSL}(2,\mathbb C)$ with the principal representation $\tau$. We prove
the following generalization of \cite{MFPABP, MFP12}:

\begin{Theorem}
\label{thm:main}
 Let $\mathcal O^3$ be a compact orientable 3-orbifold whose interior 
 is hyperbolic with holonomy $\mathrm{hol}\colon\pi_1(\mathcal O^3)\to \mathrm{PSL}(2,\mathbb C)$.
 Then the character of $\tau\circ \mathrm{hol}$ is a smooth point of $X(\mathcal O^3, G)$ of dimension
 $$
 \dim_{[\tau\circ \mathrm{hol}]} X(\mathcal O^3, G) =\frac{1}{2} \dim X(\partial \mathcal{O}^3, G).
 $$
 Furthermore, the map induced by restriction $
 X(\mathcal O^3, G)\to X(\partial \mathcal O^3, G)$ is locally injective at  the character of 
 $\tau\circ \mathrm{hol}$.
\end{Theorem}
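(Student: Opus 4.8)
The plan is to pass to group cohomology with coefficients in the adjoint representation $\mathfrak g=\mathfrak g_{\mathrm{Ad}\,\rho}$, where $\rho=\tau\circ\mathrm{hol}$, and to run Poincar\'e--Lefschetz duality together with deformation theory; the single essential geometric input is a twisted cohomology vanishing theorem for hyperbolic $3$-orbifolds. Since $\mathrm{hol}(\pi_1\mathcal O^3)$ is non-elementary, it is Zariski dense in $\mathrm{PSL}(2,\mathbb C)$, so $\rho(\pi_1\mathcal O^3)$ is Zariski dense in $\tau(\mathrm{PSL}(2,\mathbb C))$; since $G$ is adjoint, the centralizer of the principal $\mathrm{PSL}(2,\mathbb C)$ is trivial, so $\rho$ is a good (stable) representation and $H^0(\pi_1\mathcal O^3;\mathfrak g)=0$. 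Hence near $[\rho]$ the variety $X(\mathcal O^3,G)$ is the quotient of $R(\mathcal O^3,G)$ by a free $G$-action, and it is enough to prove $R(\mathcal O^3,G)$ is smooth at $\rho$: that yields $\dim_{[\rho]}X(\mathcal O^3,G)=\dim Z^1(\pi_1\mathcal O^3;\mathfrak g)-\dim G=\dim H^1(\pi_1\mathcal O^3;\mathfrak g)$, and smoothness of $R(\mathcal O^3,G)$ at $\rho$ amounts to unobstructedness of the deformation functor of $\rho$, whose obstructions lie in $H^2(\pi_1\mathcal O^3;\mathfrak g)$.

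The key step is that restriction $H^1(\pi_1\mathcal O^3;\mathfrak g)\to H^1(\pi_1\partial\mathcal O^3;\mathfrak g)$ is injective. Decomposing $\mathfrak g$ as a $\mathrm{PSL}(2,\mathbb C)$-module under the principal embedding, $\mathfrak g\cong\bigoplus_{i=1}^{r}\operatorname{Sym}^{2m_i}(\mathbb C^2)$ with $m_1,\dots,m_r$ the exponents of $G$, it suffices to prove injectivity of $H^1(\pi_1\mathcal O^3;\operatorname{Sym}^{2m}\!\circ\mathrm{hol})\to H^1(\pi_1\partial\mathcal O^3;\operatorname{Sym}^{2m}\!\circ\mathrm{hol})$ for each $m$. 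I would derive this from the finite-volume hyperbolic $3$-manifold case of \cite{MFPABP, MFP12} by passing from $\mathcal O^3$ to a torsion-free finite-index subgroup $\Gamma'\le\pi_1\mathcal O^3$ (Selberg) and using $H^\ast(\pi_1\mathcal O^3;V)=H^\ast(\Gamma';V)^{\pi_1\mathcal O^3/\Gamma'}$.

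Now apply Poincar\'e--Lefschetz duality for the oriented orbifold $(\mathcal O^3,\partial\mathcal O^3)$ (equivalently, for a manifold cover with deck invariants), using that $\mathfrak g$ is self-dual. By ``half lives, half dies'' the image of $H^1(\pi_1\mathcal O^3;\mathfrak g)\to H^1(\pi_1\partial\mathcal O^3;\mathfrak g)$ is a Lagrangian subspace of the symplectic space $H^1(\pi_1\partial\mathcal O^3;\mathfrak g)$ (cup product on the closed oriented $2$-orbifold $\partial\mathcal O^3$), so with the injectivity above $\dim H^1(\pi_1\mathcal O^3;\mathfrak g)=\tfrac12\dim H^1(\pi_1\partial\mathcal O^3;\mathfrak g)$. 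Dually, the long exact sequence of the pair --- using $H^0(\pi_1\mathcal O^3;\mathfrak g)=0$, the injectivity just established, and $\dim H^2(\pi_1\partial\mathcal O^3;\mathfrak g)=\dim H^0(\pi_1\partial\mathcal O^3;\mathfrak g)$ (Poincar\'e duality on $\partial\mathcal O^3$) --- forces restriction $H^2(\pi_1\mathcal O^3;\mathfrak g)\to H^2(\pi_1\partial\mathcal O^3;\mathfrak g)$ to be an isomorphism. At each boundary component $F$, $\rho|_F$ is a smooth point of the component of $R(F,G)$ through it (for hyperbolic $F$ because $\rho|_F$ is again good; for a Euclidean cusp $F$ because $\rho|_F$ is parabolic with regular unipotent part; in both cases that component has dimension $\dim Z^1(\pi_1 F;\mathfrak g)=\dim T_{\rho|_F}R(F,G)$, by the analysis of $2$-orbifold character varieties developed earlier in the paper), so all obstruction maps for $\pi_1 F$ vanish; by naturality of obstructions under restriction and the isomorphism $H^2(\pi_1\mathcal O^3;\mathfrak g)\cong H^2(\pi_1\partial\mathcal O^3;\mathfrak g)$, the obstruction maps for $\pi_1\mathcal O^3$ vanish too. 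Hence $R(\mathcal O^3,G)$ is smooth at $\rho$, $[\rho]$ is a smooth point of $X(\mathcal O^3,G)$ of dimension $\tfrac12\dim H^1(\pi_1\partial\mathcal O^3;\mathfrak g)$, and the earlier identification $\dim X(\partial\mathcal O^3,G)=\dim H^1(\pi_1\partial\mathcal O^3;\mathfrak g)$ near the restricted character gives the dimension formula.

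For the local injectivity of $X(\mathcal O^3,G)\to X(\partial\mathcal O^3,G)$ at $[\rho]$, note that its differential factors through the injective map $H^1(\pi_1\mathcal O^3;\mathfrak g)\to H^1(\pi_1\partial\mathcal O^3;\mathfrak g)$, but at a Euclidean cusp $F$ the subsequent map $H^1(\pi_1 F;\mathfrak g)\to T_{[\rho|_F]}X(F,G)$ has a kernel (the ``conjugacy-class-preserving'' directions); one therefore needs in addition a non-degeneracy of Neumann--Zagier type, namely that the above Lagrangian meets trivially the isotropic subspace of classes that are conjugacy-class-preserving at the cusps and zero on the hyperbolic boundary. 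Granting this (through the $2$-orbifold analysis, or by reduction to the $\mathrm{SL}(n,\mathbb C)$ case of \cite{MFP12}), the restriction is a local immersion at $[\rho]$, hence locally injective. I expect the decisive difficulties to be the two genuinely geometric inputs --- the cohomology vanishing of the second paragraph and this transversality --- the remainder being formal duality, deformation theory and the $2$-orbifold computations; extending the vanishing theorem to arbitrary even symmetric powers and to the orbifold setting is the heart of the matter.
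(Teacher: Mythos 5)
Your dimension count reproduces the paper's argument: the boundary injectivity of the twisted $H^1$ follows from the $L^2$-vanishing theorem plus Selberg's lemma, and the half-lives-half-dies principle (via Poincar\'e--Lefschetz duality) gives the factor of $\tfrac12$. Your smoothness argument is valid and takes a genuinely different route from the paper. Where the paper passes to a torsion-free finite-index subgroup $\Gamma_0$ and either runs an equivariant obstruction argument or invokes Cartan linearization of the $\Gamma/\Gamma_0$-action around the (already known to be) smooth manifold character, you instead deduce from the long exact sequence of the pair together with duality that restriction $H^2(\pi_1\mathcal O^3;\mathfrak g)\to H^2(\pi_1\partial\mathcal O^3;\mathfrak g)$ is injective, and then kill obstructions by naturality since the boundary $2$-orbifold representation varieties are smooth at $\rho\vert_F$ (Theorems~\ref{thm:dim2hyp} and~\ref{Thm:EuclideanRed}). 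This is a clean alternative that leans on the $2$-orbifold analysis developed in Section~\ref{Section:dim2} rather than on Cartan's lemma.

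The gap is in the last step. You correctly notice that injectivity of $H^1(\pi_1\mathcal O^3;\mathfrak g)\to H^1(\pi_1\partial\mathcal O^3;\mathfrak g)$ does not by itself give local injectivity of $X(\mathcal O^3,G)\to X(\partial\mathcal O^3,G)$, because at a rank-two cusp $F$ the restricted character is a singular point of $X(F,G)$ and the natural map $H^1(\pi_1 F;\mathfrak g)\to T^{Zar}_{[\rho\vert_F]}X(F,G)$ has a nontrivial kernel. You then name the missing piece (a Neumann--Zagier type non-degeneracy of the Lagrangian against the isotropic subspace of conjugacy-class-preserving directions at the cusps) and simply grant it, gesturing at ``the $2$-orbifold analysis'' or ``reduction to MFP12.'' Neither appeal actually supplies the missing input. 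The paper's proof (Lemmas~\ref{lem:kerL2} and~\ref{lem:dchi}, Proposition~\ref{prop:injmfld}) instead circumvents the singularity entirely: at each cusp one evaluates the $r$ fundamental characters $\chi_1,\dotsc,\chi_r$ of a chosen peripheral loop (Steinberg's map), shows the kernel of $H^1(T^2\times[0,\infty);\mathrm{Ad}\rho)\to H^1(\ell;\mathrm{Ad}\rho)$ consists exactly of the classes represented by $L^2$-forms, and concludes that any tangent vector to $X(M^3,G)$ killed by both $X(\partial_{\mathrm{hyp}}\overline{M^3},G)$ and these trace functions is $L^2$, hence zero by the vanishing theorem. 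That $L^2$-characterization of the kernel is the substantive content you are missing; it is not a corollary of the $2$-orbifold computations, and the decomposition $\mathrm{Ad}\circ\tau\cong\bigoplus\operatorname{Sym}^{2d_i}$ together with the passage to a manifold cover would still have to be spelled out to deduce it from the $\mathrm{SL}(n,\mathbb C)$ manifold case. Finally, phrasing the conclusion as a ``local immersion'' into the singular variety $X(\partial\mathcal O^3,G)$ is not quite well-posed; the paper works at the level of evaluated characters precisely to avoid reasoning with tangent spaces of a non-smooth target.
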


The proof of Theorem~\ref{thm:main} is based on a  vanishing theorem 
of $L^2$-cohomology, as for manifolds in \cite{MFPABP, MFP12}. 
Notice that for a Euclidean component $\mathcal O^2$ 
of $\partial \mathcal{O}^3$, the restriction of $\tau\circ\mathrm{hol}$ can be a singular point of $X(\mathcal O^2,G)$, and to prove that 
$
 X(\mathcal O^3, G)\to X(\partial \mathcal O^3, G)$
is locally injective we use evaluation of characters. 

\medskip

A representation $\rho\colon \Gamma\to G$ is called \emph{good} if it is irreducible  
and the centralizer of $\rho(\Gamma)$ in $G$ is the \emph{center} $\mathcal{Z}(G)$ of $G$, in particular
$\mathfrak{g}^{\rho(\Gamma)}=0$.
We are interested in representations of Euclidean 2-orbifolds (eg.~$\chi(\mathcal O^2)=0$) that may be not
good. For instance,  nontrivial representations of a 2-torus are never good by Kolchin's theorem.
A representation of a Euclidean 2-orbifold $\rho\colon\pi_1(\mathcal O^2)\to G$ is called 
\emph{strongly regular} if
for a maximal torsion free subgroup
$\Gamma_0 < \pi_1(\mathcal O^2)$,  it holds:
\begin{itemize}
 \item $\dim \mathfrak{g}^{\rho(\Gamma_0)}=\operatorname{rank} G$, where 
 $\mathfrak{g}^{\rho(\Gamma_0)}=\{v\in\mathfrak{g}\mid \mathrm{Ad}_{\rho(\gamma)}(v)=v,\ \forall\gamma\in\Gamma_0 \}$, and 
\item the projection
of $\rho(\Gamma_0)$ is contained in a connected abelian subgroup of $G/\mathcal Z(G)$.
\end{itemize}
The following is a generalization of a theorem of Falbel and Guilloux for manifolds  in~\cite{FalbelGuilloux}:

\begin{Theorem}
\label{Thm:lowerbound}
 Let $\mathcal O^3$ be a compact orientable  good 3-orbifold, 
 with boundary $\partial \mathcal O^3= \partial_1\mathcal O^3\sqcup\dotsb\sqcup \partial_k\mathcal O^3 $,
 and $\chi(\partial_ i\mathcal O^3)\leq 0$ for $i=l,\dotsc,k$.
 Let $\rho\colon \pi_1(\mathcal O^3)\to G$ be a   good representation such that:
 \begin{itemize}
  \item If $\chi(\partial_i\mathcal O^3)<0$, then $\rho\vert_{\pi_1 (\partial_i\mathcal O^2)}$ is good. 
  \item If $\chi(\partial_i\mathcal O^3)=0$, then $\rho\vert_{\pi_1 (\partial_i\mathcal O^2)}$ is strongly regular. 
\end{itemize}
Then 
$$
\dim_{[\rho]} X(\mathcal O^3,G)\geq \frac{1}{2}\dim X(\partial \mathcal O^3,G).
$$
\end{Theorem}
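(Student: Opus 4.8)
The plan is to bound $\dim_{[\rho]}X(\mathcal O^3,G)$ from below by twisted cohomology, to localise the obstruction to deforming $\rho$ at the boundary, and to conclude with orbifold Poincar\'e--Lefschetz duality. Write $N=\mathcal O^3$ and let $\mathrm{Ad}\rho$ denote the local system on $N$ given by $\mathfrak g$ with the action $\mathrm{Ad}\circ\rho$; all (orbifold) cohomology groups below have these coefficients. Because $\rho$ is good, $H^0(N)=\mathfrak g^{\rho(\pi_1N)}=0$, the $G$-orbit of $\rho$ in $R(N,G)$ has dimension $\dim G$, and by the slice/Kuranishi description a neighbourhood of $[\rho]$ in $X(N,G)$ is the germ at $0$ of $\kappa^{-1}(0)$, where $\kappa\colon H^1(N)\to H^2(N)$ is the Kuranishi map, $\kappa(0)=0$, $d_0\kappa=0$; so $\dim_{[\rho]}X(N,G)$ equals the local dimension of $\kappa^{-1}(0)$ at $0$, and for any linear subspace $V\subseteq H^2(N)$ with $\mathrm{im}\,\kappa\subseteq V$,
$$
\dim_{[\rho]}X(N,G)\ \geq\ \dim H^1(N)-\dim V .
$$
The task is thus to produce a small such $V$.

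First I would treat the boundary. If $\chi(\partial_iN)<0$ then $\rho\vert_{\pi_1\partial_iN}$ is good, so $H^0(\partial_iN)=H^2(\partial_iN)=0$ by Poincar\'e duality on the closed orientable $2$--orbifold $\partial_iN$ (with the Killing form), hence $R(\partial_iN,G)$ is smooth at $\rho\vert_{\pi_1\partial_iN}$ and $\dim_{[\rho]}X(\partial_iN,G)=\dim H^1(\partial_iN)$. If $\chi(\partial_iN)=0$ then $\rho\vert_{\pi_1\partial_iN}$ is strongly regular, and this is exactly what the dimension computations for $2$--orbifolds carried out in this paper need in order to conclude that $R(\partial_iN,G)$ is again smooth at $\rho\vert_{\pi_1\partial_iN}$ with $\dim_{[\rho]}X(\partial_iN,G)=\dim H^1(\partial_iN)$; components with $\chi(\partial_iN)>0$ have finite fundamental group, $H^1=0$, and play no role. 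Thus $R(\partial N,G)$ is smooth at $\rho\vert_{\pi_1\partial N}$, so $\dim X(\partial N,G)=\dim H^1(\partial N)$. I also record that, since each $\partial_iN$ is a closed orientable $2$--orbifold, the cup product $H^1(\partial_iN)\otimes H^1(\partial_iN)\to H^2(\partial_iN;\mathbb C)\cong\mathbb C$ (via the Killing form and the fundamental class) is the perfect Poincar\'e pairing; hence the cup product form on $H^1(\partial N)$ is symplectic.

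Next I would localise the obstruction. Restriction of local systems makes $C^*(N;\mathrm{Ad}\rho)\to C^*(\partial N;\mathrm{Ad}\rho)$ a morphism of differential graded Lie algebras, so obstructions are compatible under restriction to $\partial N$; concretely, the successive obstructions to integrating a class in $H^1(N)$, which lie in $H^2(N)$, restrict on $\partial N$ to the corresponding obstructions for the restricted class, and these vanish because $R(\partial N,G)$ is smooth. Therefore $\mathrm{im}\,\kappa\subseteq V$, where
$$
V:=\ker\!\big(i^*\colon H^2(N)\to H^2(\partial N)\big)=\mathrm{im}\!\big(H^2(N,\partial N)\to H^2(N)\big),
$$
the equality by exactness of the long exact sequence of $(N,\partial N)$ and $i^*$ the restriction. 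This is the manifold argument of Falbel--Guilloux~\cite{FalbelGuilloux} transported to orbifold group cohomology, which is legitimate as $\mathcal O^3$ is a good orbifold.

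Finally I would compute $\dim V$. Orbifold Poincar\'e--Lefschetz duality for the good orientable $3$--orbifold $N$ (with the Killing form) gives $H^k(N,\partial N)\cong H^{3-k}(N)^*$, so $\dim H^2(N,\partial N)=\dim H^1(N)$, and under these identifications the connecting map $\delta\colon H^1(\partial N)\to H^2(N,\partial N)$ is adjoint to $i^*\colon H^1(N)\to H^1(\partial N)$ with respect to the symplectic form on $H^1(\partial N)$. Hence $\ker\delta=(\mathrm{im}\,i^*)^{\perp}$; together with $\ker\delta=\mathrm{im}\,i^*$ (exactness) this forces $\mathrm{im}\,i^*$ to be Lagrangian --- the ``half lives, half dies'' phenomenon --- so $\dim\mathrm{im}\,i^*=\tfrac12\dim H^1(\partial N)$, whence $\dim\mathrm{im}\,\delta=\tfrac12\dim H^1(\partial N)$ and $\dim V=\dim H^2(N,\partial N)-\dim\mathrm{im}\,\delta=\dim H^1(N)-\tfrac12\dim H^1(\partial N)$. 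Substituting into the first display,
$$
\dim_{[\rho]}X(\mathcal O^3,G)\ \geq\ \dim H^1(N)-\dim V\ =\ \tfrac12\dim H^1(\partial N)\ =\ \tfrac12\dim X(\partial\mathcal O^3,G),
$$
as claimed. I expect the main obstacle to be the boundary input in the second paragraph for the Euclidean components: proving that a strongly regular representation of a closed Euclidean $2$--orbifold is a smooth point of the representation variety with $\dim X=\dim H^1$, which is where the strongly regular hypothesis is used in full and which rests on the separate, somewhat delicate analysis of character varieties of $2$--orbifolds.
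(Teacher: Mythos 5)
Your route is genuinely different from the one in the paper, and if it can be made precise it is the shorter argument. The paper proves this bound by a hands-on topological decomposition: it drills out the singular vertices of the branching locus, then some auxiliary loops, then the remaining singular edges to reach a compact manifold $\mathcal O_2$, estimates $\dim R(\mathcal O_2,G)$ from a CW presentation, and then controls the dimension drop when the drilled pieces are filled back in using the Falbel--Guilloux codimension lemma (Lemma~\ref{Lemma:FG}) applied to conjugacy orbits and to varieties of representations of quotient groups. That approach only ever needs the elementary fact that a fibre of an analytic map to $\mathbb C^k$ has codimension at most $k$. Your approach instead transplants the obstruction-theoretic framework that the paper uses for the \emph{upper} bound in Theorem~\ref{Theorem:dimcanonical} (Poincar\'e--Lefschetz duality, ``half lives, half dies'') into a \emph{lower} bound, by localising the Kuranishi obstruction in $\ker(i^*\colon H^2\to H^2(\partial))$. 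Both get to the same numbers, but yours does so at the cost of one deformation-theoretic input that the paper deliberately avoids.

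That input is the step ``therefore $\mathrm{im}\,\kappa\subseteq V$,'' and it is the one place where your write-up is not yet a proof. What naturality of Goldman-type obstructions gives you directly is an inductive statement: the $n$-th obstruction of a class $u\in H^1(N)$, \emph{when defined} (that is, when all lower obstructions vanish), restricts to the $n$-th obstruction of $i^*u$ on the boundary, which vanishes; hence that $n$-th obstruction lies in $\ker(i^*)$. Passing from this conditional statement about successive obstructions to the assertion that a Kuranishi map $\kappa\colon H^1(N)\to H^2(N)$ can be chosen with $i^*\circ\kappa\equiv 0$ requires either an explicitly functorial Kuranishi construction compatible with the DGLA morphism $C^*(N;\mathrm{Ad}\rho)\to C^*(\partial N;\mathrm{Ad}\rho)$, or the abstract statement that a deformation functor mapping to a smooth one has an obstruction theory with values in $\ker(H^2(\phi))$ and hence a hull cut out by at most $\dim\ker(H^2(\phi))$ equations. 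Both routes are available in the literature, but neither is ``just naturality,'' and you should cite or prove the precise version you rely on. Two further, smaller points: (i) you need to spell out that the Kuranishi/slice description of the germ of $X(\mathcal O^3,G)$ at a good $[\rho]$ holds for orbifold groups -- the paper obtains such statements by passing to a finite manifold cover via Proposition~\ref{prop:regularcovering} and Cartan's linearization; (ii) for a Euclidean component, Theorem~\ref{Thm:EuclideanRed} gives smoothness of $R(\partial_i,G)$ but explicitly \emph{not} of $X(\partial_i,G)$, so the equality $\dim X(\partial_i,G)=\dim H^1(\partial_i)$ must be quoted from Theorem~\ref{Thm:EuclideanRed}(ii) rather than deduced from smoothness of the character variety. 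Once the Kuranishi naturality is pinned down and these two points are made explicit, your argument goes through and is a cleaner alternative to the paper's decomposition-and-refilling proof.
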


If $\partial \mathcal{O}^3=\partial_1\mathcal O^3\sqcup\dotsb\sqcup \partial_k\mathcal O^3$  
denotes the decomposition in connected components, then
$
 X(\partial \mathcal{O}^3, G)= X(\partial_1\mathcal O^3, G)\times\dotsb \times X(\partial_k\mathcal O^3, G)$
and
$$ \dim X(\partial \mathcal O^3, G)= \dim X(\partial_1\mathcal O^3, G)+\dotsb +\dim X(\partial_k\mathcal O^3, G). 
$$
When the component $\partial_i\mathcal O^3$ is a (closed orientable) surface $S$:
$$
\dim X(S, G)=\begin{cases}
              -\chi(S)\dim G &\textrm{ if }\chi(S)<0 ,\\
              2\operatorname{rank} G &\textrm{ if }\chi(S)=0,
             \end{cases}
$$
which can be rewritten as
$$
\dim X(S, G)=
              -\chi(S)\dim G+
               2\dim\mathfrak{g}^{\rho(\pi_1S)},
$$
where $\mathfrak{g}^{H }=\{v\in\mathfrak{g}\mid \mathrm{Ad}_{\rho(\gamma)}(v)=v,\ \forall\gamma\in H\}$ 
denotes the centralizer in the Lie algebra of a subset $H\subset G$,
via the adjoint action.
When the $\partial_i\mathcal O^3$ is a (closed orientable) 2-orbifold $\mathcal O^2$ with branching locus $\Sigma\subset \mathcal O^2$, we prove in Theorems~\ref{thm:dim2hyp} and
\ref{thm:dim2Euc}:
\begin{equation}
 \label{eqn:intro}
\dim X(\mathcal O^2, G)=
               -\chi(\mathcal O^2\setminus \Sigma)\dim G +\sum_{x\in\Sigma} \dim \mathfrak{g}^{\rho(\operatorname{Stab}(x) )}   +
               2\dim\mathfrak{g}^{\rho(\pi_1(\mathcal O^2)) }  .
\end{equation}
Formula~\eqref{eqn:intro}
suggests that, instead of the usual Euler characteristic of an orbifold, we need another
quantity to compute dimensions of varieties of characters and representations. For that purpose, consider $K$
a $CW$-structure on a compact orbifold $\mathcal O^n$ and $\rho\colon \pi_1(\mathcal{O}^n)\to G$ a representation.
The \emph{twisted  Euler characteristic} is defined (Definition~\ref{Def:OEC}) as
$$
\widetilde\chi(\mathcal O^n, \mathrm{Ad}\rho) =\sum_{e\textrm{ cell of }K}(-1)^{\dim e} 
\dim \mathfrak{g}^{\mathrm{Ad}(\rho(\mathrm{Stab}(\tilde e))) } \in\mathbb{Z}.
$$
This is always an integer and should not be confused with the orbifold 
Euler characteristic, that is a rational number. For the trivial representation, this is just the Euler 
characteristic of the underlying space of the orbifold times $\dim G$. It is the alternated sum of dimensions
of cohomology groups of $\mathcal O^n$ twisted by $\mathrm{Ad}\rho$ (Proposition~\ref{Prop:Euler}):
$$
 \widetilde\chi(\mathcal{O}^n,\mathrm{Ad}\rho)=
 \sum_i (-1)^i\dim H^i(\mathcal O^n, \mathrm{Ad}\rho).
$$ 
In Section~\ref{Section:dim2} we prove
the following results, based on Goldman's work on surfaces \cite{Goldman} (see also~\cite{Weil}):

\begin{Theorem}
\label{thm:dim2hyp}
Let  $\mathcal O^2$ be a compact connected 2-orbifold, with $\chi(\mathcal O^2)\leq 0$. 
Let $\rho\in R(\mathcal{O}^2, G)$ be a good representation.
Then $[\rho]$ is a smooth point of $X(\mathcal O^2, G)$ of dimension
 $-\widetilde \chi(\mathcal O^2, \mathrm{Ad}\rho)$.
\end{Theorem}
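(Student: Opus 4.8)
The plan is to reduce Theorem~\ref{thm:dim2hyp} to a computation in the group cohomology of $\Gamma:=\pi_1(\mathcal O^2)$ with coefficients in $\mathfrak g$ via $\mathrm{Ad}\rho$, following the scheme of Weil and Goldman \cite{Weil, Goldman}. Since $\chi(\mathcal O^2)\le 0$, the orbifold $\mathcal O^2$ is good and aspherical (its orbifold universal cover is $\mathbb H^2$, $\mathbb R^2$, or contractible with boundary), so the orbifold cohomology $H^\ast(\mathcal O^2,\mathrm{Ad}\rho)$ computed from a $CW$-structure as in Definition~\ref{Def:OEC} agrees with $H^\ast(\Gamma,\mathrm{Ad}\rho)$, and Proposition~\ref{Prop:Euler} gives $\widetilde\chi(\mathcal O^2,\mathrm{Ad}\rho)=\sum_i(-1)^i\dim H^i(\mathcal O^2,\mathrm{Ad}\rho)$. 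I would then invoke the standard deformation-theoretic description of $R(\mathcal O^2,G)=\hom(\Gamma,G)$: its Zariski tangent space at $\rho$ is the cocycle space $Z^1(\Gamma,\mathrm{Ad}\rho)$, and the obstruction to integrating a first-order deformation lies in $H^2(\Gamma,\mathrm{Ad}\rho)$; hence, once $H^2(\Gamma,\mathrm{Ad}\rho)=0$, the variety $R(\mathcal O^2,G)$ is smooth at $\rho$ of dimension $\dim Z^1(\Gamma,\mathrm{Ad}\rho)$. Goodness already gives $H^0(\mathcal O^2,\mathrm{Ad}\rho)=\mathfrak g^{\rho(\Gamma)}=0$.

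The crucial step is the vanishing $H^2(\mathcal O^2,\mathrm{Ad}\rho)=0$. If $\partial\mathcal O^2\neq\emptyset$, then $\mathcal O^2$ deformation retracts onto a $1$-dimensional suborbifold, so its cohomology vanishes in degrees $\ge 2$ and there is nothing to prove. If $\mathcal O^2$ is closed, I would pass to a finite-index torsion-free subgroup (Selberg) and, replacing it by its index-$2$ orientation subgroup if necessary, obtain a normal finite-index subgroup $\Gamma_0\triangleleft\Gamma$ which is the fundamental group of a closed orientable surface. Poincaré duality for $\Gamma_0$ together with the $\mathrm{Ad}$-invariant nondegenerate Killing form on $\mathfrak g$ identifies $H^2(\Gamma_0,\mathrm{Ad}\rho)$ with the dual of $\mathfrak g^{\rho(\Gamma_0)}$; taking $\Gamma/\Gamma_0$-invariants and using the transfer isomorphism $H^\ast(\Gamma,\mathrm{Ad}\rho)\cong H^\ast(\Gamma_0,\mathrm{Ad}\rho)^{\Gamma/\Gamma_0}$ (valid in characteristic $0$) then yields
$$
H^2(\mathcal O^2,\mathrm{Ad}\rho)\cong\Bigl(\bigl(\mathfrak g^{\rho(\Gamma_0)}\bigr)^{\ast}\Bigr)^{\Gamma/\Gamma_0}\cong\bigl(\mathfrak g^{\rho(\Gamma)}\bigr)^\ast=0,
$$
again by goodness. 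In the orientable case this is just Goldman's $H^2\cong (H^0)^\ast$; the non-orientable case is what forces the detour through an orientable finite cover.

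Granting $H^0=H^2=0$, the rest is bookkeeping. From $0\to B^1\to Z^1\to H^1\to 0$ and $B^1(\Gamma,\mathrm{Ad}\rho)\cong\mathfrak g/\mathfrak g^{\rho(\Gamma)}=\mathfrak g$ we get $\dim_\rho R(\mathcal O^2,G)=\dim Z^1(\Gamma,\mathrm{Ad}\rho)=\dim G+\dim H^1(\mathcal O^2,\mathrm{Ad}\rho)$. Since $\rho$ is good it is a stable point: its $G$-orbit is closed and its stabilizer under conjugation is exactly $\mathcal Z(G)$, which is finite and acts trivially; so near $[\rho]$ the quotient $X(\mathcal O^2,G)=R(\mathcal O^2,G)/\!\!/G$ is the quotient of the smooth $R(\mathcal O^2,G)$ by the free proper action of $G/\mathcal Z(G)$, hence a smooth manifold there of dimension $\dim_\rho R(\mathcal O^2,G)-\dim G=\dim H^1(\mathcal O^2,\mathrm{Ad}\rho)$; in particular $[\rho]$ is a smooth point of $X(\mathcal O^2,G)$. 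Finally, by Proposition~\ref{Prop:Euler} and $H^0=H^2=0$,
$$
\widetilde\chi(\mathcal O^2,\mathrm{Ad}\rho)=\dim H^0-\dim H^1+\dim H^2=-\dim H^1(\mathcal O^2,\mathrm{Ad}\rho),
$$
so $\dim_{[\rho]}X(\mathcal O^2,G)=-\widetilde\chi(\mathcal O^2,\mathrm{Ad}\rho)$, as claimed.

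The step I expect to be the main obstacle is the vanishing of $H^2$ in the closed case: one must control torsion in $\Gamma$ and, when $\mathcal O^2$ is non-orientable, replace naive Poincaré duality by the argument through an orientable finite cover and transfer, and one must also make sure the identification of orbifold cohomology with $H^\ast(\Gamma,\mathrm{Ad}\rho)$ and Proposition~\ref{Prop:Euler} are applied to a genuine orbifold $CW$-structure. Everything else—the deformation theory, the $B^1/Z^1$ count, and the smoothness of the stable locus—is standard and available in \cite{Weil, Goldman}.
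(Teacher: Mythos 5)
Your overall strategy is the same as the paper's (proof of Proposition~\ref{Proposition:rhogood}): identify $T^{Zar}_\rho R$ with $Z^1(\Gamma,\mathrm{Ad}\rho)$, kill Goldman's obstructions by proving $H^0=H^2=0$, and convert $\dim H^1$ into $-\widetilde\chi$ via Proposition~\ref{Prop:Euler}. Your treatment of the orientable closed case and of the case with nonempty boundary is fine.

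The closed non-orientable case, however, contains a genuine gap. Poincar\'e duality on the orientable finite cover $\Sigma_0$ identifies $H^2(\Gamma_0,\mathrm{Ad}\rho)$ with $(\mathfrak g^{\rho(\Gamma_0)})^{*}\otimes\epsilon$, where $\epsilon$ is the orientation character of the deck group: the cup-product pairing lands in $H^2(\Sigma_0;\mathbb C)\cong\mathbb C$, on which orientation-reversing deck transformations act by $-1$. Consequently $H^2(\Gamma,\mathrm{Ad}\rho)\cong H^2(\Gamma_0,\mathrm{Ad}\rho)^{\Gamma/\Gamma_0}$ is the $\epsilon$-isotypic part of $(\mathfrak g^{\rho(\Gamma_0)})^{*}$, not the invariant part, so your chain $\bigl((\mathfrak g^{\rho(\Gamma_0)})^{*}\bigr)^{\Gamma/\Gamma_0}\cong(\mathfrak g^{\rho(\Gamma)})^{*}=0$ is not valid. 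Doing the sign bookkeeping correctly is exactly Lemma~\ref{Lemma:orientation}, which yields
$$
\dim H^2(\mathcal O^2,\mathrm{Ad}\rho)=\dim\mathfrak g^{\rho(\widetilde\Gamma)}-\dim\mathfrak g^{\rho(\Gamma)},
$$
where $\widetilde\Gamma$ corresponds to the orientation covering. Goodness of $\rho$ only kills the second term; the first can be strictly positive, for instance for a good representation induced from the index-two orientation subgroup (a dihedral representation into $\mathrm{SL}(2,\mathbb C)$ is irreducible with centralizer the center, yet its restriction to $\widetilde\Gamma$ is diagonal). This is precisely why the body version of the statement, Proposition~\ref{Proposition:rhogood}, adds the hypothesis that the restriction of $\rho$ to the orientation covering is also good; your argument needs that hypothesis (or a genuinely different mechanism for the vanishing of $H^2$) to go through.
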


\begin{Theorem}
\label{thm:dim2Euc}
Let $\mathcal O^2$ be a closed Euclidean 2-orbifold and 
$\rho\colon\pi_1(\mathcal{O}^2)\to G$ a strongly  regular representation.
 Then
 it belongs to a single component of $X(\mathcal O^2, G)$ that has dimension
  \begin{enumerate}[(a)]
   \item 
  $-\widetilde \chi(\mathcal O^2, \mathrm{Ad}\rho)+2\dim \mathfrak{g}^{\rho(\pi_1(\mathcal O^2))}$ if $\mathcal O^2$ is orientable,
  \item $-\tilde \chi(\mathcal O^2, \mathrm{Ad}\rho)+\dim \mathfrak{g}^{\rho(\pi_1( \widetilde {\mathcal O^2} ))}$ if $\mathcal O^2$ is  non-orientable,
  where $\widetilde {\mathcal O^2}\to \mathcal O^2$ denotes the orientation covering of $\mathcal O^2$.
\end{enumerate}
\end{Theorem}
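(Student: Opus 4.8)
Write $\pi=\pi_1(\mathcal O^2)$. Since $\mathcal O^2$ is a closed Euclidean $2$-orbifold, $\pi$ is a $2$-dimensional crystallographic group: it has a canonical normal finite-index translation lattice $\Lambda\cong\mathbb Z^2$, with quotient the finite point group $P=\pi/\Lambda$ (and $\mathcal O^2=\mathbb E^2/\pi$ is aspherical, so $H^i(\mathcal O^2,\mathrm{Ad}\rho)=H^i(\pi,\mathrm{Ad}\rho)$). Condition (i) of strong regularity says exactly that the common centralizer of $\rho(\Lambda)$ has the minimal dimension $\operatorname{rank}G$, i.e. $\rho|_\Lambda$ is a \emph{regular} commuting pair; condition (ii) pins down the position of $\rho(\Lambda)$ (and, for the Klein-bottle type pieces where the maximal torsion-free subgroup $\Gamma_0$ properly contains $\Lambda$, constrains the extra generator). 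The strategy, following Goldman, is: (1) show $R(\mathcal O^2,G)$ is smooth at $\rho$ of dimension $\dim Z^1(\pi,\mathrm{Ad}\rho)$; (2) deduce that near $[\rho]$ the GIT quotient $X(\mathcal O^2,G)$ is locally irreducible of dimension $\dim H^1(\pi,\mathrm{Ad}\rho)$; (3) identify $\dim H^1(\pi,\mathrm{Ad}\rho)$ with the two stated expressions by twisted Poincaré duality.

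For (1), the key input is that the representation scheme $R(\Lambda,G)=\{(A,B)\in G^2:AB=BA\}$ is smooth at a regular commuting pair, of the expected dimension $\dim G+\operatorname{rank}G=\dim Z^1(\Lambda,\mathrm{Ad}\rho)$; this holds uniformly whether $\rho(\Lambda)$ is semisimple, unipotent (the case arising from cusps, where the principal representation turns peripheral parabolics into regular unipotents), or mixed, because over the regular locus the centralizers form a smooth commutative group scheme. This propagates to $\pi$ by obstruction theory: the deformation space of $\rho$ in $R(\mathcal O^2,G)$ is quadratic, cut out by the cup product $H^1(\pi,\mathrm{Ad}\rho)^{\otimes 2}\to H^2(\pi,\mathrm{Ad}\rho)$; restriction to $\Lambda$ is compatible with cup products and is \emph{injective} on $H^2$ (finite index, characteristic zero: $H^\ast(\pi,-)=H^\ast(\Lambda,-)^P$), so the vanishing of all obstructions on $\Lambda$ forces their vanishing on $\pi$. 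Hence $R(\mathcal O^2,G)$ is smooth at $\rho$ of dimension $\dim Z^1(\pi,\mathrm{Ad}\rho)$; in particular $\rho$, and so $[\rho]$, lies in a single component.

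For (2): representations $\rho'$ near $\rho$ still restrict to regular commuting pairs on $\Lambda$ (upper semicontinuity of $\dim\mathfrak g^{(\cdot)}$, minimal at $\rho$), and using condition (ii) one checks the centralizer $Z_G(\rho'(\pi))$ keeps the locally constant dimension $\dim\mathfrak g^{\rho(\pi)}=\dim H^0(\pi,\mathrm{Ad}\rho)$; as $\rho$ is semisimple, $[\rho]$ is a point of $X(\mathcal O^2,G)$ where the quotient has the naive dimension $\dim_\rho R(\mathcal O^2,G)-(\dim G-\dim H^0)=\dim Z^1-\dim B^1=\dim H^1(\pi,\mathrm{Ad}\rho)$, and it is locally irreducible because $R$ is. For (3), Proposition~\ref{Prop:Euler} gives $\widetilde\chi(\mathcal O^2,\mathrm{Ad}\rho)=\dim H^0-\dim H^1+\dim H^2$, and twisted Poincaré--Lefschetz duality on the closed $2$-orbifold $\mathcal O^2$ gives $H^2(\mathcal O^2,\mathrm{Ad}\rho)\cong H_0(\mathcal O^2,\mathrm{Ad}\rho\otimes D)$ with $D$ the orientation module. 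When $\mathcal O^2$ is orientable, $D$ is trivial, so $\dim H^2=\dim H^0=\dim\mathfrak g^{\rho(\pi)}$ and $\dim H^1=-\widetilde\chi+2\dim\mathfrak g^{\rho(\pi)}$, which is (a). When $\mathcal O^2$ is non-orientable, $\dim H^2=\dim\{v\in\mathfrak g:\mathrm{Ad}_{\rho(\gamma)}v=\epsilon(\gamma)v\ \forall\gamma\in\pi\}$ for $\epsilon$ the orientation character, and splitting $\mathfrak g^{\rho(\pi_1\widetilde{\mathcal O^2})}=\mathfrak g^{\ker\epsilon}$ into the $\pm1$-eigenspaces of the deck involution identifies this direct sum with $H^0\oplus H^2$, so $\dim H^0+\dim H^2=\dim\mathfrak g^{\rho(\pi_1\widetilde{\mathcal O^2})}$ and $\dim H^1=-\widetilde\chi+\dim\mathfrak g^{\rho(\pi_1\widetilde{\mathcal O^2})}$, which is (b).

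The main obstacle I expect is step (2): controlling the GIT quotient for the possibly non-good representation $\rho$, i.e. using strong regularity to force $\dim\mathfrak g^{\rho'(\pi)}$ to be constant in a neighbourhood of $\rho$ (so the generic orbit has the expected codimension) and checking $\rho$ is polystable; the delicate case is $\rho(\Lambda)$ unipotent, where nearby commuting pairs can have non-conjugate regular centralizers and one must argue with the regular-centralizer group scheme rather than with a fixed maximal torus. Establishing smoothness of $R(\Lambda,G)$ at regular commuting pairs uniformly in these degenerations is the other technical heart of the argument; as a consistency check, note that $\rho|_{\pi_1\widetilde{\mathcal O^2}}$ is again strongly regular, so in the non-orientable case part (a) applies to the orientation double cover.
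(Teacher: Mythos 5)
Your overall architecture matches the paper's (Theorem~\ref{Thm:EuclideanRed}): establish smoothness of $R(\pi,G)$ at $\rho$ by first treating the abelian subgroup $\Gamma_0\cong\mathbb Z^2$ and then propagating via equivariance (naturality) of Goldman's obstructions, then compute the quotient dimension, and finally identify $\dim H^1$ via the twisted Euler characteristic and Poincar\'e duality (Lemma~\ref{Lemma:orientation} in the non-orientable case). The cosmetic differences — your $\Lambda$/$\Gamma_0$ distinction and your orientation-module formulation of duality versus the paper's $\sigma^*$-eigenspace computation — are immaterial. However, two steps are genuinely off.

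First, in step (2) you write ``as $\rho$ is semisimple'' to justify the naive quotient dimension count. This is false in general: the paper's Remark~\ref{Remark:nonsmooth} emphasizes that a strongly regular representation can fail to have closed orbit — the parabolic representation of $\mathbb Z^2$ in $\mathrm{SL}(2,\mathbb C)$ is the basic example — so $\rho$ need not be polystable, and $[\rho]$ can even be a singular point of $X(\mathcal O^2,G)$. The correct argument (the one in the paper) does not need polystability: by Proposition~\ref{Prop:dimR}, $\dim T^{Zar}_{\rho'}R = -\widetilde\chi + \dim G + \dim\mathfrak g^{\rho'(\pi)}$ with $\widetilde\chi$ locally constant; once $R$ is smooth at $\rho$, the Zariski tangent dimension is minimal there, hence $\dim\mathfrak g^{\rho'(\pi)}$ is minimized at $\rho$ and (by upper semicontinuity) is generic in the component, so the orbit through $\rho$ has the \emph{generic} dimension $\dim G - \dim\mathfrak g^{\rho(\pi)}$. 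That alone yields $\dim X_0 = \dim R_0 - (\dim G - \dim\mathfrak g^{\rho(\pi)}) = \dim H^1$, with no polystability assumption on $\rho$. You flag ``checking $\rho$ is polystable'' as an obstacle; the right resolution is to note this is generally unverifiable (and false) and rephrase the quotient-dimension step so it is never needed.

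Second, the smoothness of $R(\mathbb Z^2,G)$ at a strongly regular pair is asserted via a ``regular-centralizer group scheme'' heuristic. But strong regularity requires only $\dim\mathfrak g^{\rho(\Gamma_0)} = \operatorname{rank}G$; it does \emph{not} force any single element of $\rho(\Gamma_0)$ to be $G$-regular, so the local fibration of the commuting variety over $G^{\mathrm{reg}}$ does not directly apply. The paper closes this gap using Richardson's theorem: $\rho|_{\Gamma_0}$ lies in the closure of $G\cdot(\mathbb T\times\mathbb T)$, whose component has dimension $\dim G + \operatorname{rank}G = \dim Z^1(\mathbb Z^2,\mathrm{Ad}\rho)$ — the lower bound matches the Zariski tangent bound, giving smoothness uniformly in the semisimple, unipotent, and mixed cases. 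Condition (ii) of strong regularity (image of $\Gamma_0$ in a connected abelian subgroup of $G/\mathcal Z(G)$) is exactly what is needed to place $\rho|_{\Gamma_0}$ in this main component; your proposal never uses it for this purpose.
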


Notice that Theorem~\ref{thm:dim2Euc} does not conclude smoothness. For instance, a parabolic representation of $\mathbb{ Z}^2$
in $\mathrm{SL}(2,\mathbb{C})$ is strongly regular but its character
(the trivial character) is 
a singular point of $X(\mathbb{Z}^2,\mathrm{SL}(2,\mathbb{C}))$
and $X(\mathbb{Z}^2,\mathrm{PSL}(2,\mathbb{C}))$.

Under the hypothesis of  Theorem~\ref{thm:dim2hyp} or Theorem~\ref{thm:dim2Euc}, 
when 
$\mathcal O^2$ is \emph{closed} and \emph{orientable} then $\widetilde \chi(\mathcal O^2, \mathrm{Ad}\rho)$ is even 
(Corollary~\ref{Cor:even}) 
and therefore $\dim_{[\rho]} X(\mathcal O^2, G)$ is even.

\smallskip

Most of the computations apply to  real Lie groups, in particular we spend a section discussing 
applications to the Hitchin component, that we denote by $\mathrm{Hit}(\mathcal O^2,G_{\mathbb{R}})$,
where $G_{\mathbb{R}}$ is  the split real form of the adjoint group $G$. As we consider \emph{non-orientable}
2-orbifolds, we deal with \emph{non-connected} real forms $G_{\mathbb{R}}$: we require 
that $G_{\mathbb{R}}$  contains the image by the principal representation of $\operatorname{PGL}(2,\mathbb{R})$, 
not only of its identity component $\operatorname{PSL}(2,\mathbb{R})$. For instance $G_{\mathbb R}=\operatorname{PSL}(n,\mathbb{R})$,
 $\mathrm{PSp}^\pm(2m)$ or $\mathrm{PO}(n, n+1)$.

The dimension of $\mathrm{Hit}(\mathcal O^2,G_{\mathbb{R}})$
has already been computed in \cite{ALS} by Alessandrini, Lee and Schaffhauser, but we 
give a different approach,  closer to the one of
Long and Thistlethwaite in \cite{LongThistlethwaite} for turnovers.
For instance, we show that
$$ \big\vert \dim (\mathrm{Hit}(\mathcal O^2,\mathrm{PGL}(n,\mathbb{R})))+
\chi(\mathcal O^2) \dim (\mathrm{PGL}(n,\mathbb{R}))
\big\vert \leq C(\mathcal O^2),
$$
where $ C(\mathcal O^2)$ is a constant that depends only on $\mathcal O^2$, Proposition~\ref{prop:growthHitchin}.
For  $\mathrm{PSp}^\pm(2m,\mathbb{R})$
the bound is not uniform but linear on $m$ and 
we need to introduce another term depending on the orbifold
$$
 \big\vert \dim (\mathrm{Hit}(\mathcal O^2, \mathrm{PSp}^\pm(2m,\mathbb{R})))+
\chi(\mathcal O^2) \dim ( \mathrm{PSp}^\pm(2m,\mathbb{R}))
+ c(\mathcal O^2) m 
\big\vert \leq C(\mathcal O^2)
$$
where $ C(\mathcal O^2)$ and $ c(\mathcal O^2)$ are constants that depend only on $\mathcal O^2$,
Proposition~\ref{Prop:AssPSp}.

We also show that $\mathrm{Hit}(\mathcal O^2,G_{\mathbb R})$  maximizes the dimension among all components
 of the variety of (good) representations of $\mathcal O^2$ in $G_{\mathbb R}$, for
 $G_{\mathbb R}=\mathrm{PGL}(n,\mathbb{R})$,  Proposition~\ref{Prop:MaxHSL}, or $\mathrm{PSp}^\pm(2m,\mathbb{R})$, Proposition~\ref{Prop:MaxHPSp}.

For $\mathcal O^2$ closed and orientable, the differential form of Atiyah-Bott-Goldman gives naturally a  symplectic structure on
$\mathrm{Hit}(\mathcal O^2,G_{\mathbb{R}})$. Furthermore (see Proposition~\ref{Prop:LagrangianHitchin}):

\begin{Proposition}
\label{Prop:LagrangianHitchinIntro}
When  $\mathcal O^2$ is closed and non-orientable, with orientation covering 
$\widetilde {\mathcal O^2}$, then $\mathrm{Hit}(\mathcal O^2,G_{\mathbb{R}})$
embeds in $\mathrm{Hit}(\widetilde{\mathcal O^2},G_{\mathbb{R}})$ as a Lagrangian submanifold. 
\end{Proposition}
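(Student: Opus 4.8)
The plan is to exhibit $\mathrm{Hit}(\mathcal O^2,G_{\mathbb R})$ as the fixed-point set of an anti-symplectic involution on $\mathrm{Hit}(\widetilde{\mathcal O^2},G_{\mathbb R})$, so that the conclusion follows from the general principle that the fixed locus of an anti-symplectic involution on a symplectic manifold is Lagrangian (wherever it is a submanifold). First I would set up the map: the deck transformation $\sigma\colon\widetilde{\mathcal O^2}\to\widetilde{\mathcal O^2}$ of the orientation double cover induces, by pullback of representations, an involution $\sigma^*$ on $X(\widetilde{\mathcal O^2},G_{\mathbb R})$, and hence on the Hitchin component, since $\sigma^*$ sends a Fuchsian-composed-with-$\tau$ representation to another one of the same type (this is exactly where one needs $G_{\mathbb R}$ to contain $\tau(\mathrm{PGL}(2,\mathbb R))$, not merely $\tau(\mathrm{PSL}(2,\mathbb R))$: the orientation-reversing generator of $\pi_1(\mathcal O^2)/\pi_1(\widetilde{\mathcal O^2})$ must have somewhere to go). The fixed points of $\sigma^*$ in $X(\widetilde{\mathcal O^2},G_{\mathbb R})$ are precisely the $\sigma$-equivariant characters, which by an elementary extension argument (a $\sigma$-invariant character of $\pi_1\widetilde{\mathcal O^2}$, together with the choice of lift of $\sigma$, extends to $\pi_1\mathcal O^2$) are the images of $X(\mathcal O^2,G_{\mathbb R})\to X(\widetilde{\mathcal O^2},G_{\mathbb R})$; restricting to Hitchin components, $\mathrm{Hit}(\mathcal O^2,G_{\mathbb R})=\mathrm{Fix}(\sigma^*)\cap \mathrm{Hit}(\widetilde{\mathcal O^2},G_{\mathbb R})$ and this is a smooth submanifold because the Hitchin component is a cell on which $\sigma^*$ acts smoothly.

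Next I would check that $\sigma^*$ is anti-symplectic for the Atiyah–Bott–Goldman form $\omega$ on $X(\widetilde{\mathcal O^2},G_{\mathbb R})$. The form $\omega$ is built from the cup product $H^1(\widetilde{\mathcal O^2},\mathrm{Ad}\rho)\times H^1(\widetilde{\mathcal O^2},\mathrm{Ad}\rho)\to H^2(\widetilde{\mathcal O^2},\mathbb R)\cong\mathbb R$ paired via the Killing form, and $\sigma$ acts on $H^2(\widetilde{\mathcal O^2},\mathbb R)$ by $-1$ because $\sigma$ is orientation-reversing; the Killing form is $\mathrm{Ad}$-invariant and $\sigma^*$ acts on the coefficient module compatibly with the identification $\sigma^*\mathrm{Ad}\rho\cong\mathrm{Ad}(\sigma^*\rho)$, so the cup product is $\sigma$-equivariant into the sign-$(-1)$ representation on $H^2$. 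Hence $\sigma^*\omega=-\omega$. At a fixed point $[\rho]\in\mathrm{Hit}(\mathcal O^2,G_{\mathbb R})$, the tangent space splits as $T=T^+\oplus T^-$ into $\pm1$-eigenspaces of $d\sigma^*$, with $T\mathrm{Fix}(\sigma^*)=T^+$; since $\omega(d\sigma^* u,d\sigma^* v)=-\omega(u,v)$, the form $\omega$ vanishes identically on $T^+$ and on $T^-$ and pairs $T^+$ with $T^-$ nondegenerately, so $T^+$ is Lagrangian, i.e. $\dim T^+=\tfrac12\dim T$. This last dimension count is consistent with, and could alternatively be derived from, Theorems~\ref{thm:dim2hyp}–\ref{thm:dim2Euc} and the discussion of $\widetilde\chi$.

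The main obstacle I anticipate is the bookkeeping that makes the identification $\mathrm{Hit}(\mathcal O^2,G_{\mathbb R})\hookrightarrow\mathrm{Fix}(\sigma^*)$ an \emph{embedding} onto a connected component of the fixed locus, and that the map $X(\mathcal O^2,G_{\mathbb R})\to X(\widetilde{\mathcal O^2},G_{\mathbb R})$ is injective on the Hitchin component. Injectivity can fail in general for $X(\mathcal O^2,G)\to X(\widetilde{\mathcal O^2},G)$ (the ambiguity is the choice of lift of the orientation-reversing element, measured by $H^1$ of $\mathbb Z/2$ with suitable coefficients), but on the Hitchin component the representation of $\pi_1\mathcal O^2$ is discrete, faithful (into $G_{\mathbb R}$ as a non-connected group) and its restriction to the orientation subgroup determines it up to conjugacy because the normalizer of the image is controlled; I would make this precise using that a Fuchsian representation of $\pi_1\mathcal O^2$ is determined by its restriction to the index-two surface-group subgroup together with the orbifold structure. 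A secondary point to be careful about is smoothness of $\mathrm{Fix}(\sigma^*)$ at characters whose restriction to a Euclidean boundary piece is merely strongly regular rather than good — but for \emph{closed} 2-orbifolds, which is the hypothesis here, Theorem~\ref{thm:dim2hyp} (for $\chi<0$) or the relevant closed-Euclidean analysis gives smoothness of the Hitchin component directly, so the fixed locus of a smooth involution on a manifold is automatically a submanifold and no extra argument is needed.
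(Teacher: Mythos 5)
Your proposal is correct and rests on the same essential fact the paper uses — namely that the orientation-reversing deck transformation $\sigma$ acts by $-1$ on $H^2(\widetilde{\mathcal O^2},\mathbb{R})$ — but you package it through the ``fixed locus of an anti-symplectic involution is Lagrangian'' principle, whereas the paper (Proposition~\ref{Prop:Lagrangian}) argues more directly: it identifies $T_{[\rho]}\mathrm{Hit}(\mathcal O^2,G_{\mathbb R})$ with $H^1(\widetilde{\mathcal O^2},\mathrm{Ad}\rho)^{\sigma^*}$, checks that the Goldman pairing vanishes there (isotropic), and then invokes the separately-proved dimension count of Corollary~\ref{Corollary:double} to upgrade isotropic to Lagrangian; finally, injectivity is cited from~\cite{ALS} and properness of restriction to a finite-index subgroup upgrades the immersion to an embedding. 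Your framing is cleaner in one respect: the anti-symplectic principle gives the dimension count for free, since $T^+$ and $T^-$ are each isotropic and are paired nondegenerately by $\omega$, so $\dim T^+=\dim T^-=\tfrac12\dim T$; you do not need the cohomological Corollary~\ref{Corollary:double} as a separate ingredient. On the other hand, your route commits you to the stronger claim that $\mathrm{Hit}(\mathcal O^2,G_{\mathbb R})$ maps \emph{onto} a full component of $\mathrm{Fix}(\sigma^*)\cap\mathrm{Hit}(\widetilde{\mathcal O^2},G_{\mathbb R})$. The ``elementary extension argument'' you invoke for this does hold, but it uses a hypothesis you should make explicit: since $\rho$ is good and $G$ is \emph{adjoint} (trivial center), the element $A$ conjugating $\rho$ to $\rho\circ\sigma_*$ satisfies $A^2\in\mathcal{Z}(G_{\mathbb R})=\{1\}$, so the extension to $\pi_1(\mathcal O^2)$ exists and is unique; for a non-adjoint $G_{\mathbb R}$ there would be an obstruction and an ambiguity. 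The paper's more modest formulation — a Lagrangian immersion made into an embedding by injectivity plus properness — sidesteps this surjectivity question entirely. Both approaches are valid; yours trades a slightly longer set-up for a tidier dimension count.
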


 We apply these computations on 2-orbifolds to estimate the growth of $X(M^3,\mathrm{SL}(n,\mathbb C))$
with respect to $n$
for some 3-manifolds, in particular for some knot exteriors. For instance we show:
 
 \begin{Proposition}
 \label{Prop:fig8intro}
Let $\Gamma$ be the fundamental group of the exterior of the figure eight knot. Besides the canonical 
component (that has dimension $n-1$), for large $n$  
$X(\Gamma, \mathrm{SL}(n,\mathbb C))$ has at least $3$ components
that contain irreducible representations, whose dimension grow respectively as ${n^2}/{12}$, 
${n^2}/{20}$ and ${n^2}/{42}$.
\end{Proposition}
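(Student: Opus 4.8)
The plan is to obtain the three components from the three exceptional Seifert fibred Dehn fillings of $M$, the exterior of the figure eight knot; write $\Gamma=\pi_1(M)$, $G=\mathrm{SL}(n,\mathbb C)$ and $\mathfrak g=\mathfrak{sl}(n,\mathbb C)$. By Thurston, every filling $M(p/q)$ is hyperbolic except along the slopes $p/q\in\{\infty,0,\pm1,\pm2,\pm3,\pm4\}$; the fillings along $1,2,3$ are small Seifert fibred spaces whose base $2$-orbifolds $\mathcal O_1,\mathcal O_2,\mathcal O_3$ have underlying space $S^2$, three cone points, and Euler characteristics $\chi(\mathcal O_1)=-\tfrac1{42}$ (here $M(1)$ is the Brieskorn sphere over $\mathcal O_1=S^2(2,3,7)$), $\chi(\mathcal O_2)=-\tfrac1{20}$ and $\chi(\mathcal O_3)=-\tfrac1{12}$. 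Collapsing the central regular fibre, each Dehn filling epimorphism $\Gamma\twoheadrightarrow\pi_1(M(i))$ composed with $\pi_1(M(i))\twoheadrightarrow\pi_1^{\mathrm{orb}}(\mathcal O_i)$ gives an epimorphism $\phi_i\colon\Gamma\twoheadrightarrow\Delta_i:=\pi_1^{\mathrm{orb}}(\mathcal O_i)$ onto a hyperbolic triangle group.

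For the lower bound: for each $i$ and each $n$ take $\rho_i\colon\Delta_i\to G$ to be a Fuchsian representation postcomposed with the principal $\mathrm{PSL}(2,\mathbb C)\to G$ (equivalently, any representation sending the three cone generators to semisimple elements of the correct orders with eigenvalue multiplicities as balanced as possible); $\rho_i$ is irreducible with centraliser the centre, hence good. By Theorem~\ref{thm:dim2hyp}, $[\rho_i]$ is a smooth point of $X(\mathcal O_i,G)$, and evaluating \eqref{eqn:intro} — with $\mathfrak g^{\rho_i(\pi_1(\mathcal O_i))}=0$ and $\dim\mathfrak g^{\rho_i(\operatorname{Stab}(x_j))}=\tfrac{n^2}{p_j}+O(1)$ from the balanced multiplicities — gives $\dim_{[\rho_i]}X(\mathcal O_i,G)=-\widetilde\chi(\mathcal O_i,\mathrm{Ad}\rho_i)=|\chi(\mathcal O_i)|\,n^2+O(1)$, that is $\sim n^2/42,\ n^2/20,\ n^2/12$ for $i=1,2,3$. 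Since $\phi_i$ is onto, $\phi_i^*$ embeds $X(\mathcal O_i,G)$ as a closed subvariety of $X(\Gamma,G)$ and $\rho_i\circ\phi_i$ is good (same image as $\rho_i$); hence the component $Y_i\ni[\rho_i\circ\phi_i]$ of $X(\Gamma,G)$ contains an irreducible character and $\dim Y_i\ge|\chi(\mathcal O_i)|\,n^2+O(1)$, which exceeds the dimension $n-1$ of the canonical component, so $Y_i$ is not the canonical one.

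To pin the growth down to exactly $n^2/12,n^2/20,n^2/42$ — which in particular forces $Y_1,Y_2,Y_3$ to be pairwise distinct for $n$ large — one needs the matching upper bound $\dim Y_i=|\chi(\mathcal O_i)|\,n^2+o(n^2)$, and this is the step I expect to be the main obstacle. Since $\rho_i\circ\phi_i$ is good, $\dim Y_i\le\dim H^1\!\big(\Gamma,\mathfrak g_{\mathrm{Ad}(\rho_i\circ\phi_i)}\big)$, and I would compute the right-hand side from the Lyndon--Hochschild--Serre spectral sequence of $1\to K_i\to\Gamma\to\Delta_i\to1$ with $K_i=\ker\phi_i$: as $K_i$ acts trivially on $\mathfrak g$ and $H^2(\Delta_i,\mathfrak g)=0$ (Poincar\'e duality for the closed orbifold $\mathcal O_i$ together with $\mathfrak g^{\Delta_i}=0$), it collapses to $\dim H^1(\Gamma,\mathfrak g)=\dim H^1(\Delta_i,\mathfrak g)+\dim\big(H^1(K_i;\mathbb C)\otimes\mathfrak g\big)^{\Delta_i}$. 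The first term is $|\chi(\mathcal O_i)|\,n^2+O(1)$; for the second, identifying $K_i=\pi_1(\widehat M_i)$ with $\widehat M_i\to M$ the $\Delta_i$-cover, $\widehat M_i$ is the complement in a trivial circle bundle over $\mathbb H^2$ of the $\Delta_i$-orbit of circles lying over the surgery core (an exceptional fibre of some order $r_i$), so $H^1(K_i;\mathbb C)\cong\big(\operatorname{Ind}_{\mathbb Z/r_i}^{\Delta_i}\mathbb C\big)^{*}\oplus\mathbb C$ as $\Delta_i$-modules and Frobenius reciprocity gives $\big(H^1(K_i;\mathbb C)\otimes\mathfrak g\big)^{\Delta_i}\cong\mathfrak g^{\mathbb Z/r_i}$, of dimension $\tfrac{n^2}{r_i}+O(1)$. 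So far this only sandwiches $\dim Y_i$ between $|\chi(\mathcal O_i)|\,n^2$ and $\big(|\chi(\mathcal O_i)|+\tfrac1{r_i}\big)n^2$ up to $O(1)$; the remaining and hardest point is to show that the ``extra'' directions $E_\infty^{0,1}$ — which correspond to deformations that open the Dehn filling and no longer factor through $\phi_i$ — are obstructed, by examining the quadratic map $H^1\cup H^1\to H^2(\Gamma,\mathfrak g)$ on that subspace, so that the local dimension drops back to that of $X(\mathcal O_i,G)$. Granting this, $\dim Y_i=|\chi(\mathcal O_i)|\,n^2+O(1)$ with the asserted leading terms, and, the three dimensions (and the canonical $n-1$) being asymptotically distinct, $Y_1,Y_2,Y_3$ are three distinct components besides the canonical one.
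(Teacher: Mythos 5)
Your overall strategy matches the paper's: push the Hitchin components of the three base $2$-orbifolds $S^2(2,3,7)$, $S^2(2,4,5)$, $S^2(3,3,4)$ through the Seifert fillings $M(\pm 1)$, $M(\pm 2)$, $M(\pm 3)$ to get lower bounds from Proposition~\ref{prop:growthHitchin}, then prove an upper bound on the Zariski tangent space $H^1(\Gamma,\mathrm{Ad}\rho)$ sharp enough to separate the three components (and the canonical one) for large $n$. The lower bound in your sketch is essentially identical to the paper's. Your upper bound, via the Lyndon--Hochschild--Serre sequence for $1\to K_i\to\Gamma\to\Delta_i\to 1$, is a genuinely different route from the paper's Mayer--Vietoris/Poincar\'e-duality argument on the compact exterior $M$ (\eqref{eqn:rank}, \eqref{eqn:ker}, and Lemma~\ref{lemma:profSF}), and in principle either method can close the argument.

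However, your computation of the $E_\infty^{0,1}$ term contains a real error, and this error is precisely what creates the ``hardest remaining point'' you flag at the end. You assert that the surgery core is an exceptional fibre of the Seifert fibration, so that the lifted circles in $\mathbb{H}^2\times S^1$ have finite cyclic isotropy $\mathbb{Z}/r_i$, and you obtain $\big(H^1(K_i;\mathbb{C})\otimes\mathfrak{g}\big)^{\Delta_i}\cong\mathfrak{g}^{\mathbb{Z}/r_i}$, of dimension $\approx n^2/r_i$. But the surgery core \emph{cannot} be a fibre: removing a fibre from a Seifert fibered space leaves a Seifert fibered space, whereas $M$ is hyperbolic. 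In fact the core (a peripheral element $\gamma$ of $\Gamma$) maps to a \emph{hyperbolic}, hence infinite-order, element of $\Delta_i$ --- this is exactly what the paper checks when it observes that $\rho_0(\pi_1(\partial M))$ contains an element of infinite order. The stabiliser of a lift of the core in $\Delta_i$ is therefore infinite cyclic, generated by a hyperbolic element; Frobenius reciprocity then gives $\big(H^1(K_i;\mathbb{C})\otimes\mathfrak{g}\big)^{\Delta_i}\cong\mathfrak{g}^{\rho(\gamma)}$, and since $\operatorname{Sym}^{n-1}$ of a hyperbolic $\mathrm{SL}(2)$-element is regular, $\dim\mathfrak{g}^{\rho(\gamma)}=\operatorname{rank}(G)=n-1$.

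With this correction, the LHS degeneration you set up (using $H^2(\Delta_i,\mathfrak{g})=0$) gives $\dim H^1(\Gamma,\mathrm{Ad}\rho)=\dim H^1(\Delta_i,\mathrm{Ad}\rho)+(n-1)=|\chi(\mathcal{O}_i)|\,n^2+O(n)$, which together with your lower bound pins the growth rate down to $|\chi(\mathcal{O}_i)|\,n^2$ without any obstruction argument. The quadratic ``excess directions'' you were worried about do not exist; the excess is only linear in $n$ and is precisely what the paper's rank-of-$i^*$ computation accounts for. So the obstacle you identify as the crux of the proof is an artifact of misidentifying the isotropy of the filling core, and once that is fixed your spectral-sequence route works and agrees with the paper's bound.
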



\medskip

The paper is organized as follows. 
In Section~\ref{Section:reps} we recall some basic notions and tools on varieties of representations, and 
in Section~\ref{Section:cohomology} we introduce tools of orbifold cohomology (some well known, some other new) 
that we use later to compute Zariski tangent spaces
to varieties of representations and characters. In Section~\ref{Section:dim2} we discuss varieties of representations of
two dimensional orbifolds, and we apply some of the results to discuss dimensions of Hitchin components of orbifolds in 
Section~\ref{Section:H2O}. 
In Section~\ref{Section:dim3} we prove the results on three-dimensional orbifolds. 
Finally, some explicit examples are computed in Section~\ref{Section:SL3}.


\section{Varieties of representations and characters}
\label{Section:reps}

Let $\Gamma$ be a finitely generated group, we are mainly interested in the fundamental group of a compact orbifold. 
Let  
$G$ be a  complex semi-simple algebraic Lie group.
The variety of representations
$$
R(\Gamma,G)=\hom(\Gamma, G) 
$$
is an affine algebraic set, perhaps not radical (i.e.~ possible with non-reduced function ring $\mathbb{C}[R(\Gamma,G)]^G$).
Its quotient by conjugation in the algebraic category is the variety of characters
$$
X(\Gamma, G)=R(\Gamma,G)/\!/ G.
$$
Namely, the algebra of invariant functions $\mathbb{C}[R(\Gamma,G)]^G$ is of finite type and 
$X(\Gamma, G)$ can be defined as the affine variety this function ring (or affine scheme when the ring is non-reduced): 
$\mathbb C[X(\Gamma, G)]\cong \mathbb{C}[R(\Gamma,G)]^G  $.
When $\Gamma=\pi_1(\mathcal O)$, they are denoted respectively by $R(\mathcal O, G)$ and 
$X(\mathcal O, G)$.

Following for instance \cite{JohnsonMillson}, we define:

\begin{Definition}
A representation   $\rho\colon\Gamma\to G$ is:
\begin{enumerate}[(a)]
 \item \emph{irreducible} if 
$\rho(\Gamma)$ is not contained in a proper parabolic subgroup of $G$, and 
\item \emph{good} if it is irreducible and the centralizer of the image equals the center of $G$, $\mathcal{Z}(G)$. 
\end{enumerate}
\end{Definition}

When $G= \mathrm{SL}(n,\mathbb C)$, an irreducible representation is also good.

Let $R^{\mathrm{good}}(\Gamma, G)$ denote the subset of good representations in $R(\Gamma, G)$. 
The set of conjugacy classes 
$$\mathcal R^{\mathrm{good}}(\Gamma, G)=R^{\mathrm{good}}(\Gamma, G)/G$$
is a Zariski open subset of $X(\Gamma, G)$, cf.~\cite{JohnsonMillson}.

We discuss also representations in real Lie groups. For $G_{\mathbb{R}}$, the 
variety of characters is more subtle, cf.~\cite{ChoiLeeMarquis}, but we just consider 
$\mathcal R^{\mathrm{good}}(\Gamma, G_{\mathbb{R}})= R^{\mathrm{good}}(\Gamma, G_{\mathbb{R}})/G_{\mathbb{R}} $.

For a compact $2$ orbifold $\mathcal O^2$, we may also consider the \emph{relative character variety},
defined as 
$$
\mathcal R^{\mathrm{good}} (\mathcal{O}^2,\partial\mathcal{O}^2,G)=
\{[\rho]\in \mathcal R^{\mathrm{good}}(\mathcal{O}^2,G)\mid
[\rho\vert_{\partial_1\mathcal O^2}],\dotsc,[\rho\vert_{\partial_k\mathcal O^2}] \textrm{ are constant}\},
$$
where $\partial\mathcal O^2=\partial_1\mathcal O^2\sqcup\dotsb\sqcup \partial_k\mathcal O^2$ is the decomposition in connected components and 
$[\rho\vert_{\partial_i \mathcal O^2}]$ denotes the conjugacy class of the restriction to the $i$-th boundary component.

\subsection{Zarisi tangent space and cohomology}
\label{subsection:ZariskiTS}

 For a representation $\rho\colon \Gamma\to G$, its composition with
the adjoint representation is denoted by
$$
\mathrm{Ad}\rho\colon\Gamma\to \operatorname{Aut}( \mathfrak{g}).
$$
The adjoint representation preserves the Killing form
$$
\mathcal B\colon \mathfrak{g}\times \mathfrak{g}\to\mathbb C,
$$
that is non-degenerate, as we assume $G$ semi-simple. 

A \emph{derivation} or \emph{crossed morphism} is 
a  $\mathbb C$-linear map $d\colon\Gamma\to\mathfrak{g}$ that satisfies
$$
d(\gamma_1\gamma_2)=d(\gamma_1)+ \mathrm{Ad}_{\rho(\gamma_1)}(d(\gamma_2)),\qquad\forall\gamma_1,\gamma_2\in\Gamma.
$$
The space of crossed morphisms is denoted by $Z^1(\Gamma, \mathrm{Ad}\rho)$.
A crossed morphism is called \emph{inner} if there exists $a\in\mathfrak{g}$ such that 
$$
d(\gamma)= (\mathrm{Ad}_{\rho(\gamma)}-1) (a), \qquad \forall \gamma\in \Gamma,
$$
 and the subspace of inner crossed
morphisms is denoted by $B^1(\Gamma, \mathrm{Ad}\rho)$. The quotient is the first cohomology group:
$$
H^1(\Gamma, {\mathfrak{g}}_{\mathrm{Ad}\rho})=H^1(\Gamma, \mathrm{Ad}\rho) \cong Z^1(\Gamma, \mathrm{Ad}\rho)/ B^1(\Gamma, \mathrm{Ad}\rho).
$$

\begin{Theorem}[Weil]
\label{Thm:Weil}
Let $ T^{Zar}_\rho R(\Gamma, G)$ be the Zariski tangent space 
of $ R(\Gamma, G)$ as a \emph{scheme}
at~$\rho$.
%
There is a natural isomorphism 
$$
Z^1(\Gamma, \mathrm{Ad}\rho)\cong T^{Zar}_\rho R(\Gamma, G)
$$ 
that maps
 $B^1(\Gamma, \mathrm{Ad}\rho)$ to the tangent space to the orbit by conjugation.
Furthermore, when $\rho$ is good, 
$$
H^1(\Gamma, \mathrm{Ad}\rho)\cong T^{Zar}_\rho X(\Gamma, G).
$$
 \end{Theorem}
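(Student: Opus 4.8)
The plan is to obtain the isomorphism $Z^1(\Gamma,\mathrm{Ad}\rho)\cong T^{Zar}_\rho R(\Gamma,G)$ by a direct computation with the functor of points of the representation scheme, then to recognise the inner cocycles as the tangent directions produced by conjugation, and finally to descend to the character scheme using Luna's étale slice theorem.

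For the first isomorphism I would use that $R(\Gamma,G)$ represents the functor $A\mapsto\hom(\Gamma,G(A))$, so that a Zariski tangent vector at $\rho$ is exactly a homomorphism $\rho_\epsilon\colon\Gamma\to G(\mathbb{C}[\epsilon]/(\epsilon^2))$ reducing to $\rho$ modulo $\epsilon$, where the kernel of $G(\mathbb{C}[\epsilon]/(\epsilon^2))\to G$ is canonically $\mathfrak{g}$ via $v\mapsto 1+\epsilon v$. Writing $\rho_\epsilon(\gamma)=(1+\epsilon\,d(\gamma))\rho(\gamma)$ and expanding $\rho_\epsilon(\gamma_1\gamma_2)=\rho_\epsilon(\gamma_1)\rho_\epsilon(\gamma_2)$ to first order in $\epsilon$ (using $\rho(\gamma_1)(1+\epsilon\,d(\gamma_2))=(1+\epsilon\,\mathrm{Ad}_{\rho(\gamma_1)}d(\gamma_2))\rho(\gamma_1)$), the homomorphism condition becomes precisely the cocycle identity $d(\gamma_1\gamma_2)=d(\gamma_1)+\mathrm{Ad}_{\rho(\gamma_1)}d(\gamma_2)$; this sets up a natural $\mathbb{C}$-linear bijection with $Z^1(\Gamma,\mathrm{Ad}\rho)$. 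Then I would differentiate the orbit map $o_\rho\colon G\to R(\Gamma,G)$, $g\mapsto(\gamma\mapsto g\rho(\gamma)g^{-1})$, at the identity: $a\in\mathfrak{g}=T_eG$ maps to the deformation $\gamma\mapsto a\rho(\gamma)-\rho(\gamma)a$, i.e. to the cocycle $\gamma\mapsto -(\mathrm{Ad}_{\rho(\gamma)}-1)(a)$, so the image of $(d o_\rho)_e$ is $B^1(\Gamma,\mathrm{Ad}\rho)$, which in characteristic zero is the tangent space of the conjugation orbit.

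For the statement about $X(\Gamma,G)$, assuming $\rho$ good, I would note that $\rho$ is irreducible, hence has closed $G$-orbit (cf.~\cite{JohnsonMillson}), with stabiliser $Z_G(\rho(\Gamma))=\mathcal{Z}(G)$, finite because $G$ is semisimple, and acting trivially on all of $R(\Gamma,G)$ since conjugation by a central element is the identity. Luna's étale slice theorem then produces a locally closed affine $\mathcal{Z}(G)$-invariant subscheme $W\ni\rho$ with $T_\rho R(\Gamma,G)=T_\rho W\oplus T_\rho(G\cdot\rho)$ and with $W/\!\!/\mathcal{Z}(G)\to R(\Gamma,G)/\!\!/G=X(\Gamma,G)$ étale near $[\rho]$; since $\mathcal{Z}(G)$ acts trivially, $W/\!\!/\mathcal{Z}(G)=W$, so $W\to X(\Gamma,G)$ is étale at $\rho$ and therefore $T^{Zar}_{[\rho]}X(\Gamma,G)\cong T_\rho W\cong T_\rho R(\Gamma,G)/T_\rho(G\cdot\rho)\cong Z^1(\Gamma,\mathrm{Ad}\rho)/B^1(\Gamma,\mathrm{Ad}\rho)=H^1(\Gamma,\mathrm{Ad}\rho)$, the isomorphism being the one induced by the quotient map.

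The first two steps are routine deformation theory; the main obstacle is the last one, because $R(\Gamma,G)$ and $X(\Gamma,G)$ are genuine, possibly non-reduced, schemes, so I need the slice theorem in the form valid for a reductive group acting on an affine $\mathbb{C}$-scheme of finite type, and I must carefully use the special features of the good locus — finiteness and triviality of the action of $\mathcal{Z}(G)$ together with closedness of the orbit of an irreducible representation — to reduce the slice statement to the clean assertion that $W\to X(\Gamma,G)$ is étale near $\rho$. If one prefers to avoid Luna, an alternative is to show directly that $R^{\mathrm{good}}(\Gamma,G)\to\mathcal{R}^{\mathrm{good}}(\Gamma,G)$ is a principal $G/\mathcal{Z}(G)$-bundle for the étale topology and read off the tangent space identification, but the slice theorem is the most economical route.
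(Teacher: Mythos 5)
The paper does not actually prove Theorem~\ref{Thm:Weil}; it refers the reader to \cite{HP2020} and the references therein. So there is no ``paper's proof'' to compare against, and the relevant question is whether your argument stands on its own. It does: the dual-numbers computation identifying $T^{Zar}_\rho R(\Gamma,G)$ with $Z^1(\Gamma,\mathrm{Ad}\rho)$ is Weil's original argument, the differentiation of the orbit map identifying $B^1$ with the tangent space of the conjugacy orbit is standard, and the descent to $X(\Gamma,G)$ via Luna's \'etale slice theorem is one of the two standard routes (the other being the observation that $R^{\mathrm{good}}\to\mathcal R^{\mathrm{good}}$ is a $G/\mathcal Z(G)$-torsor, which you mention as an alternative). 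Your hypotheses for Luna are all in place: goodness gives $\mathfrak g^{\rho(\Gamma)}=0$ (so the orbit map differential $\mathfrak g\to B^1$ is injective), irreducibility implies the orbit is closed, and the stabiliser $\mathcal Z(G)$ is a finite group acting trivially on $R(\Gamma,G)$, which is exactly what makes $W/\!\!/\mathcal Z(G)=W$ and turns the \'etale map $W/\!\!/\mathcal Z(G)\to X(\Gamma,G)$ into an \'etale neighbourhood of $[\rho]$.

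The one point to spell out, which you flag yourself, is that you need Luna's slice theorem in the scheme-theoretic form, for a reductive group acting on a possibly non-reduced affine scheme of finite type. This version does exist (one can construct the slice by embedding $R(\Gamma,G)$ equivariantly in a $G$-module $V$, choosing an $H$-invariant complement $N$ to $T_\rho(G\cdot\rho)$ in $T_\rho V$, and taking $W=(\rho+N)\cap R(\Gamma,G)$ as a scheme), and with it the tangent-space decomposition $T_\rho R=T_\rho(G\cdot\rho)\oplus T_\rho W$ and the \'etale map $W\to X$ do give $T^{Zar}_{[\rho]}X\cong T_\rho W\cong Z^1/B^1$. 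So the argument is complete once you cite a scheme-theoretic Luna (Dr\'ezet's notes or Alper's work will do). The alternative principal-bundle route avoids invoking the general slice theorem but requires proving that the conjugation action on the good locus is free modulo the center and that the quotient map is \'etale-locally trivial, which amounts to re-proving a special case of the slice theorem; your choice of Luna is indeed the more economical one here.
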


 For a discussion of Theorem~\ref{Thm:Weil}, see for instance \cite{HP2020} and the references therein.
We do not need the precise definition of  scheme, just mention that 
the polynomial ideal that defines $R(\Gamma, G)$ or $X(\Gamma, G)$  may be non-reduced, and this
is taken into account in the Zariski tangent space. In particular, when the dimension of the Zariski tangent space
at some representation or character equals the dimension of the component, the variety is smooth (and
the scheme is reduced and smooth)
at this
representation or character.

There is a real version of this theorem:
when the image of a good representations is contained
in a real form $G_{\mathbb{R}}$, then
$$
H^1(\Gamma, \mathrm{Ad}\rho_{\mathbb{R}})\cong T^{Zar}_\rho \mathcal R^{\mathrm{good}}(\Gamma, G_{\mathbb{R}}).
$$

%

\begin{Proposition}[\cite{HP2020}]
\label{Prop:RelTang} 
For a compact 2-orbifold $\mathcal O^2$, if  $\rho\colon\mathcal \pi_1(\mathcal{O}^2)\to G$ is good, then
$$
T_\rho^{Zar} \mathcal R^{\mathrm{good}}(\mathcal{O}^2,\partial \mathcal{O}^2, G)\cong \ker \big(H^1(\pi_1(\mathcal O^2),\mathrm{Ad}\rho)\to 
\bigoplus_{i=1}^k H^1( \pi_1( \partial_i\mathcal{O}^2),\mathrm{Ad}\rho) \big).
$$ 
\end{Proposition}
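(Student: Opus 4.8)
The plan is to identify the relative representation variety $\mathcal R^{\mathrm{good}}(\mathcal O^2,\partial\mathcal O^2,G)$ as (an open subset of) the fiber of a natural map and then compute tangent spaces using Theorem~\ref{Thm:Weil}. Concretely, consider the restriction map on representation varieties $r\colon R^{\mathrm{good}}(\mathcal O^2,G)\to \prod_{i=1}^k R(\partial_i\mathcal O^2,G)$, $\rho\mapsto(\rho|_{\partial_1\mathcal O^2},\dots,\rho|_{\partial_k\mathcal O^2})$, and postcompose with the quotient maps to get $\bar r\colon R^{\mathrm{good}}(\mathcal O^2,G)\to \prod_i X(\partial_i\mathcal O^2,G)$. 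By definition, $R^{\mathrm{good}}(\mathcal O^2,\partial\mathcal O^2,G)$ is the preimage under $\bar r$ of a single point $([\rho|_{\partial_1\mathcal O^2}],\dots,[\rho|_{\partial_k\mathcal O^2}])$, and $\mathcal R^{\mathrm{good}}(\mathcal O^2,\partial\mathcal O^2,G)$ is its quotient by $G$. So the first step is to relate the tangent space of this fiber to the kernel of the differential of $\bar r$ at $\rho$.

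The second step is to compute the differential of $\bar r$ at $\rho$ on Zariski tangent spaces. By Weil's theorem (Theorem~\ref{Thm:Weil}), $T^{Zar}_\rho R^{\mathrm{good}}(\mathcal O^2,G)\cong Z^1(\pi_1(\mathcal O^2),\mathrm{Ad}\rho)$, and the differential of the restriction map $r$ is induced by restricting crossed morphisms, $d\mapsto (d|_{\pi_1(\partial_1\mathcal O^2)},\dots)$, landing in $\bigoplus_i Z^1(\pi_1(\partial_i\mathcal O^2),\mathrm{Ad}\rho)$. Composing with the quotient maps $X(\partial_i\mathcal O^2,G)$ kills exactly the inner part $B^1(\pi_1(\partial_i\mathcal O^2),\mathrm{Ad}\rho)$ — here I need that on each boundary component the restriction $\rho|_{\pi_1(\partial_i\mathcal O^2)}$ is either good (so Weil applies directly, $T^{Zar}_{[\cdot]}X = H^1$) or, in the reducible case, that the differential of $R\to X$ still has kernel exactly $B^1$ at that point; the cleanest route is to argue at the level of the composite $\bar r$ directly, using that the image $\bar r(R^{\mathrm{good}})$ is cut out so that the fiber's tangent space is $\ker(d\bar r_\rho)$ plus the tangent to the $G$-orbit, and $d\bar r_\rho$ factors as $Z^1(\pi_1(\mathcal O^2),\mathrm{Ad}\rho)\to \bigoplus_i H^1(\pi_1(\partial_i\mathcal O^2),\mathrm{Ad}\rho)$. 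Thus $\ker(d\bar r_\rho)$ consists of crossed morphisms $d$ whose restriction to each $\pi_1(\partial_i\mathcal O^2)$ is a coboundary, i.e. lies in $B^1(\pi_1(\partial_i\mathcal O^2),\mathrm{Ad}\rho)$.

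The third step is to descend to the character variety. Since $\rho$ is good, $T^{Zar}_{[\rho]}\mathcal R^{\mathrm{good}}(\mathcal O^2,G)\cong H^1(\pi_1(\mathcal O^2),\mathrm{Ad}\rho)$ by Theorem~\ref{Thm:Weil}, and the tangent space to the $G$-orbit is precisely $B^1(\pi_1(\mathcal O^2),\mathrm{Ad}\rho)$, which maps to zero in each $H^1(\pi_1(\partial_i\mathcal O^2),\mathrm{Ad}\rho)$. Therefore the kernel of $d\bar r_\rho$ modulo $B^1(\pi_1(\mathcal O^2),\mathrm{Ad}\rho)$ is exactly the kernel of the natural restriction map on first cohomology $H^1(\pi_1(\mathcal O^2),\mathrm{Ad}\rho)\to\bigoplus_i H^1(\pi_1(\partial_i\mathcal O^2),\mathrm{Ad}\rho)$, which yields the claimed isomorphism. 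The main obstacle is the bookkeeping around the quotient map $R\to X$ at boundary components where $\rho|_{\pi_1(\partial_i\mathcal O^2)}$ need not be good (e.g.\ boundary tori with $\chi=0$): one must be careful that "the restricted character is constant" cuts out the fiber with the expected tangent space, i.e.\ that there is no extra non-reducedness coming from the boundary quotient that would enlarge the Zariski tangent space beyond $\ker$ on $H^1$. This is handled by working throughout with the scheme structure on $R$ and the composite map $\bar r$ into the (possibly non-reduced) $\prod_i X(\partial_i\mathcal O^2,G)$, exactly as in \cite{HP2020}, rather than trying to split the argument boundary-component by boundary-component.
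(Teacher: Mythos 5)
Your overall strategy — view the relative variety as the preimage of a restriction map and differentiate — is the right idea, but there is a genuine gap in the identification you make in step one. The paper defines $\mathcal R^{\mathrm{good}}(\mathcal O^2,\partial\mathcal O^2,G)$ by requiring the \emph{conjugacy class} $[\rho'\vert_{\partial_i\mathcal O^2}]$ to be constant, not the character; explicitly, "$[\rho\vert_{\partial_i\mathcal O^2}]$ denotes the conjugacy class of the restriction." You replace this by the fiber of the composite $\bar r\colon R^{\mathrm{good}}(\mathcal O^2,G)\to\prod_i X(\partial_i\mathcal O^2,G)$ over a point, which is the condition that the restricted \emph{character} be constant. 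These agree only when $\rho\vert_{\partial_i\mathcal O^2}$ has a closed orbit. Boundary fundamental groups of a compact $2$-orbifold are abelian or infinite dihedral, so the restrictions are never good, and the two conditions can genuinely differ.

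Concretely, the step "$d\bar r_\rho$ factors as $Z^1(\pi_1(\mathcal O^2),\mathrm{Ad}\rho)\to\bigoplus_i H^1(\pi_1(\partial_i\mathcal O^2),\mathrm{Ad}\rho)$" asserts that the differential of $R(\partial_i\mathcal O^2,G)\to X(\partial_i\mathcal O^2,G)$ at $\rho\vert_{\partial_i}$ has kernel exactly $B^1(\pi_1(\partial_i\mathcal O^2),\mathrm{Ad}\rho)$; that follows from Theorem~\ref{Thm:Weil} only when $\rho\vert_{\partial_i}$ is good. It fails in general: for $\Gamma=\mathbb Z$, $G=\mathrm{SL}(2,\mathbb C)$ and $\rho\vert_{\partial_i}$ trivial, the differential of $A\mapsto\mathrm{tr}(A)$ at $\mathrm{Id}$ vanishes on all of $\mathfrak{sl}(2,\mathbb C)$, whereas $B^1=0$. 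So the fiber-over-a-character Zariski tangent space can be strictly larger than the asserted kernel, and "working with the scheme structure" does not repair this — the scheme-theoretic fiber is exactly what produces the enlargement. The fix is to use the definition as stated: take the preimage $r^{-1}\bigl(\prod_i\mathcal O_i\bigr)$ where $\mathcal O_i\subset R(\partial_i\mathcal O^2,G)$ is the conjugation orbit of $\rho\vert_{\partial_i}$. Each $\mathcal O_i$ is a smooth locally closed subvariety with tangent space $B^1(\pi_1(\partial_i\mathcal O^2),\mathrm{Ad}\rho)$, so $T_\rho^{Zar}\,r^{-1}\bigl(\prod_i\mathcal O_i\bigr)=(dr_\rho)^{-1}\bigl(\bigoplus_i B^1(\partial_i)\bigr)=\ker\bigl(Z^1(\pi_1(\mathcal O^2),\mathrm{Ad}\rho)\to\bigoplus_i H^1(\pi_1(\partial_i\mathcal O^2),\mathrm{Ad}\rho)\bigr)$; since $B^1(\pi_1(\mathcal O^2),\mathrm{Ad}\rho)$ restricts into coboundaries, quotienting by the $G$-orbit (here $\rho$ good is used, via Theorem~\ref{Thm:Weil}) gives the claimed kernel in $H^1$. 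You should rework step one and step two along these lines; step three is then fine.
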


\subsection{The principal representation}
\label{subsection:principal}

Let $G$ be a simple complex Lie group, with Lie algebra $\mathfrak{g}$.
An  element in $\mathfrak{g}$ is called \emph{regular} if its centralizer 
has minimal dimension, that is the rank of $\mathfrak{g}$.
Given a regular nilpotent element, Jacobson-Morozov theorem provides a representation of 
Lie algebras
$\tau\colon\mathfrak{sl}(2,\mathbb C)\to\mathfrak{g}$ whose image contains the given regular 
nilpotent element. 
Such a representation is unique up to conjugacy.
Assume that $G$ is an adjoint group, namely that has trivial center.
Then the induced representation of Lie groups
$$
\tau\colon \mathrm{PSL}(2,\mathbb C)\to G
$$
is called the \emph{principal representation}.

\begin{Lemma}
\label{Lemma:PrincipalGood}
The image of the principal representation 
$\tau(\mathrm{PSL}(2,\mathbb C))$  
is irreducible and has trivial center
in $G$. 
\end{Lemma}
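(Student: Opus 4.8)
The plan is to unpack the two assertions of Lemma~\ref{Lemma:PrincipalGood} --- irreducibility of $\tau(\mathrm{PSL}(2,\mathbb C))$ and triviality of its centralizer in $G$ --- and reduce both to standard structural facts about the principal $\mathfrak{sl}_2$-triple. Write $\tau\colon\mathfrak{sl}(2,\mathbb C)\to\mathfrak{g}$ for the Lie-algebra principal representation, with standard triple $(e,h,f)$ whose nilpositive part $e$ is a regular nilpotent element of $\mathfrak{g}$ and whose semisimple element $h$ is the principal (or defining) semisimple element.

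First I would handle irreducibility. By definition, a subgroup is irreducible when it is not contained in any proper parabolic of $G$; equivalently, since $G$ is adjoint, it suffices that no proper parabolic subalgebra of $\mathfrak{g}$ is $\mathrm{Ad}_{\tau(\mathrm{PSL}(2,\mathbb C))}$-invariant. A parabolic subalgebra $\mathfrak{p}\subsetneq\mathfrak{g}$ is in particular a proper subspace stable under the one-parameter subgroup generated by $\tau(e)$, hence stable under $\mathrm{ad}(e)$. The key input is the classical fact (Kostant) that the regular nilpotent $e$ has a single Jordan block on each irreducible $\tau$-isotypic piece and, more to the point, that $\ker(\mathrm{ad}\, e)$ has dimension exactly $\operatorname{rank}\mathfrak{g}$ and consists of highest-weight vectors for the principal $\mathfrak{sl}_2$, all of strictly positive $h$-weight except for the trivial constraints. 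If $\mathfrak{p}$ is a proper $\mathrm{ad}(e)$-stable subspace that also contains a Cartan (any parabolic does), then decomposing $\mathfrak{g}$ into $\tau$-irreducible summands and using that $\mathfrak{p}$ must be a sum of subrepresentations once it is also $\mathrm{ad}(h)$- and $\mathrm{ad}(f)$-stable, one sees $\mathfrak{p}$ cannot be proper; the cleanest route is: a $\tau$-invariant parabolic $\mathfrak{p}$ has a $\tau$-invariant nilradical $\mathfrak{n}$, but $\mathfrak{n}$ would then be a proper ideal in $\mathfrak{p}\supseteq$ image of $\tau$, and the image of $\tau$ together with a regular $h$ generates a subalgebra whose centralizer of $h$-weight considerations forces $\mathfrak{n}=0$, i.e. $\mathfrak{p}=\mathfrak{g}$. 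I would phrase this as: the only $\tau$-submodules of $\mathfrak{g}$ that are subalgebras and contain $\mathrm{ad}(h)$-eigenspaces symmetrically are $0$ and $\mathfrak{g}$, because $h$ is regular and the $\tau$-module structure is multiplicity-constrained by Kostant's description of $\mathfrak{g}$ as a $\tau(\mathfrak{sl}_2)$-module (the exponents of $\mathfrak{g}$ appear, each once at top weight).

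Next the centralizer. Let $Z=Z_G(\tau(\mathrm{PSL}(2,\mathbb C)))$. Its Lie algebra is $\mathfrak{g}^{\tau(\mathrm{PSL}(2,\mathbb C))}=\mathfrak{g}^{\tau(\mathfrak{sl}_2)}$, the space of vectors killed by $\mathrm{ad}\,e$, $\mathrm{ad}\,h$, $\mathrm{ad}\,f$ simultaneously; but a vector killed by all of $e,h,f$ in particular lies in $\ker(\mathrm{ad}\,e)\cap\ker(\mathrm{ad}\,h)$, and by Kostant every element of $\ker(\mathrm{ad}\,e)$ has positive $h$-weight unless it is already central --- since $\mathfrak{g}$ has no center, $\mathfrak{g}^{\tau(\mathfrak{sl}_2)}=0$. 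Hence $Z$ is a finite group. To see it is trivial, I use that $Z$ is contained in the centralizer of the regular semisimple... wait, $h$ is not the issue; rather, $Z$ commutes with $\tau(\mathrm{PSL}(2,\mathbb C))$ which acts irreducibly in the adjoint sense just established, so by a Schur-type argument $\mathrm{Ad}(Z)$ consists of $G$-elements commuting with an irreducible subgroup; but $G$ adjoint has trivial center, and any $g\in Z$ would give, via conjugation, an automorphism of $\mathfrak{g}$ fixing $\tau(\mathfrak{sl}_2)$ pointwise and hence (as $\mathfrak{g}^{\tau}=0$ and the normalizer/centralizer of a principal $\mathfrak{sl}_2$ is trivial for adjoint $G$) $\mathrm{Ad}(g)=\mathrm{id}$, so $g\in\mathcal Z(G)=\{1\}$. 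The one genuinely non-formal ingredient here is Kostant's structure theorem for the principal three-dimensional subalgebra --- specifically that $\dim\mathfrak g^{\tau(\mathfrak{sl}_2)}=0$ for adjoint simple $\mathfrak g$ (equivalently, no exponent of $\mathfrak g$ equals $0$) --- and I expect that the main obstacle in writing this cleanly is the irreducibility claim: translating "not inside a proper parabolic" into a statement purely about $\mathrm{ad}(e)$ and $\mathrm{ad}(h)$ requires a little care, since parabolics are not $\mathrm{ad}(f)$-stable in general, so I would argue instead via the Borel $\mathfrak b = \ker(\mathrm{ad}\,e)\oplus\text{(positive part)}$ fixed by $\tau(e)$ and note that any $\tau(\mathrm{PSL}_2)$-fixed parabolic contains both a Borel and its opposite (conjugate the fixed-point condition by the Weyl element $\tau\!\begin{psmallmatrix}0&1\\-1&0\end{psmallmatrix}$), forcing it to be all of $G$.

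Alternatively, and perhaps more transparently for the write-up: embed the argument in $\mathrm{PSL}(2,\mathbb C)$-module language. As a $\tau(\mathrm{PSL}(2,\mathbb C))$-module, $\mathfrak g \cong \bigoplus_{i=1}^{\operatorname{rank}\mathfrak g} V_{2m_i}$ where $m_1,\dots,m_{\operatorname{rank}\mathfrak g}$ are the exponents of $\mathfrak g$ and $V_{2m}$ is the irreducible $\mathrm{PSL}_2$-module of dimension $2m+1$ (all $m_i\ge 1$ since $\mathfrak g$ is simple, which is exactly the vanishing of invariants). A proper parabolic subalgebra $\mathfrak p$ that is $\tau$-stable is in particular a $\tau(\mathrm{PSL}_2)$-submodule, hence a sub-sum of the $V_{2m_i}$; but $\mathfrak p$ contains its own Levi, which contains a Cartan $\mathfrak h$ of $\mathfrak g$, and a Cartan meets every $V_{2m_i}$ nontrivially (it is spanned by the zero-$h$-weight vectors, one in each $V_{2m_i}$), so $\mathfrak p$ must contain every summand, i.e. $\mathfrak p=\mathfrak g$ --- contradiction. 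This disposes of irreducibility. For the center: $\mathcal Z(Z_G(\tau))^\circ$ has Lie algebra inside $\mathfrak g^{\tau(\mathfrak{sl}_2)}=\bigoplus V_{2m_i}^{\mathrm{PSL}_2}=0$, so $Z_G(\tau)$ is finite, and its elements act trivially on $\mathfrak g$ (they lie in the kernel of $\mathrm{Ad}$ restricted to... one argues they centralize the $\mathfrak{sl}_2$ and hence, by the multiplicity-one decomposition, act as a scalar on each $V_{2m_i}$; compatibility with the Lie bracket forces that scalar to be $1$), so $Z_G(\tau)=\mathcal Z(G)$, which is trivial as $G$ is adjoint.
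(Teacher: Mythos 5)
The paper does not give its own argument for this lemma: it is recorded as a particular case of Lemma~2.8 of \cite{ALS}, with the remark that a direct Lie-algebra proof is also possible. Your proposal attempts that Lie-algebra alternative, and the two facts you reach for --- the Kostant decomposition $\mathfrak g\cong\bigoplus_\alpha V_{2d_\alpha}$ with every exponent $d_\alpha\geq1$, and the regularity of $h$ making $\ker\operatorname{ad}(h)$ a Cartan --- are the right ones. However, two steps as written do not go through. In the irreducibility part, the assertion that ``a Cartan meets every $V_{2m_i}$ nontrivially (it is spanned by the zero-$h$-weight vectors)'' is true only for the \emph{particular} Cartan $\mathfrak h_0=\ker\operatorname{ad}(h)$, not for an arbitrary Cartan subalgebra: already in $\mathfrak{sl}(3,\mathbb C)$, a generic Cartan is a $2$-plane in $\mathbb C^8$ that misses the $3$-dimensional summand $V_2$ entirely. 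You have not shown that $\mathfrak h_0$ itself (rather than ``some Cartan'') lies in $\mathfrak p$, so the conclusion does not follow. The robust route --- which your first paragraph gestures at without carrying out --- is the nilradical: a $\tau$-stable parabolic has a $\tau$-stable nilradical $\mathfrak n$ (a characteristic ideal, hence preserved by $\mathrm{Ad}\,\tau(\mathrm{PSL}(2,\mathbb C))$); all $\operatorname{ad}(h)$-eigenvalues on $\mathfrak g$ are even, so every irreducible $\mathfrak{sl}_2$-summand of $\mathfrak n$ is odd-dimensional and therefore contains a nonzero zero-weight vector; that vector lies in $\mathfrak h_0$ and is semisimple, contradicting that $\mathfrak n$ is a nilpotent ideal. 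Hence $\mathfrak n=0$ and $\mathfrak p=\mathfrak g$. (Your Weyl-element sketch has the same unaddressed issue: it needs the specific Borel $\bigoplus_{k\geq0}\mathfrak g_k$ to lie in $\mathfrak p$, which is not supplied by ``every parabolic contains a Borel.'')

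The centrality step also has a gap. You invoke a ``multiplicity-one decomposition,'' but $\mathfrak{so}(2m,\mathbb C)$ for $m$ even has \emph{repeated} exponents (for $D_4$ they are $1,3,3,5$; see Table~\ref{Table:exponentsclassical}), so Schur's lemma gives a $2\times2$ block, not a scalar, on the repeated isotypic piece, and your ``compatibility with the Lie bracket forces the scalar to be $1$'' does not even get started in that case. What you do establish correctly is that $\mathfrak g^{\tau(\mathfrak{sl}_2)}=0$ because every exponent is positive, hence $Z:=Z_G(\tau(\mathrm{PSL}(2,\mathbb C)))$ has trivial Lie algebra and is finite. To finish cleanly, note $Z\subseteq Z_G(\tau(e))$ for the regular nilpotent $\tau(e)$; by Kostant, $Z_G(\tau(e))=\mathcal Z(G)\cdot U$ with $U$ connected unipotent, and a unipotent group over $\mathbb C$ has no nontrivial finite subgroups, so the finite group $Z$ is contained in $\mathcal Z(G)$, which is trivial since $G$ is adjoint.
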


This lemma is a particular case of Lemma~2.8 in \cite{ALS}, for the complexification of the Hitchin component.
Alternatively, Lemma~\ref{Lemma:PrincipalGood} can also be proved using only Lie algebras.

By construction, 
the principal representation of the Lie algebra maps nontrivial elements of $\mathfrak{sl}(2,\mathbb C)$
to regular elements of $\mathfrak{g}$, i.e.~with centralizer the rank of $\mathfrak{g}$. Via the exponential map 
we have:

\begin{Remark}
\label{Remark:PrincipalRegular}
For $g \in  \mathrm{PSL}(2,\mathbb C)$ not elliptic nor trivial, $\tau(g)$ 
is regular in $G$ (the centralizer of $\tau(g)$ in $G$ has dimension $\mathrm{rank}(G)$).
 \end{Remark}

For $G= \mathrm{PSL}(n,\mathbb C)$ the principal representation is $\operatorname{Sym}^{n-1}$. 
When defined from $\mathrm{SL}(2,\mathbb C)$ to $\mathrm{SL}(n,\mathbb C)$, $\operatorname{Sym}^{n-1}$
 preserves a bilinear 
form of $\mathbb{C}^{n-1}$, that is symmetric for $n$ odd and skew-symmetric for $n$ even, and its restriction yields
the principal representations in $\mathrm{PSp}(2m,\mathbb C)$ and $\mathrm{PO}(2m+1,\mathbb C)$.

The principal representation restricts
to $\tau\colon \mathrm{PGL}(2,\mathbb R)\to G_{\mathbb R}$, for $ G_{\mathbb R}$ the split real form of $G$, perhaps not connected. 
As we deal with non-orientable 2-orbifolds, we consider both components of $\mathrm{PGL}(2,\mathbb R)$, hence we need to consider
two components of $ G_{\mathbb R}$.
For instance $\mathrm{PGL}(n,\mathbb R)$ contains two components, according to the sign of the determinant.
We also use the notation
$$
\operatorname{Sp}^{\pm}(2m)=\{A\in \mathrm{SL}(2m,\mathbb R) \mid A^tJA=\pm J  \}
$$
for $J$ an antisymmetric, non-degenerate, bilinear form; the sign $+$ or $-$ corresponds to one or the other component of the group.
The Hitchin component is the connected component of  $\mathcal R^{\mathrm{good}}(\pi_1(\mathcal O^2), G_{\mathbb R})$ 
that contains the composition of $\tau$ with the holonomy representation of any Fuchsian structure on $\mathcal O^2$.

 For the principal representation    $\tau\colon\mathrm{PSL}(2,\mathbb C)\to G$ we have a decomposition 
\begin{equation}
 \label{eqn:exponents}
\mathrm{Ad}\circ\tau= \bigoplus_{i=1}^r \operatorname{Sym}^{2 d_i} , 
\end{equation}
where $r=\operatorname{rank} G$, cf.~\cite{TY}.
In particular, 
\begin{equation}
 \label{eqn:dimgexp}
\dim G=\sum_{i=1}^r (2 d_i+1).
\end{equation}

\begin{Definition}
 The $d_1,\ldots ,d_r\in\mathbb N$
are called 
the \emph{exponents} of $G$.
\end{Definition}


For instance, the exponents of $\mathrm{PSL}(n,\mathbb{C})$ are $1,\ldots,n-1$, because
$$
\mathrm{Ad}\circ\operatorname{Sym}^{n-1}= \bigoplus_{i=1}^{n-1} \operatorname{Sym}^{2 i} .
$$
 The exponents of simple Lie algebras are detailed in Tables~\ref{Table:exponentsclassical}
 and~\ref{Table:exponents}, cf.~\cite{ALS}.

 \begin{table}
 \begin{center}
 \begin{tabular}{c l l l}
  Lie algebra & Exponents & Rank & Dimension \\
  \hline
 $\mathfrak{sl}(n,\mathbb C)$ & $1,2,\dotsc,n-1$  & $n-1$ & $n^2-1$\\
 $\mathfrak{sp}(2m,\mathbb C)$ & $1, 3, 5, \dotsc, 2m-1$ & $ m$ & $2m^2+m$ \\
 $\mathfrak{so}(2m+1,\mathbb C)$ & $1, 3, 5, \dotsc, 2m-1$ & $ m$ & $2m^2+m$  \\
 $\mathfrak{so}(2m,\mathbb C)$ & $1, 3, 5, \dotsc, 2m-3, m-1$ & $ m$ & $2m^2-m$ \\
 \hline
 \end{tabular}
 \end{center}  
  \caption{Exponents of simple classical Lie algebras} \label{Table:exponentsclassical}
\end{table}

 \begin{table}
 \begin{center}
 \begin{tabular}{c l l l}
  Lie algebra & Exponents & Rank & Dimension \\
  \hline
 $\mathfrak g_2$ & 1, 5  & 2 & 14\\
 $\mathfrak f_4$ & 1, 5, 7, 11 & 4 & 52 \\
 $\mathfrak e_6$ & 1, 4, 5, 7, 8, 11 & 6 & 78\\
 $\mathfrak e_7$ & 1,  5, 7, 9, 11, 13, 17 & 7 & 133\\
 $\mathfrak e_8$ & 1,  7, 11, 13, 17, 19, 23, 29 & 8 & 248\\
 \hline
 \end{tabular}
 \end{center}  
  \caption{Exponents of exceptional Lie algebras} \label{Table:exponents}
\end{table}
 

\section{Orbifold cohomology}
\label{Section:cohomology}

Group cohomology is useful to study dimensions of varieties of representations, 
by Weil's theorem (Theorem~\ref{Thm:Weil}).
 In this section we discuss  a combinatorial approach to cohomology. 
We are interested in representations in semi-simple complex algebraic groups, but this section applies also to their real forms, by replacing complex dimension by real dimension.
We focus in homology and cohomology with coefficients twisted by the adjoint of a representation, but 
the constructions and results can be easily adapted to coefficients twisted by other representations. 

\medskip

We recall some basic definitions of orbifolds \cite{BMP, ThurstonNotes}. An orbifold $\mathcal O$ is called:
\begin{itemize}
 \item \emph{good} if it has an orbifold cover that is a manifold;
 \item \emph{very good} it has an orbifold cover of finite index that is a manifold;
 \item \emph{aspherical} if its universal covering is a contractible manifold;
 \item \emph{hyperbolic/Euclidean/spherical} if it is the quotient
of hyperbolic space/Euclidean space/the round sphere by a discrete group of isometries.
 \end{itemize}

In a cell decomposition $K$ of an orbifold, we require 
that  isotropy groups or stabilizers are unchanged along open cells. For a cell $e$ in $K$
this isotropy group is denoted by $\mathrm{Stab}(e)$. The \emph{orbifold Euler characteristic} for a compact orbifold is defined as
\begin{equation}
\label{eqn:Euler}
\chi(\mathcal O)=\sum_{e\textrm{ cell of }K} \frac{1}{\vert \mathrm{Stab}(e) \vert}
(-1)^{\dim e}\in\mathbb Q. 
\end{equation}
This is well behaved under coverings: if $\mathcal O'\to\mathcal O$ is an orbifold covering of finite index $k$, then 
$\chi (\mathcal O')= k \chi (\mathcal O)$. See \cite{ThurstonNotes, BMP} for instance.

\subsection{Combinatorial homology and cohomology for orbifolds}

Let $\mathcal O^n$ be a compact $n$-dimensional good orbifold, possibly not orientable. 
Fix a CW-complex structure $K$ on $\mathcal O^n$. 
This means that $K$ is a CW-complex with the same underlying space 
as $\mathcal O^n$, so that  the branching locus of $\mathcal O^n$ is
a subcomplex of $K$. In particular, $K$it lifts to a CW-complex structure $\widetilde K$ 
of the universal covering $\widetilde {\mathcal O}^n$ of $ {\mathcal O}^n$, so that each cell
of $K$ lifts to a disjoint union of homeomorphic cells of $\widetilde K$ (perhaps with nontrivial
stabilizer).

The complex of chains on the universal covering is the free 
${\mathbb{Z}}$-module on the cells of $\widetilde K$, equipped with the usual boundary operator, and it
is denoted by $C_*(\widetilde K,{\mathbb{Z}})$. It has a (non-free) action of $\Gamma=\pi_1(\mathcal O^n)$
induced by deck transformations. 
The  \emph{twisted} chain and cochain complexes are defined as:
\begin{align}
 C_*(K,\mathrm{Ad}\rho)&= \mathfrak g \otimes_{\Gamma} C(\widetilde K,{\mathbb{Z}}), \label{eqn:chains} \\
 C^*(K,\mathrm{Ad}\rho)&= \hom_{\Gamma} ( C(\widetilde K,{\mathbb{Z}}), \mathfrak g) \label{eqn:cochains}.
\end{align}
The group  $\Gamma$ acts on $\mathfrak g$ via $\mathrm{Ad}\rho$ on the left in \eqref{eqn:chains},  and 
for the tensor product in \eqref{eqn:cochains} $\Gamma$ acts on $\mathfrak g$ on the right using inverses. 
Those are complexes and cocomplexes of finite-dimensional vector spaces, and the corresponding homology and cohomology groups are denoted 
respectively by
$$
H_*(K,\mathrm{Ad}\rho)\qquad\textrm{ and }\qquad H^*(K,\mathrm{Ad}\rho).
$$

\subsection{Twisted orbifold Euler characteristic}

For a compact manifold $M$ and a representation $\rho\colon\pi_1(M)\to G$, 
as $\pi_1(M)$ acts freely on the universal covering $\widetilde M$, we
have
$$
\chi(M)\, \dim ( G  )   = \sum_i (-1)^i\dim H_i(M, \mathrm{Ad}\rho) =\sum_i (-1)^i\dim H^i(M, \mathrm{Ad}\rho).
$$
To have a similar formula for an orbifold $\mathcal O$ we need to take into account that 
$\Gamma=\pi_1(\mathcal O)$  acts  non-freely on $\widetilde {\mathcal O}$; this motivates the 
definition of twisted orbifold Euler characteristic below, Definition~\ref{Def:OEC}.

Let $\mathcal{O}$ be compact oriented orbifold, very good, with a CW-structure $K$.
 Given a subgroup $\Gamma_0\subset\Gamma=\pi_1(\mathcal O)$, the space of \emph{invariants} is
\begin{equation}
\mathfrak g^{\rho(\Gamma_0)}= \{v\in\mathfrak g\mid \mathrm{Ad}_{\rho(g)}(v)=v,\ \forall g\in \Gamma_0\},
\end{equation}
and the quotient of \emph{coinvariants},
\begin{equation}
\mathfrak g_{\rho(\Gamma_0)}= \mathfrak g /\langle \mathrm{Ad}_{\rho(g)}(v)-v \mid v\in\mathfrak g,\  g\in \Gamma_0\rangle.
\end{equation}
As the Killing form on $\mathfrak{g}$ is nondegenerate ($G$ is semisimple) and $\mathrm{Ad}$-invariant we have
\begin{equation}
\label{eqn:InvOrtCoInv}
\langle \mathrm{Ad}_{\rho(g)}(v)-v \mid v\in\mathfrak g,\  g\in \Gamma_0\rangle= ( \mathfrak g^{\rho(\Gamma_0)} )^\perp
.\end{equation}
Therefore 
\begin{equation}\label{eqn:diminvcoinv}
\dim (\mathfrak g^{\rho(\Gamma_0)}) =\dim (\mathfrak g_{\rho(\Gamma_0)}).
\end{equation}

\begin{Definition}
\label{Def:OEC}
The \emph{orbifold Euler characteristic of $\mathcal O$ twisted} by $\mathrm{Ad}\rho$ is
 $$
\widetilde\chi(\mathcal{O},\mathrm{Ad}\rho)=\sum_{e\textrm{ cell of }K}(-1)^{\dim e} 
\dim \mathfrak{g}^{\rho(\mathrm{Stab}(\tilde e)) } \in\mathbb{Z}.
$$
\end{Definition}

Here $ \tilde e$ denotes any lift of $e$ to the universal covering of $\mathcal O$ and
$\mathrm{Stab}(\tilde e)\subset  \Gamma$ its stabilizer, whose conjugacy class in $\Gamma$ is independent of the choice of the lift.

It should not be confused with the usual orbifold Euler characteristic $\chi(\mathcal{O})$,
recalled in~\eqref{eqn:Euler}, that is a rational number,
whilst $\widetilde\chi(\mathcal{O},\mathrm{Ad}\rho)$ is always an integer.
Notice also that $\widetilde \chi$ is not multiplicative under coverings either;  its intended to be a tool
to compute dimensions of cohomology groups:

\begin{Proposition}
\label{Prop:Euler}
 For $\mathcal O$ and $\rho$ as above:
 $$
 \widetilde\chi(\mathcal{O},\mathrm{Ad}\rho)=\sum_i (-1)^i\dim H_i(\mathcal O, \mathrm{Ad}\rho) =\sum_i (-1)^i\dim H^i(\mathcal O, \mathrm{Ad}\rho).
 $$
 \end{Proposition}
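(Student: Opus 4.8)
The plan is to compute both alternating sums cell by cell and reduce everything to the pointwise statement that, for each cell $e$ of $K$ with a chosen lift $\tilde e$ to $\widetilde{\mathcal O}$ and stabilizer $H_e=\mathrm{Stab}(\tilde e)\subset\Gamma$, the summand contributed by the orbit of $\tilde e$ to the chain group $C_*(K,\mathrm{Ad}\rho)=\mathfrak g\otimes_\Gamma C(\widetilde K,\mathbb Z)$ is exactly $\mathfrak g_{\rho(H_e)}$, and to the cochain group $C^*(K,\mathrm{Ad}\rho)=\hom_\Gamma(C(\widetilde K,\mathbb Z),\mathfrak g)$ is exactly $\mathfrak g^{\rho(H_e)}$. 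Indeed, the free $\mathbb Z$-module on the $\Gamma$-orbit of $\tilde e$ is $\mathbb Z[\Gamma/H_e]$ as a $\Gamma$-module, so $\mathfrak g\otimes_\Gamma\mathbb Z[\Gamma/H_e]\cong\mathfrak g\otimes_{H_e}\mathbb Z=\mathfrak g_{\rho(H_e)}$ and $\hom_\Gamma(\mathbb Z[\Gamma/H_e],\mathfrak g)\cong\hom_{H_e}(\mathbb Z,\mathfrak g)=\mathfrak g^{\rho(H_e)}$; this is just Shapiro-type bookkeeping for the induced/coinduced module, done cell-wise. (Here I use that $K$ has finitely many cells and that each cell has a well-defined stabilizer along its interior, from the CW-structure on the orbifold, and that $\mathcal O$ being very good makes $\widetilde{\mathcal O}$ a manifold so that these $\Gamma$-CW considerations are legitimate.)

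Next I would take Euler characteristics of the (finite-dimensional) chain and cochain complexes. Since the Euler characteristic of a bounded complex of finite-dimensional vector spaces equals the alternating sum of the dimensions of its terms and also equals the alternating sum of the dimensions of its (co)homology, I get
$$
\sum_i(-1)^i\dim H_i(\mathcal O,\mathrm{Ad}\rho)=\sum_i(-1)^i\dim C_i(K,\mathrm{Ad}\rho)=\sum_{e}(-1)^{\dim e}\dim\mathfrak g_{\rho(\mathrm{Stab}(\tilde e))},
$$
and likewise
$$
\sum_i(-1)^i\dim H^i(\mathcal O,\mathrm{Ad}\rho)=\sum_i(-1)^i\dim C^i(K,\mathrm{Ad}\rho)=\sum_{e}(-1)^{\dim e}\dim\mathfrak g^{\rho(\mathrm{Stab}(\tilde e))}.
$$
Finally I invoke the identity~\eqref{eqn:diminvcoinv}, $\dim\mathfrak g^{\rho(\Gamma_0)}=\dim\mathfrak g_{\rho(\Gamma_0)}$, which holds because the Killing form is nondegenerate and $\mathrm{Ad}$-invariant (equation~\eqref{eqn:InvOrtCoInv}), applied with $\Gamma_0=\mathrm{Stab}(\tilde e)$ for each cell. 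This matches both sums with $\widetilde\chi(\mathcal O,\mathrm{Ad}\rho)=\sum_e(-1)^{\dim e}\dim\mathfrak g^{\rho(\mathrm{Stab}(\tilde e))}$ of Definition~\ref{Def:OEC}, and also shows the chain and cochain complexes have equal Euler characteristic, finishing the proof.

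I expect the only subtle point — the "main obstacle" such as it is — to be making the cell-wise identification of $C_*$ and $C^*$ clean: one must check that the stabilizer of a cell acts on the relevant summand of $\mathfrak g$ precisely through $\mathrm{Ad}\rho$ restricted to $\mathrm{Stab}(\tilde e)$ (no extra twist from the deck action on the cell, since the CW-structure is required to have constant isotropy on open cells and the cell itself is fixed pointwise enough for the coefficient action to be the only thing that matters), and that the right-module convention using inverses in~\eqref{eqn:cochains} really produces invariants rather than coinvariants. Once the conventions are pinned down, everything else is the standard fact that $\chi$ of a finite complex can be read off either from terms or from homology, plus the duality~\eqref{eqn:diminvcoinv}. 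I would also remark that this argument simultaneously yields $\sum_i(-1)^i\dim H_i=\sum_i(-1)^i\dim H^i$ without invoking Poincaré duality, which is convenient since $\mathcal O$ need not be orientable.
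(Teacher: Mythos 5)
Your proof is correct and follows essentially the same route as the paper: identify each orbit summand of the chain/cochain complex with coinvariants/invariants of the stabilizer (the paper's \eqref{eqn:ChainDec} and \eqref{eqn:ChainStab}, which you phrase as Shapiro-type bookkeeping for $\mathbb Z[\Gamma/H_e]$), use nondegeneracy of the Killing form via \eqref{eqn:diminvcoinv} to equate the two dimensions, and then read off the Euler characteristic from the complex rather than its homology. The only difference is cosmetic notation.
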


\begin{proof}
We compute  the dimension of  $C_*(K,  \mathrm{Ad}\rho)$ and $ C^*(K,\mathrm{Ad}  \rho)$ as $\mathbb{C}$-vector spaces.
We aim to show that, for $0\leq i\leq \dim\mathcal{O}$,
\begin{equation}
 \label{eqn:dimC}
\dim C_i(K,  \mathrm{Ad}\rho)= \dim C^i(K,\mathrm{Ad}  \rho)= 
\sum_{j=1}^{k_i} \dim  \mathfrak{g}^{ \rho(\mathrm{Stab}(\tilde e^i_j)) }
 \end{equation}
where $\{ e^i_1,\ldots, e^i_{k_i} \} $ are the $i$-cells of $K$ and 
 $\{ \tilde e^i_1,\ldots, \tilde e^i_{k_i} \} $ a choice of lifts in the universal covering.
Equation~\eqref{eqn:dimC} will imply that
 \begin{equation}
 \widetilde\chi(\mathcal{O},\mathrm{Ad}\rho)=\sum_i (-1)^i\dim C_i(K, \mathrm{Ad}\rho) =\sum_i (-1)^i\dim C^i(K, \mathrm{Ad}\rho)
  \end{equation}
and then the proposition will follow from standard arguments in homological algebra.

To prove~\eqref{eqn:dimC}, 
use the decomposition as ${\mathbb{Z}}[\Gamma]$-module of the chain complex on $\widetilde K$:
\begin{equation}
 \label{eqn:ChainDec}
C_i(\widetilde K, \mathbb Z)=\bigoplus_{j=1}^{k_i} {\mathbb{Z}}[\Gamma]\tilde e^i_j ,
\end{equation}
because the $\Gamma$-orbits of the lifts  $\{ \tilde e^i_1,\ldots, \tilde e^i_{k_i} \} $  give a partition of the $i$-cells of $\widetilde K$.
Then we apply the natural isomorphisms of $\mathbb C$-vector spaces,
for each cell $\tilde{e}$ of $\widetilde{K}$:
\begin{equation}
\label{eqn:ChainStab}
\begin{array}{rcl}
 \hom_\Gamma( {\mathbb{Z}}[\Gamma]\tilde e, \mathfrak g) & \cong &  
 \mathfrak g^{\rho(\mathrm{Stab}(\tilde e)} \\
 \theta & \mapsto & \theta(\tilde e)
\end{array}
\qquad
\begin{array}{rcl}
 \mathfrak g \otimes_\Gamma {\mathbb{Z}}[\Gamma]\tilde e & \cong & 
 \mathfrak g_{\rho(\mathrm{Stab}(\tilde e))} \\
 v\otimes \tilde e & \mapsto & v
\end{array}
\end{equation}
whose proof is straightforward and use Equality~\eqref{eqn:diminvcoinv}.
\end{proof}

For 2-dimensional orientable orbifolds, Proposition~\ref{Prop:Euler} is essentially a formula computed by Andr\'e Weil in \cite[Sections~6 and 7]{Weil}.

\subsection{Regular Coverings}
Let $\mathcal O_0\to \mathcal O$ be a finite regular covering. Namely, 
$\Gamma_0= \pi_1(\mathcal O_0)$ is a normal subgroup of
$\Gamma= \pi_1(\mathcal O)$ of finite index. Though $\mathcal O$ is very good, we do not require $ \mathcal O_0$ 
to be a manifold in this subsection (for instance, $\mathcal{O}_0$ can be the orientation covering). 

The \emph{Galois group}, or \emph{group of deck transformations},
of the covering is $\Gamma/\Gamma_0$. It acts naturally on the chain and cochain complexes, 
$C_*(K_0, \mathrm{Ad}\rho)$ and  $C^*(K_0, \mathrm{Ad}\rho)$. 
Namely, any $\gamma\in\Gamma$ maps a chain 
$m\otimes c\in C_*(K_0, \mathrm{Ad}\rho)$ 
to $m\cdot\gamma^{-1}\otimes \gamma c= \mathrm{Ad}\rho(\gamma)(m)\otimes \gamma c$, cf.~\cite{Memo}.
The action of $\Gamma_0$ is trivial by construction and therefore we have an action of $\Gamma/\Gamma_0$. 
Similarly, $ \gamma\in\Gamma$ maps 
a cochain 
$\theta\in C^*(K_0, \mathrm{Ad}\rho)$ to $\mathrm{Ad}(\rho(\gamma))\circ\theta\circ\gamma^{-1}$, 
which again induces an action of 
the Galois group  $\Gamma/\Gamma_0$.
The subspace of elements \emph{fixed} by this action in homology and cohomology are denoted respectively
by 
$$H_*(\mathcal O_0,\mathrm{Ad}\rho)^{ \Gamma/\Gamma_0 }
\qquad\textrm{ and }\qquad
H^*(\mathcal O_0,\mathrm{Ad}\rho)^{ \Gamma/\Gamma_0 }.
$$

\begin{Proposition}
\label{prop:regularcovering}
The map $\pi\colon\mathcal O_ 0\to\mathcal O$ induces an
epimorphism
$
\pi_*\colon H_*(\mathcal O_0,\mathrm{Ad}\rho)\twoheadrightarrow H_*(\mathcal O,\mathrm{Ad}\rho)
$
 that restricts to an isomorphism $$H_*(\mathcal O_0,\mathrm{Ad}\rho)^{ \Gamma/\Gamma_0 }
 \cong H_*(\mathcal O,\mathrm{Ad}\rho).$$
 Similarly, it induces a monomorphism
 $
\pi^*\colon H^*(\mathcal O,\mathrm{Ad}\rho) \hookrightarrow H^*(\mathcal O_0,\mathrm{Ad}\rho)
$
that yields an isomorphism 
$$ 
H^*(\mathcal O,\mathrm{Ad}\rho) \cong H^*(\mathcal O_0,\mathrm{Ad}\rho) ^{ \Gamma/\Gamma_0 }.
$$
\end{Proposition}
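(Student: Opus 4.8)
The plan is to work at the chain level and then pass to (co)homology. Since $\mathcal{O}_0 \to \mathcal{O}$ is a finite regular covering with Galois group $Q = \Gamma/\Gamma_0$, I would choose the CW-structure $K$ on $\mathcal{O}$ and lift it to a CW-structure $K_0$ on $\mathcal{O}_0$, which in turn lifts to the CW-structure $\widetilde{K}$ on the common universal cover $\widetilde{\mathcal{O}}$. The key observation is that $C_*(\widetilde{K}, \mathbb{Z})$ is simultaneously a $\mathbb{Z}[\Gamma]$-module and a $\mathbb{Z}[\Gamma_0]$-module, and forming $\mathfrak{g} \otimes_{\Gamma_0} C_*(\widetilde{K},\mathbb{Z})$ versus $\mathfrak{g} \otimes_\Gamma C_*(\widetilde{K},\mathbb{Z})$ is exactly the difference between $C_*(K_0, \mathrm{Ad}\rho)$ and $C_*(K, \mathrm{Ad}\rho)$. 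First I would make precise the $Q$-action on $C_*(K_0, \mathrm{Ad}\rho)$ described in the excerpt (via $m \otimes c \mapsto \mathrm{Ad}\rho(\gamma)(m) \otimes \gamma c$ for a coset representative $\gamma$, which is well-defined because the $\Gamma_0$-action is absorbed into the tensor product) and check that the projection $\pi_*\colon C_*(K_0, \mathrm{Ad}\rho) \to C_*(K, \mathrm{Ad}\rho)$ is $Q$-invariant and surjective.

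Next I would identify the fixed subcomplex. The claim to establish at the chain level is the isomorphism
$$
C_*(K_0, \mathrm{Ad}\rho)^{Q} \;\cong\; C_*(K, \mathrm{Ad}\rho),
$$
and dually $C^*(K, \mathrm{Ad}\rho) \cong C^*(K_0, \mathrm{Ad}\rho)^{Q}$. This is a standard transfer/averaging argument: the map $\frac{1}{|Q|}\sum_{q \in Q} q$ is a $\mathbb{C}$-linear projection of $C_*(K_0, \mathrm{Ad}\rho)$ onto its $Q$-fixed subspace (here we use that we are over $\mathbb{C}$, or at least over a ring where $|Q|$ is invertible, so Maschke-type averaging is available), and one checks it agrees with a section of $\pi_*$. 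Concretely, using the decomposition of $C_*(\widetilde{K}, \mathbb{Z})$ into $\mathbb{Z}[\Gamma]$-orbits of cell-lifts as in \eqref{eqn:ChainDec}, together with the identifications \eqref{eqn:ChainStab}, both $C_*(K, \mathrm{Ad}\rho)$ and $C_*(K_0, \mathrm{Ad}\rho)^{Q}$ decompose as a direct sum over cells $e$ of $K$ of copies of $\mathfrak{g}_{\rho(\mathrm{Stab}(\tilde e))}$, compatibly with boundary maps, so the isomorphism is just bookkeeping. The cohomological statement is the same argument with $\hom_{\Gamma_0}$ in place of $\otimes_{\Gamma_0}$.

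Finally, to go from chains to homology I would invoke that $\mathbb{C}$ has characteristic zero, so taking $Q$-invariants (equivalently, applying the averaging idempotent) is exact: it commutes with homology. Hence $H_*(K_0, \mathrm{Ad}\rho)^{Q} = H_*\big(C_*(K_0, \mathrm{Ad}\rho)^{Q}\big) = H_*(K, \mathrm{Ad}\rho)$, and $\pi_*$ on homology is the composition $H_*(K_0, \mathrm{Ad}\rho) \twoheadrightarrow H_*(K_0, \mathrm{Ad}\rho)^{Q} \cong H_*(K, \mathrm{Ad}\rho)$, which exhibits it as an epimorphism restricting to the claimed isomorphism on the fixed subspace. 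Dually, $\pi^*$ is the inclusion $H^*(K, \mathrm{Ad}\rho) \cong H^*(K_0, \mathrm{Ad}\rho)^{Q} \hookrightarrow H^*(K_0, \mathrm{Ad}\rho)$, a monomorphism with image the fixed subspace. The main obstacle — really the only subtle point — is verifying that the abstractly defined $Q$-action on $C_*(K_0, \mathrm{Ad}\rho)$ matches, cell by cell and compatibly with the boundary operator, the action on the cell-lift decomposition, so that the averaging operator genuinely lands on $\mathrm{im}(\pi^*)$ and splits $\pi_*$; once the two descriptions of the action are reconciled, everything reduces to the exactness of averaging in characteristic zero, which is routine.
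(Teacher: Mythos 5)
Your proof is correct and takes essentially the same route as the paper. The paper constructs a section of $\pi_*$ by lifting and averaging over $\Gamma/\Gamma_0$; this section is precisely the inverse of the chain-level isomorphism $C_*(K_0,\mathrm{Ad}\rho)^{\Gamma/\Gamma_0}\cong C_*(K,\mathrm{Ad}\rho)$ that you establish, and both arguments rest on the same averaging idempotent (invertibility of $\vert\Gamma/\Gamma_0\vert$ in $\mathbb{C}$) and the resulting exactness of taking $\Gamma/\Gamma_0$-invariants.
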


\begin{proof}
The argument is standard and we just outline it. 
 The proof is based in constructing a section to the maps induced by the projection $\pi$.
The section at the chain and cochain level consists in taking an element of the inverse image of the map 
induced by $\pi$ and averaging by the action of $\Gamma/ \Gamma_0$.  One can check
that the  section for chains is a well defined chain morphism, induces sections in homology and has the required property, as well as for
sections of cochains and cohomology.
\end{proof}

The following proposition summarizes the main properties we need about orbifold cohomology:
 
\begin{Proposition}
\label{Prop:duality}
 Let $\mathcal O$ be a compact very good orbifold and $\rho$ a representation of 
 $\Gamma =\pi_1(\mathcal O)$ in $G$. The following hold:
 \begin{enumerate}[(a)]
  \item $H^i(\mathcal O,\mathrm{Ad}\rho)$ and $H_i(\mathcal O,\mathrm{Ad}\rho)  $
  are dual, via the Kronecker pairing.
  \item If  $\mathcal O$ is \emph{orientable}, then the cup product induces a perfect pairing
   $$H^i(\mathcal O,\mathrm{Ad}\rho)\times H^{\dim\mathcal O-i}(\mathcal O,\partial \mathcal O,\mathrm{Ad}\rho) \to   \mathbb C.$$
  \item There is a natural isomorphism $H^1(\Gamma, \mathrm{Ad}\rho)\cong H^1(\mathcal O,\mathrm{Ad}\rho)$.
  \item If $\mathcal O$ is aspherical (its universal covering is a contractible manifold), then there is a natural isomorphism $H^*(\Gamma, \mathrm{Ad}\rho)\cong H^*(\mathcal O,\mathrm{Ad}\rho)$.
 \end{enumerate}
\end{Proposition}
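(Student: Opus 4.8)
The plan is to assemble each part from standard facts about twisted (co)homology of CW-complexes together with the combinatorial setup of the previous subsections. For part~(a), the Kronecker pairing $C^i(K,\mathrm{Ad}\rho)\times C_i(K,\mathrm{Ad}\rho)\to\mathbb{C}$ is given cell-by-cell by the Killing form $\mathcal{B}$, using the identifications in~\eqref{eqn:ChainStab}: over each cell $\tilde e$ we pair $\mathfrak{g}^{\rho(\mathrm{Stab}(\tilde e))}$ with $\mathfrak{g}_{\rho(\mathrm{Stab}(\tilde e))}$, and~\eqref{eqn:InvOrtCoInv} says these are dual under $\mathcal{B}$ (the invariants are the orthogonal complement of the span of the coinvariant relations). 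This pairing is compatible with boundary and coboundary up to sign, so it descends to a perfect pairing on (co)homology because we are over a field; I would spell out the adjunction $\langle\delta\theta,c\rangle=\pm\langle\theta,\partial c\rangle$ and invoke the universal coefficient theorem over $\mathbb{C}$.

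For part~(b), I would use Poincar\'e--Lefschetz duality for the orbifold. Since $\mathcal{O}$ is orientable and very good, one may either pass to a finite manifold cover and use Proposition~\ref{prop:regularcovering} to transfer duality down to the $\Gamma/\Gamma_0$-invariants (noting the cup product and the fundamental class are equivariant), or invoke duality directly at the level of the chain complex via a dual cell decomposition, with the $\mathrm{Ad}$-invariance of $\mathcal{B}$ ensuring the cap product with the fundamental class is a chain map; combined with part~(a) this gives the stated perfect pairing between $H^i(\mathcal{O},\mathrm{Ad}\rho)$ and $H^{\dim\mathcal{O}-i}(\mathcal{O},\partial\mathcal{O},\mathrm{Ad}\rho)$. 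Part~(c) is the standard identification of group cohomology in degree one with the cohomology of the space: $H^1(\Gamma,\mathrm{Ad}\rho)$ depends only on the $2$-skeleton of any $K(\Gamma,1)$, and since $\mathcal{O}$ and a presentation complex for $\Gamma=\pi_1(\mathcal{O})$ agree through dimension~$2$ (the CW-structure $K$ realizes the generators and relations), the crossed-morphism description of $Z^1$ and $B^1$ matches the cochain description in~\eqref{eqn:cochains}. Part~(d) is then immediate: if $\widetilde{\mathcal{O}}$ is contractible then $\mathcal{O}$ is itself a $K(\Gamma,1)$ in the orbifold sense, and the cochain complex $C^*(K,\mathrm{Ad}\rho)=\hom_\Gamma(C_*(\widetilde K,\mathbb{Z}),\mathfrak{g})$ computes $H^*(\Gamma,\mathrm{Ad}\rho)$ by definition, since $C_*(\widetilde K,\mathbb{Z})$ is a free $\mathbb{Z}[\Gamma]$-resolution of $\mathbb{Z}$.

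The only genuinely delicate point is part~(b): orbifold Poincar\'e duality with nontrivial isotropy is not quite the manifold statement, and one must be careful that the twisted coefficients $\mathrm{Ad}\rho$ (rather than, say, an orientation-twisted local system) are exactly what makes the cap-product argument work, which is precisely where $\mathrm{Ad}$-invariance of the Killing form enters. I expect the cleanest route is the covering-space argument: take a finite manifold cover $\mathcal{O}_0\to\mathcal{O}$ (which exists as $\mathcal{O}$ is very good and orientable, so $\mathcal{O}_0$ may be chosen orientable too), apply classical Poincar\'e--Lefschetz duality with $\mathrm{Ad}\rho$-coefficients on $\mathcal{O}_0$, observe that the duality isomorphism is $\Gamma/\Gamma_0$-equivariant (the action on $\mathfrak{g}$ is via $\mathrm{Ad}\rho$ on both sides and the fundamental class is fixed), and then take $\Gamma/\Gamma_0$-invariants using Proposition~\ref{prop:regularcovering} together with part~(a). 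All other parts are bookkeeping with the definitions already set up in this section.
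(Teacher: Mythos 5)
Your treatment of (a) matches the paper (cell-by-cell pairing via the Killing form, nondegeneracy at chain level from~\eqref{eqn:ChainStab} and~\eqref{eqn:InvOrtCoInv}), and your preferred route for (b) — pass to a finite manifold cover, apply Poincar\'e--Lefschetz duality there, observe $\Gamma/\Gamma_0$-equivariance of the cap product with the fundamental class (using orientability of $\mathcal O$ so the top class is fixed), and take invariants via Proposition~\ref{prop:regularcovering} — is exactly what the paper does.

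Parts (c) and (d), however, contain a genuine gap. In (d) you write that $C_*(\widetilde K,\mathbb Z)$ is a free $\mathbb Z[\Gamma]$-resolution of $\mathbb Z$; this is false whenever $\mathcal O$ has nontrivial isotropy. By~\eqref{eqn:ChainDec}, $C_i(\widetilde K,\mathbb Z)\cong\bigoplus_j \mathbb Z[\Gamma]\tilde e^i_j$, and $\mathbb Z[\Gamma]\tilde e\cong\mathbb Z[\Gamma/\mathrm{Stab}(\tilde e)]$, which is not a free $\mathbb Z[\Gamma]$-module when $\mathrm{Stab}(\tilde e)\neq 1$. The same problem undermines your argument for (c): the orbifold cochain group $C^i(K,\mathrm{Ad}\rho)$ is a direct sum of invariant subspaces $\mathfrak g^{\rho(\mathrm{Stab}(\tilde e))}$ (see~\eqref{eqn:dimC}), not of copies of $\mathfrak g$, so the $2$-skeleton of $K$ is not literally a presentation complex with full coefficients, and the identification of $C^{\leq 2}(K,\mathrm{Ad}\rho)$ with group cochains does not follow by inspection. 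Two repairs are available. One is the paper's: pass to a finite regular \emph{manifold} cover $\mathcal O_0\to\mathcal O$, where the stabilizers are trivial so the classical $2$-skeleton argument does apply, obtain $H^1(\mathcal O_0,\mathrm{Ad}\rho)\cong H^1(X_0,\mathrm{Ad}\rho)$ for $X_0$ the corresponding cover of a $K(\Gamma,1)$, and then descend via Proposition~\ref{prop:regularcovering} (and for (d), note that $\mathcal O_0$ is itself a $K(\Gamma_0,1)$, so the identification holds in all degrees). The other repair, which you do not make, is to observe that over $\mathbb C$ the modules $\mathbb C[\Gamma/H]$ with $H$ finite are \emph{projective} over $\mathbb C[\Gamma]$ (averaging gives a splitting of $\mathbb C[\Gamma]\twoheadrightarrow\mathbb C[\Gamma/H]$), so $C_*(\widetilde K,\mathbb C)$ is a projective resolution of $\mathbb C$, which suffices. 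Either way, the missing step must be supplied; as written, your (c) and (d) assert something that fails for genuine orbifolds.
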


\begin{proof}
 For assertion (a), we use the Kronecker pairing between chains and cochains via the Killing form~$\mathcal B$:
$$
\begin{array}{crl}
 C_i(K,\mathrm{Ad}\rho)\times C^i(K,\mathrm{Ad}\rho) & \to  & \mathbb{C}\\
 (m\otimes c, \theta) & \mapsto & \mathcal{B}(m, \theta(c))
\end{array}
$$
One checks that it is well defined and it is not degenerate at the level of chains,
using  \eqref{eqn:ChainDec}, \eqref{eqn:ChainStab}, and \eqref{eqn:InvOrtCoInv}.
Hence it
induces a perfect pairing  between homology and cohomology.

For (b), using Proposition~\ref{prop:regularcovering}, the strategy is to use the invariance by $\Gamma/\Gamma_0$ of the corresponding properties
for a finite regular covering $\mathcal O_0\to \mathcal O$ that is a manifold.
More precisely,  using that $\mathcal O_0$ is a manifold,  the cup product defines a perfect pairing
$$
H^i(\mathcal O_0, \mathrm{Ad}\rho)\times H^{\dim\mathcal O_0- i}(\mathcal O_0,
\partial\mathcal O_0,\mathrm{Ad}\rho)\to H^{ \dim\mathcal O_0}(\mathcal O_0,\partial\mathcal O_0,\mathbb{C})\cong
\mathbb{C}.
$$
This is compatible with the action of $\Gamma/\Gamma_0$, therefore the assertion follows from
Proposition~\ref{prop:regularcovering}. Notice that orientability of $\mathcal{O}$ is relevant for saying that the action of the Galois group 
$ \Gamma/\Gamma_0$ on  $H^{ \dim\mathcal O_0}(\mathcal O_0,\partial\mathcal O_0,\mathbb{C})\cong \mathbb{C}$ is trivial.

Next we prove (c).
Chose $X$ a $K(\Gamma,1)$, namely a  CW-complex with $\pi_1(X)\cong \Gamma$ and trivial higher homotopy groups. 
Thus  there is a natural isomorphism
$H^*(X,\mathrm{Ad}\rho)\cong H^*(\Gamma,\mathrm{Ad}\rho)$. Furthermore, 
the covering corresponding to $\Gamma_0 <\Gamma$,
$X_0\to X$,  is a $K(\Gamma_0,1)$; in particular 
$H^*(X_0,\mathrm{Ad}\rho)\cong H^*(\Gamma_0,\mathrm{Ad}\rho)$. As $\mathcal O_0$ is a manifold,
a   $K(\Gamma_0,1)$ 
can be constructed from the 2-skeleton of $\mathcal O_0$ by adding cells of dimension $\geq 3$,
therefore: 
$$
H^1(\mathcal O_0,\mathrm{Ad}\rho)\cong H^1(X_0,\mathrm{Ad}\rho).
$$
Thus, by Proposition~\ref{prop:regularcovering} we have natural isomorphisms
$$
H^1(\mathcal O, \mathrm{Ad}\rho )\cong H^1(\mathcal O_0,\mathrm{Ad}\rho)^{\Gamma/\Gamma_0}\cong
H^1(X_0,\mathrm{Ad}\rho)^{\Gamma/\Gamma_0}\cong H^1(X,\mathrm{Ad}\rho),
$$
and $H^1(X,\mathrm{Ad}\rho)\cong H^1(\Gamma,\mathrm{Ad}\rho)$ because $X$ is a $K(\Gamma,1)$.

Finally, we prove (d). In the proof of (c), as ${\mathcal O}_0$ is already a $K(\Gamma_0,1)$,
and therefore homotopy
equivalent to $X_0$, hence 
$$
H^*(\mathcal O_0,\mathrm{Ad}\rho)\cong H^*(X_0,\mathrm{Ad}\rho).
$$
and the conclusion holds for any degree in cohomology. 
%
\end{proof}

From the duality in Proposition~\ref{Prop:duality} (b) we get:

\begin{Corollary}
\label{Cor:even}
When $\mathcal O^n$ is closed and orientable, of dimension $n$,
for $n$ odd $\widetilde \chi(\mathcal O^n,\mathrm{Ad}\rho )=0$, and
for $n$ even $\widetilde \chi(\mathcal O^n,\mathrm{Ad}\rho )$ is also even.
\end{Corollary}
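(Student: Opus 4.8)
The plan is to use the Poincaré--Lefschetz duality from Proposition~\ref{Prop:duality}(b) to pair up the cohomology groups of complementary degree, and then read off the parity of the twisted Euler characteristic from the surviving middle-degree term. Since $\mathcal O^n$ is closed, $\partial\mathcal O^n=\emptyset$, so the pairing becomes a perfect pairing $H^i(\mathcal O^n,\mathrm{Ad}\rho)\times H^{n-i}(\mathcal O^n,\mathrm{Ad}\rho)\to\mathbb C$, giving $\dim H^i(\mathcal O^n,\mathrm{Ad}\rho)=\dim H^{n-i}(\mathcal O^n,\mathrm{Ad}\rho)$ for all $i$. By Proposition~\ref{Prop:Euler}, $\widetilde\chi(\mathcal O^n,\mathrm{Ad}\rho)=\sum_i(-1)^i\dim H^i(\mathcal O^n,\mathrm{Ad}\rho)$.

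First I would treat $n$ odd. Then there is no self-dual degree: the involution $i\mapsto n-i$ on $\{0,\dots,n\}$ is fixed-point-free, and it pairs an even index with an odd index (since $n$ is odd, $i$ and $n-i$ have opposite parity). Group the terms of $\sum_i(-1)^i\dim H^i$ into pairs $\{i,n-i\}$; within each pair the two dimensions are equal by duality while the signs $(-1)^i$ and $(-1)^{n-i}$ are opposite, so each pair contributes $0$. Hence $\widetilde\chi(\mathcal O^n,\mathrm{Ad}\rho)=0$.

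Next, for $n$ even, write $n=2k$. Now the pairs $\{i,n-i\}$ with $i\neq k$ each contribute $2(-1)^i\dim H^i(\mathcal O^n,\mathrm{Ad}\rho)$ — an even integer — since this time $i$ and $n-i$ have the \emph{same} parity, so $(-1)^i=(-1)^{n-i}$ and the dimensions agree. The only unpaired term is $i=k$, contributing $(-1)^k\dim H^k(\mathcal O^n,\mathrm{Ad}\rho)$; its parity equals that of $\dim H^k(\mathcal O^n,\mathrm{Ad}\rho)$. The middle pairing $H^k(\mathcal O^n,\mathrm{Ad}\rho)\times H^k(\mathcal O^n,\mathrm{Ad}\rho)\to\mathbb C$ is perfect, and I claim it is skew-symmetric, which forces $\dim H^k$ to be even. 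Assembling, $\widetilde\chi(\mathcal O^n,\mathrm{Ad}\rho)$ is a sum of even terms plus an even term, hence even.

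The main obstacle is justifying that the middle cup-product pairing on $H^k(\mathcal O^{2k},\mathrm{Ad}\rho)$ is skew-symmetric (alternating), so that a perfect such pairing on a finite-dimensional space forces even dimension. The graded-commutativity of cup product gives, for $\alpha,\beta\in H^k$, the relation $\alpha\smile\beta=(-1)^{k^2}\,\sigma(\beta\smile\alpha)$, where the sign $(-1)^{k^2}$ comes from the degrees and $\sigma$ is the flip on the coefficient pairing induced by the Killing form $\mathcal B$, which is symmetric and $\mathrm{Ad}$-invariant. When $k$ is odd, $(-1)^{k^2}=-1$ and the pairing is skew-symmetric on the nose, so $\dim H^k$ is even. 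When $k$ is even the naive sign is $+1$; here one uses that the cup product is being taken in $H^{2k}(\mathcal O^{2k},\mathcal B)\cong\mathbb C$ via a form pairing $\mathfrak g\otimes\mathfrak g\to\mathbb C$ that is the Killing form, and for the \emph{adjoint} coefficients the relevant pairing $H^k\times H^k\to\mathbb C$ is again alternating because of the Lie-algebra structure — the precise check is the analogue, in orbifold cohomology, of the classical fact (Goldman) that the cup-product form on $H^1$ of a surface with $\mathrm{Ad}\rho$ coefficients is symplectic; here it is carried out for $n$ even by passing to a finite manifold cover $\mathcal O_0$ where the statement is standard Poincaré duality for the closed oriented $2k$-manifold $\mathcal O_0$, and then descending via the $\Gamma/\Gamma_0$-invariance of Proposition~\ref{prop:regularcovering}, noting orientability of $\mathcal O^n$ makes the Galois action on $H^{2k}(\mathcal O_0,\mathcal B)\cong\mathbb C$ trivial. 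I would expect this skew-symmetry step — reconciling the coefficient flip with the degree sign — to be the only point requiring care; everything else is bookkeeping with the duality already established.
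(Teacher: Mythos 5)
The paper offers no written proof of this corollary — the sentence preceding it reads ``From the duality in Proposition~\ref{Prop:duality}~(b) we get:'' — so your strategy (pair degrees $i$ and $n-i$ by Poincar\'e duality, then control the middle degree) is exactly what is being implicitly invoked. Your treatment of $n$ odd is correct: the involution $i\mapsto n-i$ is fixed-point-free and reverses the sign $(-1)^i$, so the alternating sum vanishes. Your reduction in the even case, $\widetilde\chi\equiv\dim H^{n/2}\pmod 2$, is also correct.

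The gap is in the last step, and it is a genuine one. Writing $n=2k$, the pairing $H^k\times H^k\to\mathbb C$ given by $(\alpha,\beta)\mapsto\mathcal B(\alpha\smile\beta)$ satisfies $\mathcal B(\alpha\smile\beta)=(-1)^{k^2}\mathcal B(\beta\smile\alpha)$ because cup product is graded-commutative and the Killing form is symmetric. This is skew-symmetric precisely when $k$ is odd. When $k$ is even the form is \emph{symmetric}, and a non-degenerate symmetric form on a $\mathbb C$-vector space imposes no parity constraint on the dimension. Your appeal to ``the Lie-algebra structure'' to salvage the case $k$ even does not work: the non-degenerate Poincar\'e pairing one has here comes from the Killing form, which is symmetric; the Lie bracket (which is alternating on coefficients) would give a pairing landing in $H^{2k}(\mathcal O^{2k},\mathrm{Ad}\rho)$ rather than $\mathbb C$, and there is no reason for it to be non-degenerate. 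Goldman's symplectic form on surfaces is the $k=1$ instance of the graded-commutativity sign, not a consequence of the bracket. In fact the corollary as stated is false for $n\equiv 0\pmod 4$: take $\mathcal O^4=\mathbb{CP}^2$ (a closed orientable manifold, hence a very good orbifold), $\rho$ the trivial representation into $G=\mathrm{SL}(2,\mathbb C)$; then $\widetilde\chi(\mathbb{CP}^2,\mathrm{Ad}\rho)=\chi(\mathbb{CP}^2)\cdot\dim\mathfrak{sl}(2,\mathbb C)=3\cdot 3=9$, which is odd. The statement is correct, and your argument closes, only when $k=n/2$ is odd, i.e.\ $n\equiv 2\pmod 4$; the paper only ever applies the corollary with $n=2$, so the over-general phrasing is harmless there. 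To make your proof honest you should either restrict the even case to $n\equiv 2\pmod 4$, or replace the appeal to the Lie structure with the explicit observation that only $k$ odd gives skew-symmetry and note that the corollary is applied in the paper solely in dimension $2$.
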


\subsection{Non-orientable orbifolds}
\label{Subsection:orientation}

For a connected non-orientable orbifold $\mathcal O^n$, there exists a unique orientable covering
$\widetilde{\mathcal O^n}\to \mathcal O^n$ of index 2, called the \emph{orientation covering}.
Instead of Poincar\' e duality we have:

\begin{Lemma}
\label{Lemma:orientation}
 Let $\mathcal O^n$ be a compact, non-orientable, very good orbifold 
 of dimension $n$ with orientation covering $\widetilde{\mathcal O^n}\to \mathcal O^n$. Then
$$
\dim H^i(\mathcal{O}^n,\mathrm{Ad}\rho)+\dim H^{n-i}(\mathcal{O}^n,\partial \mathcal{O}^n,\mathrm{Ad}\rho)=
\dim H^i(\widetilde{\mathcal{O}^n},\mathrm{Ad}\rho).
$$
\end{Lemma}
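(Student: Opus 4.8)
The plan is to exploit the index-2 orientation covering $\pi\colon\widetilde{\mathcal O^n}\to\mathcal O^n$ together with its Galois group $\mathbb Z/2=\Gamma/\widetilde\Gamma$, where $\widetilde\Gamma=\pi_1(\widetilde{\mathcal O^n})$, and combine Proposition~\ref{prop:regularcovering} with Poincar\'e duality upstairs (Proposition~\ref{Prop:duality}(b), applied to the \emph{orientable} orbifold $\widetilde{\mathcal O^n}$). Let $\sigma$ denote the nontrivial deck transformation. Over $\mathbb C$ (or $\mathbb R$) the $\mathbb Z/2$-module $H^i(\widetilde{\mathcal O^n},\mathrm{Ad}\rho)$ splits as a direct sum of a $+1$-eigenspace and a $-1$-eigenspace for $\sigma^*$, so
$$
\dim H^i(\widetilde{\mathcal O^n},\mathrm{Ad}\rho)=\dim H^i(\widetilde{\mathcal O^n},\mathrm{Ad}\rho)^{+}+\dim H^i(\widetilde{\mathcal O^n},\mathrm{Ad}\rho)^{-}.
$$
By Proposition~\ref{prop:regularcovering} the $+$-eigenspace is exactly $H^i(\mathcal O^n,\mathrm{Ad}\rho)$, so it remains to identify the $-$-eigenspace with $H^{n-i}(\mathcal O^n,\partial\mathcal O^n,\mathrm{Ad}\rho)$.

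First I would set up the Galois action on the relative cochain complex $C^*(\widetilde K,\partial\widetilde K,\mathrm{Ad}\rho)$ of the covering, noting that the same averaging argument as in Proposition~\ref{prop:regularcovering} gives $H^*(\mathcal O^n,\partial\mathcal O^n,\mathrm{Ad}\rho)\cong H^*(\widetilde{\mathcal O^n},\partial\widetilde{\mathcal O^n},\mathrm{Ad}\rho)^{\Gamma/\widetilde\Gamma}$. Next I would invoke the cup-product pairing
$$
H^i(\widetilde{\mathcal O^n},\mathrm{Ad}\rho)\times H^{n-i}(\widetilde{\mathcal O^n},\partial\widetilde{\mathcal O^n},\mathrm{Ad}\rho)\longrightarrow H^n(\widetilde{\mathcal O^n},\partial\widetilde{\mathcal O^n},\mathbb C)\cong\mathbb C,
$$
which is $\Gamma/\widetilde\Gamma$-equivariant. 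The key point — exactly the place where \emph{non}-orientability of $\mathcal O^n$ enters, mirroring the remark at the end of the proof of Proposition~\ref{Prop:duality}(b) — is that $\sigma$ reverses orientation of $\widetilde{\mathcal O^n}$, hence acts on $H^n(\widetilde{\mathcal O^n},\partial\widetilde{\mathcal O^n},\mathbb C)\cong\mathbb C$ by $-1$. Therefore the perfect pairing restricts to a perfect pairing between the $\varepsilon$-eigenspace of $H^i$ and the $(-\varepsilon)$-eigenspace of $H^{n-i}$, for $\varepsilon=\pm1$. In particular
$$
\dim H^i(\widetilde{\mathcal O^n},\mathrm{Ad}\rho)^{-}=\dim H^{n-i}(\widetilde{\mathcal O^n},\partial\widetilde{\mathcal O^n},\mathrm{Ad}\rho)^{+}=\dim H^{n-i}(\mathcal O^n,\partial\mathcal O^n,\mathrm{Ad}\rho),
$$
the last equality by the relative version of Proposition~\ref{prop:regularcovering}. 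Adding the contribution of the $+$-eigenspace, $\dim H^i(\widetilde{\mathcal O^n},\mathrm{Ad}\rho)^{+}=\dim H^i(\mathcal O^n,\mathrm{Ad}\rho)$, gives the claimed identity.

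The main obstacle, and the step deserving the most care, is the sign bookkeeping in the cup product: one must check that the duality isomorphism $H^i(\widetilde{\mathcal O^n},\mathrm{Ad}\rho)\cong H^{n-i}(\widetilde{\mathcal O^n},\partial\widetilde{\mathcal O^n},\mathrm{Ad}\rho)^{*}$ is genuinely $\Gamma/\widetilde\Gamma$-equivariant when the target of the top pairing is twisted by the orientation character, and that $\sigma$ indeed acts by $-1$ there (this uses that the orientation cover is, by definition, the smallest cover to which the orientation local system pulls back trivially, so $H^n(\widetilde{\mathcal O^n},\partial\widetilde{\mathcal O^n};\mathbb Z)\cong\mathbb Z$ with $\sigma$ acting as $-1$). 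A secondary point is the passage from the untwisted fundamental class to the $\mathrm{Ad}\rho$-twisted cup product, but this is formal once the pairing with values in $\mathbb C$ is in place and uses only that the Killing form is $\mathrm{Ad}$-invariant, exactly as in Proposition~\ref{Prop:duality}(a)--(b). The rest is the routine eigenspace decomposition over a field of characteristic $\neq2$, which is why the statement is phrased purely in terms of dimensions.
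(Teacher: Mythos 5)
Your proof is correct and follows essentially the same route as the paper: identify the invariants of the deck transformation with the cohomology downstairs (Proposition~\ref{prop:regularcovering}), decompose $H^i(\widetilde{\mathcal O^n},\mathrm{Ad}\rho)$ into $\pm1$-eigenspaces of $\sigma^*$, and use that $\sigma$ reverses orientation so that the cup-product pairing (Proposition~\ref{Prop:duality}(b)) puts the $+$-eigenspace of $H^i$ in perfect duality with the $-$-eigenspace of $H^{n-i}(\cdot,\partial)$. The paper carries out the eigenspace bookkeeping via matrices $J$, $S^i$, $T^{n-i}$ satisfying $(S^i)^t J T^{n-i}=-J$, whereas you phrase it directly as a perfect pairing between opposite eigenspaces; these are equivalent, and yours is if anything a cleaner way to present the same argument.
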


\begin{proof}
The nontrivial deck transformation  of the orientation covering is 
an involution denoted by $\sigma\colon\widetilde{\mathcal O^n}\to\widetilde{\mathcal O^n}$.
Let $\sigma^*$ be the induced morphism in the cohomology group.
By Proposition~\ref{prop:regularcovering}:
\begin{equation}
\label{eqn:cohominv}
H^i(\mathcal{O}^n,\mathrm{Ad}\rho)\cong H^i(\widetilde{\mathcal{O}^n},\mathrm{Ad}\rho)^{\sigma^*}
\quad\textrm{ and }\quad
H^i(\mathcal{O}^n,\partial\mathcal{O}^n ,\mathrm{Ad}\rho)\cong 
H^i(\widetilde{\mathcal{O}^n},\partial\widetilde{\mathcal{O}^n},\mathrm{Ad}\rho)^{\sigma^*}.
\end{equation}
The  
cup product induces a non-degenerate pairing, Proposition~\ref{Prop:duality}:
\begin{equation}
\label{eqn:pairingNO}
 H^i(\widetilde{\mathcal O^n},  \mathrm{Ad}\rho)   \times
 H^{n-i}(\widetilde{\mathcal O^n},  \partial \widetilde{\mathcal O^n}, \mathrm{Ad}\rho) \to 
 H^n(\widetilde {\mathcal O^n},\partial\widetilde{\mathcal{O }^n} ,\mathbb{C})\cong \mathbb{C}.
\end{equation}
Chose $\mathbb{C}$-basis for 
$ H^i(\widetilde{\mathcal O^n},  \mathrm{Ad}\rho)  $ and for
$ H^{n-i}(\widetilde{\mathcal O^n},  \partial \widetilde{\mathcal O^n}, \mathrm{Ad}\rho)$, and 
use them to get matrices; let 
\begin{itemize}
 \item  $J$
denote the matrix of the pairing \eqref{eqn:pairingNO},
\item $S^i$ denote the matrix of $\sigma^*$ on 
$  H^i(\widetilde{\mathcal O^n},  \mathrm{Ad}\rho)$, and 
\item
$T^j$ denote
the matrix of $\sigma^*$ on 
$  H^j(\widetilde{\mathcal O^n}, \partial \widetilde{\mathcal O^n},  \mathrm{Ad}\rho)$.
\end{itemize}
Since $\sigma$ reverses the orientation, $\sigma^*$ acts as minus the identity on 
$H^n(\widetilde {\mathcal O^n},\partial\widetilde{\mathcal{O }^n} ,\mathbb{C})$:
$$
(S^i)^tJ T^{n-i} = -J .
$$
As the pairing is non-degenerate, $J$ is invertible, and since $\sigma$ is an involution:
$$
(S^i)^t= - J T^{n-i} J^{-1} .
$$
Thus 
\begin{equation}
\label{eqn:kertransp}
\dim\ker( T^{n-i}-\operatorname{Id})=\dim\ker( (S^{i})^t+\operatorname{Id})=\dim\ker( S^{i}+\operatorname{Id}).
\end{equation}
Since $(S^i)^2=\operatorname{Id} $:
\begin{equation}
\label{eqn:kerinvol}
  H^i(\widetilde{\mathcal O^n},  \mathrm{Ad}\rho)=\ker( S^{i}+\operatorname{Id})\oplus\ker( S^{i}-\operatorname{Id}).
\end{equation}
Combining \eqref{eqn:kertransp} and \eqref{eqn:kerinvol}:
$$
\dim\ker( T^{n-i}-\operatorname{Id})+ \dim \ker( S^{i}-\operatorname{Id})=\dim  H^i(\widetilde{\mathcal O^n},  \mathrm{Ad}\rho).
$$
With \eqref{eqn:cohominv}
this concludes the proof.
\end{proof}

\begin{Corollary}
\label{Corollary:HalfDimGeneral}
 If $\mathcal O^n$ is non-orientable, very good, closed  and of even dimension $n$,
 then
 $$
\dim H^{\frac{n}{2}}(\mathcal{O}^n,\mathrm{Ad}\rho)=\frac{1}{2}\dim H^{\frac{n}{2}}(\widetilde{\mathcal{O}^n},\mathrm{Ad}\rho).
 $$
\end{Corollary}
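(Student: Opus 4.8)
The plan is to derive this as a direct consequence of Lemma~\ref{Lemma:orientation} applied with $i=\frac{n}{2}$, combined with the fact that for a closed orbifold the relative cohomology coincides with the absolute cohomology. First I would set $i=\frac{n}{2}$ in the identity of Lemma~\ref{Lemma:orientation}, which gives
$$
\dim H^{\frac{n}{2}}(\mathcal{O}^n,\mathrm{Ad}\rho)+\dim H^{\frac{n}{2}}(\mathcal{O}^n,\partial\mathcal{O}^n,\mathrm{Ad}\rho)=\dim H^{\frac{n}{2}}(\widetilde{\mathcal{O}^n},\mathrm{Ad}\rho).
$$
Since $\mathcal O^n$ is closed, $\partial\mathcal O^n=\emptyset$, so $H^{\frac{n}{2}}(\mathcal O^n,\partial\mathcal O^n,\mathrm{Ad}\rho)=H^{\frac{n}{2}}(\mathcal O^n,\mathrm{Ad}\rho)$, and the two terms on the left are equal. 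Dividing by $2$ yields the claim.

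The only point that needs a word of care is the identification of the relative cohomology with the absolute cohomology when the boundary is empty; this is immediate from the definition of the twisted cochain complex, since the pair $(\mathcal O^n,\emptyset)$ has the same cochain complex as $\mathcal O^n$. One should also check that the hypotheses of Lemma~\ref{Lemma:orientation} are met: $\mathcal O^n$ is assumed non-orientable, very good, closed and of even dimension $n$, which is exactly what that lemma requires (compactness follows from closedness). So no new input is needed beyond quoting the lemma.

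I do not expect any real obstacle here; the statement is essentially a specialization. If one wanted a self-contained argument not routing through Lemma~\ref{Lemma:orientation}, the substantive content would instead be: the nontrivial deck involution $\sigma$ of the orientation covering $\widetilde{\mathcal O^n}\to\mathcal O^n$ acts on $H^{\frac n2}(\widetilde{\mathcal O^n},\mathrm{Ad}\rho)$, and by Proposition~\ref{prop:regularcovering} the fixed subspace is $H^{\frac n2}(\mathcal O^n,\mathrm{Ad}\rho)$; Poincaré duality on the closed manifold-like orbifold $\widetilde{\mathcal O^n}$ together with orientation-reversal of $\sigma$ forces the $+1$ and $-1$ eigenspaces of $\sigma^*$ on $H^{\frac n2}(\widetilde{\mathcal O^n},\mathrm{Ad}\rho)$ to have equal dimension (the pairing matrix identity $(S)^t=-JSJ^{-1}$ from the proof of Lemma~\ref{Lemma:orientation}, now with both factors the same space since the dimension is $n-\frac n2=\frac n2$ and the boundary is empty, gives $\dim\ker(S-\mathrm{Id})=\dim\ker(S+\mathrm{Id})$). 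Halving the total dimension then gives the result. But quoting Lemma~\ref{Lemma:orientation} directly is the cleanest route and the one I would write up.
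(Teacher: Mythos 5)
Your proposal is correct and matches the paper's (implicit) argument: the paper states the corollary immediately after Lemma~\ref{Lemma:orientation} with no further proof, treating it precisely as the specialization you describe — set $i=\tfrac n2$, observe that $\partial\mathcal O^n=\emptyset$ makes the two left-hand terms equal, and halve. Your attention to the identification $H^{\frac n2}(\mathcal O^n,\emptyset,\mathrm{Ad}\rho)=H^{\frac n2}(\mathcal O^n,\mathrm{Ad}\rho)$ and the checking of hypotheses is sound, and the alternative self-contained sketch you offer is exactly the mechanism inside the proof of Lemma~\ref{Lemma:orientation} restricted to middle degree.
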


\begin{Remark}
 \label{rem:halfdimcohom}
The pairing in the proof of Lemma~\ref{Lemma:orientation} induces a nondegenerate pairing between the kernels
$\ker\big( H^i(\widetilde{\mathcal{O}^n},\mathrm{Ad}\rho)\to H^i(\partial \widetilde{\mathcal{O}^n},\mathrm{Ad}\rho)\big)$
and
$\ker\big( H^{n-i}(\widetilde{\mathcal{O}^n},\mathrm{Ad}\rho)\to H^{n-i}(\partial \widetilde{\mathcal{O}^n},\mathrm{Ad}\rho)\big)$
\cite{LawtonP, HP2020}. 
Thus the same argument in the proof of Lemma~\ref{Lemma:orientation} yields
\begin{multline*}
\dim \ker\big( H^i(\mathcal{O}^n,\mathrm{Ad}\rho) \to H^i(\partial \mathcal{O}^n,\mathrm{Ad}\rho)\big)
\\
+
\dim \ker\big( H^{n-i}(\mathcal{O}^n,\mathrm{Ad}\rho) \to H^{n-i}(\partial \mathcal{O}^n,\mathrm{Ad}\rho)\big)
\\
=\dim \ker\big( H^i(\widetilde{\mathcal{O}^n},\mathrm{Ad}\rho) \to H^i(\partial \widetilde{\mathcal{O}^n},\mathrm{Ad}\rho)\big).
\end{multline*}
%
\end{Remark}

\section{Varieties of representations and characters of 2-orbifolds}
\label{Section:dim2}

We recall the possible stabilizers (or isotropy groups) of a point in a 2-orbifold, so that we fix   notation. Besides the trivial one, the possible 
stabilizers of a point $x\in\mathcal{O}^2$ are:
\begin{itemize}
 \item $\mathrm{Stab}(x)\cong C_k$ is a cyclic group of rotations of the plane $\mathbb R^2$ of order $k$. Then the singular point is isolated and its is  called a 
 \emph{cone point}. Those are the unique possible non-trivial stabilizers for an orientable 2-orbifold (Figure~\ref{Figure:ConeMirrorCorner} left).
 \item $\mathrm{Stab}(x)\cong C_2$ is a group of order two generated by a
 reflection of the plane or the half-plane, ie.~modeled in the interior or in the boundary. This is called a 
 \emph{mirror point}.
 The singular locus is locally an open segment, or a proper half segment in the boundary (Figure~\ref{Figure:ConeMirrorCorner}, both pictures in the center).
 \item $\mathrm{Stab}(x)\cong D_{2k}$ is a dihedral group of $2k$ elements, generated by two reflections at the plane of angle $\pi/k$. 
 The point is called a \emph{corner reflector} (Figure~\ref{Figure:ConeMirrorCorner} right). 
\end{itemize}

\begin{figure}[h]
\begin{center}
\begin{tikzpicture}[line join = round, line cap = round, scale=.7]
%
\begin{scope}[shift={(9,0)}]
 \draw[ white, fill=gray!30!white, opacity=.3] (0,0)--(3,0)--(1,2)  to[out=210, in=90] (0,0);
\draw (0,0)--(3,0)--(1,2); 
\begin{scope}[shift={(0.05,+.1)}]
 \draw (0,0)--(3,0)--(1,2); 
\end{scope}
\draw[fill=black ]  (3-.05,0+.05) circle(.1);
%
 \draw(3.3,.3) node{$x$};
\end{scope}
\begin{scope}
 \draw[ white, fill=gray!30!white, opacity=.3] (0,0)--(4,0)  to[out=90, in=0] (2,1.8)
  to[out=180, in=90] (0,0);
\draw (0,0)--(4,0); 
\begin{scope}[shift={(0,+.1)}]
 \draw (0,0)--(4,0); 
\end{scope}
\draw[fill=black ]  (2,0+.05) circle(.1);
 \draw(2.1,.4) node{$x$};
\end{scope}
\begin{scope}[shift={(5.5 ,0)}];
 \draw[ white, fill=gray!30!white, opacity=.3] (0,0)--(2,0) -- (2,1.8)
  to[out=180, in=90] (0,0);
 \draw (0,0)--(2,0);
\draw[thick] (2,0)--(2,1.8); 
\begin{scope}[shift={(0,+.1)}]
 \draw (0,0)--(2,0); 
\end{scope}
\draw[fill=black ]  (2,0+.05) circle(.1);
 \draw(1.7,.4) node{$x$};
\end{scope}
\begin{scope}[shift={(-3,0)}]
  \draw[ white, fill=gray!30!white, opacity=.3] (0,1) circle(1.5);
  \draw[fill=black ]  (0,1) circle(.1);
  \draw(0.1,1.4) node{$x$};
\end{scope}
\end{tikzpicture}
\end{center}
\caption{From left to right, a cone point, an interior mirror point, a mirror point in the boundary, and a corner reflector.
Mirror points are represented by a double line.}
\label{Figure:ConeMirrorCorner}
\end{figure}
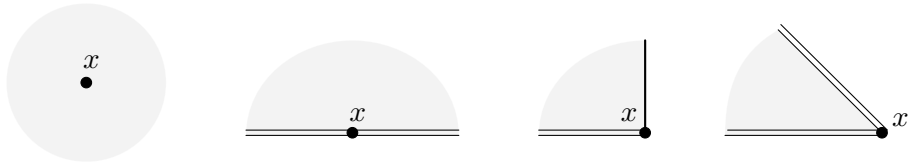

The boundary of a compact $2$-orbifold is a union of closed 1-orbifolds. There are two possibilities for a closed $1$-orbifold, up
to homeomorphism: 
\begin{itemize}
 \item A circle $S^1$ (Figure~\ref{Figure:boundaries} left, as it appears in the boundary). Its fundamental group is $\mathbb{Z}$.
 \item An interval with mirror end-points $[\![0,1]\!]$ (Figure~\ref{Figure:boundaries} right, as it appears in the boundary). 
 It is  the quotient of $S^1$ by an orientation reversing involution. 
 Its fundamental  group is dihedral infinite $D_{\infty}\cong C_2*C_2$, the free product of the stabilizers of the mirror points.
It is called \emph{full} 1-orbifold  in~\cite{ALS}.
\end{itemize}

\begin{figure}[h]
\begin{center}
\begin{tikzpicture}[line join = round, line cap = round, scale=.7]
 \shadedraw[ white, fill=gray!30!white, opacity=.3] (0,0.2) to[out=-15, in=180] (2,0)--(2,-2) to[out=180, in=15] (0,-2.2) to[out=150, in=210] cycle;
 \shadedraw[fill=gray!20!white] (2.5,-1) arc (0:360:0.5cm and 1cm);
\draw(0,0.2) to[out=-15, in=180] (2,0);
\draw(0,-2.2) to[out=15, in=180] (2,-2);
\draw[very thick] (2.5,-1) arc (0:360:0.5cm and 1cm);
\begin{scope}[shift={(5,0)}]
 \draw[ white, fill=gray!30!white, opacity=.3] (0,0.1) to[out=-15, in=180] (2,-.1)--(2,-1.9) to[out=180, in=15] (0,-2.1) to[out=110, in=250] cycle;
 \draw(0,0.2) to[out=-15, in=180] (2,0);
\draw(0,-2.2) to[out=15, in=180] (2,-2);
\begin{scope}[shift={(0,-.1)}]
 \draw(0,0.2) to[out=-15, in=180] (2,0);
\end{scope}
\begin{scope}[shift={(0,.1)}]
 \draw(0,-2.2) to[out=15, in=180] (2,-2);
\end{scope}
\draw[very thick] (2,0)--(2,-2);
\end{scope}
\end{tikzpicture}
\end{center}
\caption{A circle $S^1$ and an  interval with mirror end-points $[\![0,1]\!]$ as boundaries.}
\label{Figure:boundaries}
\end{figure}
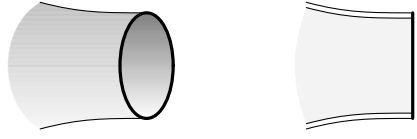

In dimension two, good and very good are equivalent. Furthermore, a closed 2-orbifold is good if and only if it is hyperbolic, Euclidean or spherical. 
When $\chi(\mathcal{O}^2)\leq 0$, 
$\mathcal{ O}^2$
is very good and either hyperbolic or Euclidean. In particular a closed
2-orbifold $\mathcal O^2$ is aspherical iff $\chi(\mathcal{O}^2)\leq 0$.

\subsection{Varieties of representations of $2$-orbifolds}
In this subsection we give a couple of results on the tangent space of the variety of
representations and characters of a 2-orbifold. Those are orbifold versions
of  theorems of Goldman for surfaces~\cite{Goldman}. In the orientable case, those results go back to Andr\' e Weil \cite[Sections~6 and 7]{Weil}.
We start with the following formulas:

\begin{Proposition}
\label{Prop:dimR} Let  $\mathcal O^2$ be a closed aspherical 2-orbifold. Set
$\Gamma=\pi_1(\mathcal O^2)$ and let $\rho\in R(\Gamma,G)$.
\begin{enumerate}[a)]
 \item If $\mathcal O^2$ is orientable, then
$$
\dim T^{Zar}_\rho R(\Gamma,G)=-\widetilde\chi(\mathcal O^2,\mathrm{Ad}\rho)+\dim G+  \dim \mathfrak{g}^{\rho(\Gamma)}.
$$
\item If $\mathcal O^2$ is non-orientable, with orientation covering  $\widetilde{\mathcal O^2}$, 
and $\widetilde\Gamma=\pi_1(\widetilde{\mathcal O^2})$, then 
$$
\dim T^{Zar}_\rho R(\Gamma,G)=-\widetilde\chi(\mathcal O^2,\mathrm{Ad}\rho)+\dim G+  \dim \mathfrak{g}^{\rho(\widetilde\Gamma)}-
\dim \mathfrak{g}^{\rho(\Gamma)}.
$$
\end{enumerate}

\end{Proposition}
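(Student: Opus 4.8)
The plan is to compute $\dim Z^1(\Gamma,\mathrm{Ad}\rho)$ directly, since by Weil's theorem (Theorem~\ref{Thm:Weil}) this equals $\dim T^{Zar}_\rho R(\Gamma,G)$. The basic tool is the exact sequence relating cocycles, coboundaries and cohomology,
$$
0\to B^1(\Gamma,\mathrm{Ad}\rho)\to Z^1(\Gamma,\mathrm{Ad}\rho)\to H^1(\Gamma,\mathrm{Ad}\rho)\to 0,
$$
together with $\dim B^1(\Gamma,\mathrm{Ad}\rho)=\dim\mathfrak g-\dim\mathfrak g^{\rho(\Gamma)}$ (the orbit map $\mathfrak g\to B^1$ has kernel exactly $\mathfrak g^{\rho(\Gamma)}$). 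Hence
$$
\dim T^{Zar}_\rho R(\Gamma,G)=\dim G-\dim\mathfrak g^{\rho(\Gamma)}+\dim H^1(\Gamma,\mathrm{Ad}\rho),
$$
so everything reduces to computing $\dim H^1(\Gamma,\mathrm{Ad}\rho)=\dim H^1(\mathcal O^2,\mathrm{Ad}\rho)$ (Proposition~\ref{Prop:duality}(c), using that $\mathcal O^2$ is aspherical since closed with $\chi\le 0$).

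First I would handle the orientable case. Since $\mathcal O^2$ is closed and orientable, Proposition~\ref{Prop:Euler} gives
$$
\widetilde\chi(\mathcal O^2,\mathrm{Ad}\rho)=\dim H^0-\dim H^1+\dim H^2,
$$
and Poincar\'e duality (Proposition~\ref{Prop:duality}(b), with empty boundary) gives $\dim H^2(\mathcal O^2,\mathrm{Ad}\rho)=\dim H^0(\mathcal O^2,\mathrm{Ad}\rho)=\dim\mathfrak g^{\rho(\Gamma)}$. Therefore $\dim H^1=2\dim\mathfrak g^{\rho(\Gamma)}-\widetilde\chi(\mathcal O^2,\mathrm{Ad}\rho)$, and substituting into the displayed formula for $\dim T^{Zar}_\rho R$ yields exactly statement (a): the $\dim\mathfrak g^{\rho(\Gamma)}$ contributions combine to $2-1=+1$ copy, giving $-\widetilde\chi(\mathcal O^2,\mathrm{Ad}\rho)+\dim G+\dim\mathfrak g^{\rho(\Gamma)}$.

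For the non-orientable case I would use the orientation covering $\widetilde{\mathcal O^2}\to\mathcal O^2$. Here Poincar\'e duality fails, but Corollary~\ref{Corollary:HalfDimGeneral} (with $n=2$) gives $\dim H^1(\mathcal O^2,\mathrm{Ad}\rho)=\tfrac12\dim H^1(\widetilde{\mathcal O^2},\mathrm{Ad}\rho)$. By the orientable computation just carried out, applied to the closed orientable orbifold $\widetilde{\mathcal O^2}$ with group $\widetilde\Gamma$, we have $\dim H^1(\widetilde{\mathcal O^2},\mathrm{Ad}\rho)=2\dim\mathfrak g^{\rho(\widetilde\Gamma)}-\widetilde\chi(\widetilde{\mathcal O^2},\mathrm{Ad}\rho)$; since the covering has index $2$ and $\widetilde\chi$ of a cover is (by the cell-count defining it, Definition~\ref{Def:OEC}, together with the $\Gamma/\widetilde\Gamma$-invariance packaged in Proposition~\ref{prop:regularcovering} and Proposition~\ref{Prop:Euler}) additive in a way that for the orientation double cover gives $\widetilde\chi(\widetilde{\mathcal O^2},\mathrm{Ad}\rho)=2\widetilde\chi(\mathcal O^2,\mathrm{Ad}\rho)$ --- this is the point I would check most carefully, via $\dim C_i(\widetilde K_0,\mathrm{Ad}\rho)=2\dim C_i(K,\mathrm{Ad}\rho)$ when $K$ lifts cell-by-cell with the stabilizer dimensions matching on lifts --- we get $\dim H^1(\mathcal O^2,\mathrm{Ad}\rho)=\dim\mathfrak g^{\rho(\widetilde\Gamma)}-\widetilde\chi(\mathcal O^2,\mathrm{Ad}\rho)$. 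Plugging this into $\dim T^{Zar}_\rho R(\Gamma,G)=\dim G-\dim\mathfrak g^{\rho(\Gamma)}+\dim H^1$ gives exactly formula (b).

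The main obstacle is the bookkeeping around $\widetilde\chi$ under the orientation covering: one must verify that the twisted orbifold Euler characteristic doubles (unlike the ordinary orbifold Euler characteristic, where this is automatic, but here with twisted coefficients one needs the stabilizer-dimension terms on the two lifts of each cell to match, which holds because the adjoint-invariant subspaces at conjugate subgroups have equal dimension). Everything else is linear algebra with the exact sequence for $Z^1$ and the two duality inputs already established in Section~\ref{Section:cohomology}.
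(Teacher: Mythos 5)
Your overall plan is sound, and the orientable case (a) is handled exactly as in the paper: express $\dim T^{Zar}_\rho R(\Gamma,G) = \dim B^1 + \dim H^1 = \dim G - \dim\mathfrak g^{\rho(\Gamma)} + \dim H^1(\Gamma,\mathrm{Ad}\rho)$, use asphericity (Proposition~\ref{Prop:duality}(d)) to identify group and orbifold cohomology, and then use Poincar\'e duality and Proposition~\ref{Prop:Euler} to pin down $\dim H^1$. The difference is only in how the two of you close the non-orientable case: the paper computes $\dim H^2(\mathcal O^2,\mathrm{Ad}\rho) = \dim\mathfrak g^{\rho(\widetilde\Gamma)} - \dim\mathfrak g^{\rho(\Gamma)}$ directly from Lemma~\ref{Lemma:orientation}, while you route through Corollary~\ref{Corollary:HalfDimGeneral} on $H^1$, apply the orientable formula to $\widetilde{\mathcal O^2}$, and then invoke the doubling identity $\widetilde\chi(\widetilde{\mathcal O^2},\mathrm{Ad}\rho) = 2\widetilde\chi(\mathcal O^2,\mathrm{Ad}\rho)$.

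That doubling identity is in fact true for a closed $\mathcal O^2$ and its orientation double cover, so your conclusion is correct; but the cell-counting justification you sketch for it does not hold up. You assume every cell lifts two-to-one with matching stabilizer dimensions, yet a cell $e$ whose lift $\tilde e$ has $\operatorname{Stab}(\tilde e)\not\subset\widetilde\Gamma$ (a mirror edge or a corner reflector, for example) lifts to a \emph{single} cell in $\widetilde{\mathcal O^2}$, with stabilizer $\operatorname{Stab}(\tilde e)\cap\widetilde\Gamma$ of index two in $\operatorname{Stab}(\tilde e)$; in that case $\dim\mathfrak g^{\rho(\operatorname{Stab}(\tilde e)\cap\widetilde\Gamma)}$ need not equal $2\dim\mathfrak g^{\rho(\operatorname{Stab}(\tilde e))}$ (take $\rho$ sending the reflection to the identity for a clear counterexample). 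The chain groups therefore do \emph{not} double cell by cell, and the paper explicitly warns that $\widetilde\chi$ is not multiplicative under coverings. The correct way to obtain the doubling is from the results already in hand: sum the three equalities of Lemma~\ref{Lemma:orientation} (for $i=0,1,2$) with alternating signs and compare with Proposition~\ref{Prop:Euler} applied to both $\mathcal O^2$ and $\widetilde{\mathcal O^2}$. That gives $\widetilde\chi(\widetilde{\mathcal O^2},\mathrm{Ad}\rho)=2\widetilde\chi(\mathcal O^2,\mathrm{Ad}\rho)$ cleanly — but at that point you have invoked exactly the lemma the paper uses to read off $\dim H^2(\mathcal O^2,\mathrm{Ad}\rho)$ directly, so the detour through $\widetilde{\mathcal O^2}$ buys nothing. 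You should replace the cell-counting sketch with this homological argument, or just follow the paper and compute $\dim H^2$ from Lemma~\ref{Lemma:orientation}.
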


\begin{proof}
We first claim 
\begin{equation}
 \label{eqn:TzarH2}
 \dim   T^{Zar}_\rho R(\Gamma,G)   = -\widetilde\chi(\mathcal O^2,\mathrm{Ad}\rho)+\dim G+\dim H^2( \mathcal O^2, \mathrm{Ad}\rho).
\end{equation}
To prove it, we use Weil's theorem, Theorem~\ref{Thm:Weil}: 
\begin{equation}
 \label{eqn:TzarWeil}
 T^{Zar}_\rho R(\Gamma,G)\cong Z^1(\Gamma,\mathrm{Ad}\rho)
 =\dim H^1( \Gamma, \mathrm{Ad}\rho)+ \dim B^1(\Gamma,\mathrm{Ad}\rho)
 .
 \end{equation}
From  
 the isomorphism $H^*( \mathcal O^2, \mathrm{Ad}\rho)\cong H^*( \Gamma, \mathrm{Ad}\rho)$
(as $\mathcal{O}^2$ is   aspherical  we  apply 
Proposition~\ref{Prop:duality} (d)), 
\eqref{eqn:TzarH2} follows from \eqref{eqn:TzarWeil} and the following equations:
\begin{align*}
\dim H^1( \mathcal O^2, \mathrm{Ad}\rho)&=-\widetilde\chi(\mathcal O^2,\mathrm{Ad}\rho)+\dim H^0( \mathcal O^2, \mathrm{Ad}\rho)
+\dim H^2( \mathcal O^2, \mathrm{Ad}\rho)
, \\
\dim B^1(\Gamma,\mathrm{Ad} \rho) &=\dim \mathfrak g-\dim \mathfrak{g}^{\rho(\Gamma)}
=\dim G-\dim  H^0( \Gamma, \mathrm{Ad}\rho).
 \end{align*}
Once \eqref{eqn:TzarH2} has been established, it remains to compute $\dim H^2( \mathcal O^2, \mathrm{Ad}\rho)$.
In the orientable case, by duality  (Proposition~\ref{Prop:duality}):
   $$\dim H^2( \mathcal O^2, \mathrm{Ad}\rho)= \dim H^0( \mathcal O^2, \mathrm{Ad}\rho) = \dim \mathfrak{g}^{\rho(\Gamma)}.
   $$
   In the non-orientable case, Lemma~\ref{Lemma:orientation} yields
   $$
   \dim H^2( \mathcal O^2, \mathrm{Ad}\rho)=\dim H^2( \widetilde{\mathcal O^2}, \mathrm{Ad}\rho)-\dim H^0( \mathcal O^2, \mathrm{Ad}\rho)
   =\dim \mathfrak{g}^{\rho(\widetilde \Gamma)}-\dim \mathfrak{g}^{\rho(\Gamma)},
   $$
and the proposition follows.   
\end{proof}

\begin{Proposition}
\label{Proposition:rhogood}
Let $\mathcal O^2$ be a compact aspherical 2-orbifold 
and  $\rho\colon \Gamma\to G$ a good representation.
When $\mathcal O^2$ is non-orientable, assume furthermore that the restriction of $\rho$ to 
the orientation covering is also good. 

Then the conjugacy class $[\rho]$ is a smooth point of
 $ X (\mathcal O^2, G)$  
 of  dimension $-\widetilde\chi(\mathcal O^2,\mathrm{Ad}\rho)$,
 with tangent space naturally isomorphic to $ H^1(\mathcal O^2,\mathrm{Ad}\rho)$.
\end{Proposition}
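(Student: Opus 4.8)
The plan is to combine Weil's theorem (Theorem~\ref{Thm:Weil}), the dimension count from Proposition~\ref{Prop:dimR}, and Poincar\'e duality (Proposition~\ref{Prop:duality}) to show that the obstruction to deformations vanishes, i.e.\ that the Zariski tangent space to $R(\Gamma,G)$ at $\rho$ has exactly the dimension predicted for a smooth point of the representation variety lying over a smooth point of $X(\mathcal O^2,G)$ of dimension $-\widetilde\chi(\mathcal O^2,\mathrm{Ad}\rho)$. Concretely, since $\rho$ is good, $\mathfrak g^{\rho(\Gamma)}=0$, so by Weil's theorem the orbit $G\cdot\rho$ has dimension $\dim G$ and, by the real/complex correspondence between $H^1$ and the tangent space to $X$, it suffices to prove that $\dim R(\Gamma,G)$ near $\rho$ equals $\dim G + \dim H^1(\mathcal O^2,\mathrm{Ad}\rho)$ and that $\dim H^1(\mathcal O^2,\mathrm{Ad}\rho)=-\widetilde\chi(\mathcal O^2,\mathrm{Ad}\rho)$.

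First I would treat the \emph{closed} case. Here $\mathfrak g^{\rho(\Gamma)}=0$ forces $H^0(\mathcal O^2,\mathrm{Ad}\rho)=0$, and in the orientable case Poincar\'e duality (Proposition~\ref{Prop:duality}(b)) gives $H^2(\mathcal O^2,\mathrm{Ad}\rho)\cong H^0(\mathcal O^2,\mathrm{Ad}\rho)^*=0$; in the non-orientable case the hypothesis that $\rho$ restricted to the orientation covering is good gives $\mathfrak g^{\rho(\widetilde\Gamma)}=0$, and then Lemma~\ref{Lemma:orientation} (or the formula in Proposition~\ref{Prop:dimR}(b)) yields $H^2(\mathcal O^2,\mathrm{Ad}\rho)=0$ as well. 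In either case Proposition~\ref{Prop:dimR} then reads $\dim T^{Zar}_\rho R(\Gamma,G)=-\widetilde\chi(\mathcal O^2,\mathrm{Ad}\rho)+\dim G$, while the Euler-characteristic identity $\widetilde\chi=\dim H^0-\dim H^1+\dim H^2$ (Proposition~\ref{Prop:Euler}) with $H^0=H^2=0$ gives $\dim H^1(\mathcal O^2,\mathrm{Ad}\rho)=-\widetilde\chi(\mathcal O^2,\mathrm{Ad}\rho)$. Therefore $\dim T^{Zar}_\rho R(\Gamma,G)=\dim G+\dim H^1$, which is the minimal possible dimension (the orbit plus a transverse $H^1$ worth of genuine deformations), so $\rho$ is a smooth point of $R(\Gamma,G)$; passing to the quotient, $[\rho]$ is a smooth point of $X(\mathcal O^2,G)$ with $T^{Zar}_{[\rho]}X(\mathcal O^2,G)\cong H^1(\Gamma,\mathrm{Ad}\rho)\cong H^1(\mathcal O^2,\mathrm{Ad}\rho)$ (using Proposition~\ref{Prop:duality}(c),(d) and Weil's theorem) of dimension $-\widetilde\chi(\mathcal O^2,\mathrm{Ad}\rho)$.

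For the case with nonempty boundary I would reduce to the closed case by doubling: let $D\mathcal O^2$ be the orbifold double of $\mathcal O^2$ along $\partial\mathcal O^2$, which is a closed aspherical $2$-orbifold containing $\mathcal O^2$; one shows $\rho$ extends to a good representation of $\pi_1(D\mathcal O^2)$ (reflecting), applies the closed case there, and then restricts. Alternatively, and perhaps more cleanly, I would argue directly: $\mathcal O^2$ with boundary is homotopy equivalent to a $1$-complex, so $R(\Gamma,G)$ is smooth at the good representation $\rho$ of the (surface-group-like, hence $1$-relator-up-to-duality) group $\Gamma$ with $\dim T^{Zar}_\rho R=\dim Z^1(\Gamma,\mathrm{Ad}\rho)$, and the half-lives-half-dies type computation plus the fact that $H^2(\mathcal O^2,\mathrm{Ad}\rho)=0$ when $\mathcal O^2$ has boundary (it is homotopy equivalent to a $1$-complex) gives the count $\dim H^1(\mathcal O^2,\mathrm{Ad}\rho)=-\widetilde\chi(\mathcal O^2,\mathrm{Ad}\rho)+\dim H^0$, which together with $H^0=0$ (goodness) and $\dim B^1=\dim G$ yields $\dim T^{Zar}_\rho R=\dim G+\dim H^1$ exactly as before.

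The main obstacle is the verification that the representation variety is actually \emph{smooth} (not merely that its Zariski tangent space has the expected dimension at a point where we already know $X$ has that dimension) — i.e.\ ruling out that $\rho$ lies on a higher-dimensional component where the tangent space jumps, or conversely that our dimension count is only an upper bound. This is handled by the standard Goldman/Weil argument: the vanishing $H^2(\mathcal O^2,\mathrm{Ad}\rho)=0$ kills the quadratic (cup-product) obstruction to integrating infinitesimal deformations, so $R(\Gamma,G)$ is cut out near $\rho$ by equations whose differentials have constant rank, hence it is smooth there of dimension exactly $\dim Z^1=\dim G+\dim H^1$; I would cite Goldman~\cite{Goldman} (and Weil~\cite{Weil} in the orientable case) for this obstruction-theoretic input and note that the orbifold cohomology developed in Section~\ref{Section:cohomology} makes the cup-product and duality statements go through verbatim.
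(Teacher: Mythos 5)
Your proposal is correct and follows essentially the same route as the paper: goodness forces $H^0(\mathcal O^2,\mathrm{Ad}\rho)=0$; $H^2(\mathcal O^2,\mathrm{Ad}\rho)=0$ either by Poincar\'e duality (orientable closed), by Lemma~\ref{Lemma:orientation} together with the extra goodness hypothesis on the orientation covering (non-orientable closed), or because a compact 2-orbifold with boundary has (virtually) the homotopy type of a $1$-complex; and then asphericity identifies $H^2(\Gamma,\mathrm{Ad}\rho)$ with $H^2(\mathcal O^2,\mathrm{Ad}\rho)=0$ so Goldman's obstruction theory gives smoothness of $X$ at $[\rho]$ with tangent space $H^1(\Gamma,\mathrm{Ad}\rho)\cong H^1(\mathcal O^2,\mathrm{Ad}\rho)$ of the stated dimension. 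Two small remarks: the claim that a small Zariski tangent space dimension "is the minimal possible, so $\rho$ is a smooth point" is logically incomplete on its own (tangent-space dimension bounds local dimension from above, not below), but you correctly close this by invoking Goldman's vanishing of the cup-product obstruction; and the doubling reduction you float as an alternative is not needed and is slightly delicate (a good $\rho$ on $\mathcal O^2$ need not extend to the double, and the restriction map need not be onto), so the direct $1$-complex argument — which is what the paper uses, phrased as "virtually the homotopy type of a $1$-complex" — is the clean route.
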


\begin{proof}
 As $\rho$ is good, its centralizer is finite, hence 
$$ 
  \dim H^0(\mathcal O^2,\mathrm{Ad}\rho)=
  \dim H^0(\Gamma,\mathrm{Ad}\rho)=\dim \mathfrak{g}^{\rho(\Gamma)}=0.
$$
When $\mathcal O^2$ is closed then 
$H^2(\mathcal O^2,\mathrm{Ad}\rho)=0$ (by duality in the orientable case, Proposition~\ref{Prop:duality},
or by Lemma~\ref{Lemma:orientation} in the non-orientable case). 
When $\mathcal O^2$ is not closed, then it has virtually the homotopy type of a 1-complex and also
 $H^2(\mathcal O^2,\mathrm{Ad}\rho)=0$. 

Moreover, as    $\mathcal O^2$ is aspherical, 
by  Proposition~\ref{Prop:duality} 
$H^2(\Gamma,\mathrm{Ad}\rho)=0$. By Goldman's obstruction theory~\cite{Goldman}, this implies that 
$[\rho]$ is a smooth point of local  dimension
$\dim  H^1(\Gamma,\mathrm{Ad}\rho)= \dim  H^1(\mathcal O^2,\mathrm{Ad}\rho)=
-\widetilde\chi(\mathcal O^2,\mathrm{Ad}\rho)$.
\end{proof}

From Proposition~\ref{Proposition:rhogood} and Corollary~\ref{Corollary:HalfDimGeneral}:

\begin{Corollary}
\label{Corollary:double}
 Let $\mathcal O^2$ be a \emph{closed} non-orientable 2-orbifold with 
$\chi(\mathcal O^2)\leq 0$ and  $\rho\colon \Gamma\to G$. Assume that 
the restriction of $\rho$ to the orientation covering $\widetilde{\mathcal O^2}$
is good. Then
$$
 \dim_{[\rho]}X(\mathcal O^2, G)=\frac{1}{2} \dim_{[\rho]}X(\widetilde{\mathcal O^2}, G).
$$ 
\end{Corollary}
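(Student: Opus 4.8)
The plan is to derive this directly from Proposition~\ref{Proposition:rhogood}, applied once to $\mathcal O^2$ and once to its orientation covering $\widetilde{\mathcal O^2}$, combined with the half-dimension identity of Corollary~\ref{Corollary:HalfDimGeneral}. So the corollary should be essentially a bookkeeping step.

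First I would verify that the hypotheses of Proposition~\ref{Proposition:rhogood} are satisfied on both sides. Since $\mathcal O^2$ is closed with $\chi(\mathcal O^2)\leq 0$, it is aspherical and very good, and its orientation covering $\widetilde{\mathcal O^2}$ is closed, orientable, with $\chi(\widetilde{\mathcal O^2})=2\chi(\mathcal O^2)\leq 0$, hence also aspherical. The restriction $\rho\vert_{\pi_1(\widetilde{\mathcal O^2})}$ is good by assumption; this in turn forces $\rho$ itself to be good, because its image contains an irreducible subgroup (hence is irreducible, i.e.\ not contained in a proper parabolic), and the centralizer of $\rho(\Gamma)$ is contained in that of $\rho(\pi_1(\widetilde{\mathcal O^2}))$, which equals $\mathcal Z(G)$. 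This is the only point that needs a sentence of argument; everything else is immediate from the results already in the excerpt.

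Then Proposition~\ref{Proposition:rhogood} applies to the non-orientable orbifold $\mathcal O^2$ (using that $\rho$ restricted to the orientation covering is good) and gives that $[\rho]$ is a smooth point of $X(\mathcal O^2,G)$ with $\dim_{[\rho]}X(\mathcal O^2,G)=\dim H^1(\mathcal O^2,\mathrm{Ad}\rho)$. Applied to the orientable orbifold $\widetilde{\mathcal O^2}$ it gives $\dim_{[\rho]}X(\widetilde{\mathcal O^2},G)=\dim H^1(\widetilde{\mathcal O^2},\mathrm{Ad}\rho)$. Finally Corollary~\ref{Corollary:HalfDimGeneral} with $n=2$ (so that $n/2=1$) yields $\dim H^1(\mathcal O^2,\mathrm{Ad}\rho)=\tfrac12\dim H^1(\widetilde{\mathcal O^2},\mathrm{Ad}\rho)$, and stringing the three equalities together gives $\dim_{[\rho]}X(\mathcal O^2,G)=\tfrac12\dim_{[\rho]}X(\widetilde{\mathcal O^2},G)$.

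There is no real obstacle: the substantive content lives in Proposition~\ref{Proposition:rhogood} (Goldman's obstruction theory plus the cohomological dimension computations) and in the Poincaré-duality argument underlying Lemma~\ref{Lemma:orientation} and Corollary~\ref{Corollary:HalfDimGeneral}. If anything, the one thing worth being careful about is keeping straight that $\dim_{[\rho]}X(\widetilde{\mathcal O^2},G)$ refers to the dimension at the character of the restriction $\rho\vert_{\pi_1(\widetilde{\mathcal O^2})}$, and that $\widetilde{\mathcal O^2}$ may still carry cone points (it need not be a manifold), which is harmless since the orientable case of Proposition~\ref{Proposition:rhogood} imposes no extra condition.
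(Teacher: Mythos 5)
Your proposal is correct and follows essentially the same route the paper takes, which cites exactly Proposition~\ref{Proposition:rhogood} and Corollary~\ref{Corollary:HalfDimGeneral} as the two ingredients. The one small point you add beyond the paper's terse derivation — that goodness of $\rho\vert_{\pi_1(\widetilde{\mathcal O^2})}$ implies goodness of $\rho$ itself, so that Proposition~\ref{Proposition:rhogood} really is applicable — is correct and worth spelling out, but it is a routine observation rather than a different argument.
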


\begin{Proposition}
\label{Prop:Symplectic}
 Let $\mathcal O^2$ be a compact aspherical 2-orbifold
 and  $\rho\colon \Gamma\to G$ a good representation that is $\partial$-regular. 
Assume that $\mathcal O^2$ is orientable. 
Then the cup product defines a  $\mathbb C$-valued symplectic structure on 
$\mathcal R^{\textrm{good}}(\mathcal O^2,\partial \mathcal O^2, G)$. Furthermore,
it is real valued for $G_{\mathbb{R}}$, a real form of~$G$. 
\end{Proposition}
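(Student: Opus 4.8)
The plan is to carry over Goldman's construction for surfaces \cite{Goldman} using the orbifold (co)homology of \S\ref{Section:cohomology}, and to obtain closedness of the resulting form by descent to a finite manifold cover. Write $\Gamma=\pi_1(\mathcal O^2)$. By Proposition~\ref{Prop:RelTang} together with Proposition~\ref{Prop:duality}(c), the tangent space at $[\rho]$ is
$$
T_{[\rho]}\,\mathcal R^{\mathrm{good}}(\mathcal O^2,\partial\mathcal O^2,G)\cong\ker\big(i^*\colon H^1(\mathcal O^2,\mathrm{Ad}\rho)\to H^1(\partial\mathcal O^2,\mathrm{Ad}\rho)\big),
$$
which by the long exact sequence of the pair $(\mathcal O^2,\partial\mathcal O^2)$ equals the image of $j^*\colon H^1(\mathcal O^2,\partial\mathcal O^2,\mathrm{Ad}\rho)\to H^1(\mathcal O^2,\mathrm{Ad}\rho)$. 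Poincar\'e--Lefschetz duality, Proposition~\ref{Prop:duality}(b), gives the perfect pairing
$$
\langle\ ,\ \rangle\colon H^1(\mathcal O^2,\partial\mathcal O^2,\mathrm{Ad}\rho)\times H^1(\mathcal O^2,\mathrm{Ad}\rho)\to H^2(\mathcal O^2,\partial\mathcal O^2,\mathbb C)\cong\mathbb C
$$
built from the cup product, the Killing form on $\mathfrak g$, and evaluation on the orientation class. I would then set $\omega_{[\rho]}(a,b)=\langle\tilde b,a\rangle$ for $a,b\in T_{[\rho]}$ and any lift $\tilde b$ of $b$ along $j^*$, and record, by the obstruction argument of Proposition~\ref{Proposition:rhogood} applied to the pair (this is where $\partial$-regularity is used), that $\mathcal R^{\mathrm{good}}(\mathcal O^2,\partial\mathcal O^2,G)$ is smooth at $[\rho]$ with this tangent space, so that the $\omega_{[\rho]}$ assemble into an algebraic $2$-form $\omega$.

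Next I would verify the pointwise linear-algebra properties. Well-definedness: the indeterminacy of $\tilde b$ is $\ker j^*=\operatorname{im}\big(\delta\colon H^0(\partial\mathcal O^2,\mathrm{Ad}\rho)\to H^1(\mathcal O^2,\partial\mathcal O^2,\mathrm{Ad}\rho)\big)$, and since $\delta$ is adjoint to $i^*$ under the dualities, $\langle\delta c,a\rangle=\pm\langle c,i^*a\rangle=0$ for $a\in T_{[\rho]}$; thus $\ker j^*$ annihilates $T_{[\rho]}$ and $\omega_{[\rho]}$ does not depend on the lift. Non-degeneracy is then immediate from perfectness of $\langle\ ,\ \rangle$: if $\omega_{[\rho]}(a,\cdot)=0$, then $a$ pairs trivially with every lift, hence with all of $H^1(\mathcal O^2,\partial\mathcal O^2,\mathrm{Ad}\rho)$, so $a=0$. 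Skew-symmetry follows by rewriting $\langle\tilde b,a\rangle=\langle\tilde b,j^*\tilde a\rangle$, which equals, up to sign, the relative--relative cup product $\tilde a\cup\tilde b\in H^2(\mathcal O^2,\partial\mathcal O^2,\mathbb C)\cong\mathbb C$: this product is anti-commutative on degree-one classes while the Killing form is symmetric, so $\omega_{[\rho]}(a,b)=-\omega_{[\rho]}(b,a)$.

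The main point is closedness of $\omega$. Since $\chi(\mathcal O^2)\le0$, $\mathcal O^2$ is very good, so there is a finite regular covering $p\colon\Sigma\to\mathcal O^2$ with $\Sigma$ a compact orientable surface (orientable since $\mathcal O^2$ is), Galois group $Q=\Gamma/\pi_1(\Sigma)$ acting on $\mathcal R^{\mathrm{good}}(\Sigma,\partial\Sigma,G)$, passing if needed to a further cover so the restriction of $\rho$ to $\pi_1(\Sigma)$ is good and $\partial$-regular. By Goldman \cite{Goldman}, $\mathcal R^{\mathrm{good}}(\Sigma,\partial\Sigma,G)$ carries its Atiyah--Bott--Goldman symplectic form $\omega_\Sigma$, and since the elements of $Q$ act by orientation-preserving diffeomorphisms of $\Sigma$ they act by symplectomorphisms. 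By Proposition~\ref{prop:regularcovering}, $p^*$ identifies $H^1(\mathcal O^2,\mathrm{Ad}\rho)$ with the $Q$-invariants of $H^1(\Sigma,\mathrm{Ad}(\rho\circ p))$ compatibly with cup products and with the fundamental classes up to the factor $\deg p$; hence near $[\rho]$ the map $p^*$ embeds $\mathcal R^{\mathrm{good}}(\mathcal O^2,\partial\mathcal O^2,G)$ onto an open subset of the $Q$-fixed locus, where $p^*\omega$ agrees, up to the scalar $\deg p$, with $\iota^*\omega_\Sigma$ ($\iota$ the inclusion of the fixed locus). Since $d(\iota^*\omega_\Sigma)=\iota^*d\omega_\Sigma=0$, the form $\omega$ is closed, which together with the previous paragraph shows $\omega$ is a symplectic structure. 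Equivalently, Goldman's chain-level proof that $d\omega_\Sigma=0$, which uses only the duality of Proposition~\ref{Prop:duality} and the cup-product/Jacobi identities, can be run directly on the orbifold $CW$-structure; the covering is merely a convenient way to quote the result. For a real form $G_{\mathbb R}$, the Killing form of $\mathfrak g_{\mathbb R}$, the cup product, and Lefschetz duality with $\mathbb R$-coefficients are all defined over $\mathbb R$, so $\omega$ is real valued there.

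I expect closedness to be the only genuine difficulty, and within it the two points needing care are that the finite manifold cover can be chosen with orientable total space (so $Q$ acts symplectically) and that the identification of $\mathcal R^{\mathrm{good}}(\mathcal O^2,\partial\mathcal O^2,G)$ with a component of the $Q$-fixed locus is valid, at least locally near $[\rho]$, and intertwines the two forms.
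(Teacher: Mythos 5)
Your approach to closedness — descend to a finite regular manifold cover $\Sigma\to\mathcal O^2$, identify $\mathcal R^{\mathrm{good}}(\mathcal O^2,\partial\mathcal O^2,G)$ locally with the $Q$-fixed locus inside $\mathcal R^{\mathrm{good}}(\Sigma,\partial\Sigma,G)$, and pull back the Goldman form — is genuinely different from the paper's, and it has two gaps the paper's route avoids. First, you cannot in general arrange that $\rho|_{\pi_1(\Sigma)}$ is $\partial$-regular by passing to a deeper cover: if $\gamma$ generates a boundary circle of $\mathcal O^2$, the corresponding boundary element of $\Sigma$ is $\gamma^k$ with $k>1$ dictated by the covering, and $\rho(\gamma^k)$ can fail to be regular even when $\rho(\gamma)$ is (e.g.\ $\rho(\gamma)$ of finite order with distinct eigenvalues); deeper covers only worsen this. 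Second, the identification of a neighborhood of $[\rho]$ in $\mathcal R^{\mathrm{good}}(\mathcal O^2,\partial\mathcal O^2,G)$ with an open subset of the $Q$-fixed locus is not just Proposition~\ref{prop:regularcovering} on cohomology: you need smoothness of the surface variety at $[\rho|_{\pi_1(\Sigma)}]$ plus a Cartan-linearization argument (as the paper does in the 3-dimensional Theorem~\ref{Theorem:dimcanonical}) to know that the fixed locus is a smooth subvariety on which the restriction map is a local diffeomorphism, and that the two $2$-forms actually agree there (not merely at the single tangent space $T_{[\rho]}$). Your final sentence, that Goldman's chain-level computation ``can be run directly on the orbifold CW-structure,'' is precisely the thing that would need proving.

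The paper's reduction is simpler and avoids both issues: since $\mathcal O^2$ is orientable, the branching locus is a finite set of cone points, and removing small disk neighborhoods of them produces a compact \emph{surface} with extra boundary circles. The restriction map to $\mathcal R^{\mathrm{good}}(\mathcal O^2\setminus\mathcal N(\Sigma_{\mathcal O^2}),\partial(\mathcal O^2\setminus\mathcal N(\Sigma_{\mathcal O^2})),G)$ is then a local isomorphism of \emph{relative} character varieties, because the new boundary holonomies are finite-order hence conjugacy-rigid — exactly the relative constraint. One identifies kernels via Mayer--Vietoris, checks naturality of cup products, and imports closedness from GHJW for surfaces with boundary, with no equivariance, no fixed-locus argument, and no $\partial$-regularity worry for the auxiliary manifold. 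Your pointwise linear algebra (well-definedness via adjointness of $\delta$ and $i^*$, non-degeneracy from perfectness, skew-symmetry from graded-commutativity and symmetry of the Killing form) is correct and spells out what the paper dismisses as clear; that part is fine. But as written, the closedness argument is not complete.
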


\begin{proof}
From the construction, non-degeneracy and skew symmetry is clear. The non-trivial issue
is to check that this differential form is closed
in $R^{\textrm{good}}(\mathcal O^2,\partial \mathcal O^2, G)$.
For 2-manifolds with boundary, being closed is due to \cite{GHJW}, as explained in \cite{HP2020}.
For an orientable orbifold with boundary, we reduce to the manifold case. 
 Let $\Sigma_{\mathcal O^2}$ denote the  branching locus of $\mathcal O^2$, it is a finite union of
 cone points. The restriction map
 $$
 \mathcal R^{\textrm{good}}(\mathcal O^2,\partial \mathcal O^2, G) \to
  \mathcal R^{\textrm{good}}(\mathcal O^2\setminus\mathcal N( \Sigma_{\mathcal O^2}),
  \partial (\mathcal O^2\setminus\mathcal N( \Sigma_{\mathcal O^2})) , G)
 $$
is a   (local) isomorphism,
  as the conjugacy classes of finite order elements cannot be deformed. 
  To check the isomorphism at the level of tangent spaces, notice that
the map induced by inclusion  
$$
H^1(\mathcal O^2,\mathrm{Ad}\rho)
\to
H^1(\mathcal O^2\setminus\mathcal N( \Sigma_{\mathcal O^2}),\mathrm{Ad}\rho)
$$
restricts to an isomorphism  of kernels:
\begin{multline*}
\ker
\left(
H^1(\mathcal O^2,\mathrm{Ad}\rho)
\to H^1(\partial \mathcal O^2,\mathrm{Ad}\rho)
\right)
\\
\cong  \ker
\left(
H^1(\mathcal O^2\setminus\mathcal N( \Sigma_{\mathcal O^2}),\mathrm{Ad}\rho)
\to
H^1(\partial( \mathcal O^2\setminus\mathcal N( \Sigma_{\mathcal O^2} ),\mathrm{Ad}\rho)
\right) ,
\end{multline*}
that can be proved
for instance by using Mayer-Vietoris exact sequence. 
Furthermore, by naturality, it can be shown that the isomorphism
maps the cup product to the cup product.
\end{proof}

\begin{Proposition}
\label{Prop:Lagrangian}
 Let $\mathcal O^2$ be a \emph{closed} non-orientable 2-orbifold with 
$\chi(\mathcal O^2)\leq 0$. Let  $\rho\colon \Gamma\to G$ be a representation 
whose  restriction to the orientation covering $\widetilde{\mathcal O^2}$
is good  and $\partial$-regular.
Then the restriction map induces an immersion  around $[\rho]$ of
$\mathcal R^{\textrm{good}}(\mathcal O^2,\partial \mathcal O^2, G)$
in $\mathcal R^{\textrm{good}}(\widetilde{\mathcal O^2},\partial \widetilde{\mathcal O^2}, G)$
as a \emph{Lagrangian} submanifold. 
\end{Proposition}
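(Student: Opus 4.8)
The plan is to combine the two structural facts already available: that $[\rho]$ sits inside a symplectic manifold on the upstairs side (Proposition~\ref{Prop:Symplectic} applied to the orientation covering $\widetilde{\mathcal O^2}$, which is closed, orientable and aspherical since $\chi(\mathcal O^2)\leq 0$), and that the downstairs object has exactly half the expected dimension. First I would record that, since $\widetilde{\mathcal O^2}$ is closed, the relative character variety $\mathcal R^{\mathrm{good}}(\widetilde{\mathcal O^2},\partial\widetilde{\mathcal O^2},G)$ coincides with $\mathcal R^{\mathrm{good}}(\widetilde{\mathcal O^2},G)$, so Proposition~\ref{Prop:Symplectic} equips a neighbourhood of $[\rho]$ in $X(\widetilde{\mathcal O^2},G)$ with a $\mathbb C$-valued (resp.\ real-valued, over $G_{\mathbb R}$) symplectic form $\omega$ given by the cup product on $H^1(\widetilde{\mathcal O^2},\mathrm{Ad}\rho)$. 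By Proposition~\ref{Proposition:rhogood}, applied both to $\mathcal O^2$ (using the hypothesis that $\rho|_{\widetilde{\mathcal O^2}}$ is good, hence $\rho$ is good and $\partial$-regular for the closed orbifold $\mathcal O^2$, for which $\partial$-regularity is vacuous) and to $\widetilde{\mathcal O^2}$, both $[\rho]\in X(\mathcal O^2,G)$ and $[\rho]\in X(\widetilde{\mathcal O^2},G)$ are smooth points with tangent spaces $H^1(\mathcal O^2,\mathrm{Ad}\rho)$ and $H^1(\widetilde{\mathcal O^2},\mathrm{Ad}\rho)$ respectively.

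Next I would establish that the restriction map $X(\mathcal O^2,G)\to X(\widetilde{\mathcal O^2},G)$ is an immersion near $[\rho]$. On tangent spaces this is the map $\pi^*\colon H^1(\mathcal O^2,\mathrm{Ad}\rho)\hookrightarrow H^1(\widetilde{\mathcal O^2},\mathrm{Ad}\rho)$, which is injective by Proposition~\ref{prop:regularcovering}; identifying the source with $H^1(\widetilde{\mathcal O^2},\mathrm{Ad}\rho)^{\sigma^*}$ as in \eqref{eqn:cohominv}, the image is precisely the $(+1)$-eigenspace of $\sigma^*$. By Corollary~\ref{Corollary:HalfDimGeneral} (with $n=2$) this eigenspace has dimension exactly half of $\dim H^1(\widetilde{\mathcal O^2},\mathrm{Ad}\rho)$, so the image is a middle-dimensional subspace. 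It remains to check it is isotropic (hence Lagrangian, by the dimension count). For this I would use the behaviour of the cup product under $\sigma$: the pairing $\omega$ is built from the cup product composed with the fundamental class of $\widetilde{\mathcal O^2}$, and $\sigma$ reverses orientation, so $\sigma^*$ is an anti-symplectic involution, $\sigma^*\omega=-\omega$ (this is exactly the mechanism already used in the proof of Lemma~\ref{Lemma:orientation}, where $(S^i)^tJT^{n-i}=-J$). Then for $a,b$ in the $(+1)$-eigenspace, $\omega(a,b)=\omega(\sigma^*a,\sigma^*b)=(\sigma^*\omega)(a,b)=-\omega(a,b)$, forcing $\omega(a,b)=0$. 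Thus the tangent space to the image is Lagrangian at $[\rho]$, and since this holds at $[\rho]$ and the form is closed, the image is a Lagrangian submanifold near $[\rho]$.

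Finally I would package this: the image $\mathcal R^{\mathrm{good}}(\mathcal O^2,\partial\mathcal O^2,G)=X(\mathcal O^2,G)$ near $[\rho]$ is a smooth manifold of dimension $-\widetilde\chi(\mathcal O^2,\mathrm{Ad}\rho)=\tfrac12\dim H^1(\widetilde{\mathcal O^2},\mathrm{Ad}\rho)=\tfrac12\dim_{[\rho]}X(\widetilde{\mathcal O^2},G)$ (the first equality from Proposition~\ref{Proposition:rhogood}, the middle from Corollary~\ref{Corollary:HalfDimGeneral} applied to $H^1$, equivalently Corollary~\ref{Corollary:double}), and the restriction map is an immersion whose image is isotropic, hence Lagrangian. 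The statement for a real form $G_{\mathbb R}$ follows verbatim, replacing complex dimensions by real dimensions throughout and using the real version of Proposition~\ref{Prop:Symplectic}.

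The main obstacle I anticipate is the sign computation $\sigma^*\omega=-\omega$: one must be careful that $\partial$-regularity of $\rho|_{\widetilde{\mathcal O^2}}$ (which is automatic since $\widetilde{\mathcal O^2}$ is closed) is what lets us apply Proposition~\ref{Prop:Symplectic}, and that the orientation-reversing nature of $\sigma$ translates correctly through the fundamental-class pairing — but this is precisely the linear-algebra input already isolated in the proof of Lemma~\ref{Lemma:orientation}, so no genuinely new work is needed beyond citing it and running the isotropy argument.
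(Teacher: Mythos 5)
Your proposal is correct and follows essentially the same route as the paper: identify $T^{Zar}_{[\rho]}\mathcal R^{\mathrm{good}}(\mathcal O^2,G)$ with $H^1(\widetilde{\mathcal O^2},\mathrm{Ad}\rho)^{\sigma^*}$ via Propositions~\ref{prop:regularcovering} and \ref{Proposition:rhogood}, observe that $\sigma^*$ acts by $-1$ on $H^2(\widetilde{\mathcal O^2},\mathbb C)$ so the cup-product form vanishes on the fixed subspace, and conclude by the half-dimension count (Corollary~\ref{Corollary:double}, which you correctly unpack as Proposition~\ref{Proposition:rhogood} plus Corollary~\ref{Corollary:HalfDimGeneral}). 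Your isotropy computation $\omega(a,b)=\omega(\sigma^*a,\sigma^*b)=-\omega(a,b)$ is the same argument the paper compresses into the phrase ``the bilinear form must be trivial on the space fixed by $\sigma^*$,'' and your explicit appeal to Proposition~\ref{Prop:Symplectic} for the symplectic structure on the closed orientable cover is implicit in the paper's statement; no substantive difference.
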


\begin{proof}
Let $\sigma\colon \widetilde{\mathcal O^2}\to \widetilde{\mathcal O^2}$ be the orienting reversing involution so that 
$\widetilde{\mathcal O^2}/\sigma= \mathcal O^2$. 
By  Propositions~\ref{prop:regularcovering} and~\ref{Proposition:rhogood}
$$
T^{Zar}_{[\rho]} \mathcal{R}^{\textrm{good}}(\mathcal O^2, G)\cong 
H^1(\mathcal O^2,\mathrm{Ad}\rho)
\cong H^1(\widetilde{\mathcal O^2},\mathrm{Ad}\rho)^{\sigma^*}
\cong 
\big( T^{Zar}_{[\rho]} \mathcal{R}^{\textrm{good}}(\widetilde{\mathcal O^2}, G)\big)^{\sigma^*}.
$$
As $\sigma^*$ acts as minus the identity on $H^2(\widetilde{\mathcal O^2},\mathbb{C})$ 
and by construction of the non-degenerate bilinear form
$$
H^1(\widetilde{\mathcal O^2},\mathrm{Ad}\rho)\times H^1(\widetilde{\mathcal O^2},\mathrm{Ad}\rho)
\to  H^2(\widetilde{\mathcal O^2},\mathbb{C})
$$
(see the proof of Lemma~\ref{Lemma:orientation}), 
the bilinear form must be trivial on the space fixed by $\sigma^*$,  $H^1(\widetilde{\mathcal O^2},\mathrm{Ad}\rho)^{\sigma^*}$.
Namely, $T^{Zar}_{[\rho]} \mathcal{R}(\mathcal O^2, G)$ is an isotropic subspace. By Corollary~\ref{Corollary:double},
it has the dimension to be Lagrangian.
\end{proof}

\subsection{Orbifolds with boundary}

For an orientable 2-orbifold $\mathcal O^2$, a representation $\rho$ is called \emph{$\partial$-regular}
if for each $\gamma$ generator of a peripheral subgroup,
$\rho(\gamma)$ is a regular element (i.e. $\dim \mathfrak{g}^{\rho(\gamma)}=\operatorname{rank} G$).
For a non-orientable 2-orbifold, we will consider $\partial$-regularity on the orientation covering.

We start with a result on the dimension of the relative variety of (conjugacy classes of) representations $\mathcal R^{\mathrm{good}} (\mathcal O^2, \partial \mathcal O^2, G)$:


\begin{Proposition}
\label{Prop:relsmooth}
 Let $\mathcal O^2$ be a compact 2-orbifold with boundary and 
 $\chi (\mathcal O^2)\leq 0$.
 Assume  $\rho\colon \Gamma\to G$ is good and $\partial$-regular. 
When $\mathcal O^2$ is non-orientable, assume that the restriction to the orientation covering 
is good and $\partial$-regular.
 Then $[\rho]$ is a smooth point of
$\mathcal R^{\mathrm{good}} (\mathcal O^2, \partial \mathcal O^2, G)$
of dimension 
$$-\widetilde\chi(\mathcal O^2,\mathrm{Ad}\rho)-(\mathsf{c}+\frac{\mathsf{b}}{2})\ \mathrm{rank} (G)+
\frac12 \widetilde\chi(\partial \mathcal O^2,\mathrm{Ad}\rho) ,
$$
where 
$\mathsf{c}$ is the number of components of $\partial \mathcal O^2$ 
that are circles and $\mathsf{b}$ the number of components of $\partial \mathcal O^2$ 
that are intervals with mirror points $[\![0,1]\!]$.
\end{Proposition}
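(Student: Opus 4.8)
The plan is to compute $\dim T^{Zar}_{[\rho]}\mathcal{R}^{\mathrm{good}}(\mathcal O^2,\partial\mathcal O^2,G)$ via Proposition~\ref{Prop:RelTang}, which identifies this tangent space with $\ker\big(H^1(\mathcal O^2,\mathrm{Ad}\rho)\to\bigoplus_i H^1(\partial_i\mathcal O^2,\mathrm{Ad}\rho)\big)$, and to match it with the claimed number, thereby also proving smoothness by a dimension count against the component. First I would treat the orientable case. By Proposition~\ref{Prop:duality}(b), the cup product gives a perfect pairing $H^1(\mathcal O^2,\mathrm{Ad}\rho)\times H^1(\mathcal O^2,\partial\mathcal O^2,\mathrm{Ad}\rho)\to\mathbb C$, and the long exact sequence of the pair together with Remark~\ref{rem:halfdimcohom} (applied to the orientable orbifold, where it reduces to the standard "half lives, half dies" statement) shows that the image of $H^1(\mathcal O^2,\mathrm{Ad}\rho)\to H^1(\partial\mathcal O^2,\mathrm{Ad}\rho)$ is a Lagrangian (half-dimensional isotropic) subspace of $H^1(\partial\mathcal O^2,\mathrm{Ad}\rho)$ with respect to the cup-product symplectic form on the boundary. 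Hence
\begin{equation*}
\dim\ker\big(H^1(\mathcal O^2,\mathrm{Ad}\rho)\to H^1(\partial\mathcal O^2,\mathrm{Ad}\rho)\big)
= \dim H^1(\mathcal O^2,\mathrm{Ad}\rho)-\tfrac12\dim H^1(\partial\mathcal O^2,\mathrm{Ad}\rho).
\end{equation*}

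Next I would evaluate the two terms on the right. Since $\rho$ is good, $H^0=0$, and since $\mathcal O^2$ has boundary it has (virtually) the homotopy type of a $1$-complex so $H^2(\mathcal O^2,\mathrm{Ad}\rho)=0$; therefore, by Proposition~\ref{Prop:Euler}, $\dim H^1(\mathcal O^2,\mathrm{Ad}\rho)=-\widetilde\chi(\mathcal O^2,\mathrm{Ad}\rho)$. For the boundary, each $\partial_i\mathcal O^2$ is either a circle or a full $1$-orbifold $[\![0,1]\!]$; in the circle case $H^1(\partial_i,\mathrm{Ad}\rho)$ has dimension $\dim\mathfrak g^{\rho(\gamma)}=\operatorname{rank}G$ by $\partial$-regularity, while $\widetilde\chi(\partial_i,\mathrm{Ad}\rho)=\dim H^0-\dim H^1=-\operatorname{rank}G$, so $\dim H^1(\partial_i)=-\widetilde\chi(\partial_i,\mathrm{Ad}\rho)$ contributes $\operatorname{rank}G$ to $\tfrac12\dim H^1(\partial\mathcal O^2)$, i.e.\ $\mathsf c\cdot\operatorname{rank}G$ in total; in the $[\![0,1]\!]$ case a direct computation with $D_\infty=C_2*C_2$ (Mayer--Vietoris / the standard interval-with-mirror-endpoints model) gives $\dim H^1(\partial_i,\mathrm{Ad}\rho)=\dim\mathfrak g^{\rho(\gamma)}=\operatorname{rank}G$ again under $\partial$-regularity of the lift, contributing $\tfrac{\mathsf b}{2}\operatorname{rank}G$. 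Combining,
\begin{equation*}
\dim\mathcal{R}^{\mathrm{good}}(\mathcal O^2,\partial\mathcal O^2,G)
= -\widetilde\chi(\mathcal O^2,\mathrm{Ad}\rho)-\big(\mathsf c+\tfrac{\mathsf b}{2}\big)\operatorname{rank}G
+\tfrac12\widetilde\chi(\partial\mathcal O^2,\mathrm{Ad}\rho),
\end{equation*}
where I have used $\tfrac12\dim H^1(\partial\mathcal O^2)=\tfrac12\big((\mathsf c+\mathsf b)\operatorname{rank}G\big)=-\tfrac12\widetilde\chi(\partial\mathcal O^2,\mathrm{Ad}\rho)+(\mathsf c+\tfrac{\mathsf b}{2})\operatorname{rank}G$; rearranging yields the stated formula. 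Smoothness follows because Goldman's obstruction theory (as in Proposition~\ref{Proposition:rhogood}) shows $\mathcal{R}^{\mathrm{good}}(\mathcal O^2,\partial\mathcal O^2,G)$ has local dimension at least this Zariski tangent dimension, forcing equality.

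For the non-orientable case I would pass to the orientation covering $\widetilde{\mathcal O^2}\to\mathcal O^2$, apply the orientable computation there, and descend using Remark~\ref{rem:halfdimcohom}: the relevant kernel $\ker\big(H^1(\mathcal O^2,\mathrm{Ad}\rho)\to H^1(\partial\mathcal O^2,\mathrm{Ad}\rho)\big)$ has exactly half the dimension of its counterpart upstairs, paralleling Proposition~\ref{Prop:Lagrangian}. Since $\widetilde\chi$ upstairs is $2\widetilde\chi$ downstairs, boundary components upstairs double appropriately (a $[\![0,1]\!]$ downstairs may lift to a circle or to two intervals, a circle to two circles or one circle), and one checks the counts $\mathsf c,\mathsf b$ transform consistently so that the halved quantity upstairs reproduces the same closed formula downstairs. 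The main obstacle I anticipate is the careful bookkeeping of boundary $1$-orbifold cohomology under the covering — in particular verifying that $\dim H^1([\![0,1]\!],\mathrm{Ad}\rho)=\operatorname{rank}G$ under $\partial$-regularity of the lift, and tracking how the combinatorial numbers $\mathsf c$ and $\mathsf b$ behave when some boundary intervals lift to circles — rather than the symplectic "half lives, half dies" argument, which is by now routine.
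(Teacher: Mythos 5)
Your high-level outline (compute the tangent space as $\ker(H^1(\mathcal O^2,\mathrm{Ad}\rho)\to H^1(\partial\mathcal O^2,\mathrm{Ad}\rho))$ via Proposition~\ref{Prop:RelTang} and then evaluate the two $H^1$'s) is indeed the paper's strategy, but the crucial intermediate steps are wrong and lead to a formula that is not the one claimed.

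The central error is the ``half lives, half dies / Lagrangian image'' step. You assert that the image of $H^1(\mathcal O^2,\mathrm{Ad}\rho)\to H^1(\partial\mathcal O^2,\mathrm{Ad}\rho)$ is half-dimensional, so that $\dim\ker=\dim H^1(\mathcal O^2)-\tfrac12\dim H^1(\partial\mathcal O^2)$. That statement belongs to the \emph{three}-dimensional situation (as in Theorem~\ref{Theorem:dimcanonical}, where the boundary is a surface and its $H^1$ carries a symplectic cup-product pairing). In dimension $2$ the boundary is a $1$-orbifold, $H^2(\partial\mathcal O^2)=0$, and there is no such pairing on $H^1(\partial\mathcal O^2)$. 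In fact under the present hypotheses the restriction map is \emph{surjective}: since $\rho$ is good, $H^0(\mathcal O^2,\mathrm{Ad}\rho)=0$, hence by Lefschetz duality (Proposition~\ref{Prop:duality}(b), or Lemma~\ref{Lemma:orientation} in the non-orientable case) $H^2(\mathcal O^2,\partial\mathcal O^2,\mathrm{Ad}\rho)=0$, and the long exact sequence of the pair forces $H^1(\mathcal O^2)\twoheadrightarrow H^1(\partial\mathcal O^2)$. So $\dim\ker=\dim H^1(\mathcal O^2)-\dim H^1(\partial\mathcal O^2)$, not with a factor $\tfrac12$. You can sanity-check with the once-punctured torus and $G=\mathrm{SL}(2,\mathbb C)$: $\dim H^1(\Sigma)=3$, $\dim H^1(S^1)=1$, the relative variety has dimension $2=3-1$, not $3-\tfrac12$.

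The boundary cohomology values you state are also incorrect. For a circle boundary component with $\partial$-regular holonomy, $\dim H^0(S^1,\mathrm{Ad}\rho)=\dim H^1(S^1,\mathrm{Ad}\rho)=\operatorname{rank}G$, so $\widetilde\chi(S^1,\mathrm{Ad}\rho)=0$, not $-\operatorname{rank}G$; you implicitly set $\dim H^0=0$, which fails because the boundary holonomy has a nontrivial centralizer. For a full $1$-orbifold $[\![0,1]\!]$ with $\pi_1=D_\infty$, one has $\dim H^0+\dim H^1=\operatorname{rank}G$ by Lemma~\ref{Lemma:orientation} and $\dim H^0-\dim H^1=\widetilde\chi([\![0,1]\!],\mathrm{Ad}\rho)$, so $\dim H^1=\tfrac12\big(\operatorname{rank}G-\widetilde\chi([\![0,1]\!],\mathrm{Ad}\rho)\big)$; this genuinely depends on the representation (through $\dim\mathfrak g^{\rho(\sigma_i)}$, cf.\ Lemma~\ref{Lemma:InfiniteDihedral}) and is \emph{not} identically $\operatorname{rank}G$. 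As a consequence $\dim H^1(\partial\mathcal O^2,\mathrm{Ad}\rho)$ is $(\mathsf c+\tfrac{\mathsf b}{2})\operatorname{rank}G-\tfrac12\widetilde\chi(\partial\mathcal O^2,\mathrm{Ad}\rho)$, not $(\mathsf c+\mathsf b)\operatorname{rank}G$, and the ``rearranging'' at the end of your orientable case is internally inconsistent with the values you stated just before it. Once the surjectivity of the restriction map and the correct boundary $H^1$ are in place, $\dim\ker=-\widetilde\chi(\mathcal O^2,\mathrm{Ad}\rho)-(\mathsf c+\tfrac{\mathsf b}{2})\operatorname{rank}G+\tfrac12\widetilde\chi(\partial\mathcal O^2,\mathrm{Ad}\rho)$ drops out immediately, and the non-orientable case follows the same scheme via Lemma~\ref{Lemma:orientation} rather than a separate ``halving'' argument.
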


In this proposition, the components of the boundary that are circles do not contribute to $\widetilde \chi(\partial \mathcal O^2,\mathrm{Ad}\rho)$.

\begin{proof}
The same proof as in \cite[Proposition~2.10]{HP2020} applies here, in particular we have smoothness and the following equalities 
\begin{align*}
\dim (\mathcal R^{\mathrm{good}} (\mathcal O^2, \partial \mathcal O^2, G) ) &=
\dim \ker \big(H^1(\mathcal O^2,\mathrm{Ad}\rho)\to H^1(\partial \mathcal O^2,\mathrm{Ad}\rho)\big) \\ &=
\dim ( H^1(\mathcal O^2,\mathrm{Ad}\rho) )- \dim ( H^1(\partial \mathcal O^2,\mathrm{Ad}\rho) ). 
\end{align*}
As $H^0(\mathcal O^2,\mathrm{Ad}\rho)=H^2(\mathcal O^2,\mathrm{Ad}\rho)=0$,
$$\dim H^1(\mathcal O^2,\mathrm{Ad}\rho)=-\widetilde\chi(\mathcal O^2,\mathrm{Ad}\rho).$$
We count the contribution of each component $\partial_i\mathcal O^2 $ of $\partial \mathcal O^2$ 
to   $\dim H^1(\partial \mathcal O^2,\mathrm{Ad}\rho)$.
\begin{itemize}
 \item 
When $\partial_i\mathcal O^2 \cong S^1$, then, by duality and $\partial$-regularity:
\begin{equation}
 \label{eqn:reg}
\dim H^1(\partial_i\mathcal O^2,\mathrm{Ad}\rho)=\dim H^0(\partial_i\mathcal O^2,\mathrm{Ad}\rho)=\dim \mathfrak{g}^{\rho(\partial_i\mathcal O^2)}
=\operatorname{rank} G.
 \end{equation}
\item When $\partial_i\mathcal O^2 \cong [\![0,1]\!]$, then 
\begin{align*}
\widetilde \chi(\partial_i\mathcal O^2,\mathrm{Ad}\rho )& 
= \dim H^0(\partial_i\mathcal O^2,\mathrm{Ad}\rho) - \dim H^1(\partial_i\mathcal O^2,\mathrm{Ad}\rho) \\
 \operatorname{rank} G & = \dim H^0(\partial_i\mathcal O^2,\mathrm{Ad}\rho) + \dim H^1(\partial_i\mathcal O^2,\mathrm{Ad}\rho), 
\end{align*}
where the last line follows from Lemma~\ref{Lemma:orientation} and the assumption that $\rho$ is $\partial$-regular on the orientation covering.
From this 
we deduce
\begin{equation}
\label{eqn:H1Interval}
\dim H^1(\partial_i\mathcal O^2,\mathrm{Ad}\rho)=\frac{1}{2} (  \operatorname{rank} G  - \widetilde \chi(\partial_i\mathcal O^2,\mathrm{Ad}\rho ) ).
\end{equation}
\end{itemize}
From \eqref{eqn:reg} and \eqref{eqn:H1Interval}:
\begin{equation}
\label{eqn:dimH1bc}
\dim H^1(\partial \mathcal O^2,\mathrm{Ad}\rho)= (\mathsf{c}+\frac{\mathsf{b}}{2})\ \mathrm{rank} G-
\frac12 \widetilde\chi(\partial \mathcal O^2,\mathrm{Ad}\rho) ,
\end{equation}
which concludes the proof of the proposition.
\end{proof}



\begin{Proposition}
\label{Prop:boundaryNO}
Let $\mathcal O^2$ be a non-orientable compact 2-orbifold with orientation covering $\widetilde{\mathcal O^2}$. 
Let $\rho\colon \pi_1(\mathcal O^2)\to G$ be a representation whose restriction to $\widetilde{\mathcal O^2}$ 
is good and $\partial$-regular. Then 
$$
 \dim_{[\rho]} X(\mathcal O^2, G)= \frac{1}{2} \dim_{\rho} X(\widetilde{\mathcal O^2}, G)
+ \frac{1}{2}\sum_{[\![0,1]\!]\subset \partial \mathcal O^2 }
\widetilde{\chi}( [\![0,1]\!], \mathrm{Ad}\rho)
%
$$
where the sum runs on the components of $\partial\mathcal O^2$ that are intervals with mirror boundary,
$[\![0,1]\!]$.
\end{Proposition}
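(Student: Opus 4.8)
The plan is to reduce the statement to a short twisted-Euler-characteristic identity and then invoke the dimension formula for good representations. First, since $\rho|_{\pi_1(\widetilde{\mathcal O^2})}$ is good, $\rho$ itself is good (irreducibility and triviality of the centralizer persist when passing from a finite-index subgroup to the whole group). Hence Proposition~\ref{Proposition:rhogood} applies to both $\mathcal O^2$ and its orientation covering $\widetilde{\mathcal O^2}$ (both are compact aspherical 2-orbifolds, and in the non-orientable case the hypothesis on the orientation covering is exactly what is assumed), so $[\rho]$ is a smooth point of $X(\mathcal O^2,G)$ and of $X(\widetilde{\mathcal O^2},G)$ with tangent spaces $H^1(\mathcal O^2,\mathrm{Ad}\rho)$ and $H^1(\widetilde{\mathcal O^2},\mathrm{Ad}\rho)$. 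Since $\rho$ is good, $H^0(\,\cdot\,,\mathrm{Ad}\rho)=0$, and since $\mathcal O^2$ (hence $\widetilde{\mathcal O^2}$) has non-empty boundary it is virtually homotopy equivalent to a $1$-complex, so $H^2(\,\cdot\,,\mathrm{Ad}\rho)=0$ as well; therefore by Proposition~\ref{Prop:Euler} one has $\dim_{[\rho]} X(\mathcal O^2,G)=-\widetilde\chi(\mathcal O^2,\mathrm{Ad}\rho)$ and $\dim_{\rho} X(\widetilde{\mathcal O^2},G)=-\widetilde\chi(\widetilde{\mathcal O^2},\mathrm{Ad}\rho)$. (If $\mathcal O^2$ is closed the right-hand sum is empty and the statement is Corollary~\ref{Corollary:double}, so we may assume $\partial\mathcal O^2\neq\emptyset$.) It thus remains to prove the $(\mathrm{co})$homological identity $2\,\widetilde\chi(\mathcal O^2,\mathrm{Ad}\rho)=\widetilde\chi(\widetilde{\mathcal O^2},\mathrm{Ad}\rho)+\widetilde\chi(\partial\mathcal O^2,\mathrm{Ad}\rho)$.

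Second, I would obtain this identity by combining Lemma~\ref{Lemma:orientation} with the long exact sequence of the pair $(\mathcal O^2,\partial\mathcal O^2)$ with coefficients in $\mathrm{Ad}\rho$. Taking the alternating sum over $i$ of the equalities $\dim H^i(\mathcal O^2,\mathrm{Ad}\rho)+\dim H^{2-i}(\mathcal O^2,\partial\mathcal O^2,\mathrm{Ad}\rho)=\dim H^i(\widetilde{\mathcal O^2},\mathrm{Ad}\rho)$ of Lemma~\ref{Lemma:orientation} and using Proposition~\ref{Prop:Euler} for $\mathcal O^2$ and $\widetilde{\mathcal O^2}$ gives $\widetilde\chi(\mathcal O^2,\mathrm{Ad}\rho)+\sum_i(-1)^i\dim H^i(\mathcal O^2,\partial\mathcal O^2,\mathrm{Ad}\rho)=\widetilde\chi(\widetilde{\mathcal O^2},\mathrm{Ad}\rho)$. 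On the other hand, the long exact sequence of the pair has vanishing total Euler characteristic, which (using Proposition~\ref{Prop:Euler} for the orbifolds $\mathcal O^2$ and $\partial\mathcal O^2$) reads $\sum_i(-1)^i\dim H^i(\mathcal O^2,\partial\mathcal O^2,\mathrm{Ad}\rho)=\widetilde\chi(\mathcal O^2,\mathrm{Ad}\rho)-\widetilde\chi(\partial\mathcal O^2,\mathrm{Ad}\rho)$. Adding the two displays gives the desired relation $2\,\widetilde\chi(\mathcal O^2,\mathrm{Ad}\rho)=\widetilde\chi(\widetilde{\mathcal O^2},\mathrm{Ad}\rho)+\widetilde\chi(\partial\mathcal O^2,\mathrm{Ad}\rho)$.

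Third, I would compute the boundary term: the components of $\partial\mathcal O^2$ are circles $S^1$ and mirror intervals $[\![0,1]\!]$, and a circle carries no branching, so $\widetilde\chi(S^1,\mathrm{Ad}\rho)=\chi(S^1)\dim G=0$; hence $\widetilde\chi(\partial\mathcal O^2,\mathrm{Ad}\rho)=\sum_{[\![0,1]\!]\subset\partial\mathcal O^2}\widetilde\chi([\![0,1]\!],\mathrm{Ad}\rho)$. Dividing the relation of the previous paragraph by $2$ and substituting $\dim_{[\rho]} X(\mathcal O^2,G)=-\widetilde\chi(\mathcal O^2,\mathrm{Ad}\rho)$ and $\dim_{\rho} X(\widetilde{\mathcal O^2},G)=-\widetilde\chi(\widetilde{\mathcal O^2},\mathrm{Ad}\rho)$ yields the claimed formula. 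The only points requiring care are the verification that Proposition~\ref{Proposition:rhogood} genuinely applies in the non-orientable, with-boundary setting (asphericity, goodness of $\rho$ and of its restriction to $\widetilde{\mathcal O^2}$, and the vanishings $H^0=H^2=0$) and the fact that $\partial\widetilde{\mathcal O^2}$ is the orientation covering of $\partial\mathcal O^2$ — each mirror interval $[\![0,1]\!]\cong S^1/(z\mapsto\bar z)$ being covered by a single circle and each boundary circle by two — which is what makes Lemma~\ref{Lemma:orientation}'s relative term compatible with the component-by-component computation; the remainder is the routine alternating-sum bookkeeping above, so I do not expect a genuine obstacle.
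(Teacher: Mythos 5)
Your proof is correct, but it takes a genuinely different route from the paper's. The paper writes $\dim_{[\rho]}X(\mathcal O^2,G)=\dim\mathcal R^{\mathrm{good}}(\mathcal O^2,\partial\mathcal O^2,G)+\dim H^1(\partial\mathcal O^2,\mathrm{Ad}\rho)$ (and likewise for $\widetilde{\mathcal O^2}$), halves the relative term via the Lagrangian immersion of Proposition~\ref{Prop:Lagrangian}, and then compares the two boundary terms using the explicit counts \eqref{eqn:reg} and \eqref{eqn:dimH1bc}, which is where $\partial$-regularity enters. You instead read both dimensions as $-\widetilde\chi$ via Proposition~\ref{Proposition:rhogood} and reduce the whole statement to the identity $2\,\widetilde\chi(\mathcal O^2,\mathrm{Ad}\rho)=\widetilde\chi(\widetilde{\mathcal O^2},\mathrm{Ad}\rho)+\widetilde\chi(\partial\mathcal O^2,\mathrm{Ad}\rho)$, which you derive correctly from Lemma~\ref{Lemma:orientation} together with the vanishing of the alternating sum along the long exact sequence of the pair $(\mathcal O^2,\partial\mathcal O^2)$; the circle components then drop out because $\widetilde\chi(S^1,\mathrm{Ad}\rho)=0$. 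Your argument is arguably cleaner and shows that $\partial$-regularity is not actually needed for this formula (only goodness of the restriction to $\widetilde{\mathcal O^2}$), whereas the paper's route recycles the relative character variety computations that it needs elsewhere. Two minor points: Proposition~\ref{Proposition:rhogood} requires $\mathcal O^2$ aspherical (equivalently $\chi(\mathcal O^2)\le 0$), a hypothesis that is only implicit in the statement — but the paper's own proof makes the same implicit assumption through Proposition~\ref{Prop:relsmooth}, so this is not a defect relative to the paper; and your appeal to Proposition~\ref{Prop:Euler} for $\partial\mathcal O^2$ is simply its component-by-component application to the closed $1$-orbifolds of the boundary, which is legitimate.
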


\begin{proof}
In the proof of Proposition~\ref{Prop:relsmooth} we use (based on \cite{HP2020}):
\begin{align*}
\dim_{[\rho]} X( \mathcal O^2, G)=&\dim_{[\rho]} \mathcal{R}^{\mathrm{good} }( \mathcal O^2,\partial\mathcal O^2, G)
 +\dim H^1(\partial\mathcal O^2, \mathrm{Ad}\rho),
 \\
\dim_{[\rho]} X( \widetilde{\mathcal O^2}, G)=&\dim_{[\rho]} \mathcal{R}^{\mathrm{good} }( \widetilde{\mathcal O^2},\partial\mathcal O^2, G)
 +\dim H^1(\partial\widetilde{\mathcal O^2}, \mathrm{Ad}\rho).
\end{align*} 
Furthermore, by Proposition~\ref{Prop:Lagrangian}:
$$
 \dim_{[\rho]} \mathcal{R}^{\mathrm{good} }( \mathcal O^2,\partial\mathcal O^2, G)=
 \frac12 \dim_{[\rho]} \mathcal{R}^{\mathrm{good} }( \widetilde{\mathcal O^2},\partial \widetilde{\mathcal O^2}, G).
$$
And from \eqref{eqn:reg} and \eqref{eqn:dimH1bc} in the proof of Proposition~\ref{Prop:relsmooth}:
\begin{align*}
 \dim H^1(\partial\widetilde{\mathcal O^2}, \mathrm{Ad}\rho) & = (2\mathsf{c}+{\mathsf{b}})\operatorname{rank} G,
 \\
 \dim H^1(\partial\mathcal O^2, \mathrm{Ad}\rho) & = (\mathsf{c}+\frac{\mathsf{b}}2)\operatorname{rank} G -\frac{1}{2}
 \widetilde \chi(\partial \mathcal O^2, \mathrm{Ad}\rho) .
\end{align*}
Using these formulas, we just need to know that 
$\widetilde \chi(\partial \mathcal O^2, \mathrm{Ad}\rho)$  is the sum of the twisted Euler characteristics of each component,
and that  $\widetilde \chi(S^1, \mathrm{Ad}\rho)=0$.
\end{proof}

\begin{figure}[h]
\begin{center}
\begin{tikzpicture}[line join = round, line cap = round]
\draw[very thick] (0,0)--(4,0); 
\draw[thick] (0,-.15)--(0,.15);
\draw[thick] (4,-.15)--(4,.15);
\draw(0, .4) node{$\mathbb Z/2\mathbb{Z}$};
\draw(4, .4) node{$\mathbb Z/2\mathbb{Z}$};
\end{tikzpicture}
\end{center}
\caption{The orbifold $[\![0,1]\!]=\mathbb R/ D_\infty$.}
\label{Figure:Interval}
\end{figure}

The orbifold fundamental group of  $ [\![0,1]\!] $ is the infinite dihedral group $D_\infty$, so that $\mathbb{R}/D_\infty\cong [\![0,1]\!] $. 
In  a group presentation 
$$
 D_{\infty}=\langle \sigma_1, \sigma_2\mid  \sigma_1^2=\sigma_2^2= 1\rangle 
$$
the stabilizers of the mirror points are the cyclic groups of order $2$ generated by $\sigma_1$ and $\sigma_2$ respectively, see 
Figure~\ref{Figure:Interval}.
For further applications we need the following lemma:

\begin{Lemma}
 \label{Lemma:InfiniteDihedral}
 For any representation $\rho\colon D_{\infty}=\langle \sigma_1, \sigma_2\mid  \sigma_1^2=\sigma_2^2= 1\rangle \to G$:
 \begin{align*}
 \widetilde \chi( \mathbb{R}/D_\infty  ,\mathrm{Ad}\rho)  
  & = \dim \mathfrak{g}^{\rho(\sigma_1)}+\dim \mathfrak{g}^{\rho(\sigma_2)} -\dim \mathfrak{g}, \\
  \dim H^0(  \mathbb{R}/D_\infty ,\mathrm{Ad}\rho )&= \big(  \dim \mathfrak{g}^{\rho(\sigma_1\sigma_2)}+ 
  \dim \mathfrak{g}^{\rho(\sigma_1)}+\dim \mathfrak{g}^{\rho(\sigma_2)} -\dim \mathfrak{g}\big)/2 , \\
  \dim H^1(  \mathbb{R}/D_\infty ,\mathrm{Ad}\rho )&= \big(  \dim \mathfrak{g}^{\rho(\sigma_1\sigma_2)}- 
  \dim \mathfrak{g}^{\rho(\sigma_1)}-\dim \mathfrak{g}^{\rho(\sigma_2)} +\dim \mathfrak{g}\big)/2 .
 \end{align*}
\end{Lemma}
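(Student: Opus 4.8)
The plan is to work with the economical CW-structure $K$ on $\mathbb R/D_\infty\cong[\![0,1]\!]$ having two $0$-cells $v_1,v_2$ whose stabilizers are the order-two groups $\langle\sigma_1\rangle$ and $\langle\sigma_2\rangle$, and a single $1$-cell $e$ with trivial stabilizer (see Figure~\ref{Figure:Interval}). The first identity is then immediate from Definition~\ref{Def:OEC}: the $1$-cell has trivial stabilizer, so it contributes $-\dim\mathfrak g$, while $v_1,v_2$ contribute $\dim\mathfrak g^{\rho(\sigma_1)}$ and $\dim\mathfrak g^{\rho(\sigma_2)}$, whence
$$
\widetilde\chi(\mathbb R/D_\infty,\mathrm{Ad}\rho)=\dim\mathfrak g^{\rho(\sigma_1)}+\dim\mathfrak g^{\rho(\sigma_2)}-\dim\mathfrak g.
$$

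For the remaining two identities I would set up a two-by-two linear system for $\dim H^0$ and $\dim H^1$ of $\mathbb R/D_\infty$. Because $K$ is one-dimensional, $H^i(\mathbb R/D_\infty,\mathrm{Ad}\rho)=0$ for $i\ge 2$, so Proposition~\ref{Prop:Euler} supplies the first equation $\dim H^0-\dim H^1=\widetilde\chi(\mathbb R/D_\infty,\mathrm{Ad}\rho)$. For the second I would invoke the orientation covering: the reflections $\sigma_1,\sigma_2$ reverse orientation, so the orientation-preserving subgroup is $\langle\sigma_1\sigma_2\rangle\cong\mathbb Z$ and the orientation covering of $\mathbb R/D_\infty$ is the circle $S^1=\mathbb R/\langle\sigma_1\sigma_2\rangle$ (which is a manifold, so $\mathbb R/D_\infty$ is very good). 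Since $[\![0,1]\!]$ is a closed $1$-orbifold, $\partial(\mathbb R/D_\infty)=\emptyset$, and Lemma~\ref{Lemma:orientation} applied with $n=1$ and $i=0$ gives
$$
\dim H^0(\mathbb R/D_\infty,\mathrm{Ad}\rho)+\dim H^1(\mathbb R/D_\infty,\mathrm{Ad}\rho)=\dim H^0(S^1,\mathrm{Ad}\rho)=\dim\mathfrak g^{\rho(\sigma_1\sigma_2)},
$$
the last equality because $H^0$ of the connected circle with these coefficients is $\mathfrak g^{\rho(\pi_1 S^1)}=\mathfrak g^{\rho(\sigma_1\sigma_2)}$. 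Adding and subtracting the two equations and dividing by $2$ yields the stated formulas for $\dim H^0$ and $\dim H^1$.

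The only substantive step is the second equation, and all of its weight is carried by Lemma~\ref{Lemma:orientation}: its content is that the nontrivial deck transformation $\sigma$ of $S^1\to[\![0,1]\!]$ reverses the orientation of the circle, hence $\sigma^*$ acts by $-1$ on $H^1(S^1;\mathbb C)$ and, through Poincar\'e duality on $S^1$, interchanges the $(\pm1)$-eigenspaces of $H^0(S^1,\mathrm{Ad}\rho)$ and $H^1(S^1,\mathrm{Ad}\rho)$ in such a way that the two $\sigma^*$-fixed subspaces together have dimension $\dim\mathfrak g^{\rho(\sigma_1\sigma_2)}$; by Proposition~\ref{prop:regularcovering} those fixed subspaces are $H^0$ and $H^1$ of $\mathbb R/D_\infty$. (One could instead obtain $H^0(\mathbb R/D_\infty,\mathrm{Ad}\rho)\cong\mathfrak g^{\rho(\sigma_1)}\cap\mathfrak g^{\rho(\sigma_2)}$ and $H^1(\mathbb R/D_\infty,\mathrm{Ad}\rho)\cong\mathfrak g/(\mathfrak g^{\rho(\sigma_1)}+\mathfrak g^{\rho(\sigma_2)})$ from the Mayer--Vietoris sequence of $D_\infty=C_2*C_2$, but then one is left to check the identity $\dim\mathfrak g^{\rho(\sigma_1\sigma_2)}=2\dim\big(\mathfrak g^{\rho(\sigma_1)}\cap\mathfrak g^{\rho(\sigma_2)}\big)+\dim\mathfrak g-\dim\mathfrak g^{\rho(\sigma_1)}-\dim\mathfrak g^{\rho(\sigma_2)}$ for the two $\mathcal B$-orthogonal involutions $\mathrm{Ad}_{\rho(\sigma_1)}$, $\mathrm{Ad}_{\rho(\sigma_2)}$ and their product $\mathrm{Ad}_{\rho(\sigma_1\sigma_2)}$, which the cohomological route above avoids entirely.)
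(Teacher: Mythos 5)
Your proof is correct and follows essentially the same route as the paper: compute $\widetilde\chi(\mathbb R/D_\infty,\mathrm{Ad}\rho)$ from the two-vertex, one-edge CW-structure, pair it with the relation $\dim H^0+\dim H^1=\dim\mathfrak g^{\rho(\sigma_1\sigma_2)}$ coming from Lemma~\ref{Lemma:orientation} applied to the orientation covering $S^1\to\mathbb R/D_\infty$, and solve the resulting $2\times 2$ system. The extra commentary on why $\sigma^*$ acts by $-1$ and the Mayer--Vietoris alternative are correct but unnecessary for the argument.
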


\begin{proof}
We compute the twisted Euler characteristic by using the simplicial structure of $\mathbb R/D_\infty$ with
one 1-cell (with trivial stabilizers) and two 0-cells (with stabilizers generated by $\sigma_1$ and $\sigma_2$ respectively):
\begin{equation}
\label{eqn:twistDinfty}
\begin{split}
\widetilde \chi(\mathbb R/D_\infty,\mathrm{Ad}\rho)  
&= \dim \mathfrak{g}^{\rho(\sigma_1)}+\dim \mathfrak{g}^{\rho(\sigma_2)} -\dim \mathfrak{g}\\
 & = \dim H^0(  \mathbb{R}/D_\infty ,\mathrm{Ad}\rho ) -  \dim H^1(  \mathbb{R}/D_\infty ,\mathrm{Ad}\rho ). 
\end{split}
\end{equation}
%
The orientation covering is denoted by $S^1\to \mathbb{R}/D_\infty$,  and $\pi_1(S^1)$ 
corresponds to the infinite cyclic subgroup of $D_{\infty}$ generated by $\sigma_1\sigma_2$.
As $S^1$ is a manifold and $\chi(S^1)=0$:
\begin{equation}
 \label{eqn:dimh1h0}
\dim H^1(S^1,\mathrm{Ad}\rho)\cong\dim H^0(S^1,\mathrm{Ad}\rho)\cong \mathfrak{g}^{\rho(\sigma_1\sigma_2)}.
 \end{equation}
By Lemma~\ref{Lemma:orientation}: 
\begin{equation}
 \label{eqn:h0plush1}
\dim H^0(  \mathbb{R}/D_\infty ,\mathrm{Ad}\rho ) +  \dim H^1(  \mathbb{R}/D_\infty ,\mathrm{Ad}\rho )=
\dim \mathfrak{g}^{\rho(\sigma_1\sigma_2)}. 
\end{equation}
Then the lemma follows from
\eqref{eqn:twistDinfty}
and
\eqref{eqn:h0plush1}.
\end{proof}

Notice that in the previous lemma we do not require anything for the representation $\rho$, that could be trivial.
The first direct application goes to the stabilizer of a corner reflector, the dihedral group $  D_{2k}$, by pre-composing
any representation $D_{2k}\to G$ with the natural surjection from the infinite dihedral group $D_{\infty}\to D_{2k}  $:


\begin{figure}[h]
\begin{center}
\begin{tikzpicture}[line join = round, line cap = round, scale=.9]
%
 \draw[ white, fill=gray!30!white, opacity=.3] (0,0)--(3,0)--(1,2)  to[out=210, in=90] (0,0);
\draw (0,0)--(3,0)--(1,2); 
\begin{scope}[shift={(0.05,+.1)}]
 \draw (0,0)--(3,0)--(1,2); 
\end{scope}
\draw[fill=black ]  (3-.05,0+.05) circle(.1);
\draw[fill=black ]  (1-.05,0+.05) circle(.1);
\draw[fill=black ]  (2.07-.05,1+.05) circle(.1);
\draw(3.3,-.3) node{$x_0$};
\draw(2.5,1.2) node{$x_2$};
\draw(1,-.4) node{$x_1$};
\end{tikzpicture}
\end{center}
\caption{Corner reflector modeled on $\mathbb R^2/ D_{2k}$, with the notation in Corollary~\ref{Coro:CornerCenter}.}
\label{Figure:Corner}
\end{figure}
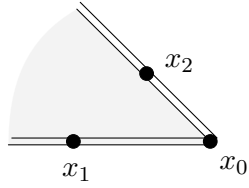

\begin{Corollary}
\label{Coro:CornerCenter}
Let $x_0\in\mathcal O^2 $ be a corner reflector and $x_1$ and $x_2$  mirror points in a neighborhood of $x_0$
separated by $x_0$, see Figure~\ref{Figure:Corner}. If $C_k\subset  \operatorname{Stab}(x_0)$ is the orientation preserving cyclic subgroup
of index 2,
then:
$$
  \dim \mathfrak{g}^{\rho(  \operatorname{Stab}(x_0) )}=   \big(  \dim \mathfrak{g}^{\rho( C_k )}+ 
  \dim \mathfrak{g}^{\rho( \operatorname{Stab}(x_1) )}+\dim \mathfrak{g}^{\rho( \operatorname{Stab}(x_2) )} -\dim \mathfrak{g}\big)/2 .
$$
\end{Corollary}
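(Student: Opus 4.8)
The plan is to recognize $\operatorname{Stab}(x_0)$ as a finite dihedral quotient of the infinite dihedral group and then reduce directly to the $\dim H^0$ formula of Lemma~\ref{Lemma:InfiniteDihedral}. First I would fix the local picture around the corner reflector: $\operatorname{Stab}(x_0)\cong D_{2k}$ is generated by the two reflections $\sigma_1,\sigma_2$ that fix the two mirror walls of $\mathcal O^2$ meeting at $x_0$, and these reflections satisfy $\sigma_1^2=\sigma_2^2=1$ while $\sigma_1\sigma_2$ generates the index-$2$ rotation subgroup $C_k$. Choosing $x_1$ on one of those walls and $x_2$ on the other (as in Figure~\ref{Figure:Corner}), we have $\operatorname{Stab}(x_1)=\langle\sigma_1\rangle$, $\operatorname{Stab}(x_2)=\langle\sigma_2\rangle$ and $C_k=\langle\sigma_1\sigma_2\rangle$. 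Precomposing $\rho|_{\operatorname{Stab}(x_0)}$ with the natural surjection $D_\infty=\langle\sigma_1,\sigma_2\mid\sigma_1^2=\sigma_2^2=1\rangle\twoheadrightarrow D_{2k}=\operatorname{Stab}(x_0)$ produces a representation $\bar\rho\colon D_\infty\to G$ with $\bar\rho(D_\infty)=\rho(\operatorname{Stab}(x_0))$.

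Next I would record the identifications of invariant subspaces: $\mathfrak g^{\bar\rho(D_\infty)}=\mathfrak g^{\rho(\operatorname{Stab}(x_0))}$; $\mathfrak g^{\bar\rho(\sigma_i)}=\mathfrak g^{\rho(\operatorname{Stab}(x_i))}$ for $i=1,2$ since $\bar\rho(\sigma_i)$ generates $\rho(\operatorname{Stab}(x_i))$; and $\mathfrak g^{\bar\rho(\sigma_1\sigma_2)}=\mathfrak g^{\rho(C_k)}$ since $C_k$ is cyclic with generator $\sigma_1\sigma_2$. Because $\mathbb R/D_\infty$ is aspherical (its universal cover $\mathbb R$ is contractible), $H^0(\mathbb R/D_\infty,\mathrm{Ad}\bar\rho)\cong H^0(D_\infty,\mathrm{Ad}\bar\rho)=\mathfrak g^{\bar\rho(D_\infty)}$ by Proposition~\ref{Prop:duality}, so in particular $\dim H^0(\mathbb R/D_\infty,\mathrm{Ad}\bar\rho)=\dim\mathfrak g^{\rho(\operatorname{Stab}(x_0))}$.

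Finally I would substitute these values into the second displayed formula of Lemma~\ref{Lemma:InfiniteDihedral} applied to $\bar\rho$,
$$
\dim H^0(\mathbb R/D_\infty,\mathrm{Ad}\bar\rho)=\big(\dim\mathfrak g^{\bar\rho(\sigma_1\sigma_2)}+\dim\mathfrak g^{\bar\rho(\sigma_1)}+\dim\mathfrak g^{\bar\rho(\sigma_2)}-\dim\mathfrak g\big)/2,
$$
which immediately yields the claimed identity. There is essentially no hard step here: the corollary is a direct specialization of Lemma~\ref{Lemma:InfiniteDihedral}, and the only point deserving care is the purely geometric claim in the first paragraph — that $x_1$ and $x_2$ can be taken so that their stabilizers are exactly the two dihedral generators whose product generates $C_k$ — which is just the configuration drawn in Figure~\ref{Figure:Corner}. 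As in Lemma~\ref{Lemma:InfiniteDihedral}, no hypothesis whatsoever on $\rho$ is used; in particular $\rho$ restricted to $\operatorname{Stab}(x_0)$ may be trivial.
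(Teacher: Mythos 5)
Your proof is correct and follows exactly the route the paper intends: precompose with the surjection $D_\infty \twoheadrightarrow D_{2k}=\operatorname{Stab}(x_0)$ and read off the $H^0$ formula of Lemma~\ref{Lemma:InfiniteDihedral}, using that $H^0$ computes invariants. The identifications of $\mathfrak g^{\bar\rho(\sigma_i)}$, $\mathfrak g^{\bar\rho(\sigma_1\sigma_2)}$, and $\mathfrak g^{\bar\rho(D_\infty)}$ with the corresponding invariant subspaces for $\rho$ are exactly the ones needed, and you correctly note that no hypothesis on $\rho$ is required.
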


%

\subsection{Euclidean 2-orbifolds}
\label{section:Euclidean}

Along this subsection, let $\mathcal O^2$ be a Euclidean two orbifold without boundary. 
Set $\Gamma=\pi_1(\mathcal O^2)$. 
As $\operatorname{Isom}(\mathbb R^2)$ is the semi-direct product 
$\mathbb R^2\rtimes \mathrm{O}(2)$, we have an exact sequence (Bieberbach theorem):
\begin{equation}
 \label{eqn:exacteuclidean}
1\to \Gamma_0\to \Gamma\to \Lambda\to 1,
\end{equation}
with $\Gamma_0\cong\mathbb Z^2$ and $\Lambda\subset \mathrm{O}(2)$ the linear part. 
When $\mathcal O^2$ is orientable,
 $\Lambda$ is cyclic and when  $\mathcal O^2$ is non-orientable, $\Lambda$ is dihedral (or cyclic of order 2).
Furthermore, when $\mathcal O^2$ is orientable or a Coxeter group, $\Gamma_0$ is the maximal torsion
free subgroup and the sequence \eqref{eqn:exacteuclidean} splits, as $\Lambda$ 
corresponds to the stabilizer of a point in $\mathbb R^2$.
A  Coxeter group  in  $\operatorname{Isom}(\mathbb R^2)$  is a group generated by reflections along a square or a Euclidean triangle 
with angles an integer divisor of $\pi$.

\begin{figure}[ht]
\begin{center}
 \begin{tikzpicture}[line join = round, line cap = round, scale=.8]
   \begin{scope}[shift={(-8,1)}]
   \draw (0,0) circle [x radius=1.7, y radius=.8];
    \draw (1.15,0.1)  arc[x radius = 1.2, y radius = .5, start angle= 345, end angle= 195];
    \draw (0.95,0.0)  arc[x radius = 1, y radius = .5, start angle= 15, end angle= 165];
\draw (0,-1.75) node{$T^2$};
   \end{scope}
  \begin{scope}[shift={(-4,0)}]
 \draw (-1,0)--(1,0)--(1, 2)--(-1,2)--cycle; 
     \draw[red, fill=red] (-1,0) circle(0.04);
     \draw[red, fill=red] (1,0) circle(0.04);
     \draw[red, fill=red] (1,2) circle(0.04);
     \draw[red, fill=red] (-1,2) circle(0.04);
    \draw (-1+.4,0) to[out=90, in=0] (-1  , 0+.4); 
     \draw[dashed, very thin] (-1+.4,0) to[out=180, in=-90] (-1  ,  .4); 
          \draw (1-.4,0) to[out=90, in=180] (1  , 0+.4); 
     \draw[dashed, very thin] (1-.4,0) to[out=0, in=-90] (1  ,  .4);   
    \draw (-1+.4,2) to[out=-90, in=0] (-1  , 2-.4); 
     \draw[dashed, very thin] (-1+.4,2) to[out=180, in=90] (-1  ,  2-.4); 
          \draw (1-.4,2) to[out=-90, in=180] (1  , 2-.4); 
      \draw[dashed, very thin] (1-.4,2) to[out=0, in=90] (1  ,  2-.4);   
     \draw (-6/5,0) node{$2 $};
     \draw (6/5,0) node{$2 $};
     \draw (6/5,2) node{$2 $};
     \draw (-6/5,2) node{$2 $};
     \draw (0,-.75) node{$S^2(2,2,2, 2)$};
   \end{scope}
\begin{scope}[shift={(0,0)}]
 \draw (-1,0)--(1,0)--(0, 1.73)--cycle; 
     \draw[red, fill=red] (-1,0) circle(0.04);
     \draw[red, fill=red] (1,0) circle(0.04);
     \draw[red, fill=red] (0,1.73) circle(0.04);
    \draw (-1+.4,0) to[out=80, in=0] (-1+.4*.5  , 0+.4*.87); 
     \draw[dashed, very thin] (-1+.4,0) to[out=160, in=-100] (-1+.4*.5  , 0+.4*.87); 
    \draw (1-.4,0) to[out=110, in=180+0] (1-.4*.5  , 0+.4*.87); 
     \draw[ dashed, very thin] (1-.4,0) to[out=20, in=-80] (1-.4*.5  , 0+.4*.87); 
    \draw  (0-.4*.5, 1.73-.4*.87) to[out=-60, in=180+60] (0+.4*.5, 1.73-.4*.87);
    \draw[ dashed, very thin]  (0-.4*.5, 1.73-.4*.87) to[out=30, in=180-30] (0+.4*.5, 1.73-.4*.87);
     \draw (-6/5,0) node{$3 $};
     \draw (6/5,0) node{$3 $};
     \draw (1/5,1.8) node{$3 $};
     \draw (0,-.75) node{$S^2(3,3,3)$};
   \end{scope}
       \begin{scope}[shift={(4,0)}]
 \draw (-1,0)--(1,0)--(-1, 2)--cycle; 
     \draw[red, fill=red] (-1,0) circle(0.04);
     \draw[red, fill=red] (1,0) circle(0.04);
     \draw[red, fill=red] (-1,2) circle(0.04);
 \draw (-1+.4,0) to[out=90, in=0] (-1  , 0+.4); 
     \draw[dashed, very thin] (-1+.4,0) to[out=180, in=-90] (-1  ,  .4); 
  \draw (1-.6,0) to[out=100, in=215] (1-.6*0.707, 0+  .6*0.707); 
  \draw[ dashed, very thin] (1-.6,0) to[out=10, in=-55] (1-.6*0.707, 0+  .6*0.707); 
\draw (-1,2-.6) to[out=0-30, in=-45-30] (-1+.6*0.707, 2- .6*0.707); 
  \draw[ dashed, very thin]    (-1,2-.6) to[out=90-30, in=135+30](-1+.6*0.707, 2- .6*0.707); 
    
%
     \draw (-6/5,0) node{$2 $};
     \draw (6/5,0) node{$4 $};
     \draw (-1-1/5,2) node{$4 $};
     \draw (0,-.75) node{$S^2(2,4,4)$};
   \end{scope}
       \begin{scope}[shift={(8,0)}]
 \draw (-1,0)--(0.15,0)--(-1, 2)--cycle; 
     \draw[red, fill=red] (-1,0) circle(0.04);
     \draw[red, fill=red] (.15,0) circle(0.04);
     \draw[red, fill=red] (-1,2) circle(0.04);
 \draw (-1+.4,0) to[out=90, in=0] (-1  , 0+.4); 
     \draw[dashed, very thin] (-1+.4,0) to[out=180, in=-90] (-1  ,  .4);     
    \draw (.15-.4,0) to[out=110, in=180+0] (.15-.4*.5  , 0+.4*.87); 
     \draw[ dashed, very thin] (.15-.4,0) to[out=20, in=-80] (.15-.4*.5  , 0+.4*.87);
 \draw (-1 ,2-.6) to[out=-30, in=225+30] (-1+.5*.6, 2-.87*.6);
 \draw[ dashed, very thin] (-1 ,2-.6) to[out=90-30, in=225-30] (-1+.5*.6, 2-.87*.6);
     \draw (-6/5,0) node{$2 $};
     \draw (.15+1/5,0) node{$3 $};
     \draw (-6/5,2) node{$6 $};
     \draw (-.4,-.75) node{$S^2(2,3,6)$};
   \end{scope}
\end{tikzpicture}
\end{center}
\caption{The five closed, orientable, and Euclidean  2-orbifolds.}
\label{Figure:Euclidean}
\end{figure}
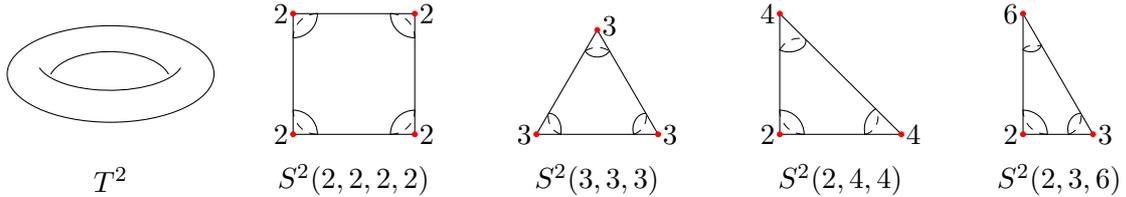

There are precisely five closed orientable Euclidean 2-orbifolds (Figure~\ref{Figure:Euclidean}): 
\begin{itemize}
 \item the 2-torus $T^2$,
\item a 2-sphere with 4 cone points of order $2$, $S^2(2,2,2,2)$, 
and 
\item 
three
2-spheres with cone points of order $p$, $q$ and $r$ satisfying 
$\frac{1}{p}+\frac{1}{q}+\frac{1}{r}=1$,
$S^2(p,q,r)$ for $(p,q,r)=(3,3,3)$, $(2,4,4)$ and $(2,3,6)$. 
\end{itemize}
The orbifold
$S^2(2,2,2,2)$ is sometimes called a  \emph{pillowcase}, geometrically is the 
double of a rectangle, and $S^2(p,q,r)$ is called a 
 \emph{turnover}, as it is the double of a triangle. For different values of 
 the ramifications, turnovers can also be spherical or Euclidean. 
Table~\ref{Table:kO} gives the cardinality of $\Lambda\cong \Gamma/\Gamma_0$
for these five Euclidean orbifolds.

\begin{table}[h]
\begin{center}
 \begin{tabular}{c|c}
 $\mathcal O^2$ & $ | \Gamma/\Gamma_0 | $ \\
 \hline
 $T^2$ &  1 \\
 $S^2(2,2,2,2)$ & 2 \\
 $S^2( 3,3,3)$ & 3  \\
 $S^2( 2,4,4)$ & 4  \\
 $S^2( 2,3,6)$ & 6 
 \end{tabular}
 \caption{Values of $k(\mathcal O^2)=\vert \Gamma/\Gamma_0\vert$
 when $\mathcal O^2$ is orientable, for $\Gamma_0$ maximal torsion free.}
 \label{Table:kO}
\end{center}\end{table}

As $\Gamma=\pi_1(\mathcal O^2)$ is virtually abelian, most of the representations we consider are not irreducible.
Instead, we deal with strong regularity:

\begin{Definition}
A representation of a Euclidean 2-orbifold $\rho\colon\pi_1 (\mathcal O^2)\to G$ is 
called \emph{strongly regular} if for the maximal normal subgroup
$\Gamma_0 < \pi_1(\mathcal O^2)$, with $\Gamma_0\cong \mathbb Z^2$,
\begin{enumerate}[(a)]
\item $\dim \mathfrak{g}^{\rho(\Gamma_0)}=\operatorname{rank} G$, and 
\item the projection
of $\rho(\Gamma_0)$ is contained in a connected abelian subgroup of $G/\mathcal Z(G)$.
\end{enumerate}
\end{Definition}

\begin{Theorem}
\label{Thm:EuclideanRed}
Assume that $\mathcal O^2$ is compact Euclidean and that 
$\rho\colon\pi_1(\mathcal O^2)\to G$ is \emph{strongly regular}. 
Then:
\begin{enumerate}[(i)]
 \item  $\rho$ is a smooth point of $R(\Gamma, G)$ of dimension
$$
 -\widetilde\chi(\mathcal O^2,\mathrm{Ad}\rho)+\dim G+ \dim \mathfrak{g}^{\rho(\Gamma)}.
$$
 \item  The component $X_0(\Gamma,G)$ of $X(\Gamma,G)$ that contains the character of $\rho$ has dimension:
$$
\dim X_0(\Gamma,G)  =-\widetilde\chi(\mathcal O^2,\mathrm{Ad}\rho)+2 \dim  \mathfrak{g}^{\rho(\Gamma)}=
\dim H^1(\mathcal O^2,\mathrm{Ad}\rho).
$$
\end{enumerate}  
\end{Theorem}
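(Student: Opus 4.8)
The plan is to prove (i) first in the orientable case, deduce (ii) from it, and then reduce the non-orientable case. Write $\Gamma=\pi_1(\mathcal O^2)$; by Bieberbach's theorem the translation lattice $\Gamma_0\cong\mathbb Z^2$ is normal of finite index, with finite quotient $\Lambda=\Gamma/\Gamma_0$, and when $\mathcal O^2$ is orientable $\Lambda$ is cyclic and $\Gamma=\Gamma_0\rtimes\Lambda$ splits. Set $\rho_0=\rho|_{\Gamma_0}$ and $\bar\rho=\rho|_\Lambda$. The strategy is to first understand $R(\Gamma_0,G)$ near $\rho_0$, then transfer the conclusion to $R(\Gamma,G)$ using that $\Lambda$ is finite, and finally pass to the GIT quotient.

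\emph{The lattice.} Since $T^2$ is a closed orientable surface with $\chi(T^2)=0$, condition (a) of strong regularity gives $\dim H^0(T^2,\mathrm{Ad}\rho_0)=\dim\mathfrak g^{\rho_0(\Gamma_0)}=\operatorname{rank}G$, Poincar\'e duality gives $\dim H^2=\dim H^0$, and hence $\dim H^1(\mathbb Z^2,\mathrm{Ad}\rho_0)=2\operatorname{rank}G$; together with $\dim B^1(\mathbb Z^2,\mathrm{Ad}\rho_0)=\dim G-\operatorname{rank}G$ and Theorem~\ref{Thm:Weil} this yields $\dim T^{Zar}_{\rho_0}R(\Gamma_0,G)=\dim Z^1(\mathbb Z^2,\mathrm{Ad}\rho_0)=\dim G+\operatorname{rank}G$. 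On the other hand, condition (b) places $\rho_0$ on the ``good'' component $\overline{G\cdot\hom(\mathbb Z^2,T)}$ of the commuting variety $\hom(\mathbb Z^2,G)$, whose dimension equals $\dim G+\operatorname{rank}G$; since the Zariski tangent space at $\rho_0$ has that same dimension, $\rho_0$ lies on this single component and is a smooth point of it. I expect this to be the crux: strong regularity is exactly what is needed to prevent $\rho_0$ from sitting at a crossing of components of $\hom(\mathbb Z^2,G)$ (compare the parabolic representation of $\mathbb Z^2$ in $\mathrm{SL}(2,\mathbb C)$), and in particular condition (b), on the position of $\rho_0(\Gamma_0)$ relative to a connected abelian subgroup, is used precisely here.

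\emph{From $\Gamma_0$ to $\Gamma$.} Because $\Lambda$ is finite and we are in characteristic zero, $\bar\rho\in\hom(\Lambda,G)$ is rigid; so every representation near $\rho$ is conjugate to one agreeing with $\bar\rho$ on $\Lambda$, and such a representation is determined by its restriction to $\Gamma_0$, which must lie in the fixed-point scheme $\hom(\Gamma_0,G)^\Lambda$ for the $\Lambda$-action combining the point group on $\Gamma_0$ with $\mathrm{Ad}\bar\rho$ on $G$. Thus, near $\rho$, $R(\Gamma,G)$ is, as a scheme, $G\times^{Z_G(\bar\rho(\Lambda))}\hom(\Gamma_0,G)^\Lambda$; since the fixed locus of a finite group acting on a variety that is smooth at $\rho_0$ is again smooth at $\rho_0$, the scheme $R(\Gamma,G)$ is smooth at $\rho$. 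Its local dimension is then $\dim T^{Zar}_\rho R(\Gamma,G)=\dim Z^1(\Gamma,\mathrm{Ad}\rho)$, which Proposition~\ref{Prop:dimR} already computes to be $-\widetilde\chi(\mathcal O^2,\mathrm{Ad}\rho)+\dim G+\dim\mathfrak g^{\rho(\Gamma)}$; this proves (i). (The same number comes out of a direct count, using $H^1(\Gamma,\mathrm{Ad}\rho)\cong H^1(\Gamma_0,\mathrm{Ad}\rho_0)^\Lambda$ from the Hochschild--Serre sequence, the Maschke splitting of $0\to B^1\to Z^1\to H^1\to0$ for $\Gamma_0$, and Proposition~\ref{Prop:Euler} with Poincar\'e duality.)

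\emph{The character variety and the non-orientable case.} Let $R_0\ni\rho$ be the (now unique) component of $R(\Gamma,G)$, so $X_0=R_0/\!\!/G$. The centralizer dimension is locally constant on $R_0$ near $\rho$, equal to $\dim\mathfrak g^{\rho(\Gamma)}$: a nearby $\rho'$ is conjugate to one with $\rho'|_\Lambda=\bar\rho$ and $\rho'(\Gamma_0)$ still strongly regular, so $\mathfrak g^{\rho'(\Gamma_0)}$ is a $\bar\rho(\Lambda)$-subrepresentation of $\mathfrak g$ of fixed dimension $\operatorname{rank}G$ varying in a connected family, whence its space of $\Lambda$-invariants $\mathfrak g^{\rho'(\Gamma)}$ has locally constant dimension. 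Therefore the generic $G$-orbit in $R_0$ has dimension $\dim G-\dim\mathfrak g^{\rho(\Gamma)}$, and $\dim X_0=\dim R_0-\dim G+\dim\mathfrak g^{\rho(\Gamma)}=-\widetilde\chi(\mathcal O^2,\mathrm{Ad}\rho)+2\dim\mathfrak g^{\rho(\Gamma)}$, which equals $\dim H^1(\mathcal O^2,\mathrm{Ad}\rho)$ by Proposition~\ref{Prop:Euler} and Poincar\'e duality, giving (ii). When $\mathcal O^2$ is non-orientable the extension $1\to\Gamma_0\to\Gamma\to\Lambda\to1$ need not split, so I would instead run the argument for the orientable Euclidean orientation covering $\widetilde{\mathcal O^2}$ and descend using Proposition~\ref{prop:regularcovering} and Lemma~\ref{Lemma:orientation}; the latter replaces $\dim H^2(\mathcal O^2,\mathrm{Ad}\rho)=\dim\mathfrak g^{\rho(\Gamma)}$ by $\dim\mathfrak g^{\rho(\widetilde\Gamma)}-\dim\mathfrak g^{\rho(\Gamma)}$, producing the formula of Theorem~\ref{thm:dim2Euc}(b).
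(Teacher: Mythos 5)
Your proposal is correct and reaches the same formulas as the paper, but the route you take for the key smoothness step is genuinely different. The paper handles part (i) by first using Richardson's theorem for $T^2$ (which you also invoke, phrased as ``the good component $\overline{G\cdot\hom(\mathbb Z^2,\mathbb T)}$''), then passing to general $\Gamma$ via an \emph{equivariant obstruction} argument: Goldman's obstructions are natural, hence $\Gamma/\Gamma_0$-equivariant, and since they vanish for the smooth representation variety of $\Gamma_0$ they vanish for $\Gamma$ by Propositions~\ref{prop:regularcovering} and~\ref{Prop:duality}. You instead exploit the semidirect-product splitting $\Gamma\cong\Gamma_0\rtimes\Lambda$ (which the paper notes holds precisely in the orientable case), combine it with rigidity of $\hom(\Lambda,G)$ at $\bar\rho$ to present $R(\Gamma,G)$ near $\rho$ as $G\times^{Z_G(\bar\rho(\Lambda))}\hom(\Gamma_0,G)^\Lambda$, and then invoke the fact that the fixed locus of a finite group acting on a variety smooth at $\rho_0$ is smooth at $\rho_0$ (Fogarty/Cartan linearization). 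These two smoothness arguments are close cousins (each is at bottom a ``finite group acting on something smooth'' statement), but yours is more structural and self-contained, while the paper's equivariant-obstruction argument does not rely on the splitting and is the one the author reuses again in the 3-dimensional case. For part (ii), the paper only needs upper semi-continuity of $\dim\mathfrak g^{\rho'(\Gamma)}$ and the fact that a smooth point minimizes the Zariski tangent dimension, whereas you argue for genuine local constancy of $\dim\mathfrak g^{\rho'(\Gamma)}$ via the Grassmannian of $\Lambda$-subrepresentations of dimension $\operatorname{rank} G$. Your argument is a little stronger and is sound, though you are implicitly using two small facts that deserve a sentence: that $\dim\mathfrak g^{\rho'(\Gamma_0)}=\operatorname{rank}G$ is an open condition on the component of $\rho_0$ (upper semicontinuity plus the inequality $\ge\operatorname{rank}G$ inherited from $\overline{G\cdot\hom(\mathbb Z^2,\mathbb T)}$), and that the $\Lambda$-isotypic decomposition of a continuously varying $\Lambda$-invariant subspace of fixed dimension is locally constant. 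The non-orientable reduction you sketch at the end matches the formula stated in Theorem~\ref{thm:dim2Euc}(b) via Lemma~\ref{Lemma:orientation}; note that Theorem~\ref{Thm:EuclideanRed} as stated gives the orientable formula (consistent with Proposition~\ref{Prop:dimR}(a)), so your restriction to the orientable case for the main argument is consistent with how the statement is actually used.
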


\begin{proof} (i) 
First consider the case $\mathcal O^2=T^2$, here we adapt an argument of \cite{HeusenerPorti15}. In this case 
$\widetilde\chi(\mathcal O^2,\mathrm{Ad}\rho)=0$, and by a theorem of Richardson 
\cite[Theorem~B]{Richardson}, 
$\rho$ is in the closure of $\mathbb{T}\times \mathbb{T}$ for 
$\mathbb{T}\subset G$ a maximal torus, $\mathbb{T}\cong (\mathbb{C}^*)^{\operatorname{rank}(G)}$ 
(meaning torus of an algebraic group). Therefore $\rho$ is contained in a component of dimension
at least 
$\dim G+\operatorname{rank} G$.  
As $\dim \mathfrak{g}^{\rho(\Gamma)}= \operatorname{rank} G$,
by Proposition~\ref{Prop:dimR}, $\dim T^{Zar}_\rho R(\Gamma,G)= \dim G+ \operatorname{rank} G $.
Therefore
$$
\dim_{\rho} R(\Gamma, G)\geq \dim G+\operatorname{rank} G=
\dim  T^{Zar}_\rho R(\Gamma,G).
$$
Using that the dimension of the Zariski tangent space is always larger than or equal to the dimension
of a variety, with equality only at smooth points, it follows that 
 $\dim_{\rho} R(\Gamma, G)=\dim  T^{Zar}_\rho R(\Gamma,G)$ and $\rho$ is a smooth point.

For general $\mathcal O^2$, notice that 
$ -\widetilde\chi(\mathcal O^2,\mathrm{Ad}\rho)+\dim G+ \dim \mathfrak{g}^{\rho(\Gamma)}
=\dim  T^{Zar}_\rho R(\Gamma,G)$, by Proposition~\ref{Prop:dimR} and all we need to show is that $\rho$ is a
smooth point. 
For that purpose, we use that the variety of representations of $\Gamma_0$ at 
$\rho\vert_{\Gamma_0}$ is smooth. As Goldman  obstructions are  natural  \cite[\S3]{HPS},
they are also equivariant. 
In particular
if we start with a equivariant infinitesimal deformation of $\Gamma_0$, then the sequence of obstructions to integrability is also equivariant.
Thus, by Proposition~\ref{prop:regularcovering} and Proposition~\ref{Prop:duality}, the sequence of obstructions
to integrability of $\Gamma$ vanishes.  This proves (i).

For (ii), notice that $\dim \mathfrak{g}^{\rho(\Gamma)}=\dim H^0(\Gamma,\mathrm{Ad}\rho)$ is 
upper semi-continuous  on
$R(\Gamma, G)$   \cite[Ch. III, Theorem 12.8]{Hartshorne}.  By smoothness,
the dimension of the Zariski tangent space reaches its minimum (along the irreducible component) at $\rho$. 
Hence, as
$$ 
\dim G+  \dim\mathfrak{g}^{\rho(\Gamma)} -\widetilde \chi(\mathcal O^2,\mathrm{Ad}\rho)= \dim T^{Zar}_\rho R(\Gamma,G)
$$
by Proposition~\ref{Prop:dimR},
$\dim\mathfrak{g}^{\rho(\Gamma)}$ reaches its minimum along the irreducible component at $\rho$. 
(Here we use that  $\widetilde \chi(\mathcal O^2,\mathrm{Ad}\rho)$ is constant along components,
because the elements of finite order cannot be deformed.) 
Equivalently the dimension of the orbit  by conjugation 
$$
\dim G-\dim  \mathfrak{g}^{\rho(\Gamma)}
$$ 
reaches its maximum at  $\rho$. 
It follows (for instance from 
\cite[Section~6.3]{Dolgachev}) 
that
$$
\dim X_0(\Gamma, G)=\dim R_0(\Gamma, G)- \big( \dim G-\dim \mathfrak{g}^{\rho(\Gamma)}  \big),
$$
which proves (ii).
\end{proof}

\begin{Remark}
\label{Remark:nonsmooth}
It does not follow from Theorem~\ref{Thm:EuclideanRed} that the variety of characters $X(\Gamma,G)$ is smooth at the character of $\rho$.
\end{Remark}

To explain Remark~\ref{Remark:nonsmooth},  we point out that 
even if $\rho$ is strongly regular,
its orbit may be not closed, so $\rho$ could be smooth but its character not. 
For instance, the parabolic representation of $\mathbb{Z}^2$ in $\mathrm{SL}(2,\mathbb C)$ is a smooth point of 
$R(\mathbb{Z}^2, \mathrm{SL}(2,\mathbb C))$, but its character is a singular point of  $X(\mathbb{Z}^2, \mathrm{SL}(2,\mathbb C))$.
The orbit of the parabolic representation is not closed because it accumulates at the trivial representation (that it is not regular). 

\medskip

Orientable Euclidean 2-orbifolds appear as peripheral subgroups of hyperbolic three-orbifolds of finite volume. Thus 
a natural representation of $\Gamma=\pi_1(\mathcal O^2)$ in  $\mathrm{PSL}(2,\mathbb C)$ occurs as the holonomy of 
horospherical cusps, homeomorphic to $\mathcal{O}^2\times [0,+\infty)$.
We are interested in the composition of this holonomy with the principal
representation $\tau\colon \mathrm{PSL}(2,\mathbb C)\to G$, in view of further computations
for hyperbolic three-orbifolds. 

\begin{Proposition}
 Let $\mathcal O^2$ be a closed orientable Euclidean 2-orbifold. Consider the holonomy
 $\operatorname{hol}\colon\pi_1(\mathcal{O}^2)\to\mathrm{PSL}(2,\mathbb C)$ as the
 horospherical section of a cusp,
 composed with the principal representation $\tau\colon \mathrm{PSL}(2,\mathbb C)
 \to G$. Then $\dim_{[\tau\circ \operatorname{hol}]} X(\mathcal O^2,G)$  is given by 
 Table~\ref{table:dimparabolic}.
 Furthermore, the dimension of $\mathfrak{g}^{\tau\circ\mathrm{hol}(\Gamma) } $ 
 is given by Table~\ref{table:stabparabolic}.
\end{Proposition}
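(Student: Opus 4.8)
The plan is to reduce everything to weight counts for the principal $\mathfrak{sl}_2$-triple. Write $\rho=\tau\circ\operatorname{hol}$, $\Gamma=\pi_1(\mathcal O^2)$, $\Gamma_0\cong\mathbb Z^2$ the maximal torsion free subgroup, and $\Lambda=\Gamma/\Gamma_0$. First I would check that $\rho$ is strongly regular, so that Theorem~\ref{Thm:EuclideanRed} (equivalently Theorem~\ref{thm:dim2Euc}~(a)) applies and gives
$$
\dim_{[\rho]}X(\mathcal O^2,G)=-\widetilde\chi(\mathcal O^2,\mathrm{Ad}\rho)+2\dim\mathfrak g^{\rho(\Gamma)}=\dim H^1(\mathcal O^2,\mathrm{Ad}\rho) .
$$
Here $\operatorname{hol}$ restricted to $\Gamma_0$ is the translation part of the cusp holonomy, so it has nontrivial image in the one-parameter unipotent subgroup $U=\{z\mapsto z+b\}\subset\mathrm{PSL}(2,\mathbb C)$; composing with $\tau$, $\rho(\Gamma_0)$ lies in the one-parameter unipotent subgroup $\exp(\mathbb C N)\subset G$, where $N=d\tau(n)$ for $n$ a nilpotent generator of $\mathfrak u$, which is a regular nilpotent by the definition of the principal representation. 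Since $G$ is adjoint, this gives condition (b) of strong regularity, and for (a) one uses that for $g\in\Gamma_0$ nontrivial $\rho(g)=\exp(bN)$ with $b\neq0$, and $\mathrm{Ad}_{\exp(bN)}v=v$ if and only if $[N,v]=0$, so $\mathfrak g^{\rho(\Gamma_0)}=\ker(\operatorname{ad}_N)$ has dimension $\operatorname{rank}G$ (cf.~Remark~\ref{Remark:PrincipalRegular}).

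It then remains to compute $\widetilde\chi(\mathcal O^2,\mathrm{Ad}\rho)$ and $\dim\mathfrak g^{\rho(\Gamma)}$. Using the decomposition $\mathrm{Ad}\circ\tau=\bigoplus_{i=1}^{r}\operatorname{Sym}^{2d_i}$ of~\eqref{eqn:exponents} together with the principal triple $(H,N,N^-)$, the stabilizer generator of a cone point of order $m$ maps under $\operatorname{hol}$ to a rotation $z\mapsto e^{2\pi i/m}z$, which lifts to $\exp(\tfrac{\pi i}{m}h)\in\mathrm{SL}(2,\mathbb C)$; hence $\mathrm{Ad}\rho$ of it acts on the (with respect to $H$) weight-$2j$ subspace of the $i$-th summand $\operatorname{Sym}^{2d_i}$ (for $-d_i\le j\le d_i$) by the scalar $e^{2\pi i j/m}$, so counting fixed vectors yields $\dim\mathfrak g^{\rho(\operatorname{Stab}(x))}=\sum_{i=1}^{r}\bigl(2\lfloor d_i/m\rfloor+1\bigr)$ for a cone point $x$ of order $m$ (the sign ambiguity in the lift disappears upon passing to $\mathrm{Ad}$). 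Taking a CW-structure presenting $\mathcal O^2$ as the double of a Euclidean triangle (for $S^2(p,q,r)$) or rectangle (for $S^2(2,2,2,2)$), with the cone points as the only cells having nontrivial stabilizer, Definition~\ref{Def:OEC} and $\dim G=\sum_i(2d_i+1)$ from~\eqref{eqn:dimgexp} give a closed formula for $\widetilde\chi(\mathcal O^2,\mathrm{Ad}\rho)$; for $T^2$ simply $\widetilde\chi=0$. For the centralizer, the Bieberbach sequence splits (\S\ref{section:Euclidean}), so $\Gamma=\Gamma_0\rtimes\Lambda$ with $\Lambda\cong\mathbb Z/k$ generated by the stabilizer $\langle\lambda\rangle$ of the cone point of maximal order $k=|\Lambda|$ (Table~\ref{Table:kO}); since $\mathfrak g^{\rho(\Gamma_0)}=\ker(\operatorname{ad}_N)$ is spanned by the $r$ highest-weight vectors $v_i$ of $H$-weight $2d_i$, on which $\mathrm{Ad}_{\rho(\lambda)}$ acts by $e^{2\pi i d_i/k}$, one obtains $\dim\mathfrak g^{\rho(\Gamma)}=\#\{i:k\mid d_i\}$. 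Substituting these into the displayed equation produces Tables~\ref{table:dimparabolic} and~\ref{table:stabparabolic}; in fact the dimension entry collapses to $2\,\#\{i:d_i\equiv-1\pmod{k}\}$.

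The main obstacle is the last, purely combinatorial step: keeping the $\mathrm{PSL}$-versus-$\mathrm{SL}$ square-root conventions straight so that the eigenvalue on $H$-weight-$2j$ vectors is unambiguously $e^{2\pi i j/m}$, and checking that the alternating sums of floor functions simplify as claimed, using $\tfrac1p+\tfrac1q+\tfrac1r=1$ for the three turnovers. Establishing $\dim\mathfrak g^{\rho(\Gamma_0)}=\operatorname{rank}G$ exactly (and not merely $\le\operatorname{rank}G$) also deserves care, but follows from the identity $\mathfrak g^{\exp(\mathbb C N)}=\ker(\operatorname{ad}_N)$ used above rather than from Remark~\ref{Remark:PrincipalRegular} alone.
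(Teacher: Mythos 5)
Your proposal follows the paper's proof essentially verbatim: establish strong regularity of $\tau\circ\operatorname{hol}$ so that Theorem~\ref{Thm:EuclideanRed} applies, then compute $\widetilde\chi$ and $\dim\mathfrak g^{\rho(\Gamma)}$ via the weight decomposition $\mathrm{Ad}\circ\tau=\bigoplus_\alpha\operatorname{Sym}^{2d_\alpha}$ and the three facts $\dim V_{2d_\alpha}=2d_\alpha+1$, $\dim V_{2d_\alpha}^{C_m}=2\lfloor d_\alpha/m\rfloor+1$, and $\dim V_{2d_\alpha}^{\Gamma}=\delta^{d_\alpha}_{k\mathbb Z}$, the last via the split Bieberbach sequence and the action of the cone-point rotation on highest-weight vectors. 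Your additional remark that every entry of Table~\ref{table:dimparabolic} is $2\,\#\{\alpha: d_\alpha\equiv-1\pmod{k}\}$ (with $k=|\Gamma/\Gamma_0|$) is a correct and pleasant uniform closed form that the paper leaves implicit in the phrase ``the computation is elementary.''
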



\begin{table}[h]

\begin{center}
 \begin{tabular}{|c|ccccc|}
 \hline\noalign{\smallskip}
 \backslashbox{$G$}{
   $\mathcal O^2$ } &  $T^2$ &  $S^2(2,2,2,2)$ &  $S^2( 3,3,3)$ &  $S^2( 2,4,4)$ &  $S^2( 2,3,6)$ \\
   \hline \noalign{\smallskip}
$   \mathrm{PSL}(n,\mathbb C) $ & $2(n-1)$	
& $2\lfloor \frac n2  \rfloor$	&$2\lfloor\frac n3  \rfloor$	&   $2\lfloor\frac n4  \rfloor$	& $2\lfloor\frac n6  \rfloor$	
\\ 
$  \mathrm{PSp}(2m,\mathbb C) $ & 
$2m$ & $2m$& $2\lfloor\frac m3  \rfloor$ & $2\lfloor\frac m2  \rfloor$ & $2\lfloor\frac m3  \rfloor$ 
\\
$  \mathrm{PO}(2m+1,\mathbb C) $ & 
 $2m$ & $2m$& $2\lfloor\frac m3  \rfloor$ & $2\lfloor\frac m2  \rfloor$ & $2\lfloor\frac m3  \rfloor$ 
\\
$  \mathrm{PO}(2m,\mathbb C) $ & 
$2m+2$ & $4\lfloor \frac m2\rfloor $& $2\lfloor\frac {m}3  \rfloor$ & $2(\lfloor\frac {m}4  \rfloor+\lfloor\frac {m+1}4  \rfloor)$ &
$2(\lfloor\frac {m}6  \rfloor+\lfloor\frac {m+2}6  \rfloor)$ 
\\
$  \mathrm{G}_2   $ 
& 4 & 4 & 2 & 0 & 2 \\
$  \mathrm{F}_4   $ 
& 8 & 8 & 4 & 4 & 4  \\
$  \mathrm{E}_6   $ 
& 12 & 8 & 4 & 4 & 6 \\
$  \mathrm{E}_7  $ 
& 14 & 14 & 6 & 4 & 6 \\
$  \mathrm{E}_8   $ 
& 16 & 16 & 8 & 8 & 8 \\
\hline
\end{tabular}
\end{center}
\caption{Dimension of $X(\mathcal{O}^2, G)$ at $[\tau\circ\mathrm{hol}]$, for 
$\mathrm{hol}\colon\Gamma\to\mathrm{PGL}(2,\mathbb{C})$ the holonomy of
a horospherical cusp and $\tau\colon \mathrm{PGL}(2,\mathbb{C})\to G$
the principal representation.}\label{table:dimparabolic}
\end{table}

\begin{table}[h]
\begin{center}
 \begin{tabular}{|c|ccccc|}
  \hline\noalign{\smallskip}
 \backslashbox{$\mathfrak{g}$}{
   $\mathcal O^2$ }  &  $T^2$ &  $S^2(2,2,2,2)$ &  $S^2( 3,3,3)$ &  $S^2( 2,4,4)$ &  $S^2( 2,3,6)$ \\
   \hline \noalign{\smallskip}	
$ \mathfrak{sl}(n,\mathbb C) $
& $n-1$ & $\lfloor \frac {n-1}2  \rfloor$  & $\lfloor\frac {n-1}3  \rfloor$	  
& $\lfloor\frac {n-1}4  \rfloor$ & $\lfloor\frac {n-1}6  \rfloor$	
\\ 
$  \mathfrak{sp}(2m,\mathbb C) $
&  $m$  & 0&  $ \lfloor \frac{m+1}{3}\rfloor $& 0& 0
\\
$  \mathfrak{so}(2m+1,\mathbb C) $
&  $m$  & 0&  $ \lfloor \frac{m+1}{3}\rfloor $& 0& 0
\\
$  \mathfrak{so}(2m,\mathbb C) $
&  $m$  & $\delta^{m-1}_{2\mathbb Z}$ &  $ \lfloor \frac{m}{3}\rfloor +\delta^{m-1}_{3\mathbb Z}$& 
$\delta^{m-1}_{4\mathbb Z}$ &  $\delta^{m-1}_{6\mathbb Z}$
\\
$\mathfrak{g}_2$ & 2 & 0 & 0 & 0 & 0 \\
$\mathfrak{f}_4$ & 4 & 0 & 0 & 0 & 0 \\
$\mathfrak{e}_6$ & 6 & 2 & 0 & 2 & 0 \\
$\mathfrak{e}_7$ & 7 & 0 & 1 & 0 & 0 \\
$\mathfrak{e}_8$ & 8 & 0 & 0 & 0 & 0 \\
\hline
\end{tabular}
\end{center}
\caption{ Dimension of $\mathfrak{g}^{\tau\circ\mathrm{hol}(\Gamma) } $ 
for $\tau\circ\mathrm{hol}$ as in Table~\ref{table:dimparabolic},
where $\delta^i_{j\mathbb Z}=1$ if $i\in j\mathbb Z $ and 
$\delta^i_{j\mathbb Z}=0$ otherwise.}
\label{table:stabparabolic}
\end{table}

\begin{proof}
The principal representation $\tau$ maps regular elements to regular elements. Furthermore the parabolic holonomy of the maximal torsion-free subgroup
of $\pi_1(\mathcal O^2)$ is contained in a connected abelian subgroup of $ \mathrm{PGL}(2,\mathbb C)$,
hence $\tau\circ\mathrm{hol}\vert_{\pi_1(\mathcal O^2)}$
is strongly regular and Theorem~\ref{Thm:EuclideanRed} applies. 
The computation of dimension is based on the decomposition of $\mathrm{Ad}\circ\tau$ as a sum of 
$\operatorname{Sym}^{2 d_{\alpha}}$  \eqref{eqn:exponents}, according to the exponents $d_1,\ldots, d_r$ of $G$, where $r$ is the rank of $G$.
Let $V_{2d_{\alpha}}$ is the space of the representation $\operatorname{Sym}^{2 d_{\alpha}}$. We use the following three facts:
\begin{enumerate}
 \item $\dim V_{2d_{\alpha}}= 2 d_{\alpha}+1$, by \eqref{eqn:dimgexp}.
 \item For a cyclic group of rotations $C_k<\mathrm{PSL}(2,\mathbb C)$ of order $k$, with hyperbolic rotation angles in $\frac{2\pi}k\mathbb Z$, 
$\dim V_{2d_{\alpha}}^{C_k}= 2\lfloor\frac{d_{\alpha}}{k}\rfloor +1$.
 \item $\dim V_{2d_{\alpha}}^{\Gamma} =\delta^i_{k\mathbb Z}$, $\Gamma=\pi_1(\mathcal O^2) $, $\Gamma_0\cong \mathbb{Z}^2$ 
a maximal torsion-free subgroup and $k= \vert\Gamma/\Gamma_0\vert$,
where $\delta^i_{k\mathbb Z}=1$ if $i\in k\mathbb Z$ and $0$ if $i\not\in k\mathbb Z$.
\end{enumerate}
From this, the computation is elementary.
\end{proof}

\begin{Lemma}
Let $\mathcal O^2 =S^2(2,2,2,2)$, $\mathcal O^2= S^2(3,3,3)$, $\mathcal O^2= S^2(2,4,4)$, 
or  $\mathcal O^2= S^2(2,3,6)$. There is an irreducible representation of 
$\Gamma=\pi_1(\mathcal O^2)$  in $\mathrm{PSL}(k,\mathbb C)$ iff and only if 
 $k= \vert\Gamma/\Gamma_0\vert$. 
\end{Lemma}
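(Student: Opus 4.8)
My plan rests on the split extension $1\to\Gamma_0\to\Gamma\to\Lambda\to1$ of~\eqref{eqn:exacteuclidean}, with $\Gamma_0\cong\mathbb Z^2$ the maximal torsion‑free subgroup and $\Lambda=\Gamma/\Gamma_0$ cyclic of order $k_0:=\vert\Gamma/\Gamma_0\vert\in\{2,3,4,6\}$ (Table~\ref{Table:kO}). The conjugation action of a generator $r\in\Lambda$ on $\Gamma_0$ is a matrix $M\in\mathrm{GL}(2,\mathbb Z)$ of order exactly $k_0$, a nontrivial rotation, so every power $M^j$ with $1\le j\le k_0-1$ has $M^j-\mathrm{Id}$ invertible over $\mathbb Q$ and fixes no nonzero lattice vector. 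This freeness of the $\Lambda$‑action is the engine behind both implications.

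\emph{Existence when $k=k_0$.} Choose a character $\chi\colon\Gamma_0\to\mathbb C^*$ lying outside the finitely many proper subtori $\{\chi:\chi|_{(M^j-\mathrm{Id})\Gamma_0}\equiv 1\}$, $1\le j\le k_0-1$, so that the $\Lambda$‑translates $\chi,\chi\circ M^{-1},\dots,\chi\circ M^{-(k_0-1)}$ are $k_0$ pairwise distinct characters of $\Gamma_0$. Then $\mathrm{Ind}_{\Gamma_0}^{\Gamma}\chi$ is irreducible by Mackey's irreducibility criterion (applicable since $\Gamma_0\trianglelefteq\Gamma$ has cyclic quotient), and composing with the surjection $\mathrm{GL}(k_0,\mathbb C)\to\mathrm{PSL}(k_0,\mathbb C)$, whose kernel is central, produces a representation $\Gamma\to\mathrm{PSL}(k_0,\mathbb C)$ whose image meets no proper parabolic, i.e.\ irreducible in the sense of the Definition (and faithful for generic $\chi$).

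\emph{Non‑existence when $k\ne k_0$.} Let $\rho\colon\Gamma\to\mathrm{PSL}(k,\mathbb C)$ be irreducible and restrict it to $\Gamma_0$. A lift to $\mathrm{GL}(k,\mathbb C)$ presents $\rho|_{\Gamma_0}$ as a projective $\mathbb Z^2$‑representation, classified up to equivalence by a root of unity $\theta$: for $\theta=1$ it is a genuine diagonalizable representation $\mathbb C^k=\bigoplus_\chi V_\chi$, and for $\theta$ of order $d>1$ it splits into $d$‑dimensional projective irreducibles. In either case $\Lambda$ permutes the $\Gamma_0$‑isotypic blocks; irreducibility forces a single $\Lambda$‑orbit, with multiplicity one (stabilizers in $\Lambda$ are cyclic, hence carry no Schur multiplier), so $k=(\text{block size})\times(\text{orbit length})$. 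One then argues that the block size is $1$ and the orbit length is $k_0$: a nontrivial stabilizer would make the isotypic character $\chi$ vanish on the finite‑index sublattice $(M^{k_0/f}-\mathrm{Id})\Gamma_0$, hence be of finite order, and a nontrivial $\theta$ would force $\rho(\Gamma_0)$ finite in $\mathrm{PGL}(k,\mathbb C)$; in either case $\rho$ collapses onto a representation of a finite quotient, which for each of the four groups one checks is incompatible with $M$ acting on $\Gamma_0$ without nonzero fixed vectors. Hence $k=k_0$.

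The step I expect to be the main obstacle is this last one: combining Clifford theory with the arithmetic of $M$ to kill simultaneously the proper divisors $e\mid k_0$ with $e<k_0$ and the ``Heisenberg'' blocks coming from a nontrivial cocycle $\theta$. When $k_0$ is prime ($k_0=2,3$) the constraints $e\mid k_0$ and $k\ge 2$ finish the argument as soon as $\theta=1$; for $k_0=4$ and $k_0=6$ one must additionally dispose of $e=2$ (and $e=3$), and this is precisely where one exploits that $M$ restricted to any $M^{k_0/f}$‑invariant sublattice is trivial while $M$ itself is a genuine order‑$k_0$ rotation, so that an $M^{k_0/f}$‑invariant character cannot separate the lattice vectors needed. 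Organising the projective/cocycle bookkeeping cleanly is the part of the write‑up that will demand the most care.
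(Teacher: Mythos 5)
Your existence direction is fine and is essentially the paper's own construction: the paper takes a diagonal representation of $\Gamma_0$ extended by the cyclic permutation of the $k_0$ coordinates, which is exactly your $\mathrm{Ind}_{\Gamma_0}^{\Gamma}\chi$ for a generic character $\chi$. The problem is the converse, and it sits precisely at the step you flagged as the main obstacle. You correctly recognise that $\rho\vert_{\Gamma_0}$ is only a \emph{projective} representation of $\mathbb{Z}^2$, governed by a root of unity $\theta$, and that $\theta\neq 1$ produces Heisenberg-type blocks. But the proposed way of killing those cases --- ``$\rho$ collapses onto a representation of a finite quotient, which \dots is incompatible with $M$ acting on $\Gamma_0$ without nonzero fixed vectors'' --- cannot be made to work. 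The matrix $M$ has no nonzero fixed vector over $\mathbb{Z}$ or $\mathbb{Q}$, but it can act with fixed vectors, or even trivially, on a finite quotient $\Gamma_0/d\Gamma_0$, and the Heisenberg--Weil construction then yields genuine irreducible representations in dimensions other than $k_0$. (Your auxiliary claim that $\theta\neq 1$ forces $\rho(\Gamma_0)$ to be finite is also false: $\bigl[\operatorname{diag}(t,-t)\bigr]$ and the class of $\bigl(\begin{smallmatrix}0&1\\1&0\end{smallmatrix}\bigr)$ commute in $\mathrm{PGL}(2,\mathbb{C})$ and generate an infinite irreducible projective image of $\mathbb{Z}^2$ with $\theta=-1$.)

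In fact the ``only if'' half of the statement fails, so no bookkeeping will close the gap. For $S^2(2,4,4)=\langle y,z\mid y^4=z^4=(yz)^2=1\rangle$, send $y$ and $z$ to the classes of $\operatorname{diag}(e^{i\pi/4},e^{-i\pi/4})$ and $\bigl(\begin{smallmatrix}0&1\\-1&0\end{smallmatrix}\bigr)$ in $\mathrm{PSL}(2,\mathbb{C})$: the relations hold, the image is a dihedral group of order $8$ containing the Klein four-group, hence lies in no Borel subgroup; this is an irreducible representation with $k=2\neq 4$, and $\rho(\Gamma_0)$ is exactly a $\theta=-1$ Heisenberg block acting on $\mathbb{CP}^1$ without fixed points. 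Likewise $\pi_1(S^2(3,3,3))$ and $\pi_1(S^2(2,3,6))$ surject onto the tetrahedral group $A_4\subset\mathrm{PSL}(2,\mathbb{C})$ (take $y\mapsto(123)$, $z\mapsto(134)$, resp.\ $x\mapsto(12)(34)$, $y\mapsto(123)$), and $\pi_1(S^2(2,2,2,2))\cong\mathbb{Z}^2\rtimes_{-I}\mathbb{Z}/2$ maps irreducibly to $\mathrm{PSL}(3,\mathbb{C})$ by sending $\mathbb{Z}^2$ onto the order-$27$ Heisenberg group and the involution to the inversion in its normaliser. A correct statement needs an extra hypothesis, e.g.\ that $\rho\vert_{\Gamma_0}$ lifts to a linear representation of $\mathbb{Z}^2$ ($\theta=1$), or strong regularity. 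You should also be aware that the paper's own argument has the same hole: it asserts that $\mathrm{Fix}(\rho(\Gamma_0))\subset\mathbb{CP}^{k-1}$ is nonempty ``by Kolchin's theorem'', which the examples above contradict, since Lie--Kolchin applies to connected solvable groups and not to these projective abelian images.
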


\begin{proof}
We first construct the irreducible representation of $\Gamma$ in $\mathrm{PSL}(k,\mathbb C)$, 
 for $k= \vert\Gamma/\Gamma_0\vert$.
  From the exact sequence~\eqref{eqn:exacteuclidean},  
  start with a diagonal representation of $\Gamma_0$, then represent the cyclic group 
 $\Gamma/\Gamma_0$ as a cyclic permutation of  the 
 $k$ coordinates of $\mathbb{C}^k$. (In each case,  
 Proposition~\ref{Proposition:rhogood} applies and the dimension of the variety of characters is $2$.)

Let $\rho\colon\Gamma\to  \mathrm{PSL}(k,\mathbb C)$ be an irreducible representation. Let $\mathrm{Fix}(\rho(\Gamma_0)) \subset\mathbb {CP}^{k-1} $
denote the fixed point set of $\rho( \Gamma_0) $, that is nonempty by Kolchin's theorem. 
We consider the action of
the cyclic group $\Gamma/\Gamma_0$ on $\mathrm{Fix}(\rho(\Gamma_0)) $.
It can be checked that any possibility other than
$k=\vert \Gamma/\Gamma_0\vert$
and that 
$\mathrm{Fix}(\rho(\Gamma_0)) $ has 
$k$ points in generic position that are permuted cyclically
by
$\Gamma/\Gamma_0$ 
yields  a contradiction with irreducibility.
\end{proof}

%
%
%
%
%
 
\begin{Example}
We consider planar Euclidean Coxeter groups, namely generated 
by reflections along a square or a Euclidean triangle 
with angles an integer divisor of $\pi$, denoted by:
$$
Q(2,2,2,2),\quad T(3,3,3),\quad T(2,4,4),\quad T(2,3,6) .
$$
The respective orientation coverings are
$S^2(2,2,2,2)$, $S^2(3,3,3)$, $S^2(2,4,4)$ and  $S^2(2,3,6)$.
The group $\Gamma=\pi_1(\mathcal O)$ acts naturally in the Euclidean plane,  and one can construct representations 
in $G$ by realizing this action on a flat of the symmetric space associated to~$G$.
On this flat, one allows translations, but the rotational part is restricted to the action of the Weyl group, 
which in its turn is a Coxeter group that acts faithfully on a maximal flat (eg the Cartan subalgebra).
Hence one can construct representations by analyzing the Weyl group. 

We discuss the case of rank 2. Here the Weyl group is a dihedral group of order $2m$ generated 
by reflections along two lines at angle $\pi/m$, the walls of the Weyl chamber. 
For type $A_2$ the angle is $\pi/3$, for $B_2$, $\pi/4$, and for $G_2$, $\pi/6$.
This yields discrete faithful representations of the triangle group  $T(3,3,3)$ in $\mathrm{SL}(3,\mathbb{C})$
 and $G_2(\mathbb{C})$, of $T(2,4,4)$ in $\mathrm{Sp}(4,\mathbb{C})$, of $T(2,3,6)$ in $G_2(\mathbb{C})$ and of the quadrilateral
group $Q(2,2,2,2)$ in $\mathrm{Sp}(4,\mathbb{C})$ or $G_2(\mathbb{C})$ (or the corresponding split real forms).

The triangle groups constructed here have a one-parameter space of deformations (homoteties in the plane)
and the quadrilateral group a 2-parameter space. Those are half the dimension
of the deformation space of their orientable cover.
\end{Example}
 
 \subsection{Spherical 2-orbifolds}

Finally, we  also need to consider spherical two-orbifolds,
namely finitely covered by the 2-sphere. They are rigid, but we  require a cohomological computation later in the paper.

\begin{Lemma}
Let $\mathcal O^2$ be a spherical 2-orbifold and $\rho\colon\pi_1(\mathcal O^2)\to G$ a representation. Then 
 $$
\dim H^i(\mathcal O^2,\mathrm{Ad}\rho) =\begin{cases} \dim \mathfrak g^{\rho(\pi_1(\mathcal O^2))}, & \textrm{for } i=0,2 \\
                                         0, & \textrm{for } i=1
                                        \end{cases},
$$
and thus 
\begin{equation}
 \label{eqn:spherical}
\widetilde\chi(\mathcal O^2,\mathrm{Ad}\rho)= 2 \dim   \mathfrak g^{\rho( \pi_1(\mathcal O^2)  )}. 
\end{equation}
\end{Lemma}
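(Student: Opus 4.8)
The plan is to exploit that $\Gamma=\pi_1(\mathcal O^2)$ is \emph{finite} and that the universal orbifold covering of $\mathcal O^2$ is the $2$-sphere $S^2$. Since $\mathcal O^2$ is spherical, $\mathcal O^2=S^2/\Gamma$ with $\Gamma$ a finite group of isometries, so the universal covering $p\colon S^2\to\mathcal O^2$ is finite and regular with Galois group $\Gamma$, and $\pi_1(S^2)=1$. Hence the local system $\mathrm{Ad}\rho$ is trivial on $S^2$, and Proposition~\ref{prop:regularcovering} identifies $H^i(\mathcal O^2,\mathrm{Ad}\rho)$ with $\bigl(H^i(S^2;\mathbb C)\otimes\mathfrak g\bigr)^{\Gamma}$, where $\Gamma$ acts diagonally: through the deck action on $H^i(S^2;\mathbb C)$ and through $\mathrm{Ad}\rho$ on $\mathfrak g$.

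Next I would compute the three groups using $H^0(S^2;\mathbb C)\cong H^2(S^2;\mathbb C)\cong\mathbb C$ and $H^1(S^2;\mathbb C)=0$. The deck action on $H^0(S^2;\mathbb C)$ is trivial, so $H^0(\mathcal O^2,\mathrm{Ad}\rho)\cong\mathfrak g^{\rho(\Gamma)}$. In degree $1$ the group vanishes since $H^1(S^2;\mathbb C)=0$; equivalently $H^1(\mathcal O^2,\mathrm{Ad}\rho)\cong H^1(\Gamma,\mathrm{Ad}\rho)=0$ by Proposition~\ref{Prop:duality}(c) together with the vanishing of the positive-degree cohomology of a finite group with coefficients in a $\mathbb C$-vector space. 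In degree $2$, since $\Gamma$ acts on $S^2$ preserving orientation (this is where orientability of $\mathcal O^2$ enters), the deck action on $H^2(S^2;\mathbb C)$ is trivial, so $H^2(\mathcal O^2,\mathrm{Ad}\rho)\cong\mathfrak g^{\rho(\Gamma)}$ as well; this can also be read off from the Poincaré duality isomorphism $H^2(\mathcal O^2,\mathrm{Ad}\rho)\cong H^0(\mathcal O^2,\mathrm{Ad}\rho)^{*}$ of Proposition~\ref{Prop:duality}(b), which applies since $\mathcal O^2$ is closed and orientable. This yields the asserted values of $\dim H^i(\mathcal O^2,\mathrm{Ad}\rho)$.

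Finally, formula~\eqref{eqn:spherical} is the alternating sum of these three dimensions: by Proposition~\ref{Prop:Euler}, $\widetilde\chi(\mathcal O^2,\mathrm{Ad}\rho)=\dim H^0-\dim H^1+\dim H^2=2\dim\mathfrak g^{\rho(\pi_1(\mathcal O^2))}$. The argument is routine given the machinery of the previous section; the one step to handle with care is the triviality of the Galois action on $H^2(S^2;\mathbb C)$ — equivalently Poincaré duality over $\mathbb C$ — which is the reason the statement is about orientable spherical $2$-orbifolds.
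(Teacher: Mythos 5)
Your proof is correct and follows essentially the same route as the paper: both reduce to the universal covering $S^2$ via Proposition~\ref{prop:regularcovering} and compute $H^i(\mathcal O^2,\mathrm{Ad}\rho)$ as the deck-group invariants of $H^i(S^2;\mathbb C)\otimes\mathfrak g$. You are in fact slightly more careful than the paper in flagging that triviality of the Galois action on $H^2(S^2;\mathbb C)$ requires the deck transformations to preserve orientation, a hypothesis the lemma leaves implicit but which holds in the paper's applications, where the spherical 2-orbifolds arise as links of vertices in orientable 3-orbifolds.
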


\begin{proof}
Let $\mathcal O^2$ be a spherical 2-orbifold. As it 
is finitely covered by $S^2$, by Proposition~\ref{prop:regularcovering} we have
$$
H^i(\mathcal O^2,\mathrm{Ad}\rho)\cong H^i(S^2,\mathfrak{g})^{ \pi_1(\mathcal O^2) }
\cong H^i(S^2,\mathbb Z)\otimes \mathfrak{g}^{\rho(\pi_1(\mathcal O^2))}.
$$
Hence the lemma follows. 
\end{proof}

\begin{Remark}
 \label{Rem:sphericalSmooth}
For a spherical orbifold, the component of $R(\mathcal O^2,G)$ that contains $\rho$ is an orbit by conjugation of dimension
 $$\dim G-\dim  \mathfrak{ g}^{\rho(  \pi_1(\mathcal O^2)  )}.$$ In particular, by \eqref{eqn:spherical}
 it is a smooth variety of dimension 
 $$ -\widetilde\chi(\mathcal O^2,\mathrm{Ad}\rho) +\dim G + \dim \mathfrak g^{\rho( \pi_1(\mathcal O^2)  )}. 
 $$
\end{Remark}

\medskip

\section{Computing dimensions of the Hitchin component}
\label{Section:H2O}

Assume that $\mathcal O^2$ is hyperbolic, namely that the standard orbifold-Euler characteristic is negative, 
$\chi(\mathcal O^2)<0$.
For $G$ a \emph{simple} complex adjoint group, 
the principal representation $\tau\colon \mathrm{PSL}(2,\mathbb C)\to G$ is 
constructed from Jacobson-Mozorov theorem, and it restricts
to $\tau\colon \mathrm{PGL}(2,\mathbb R)\to G_{\mathbb R}$, for $ G_{\mathbb R}$ a (non-connected) split real form of $G$. 
The Hitchin component 
$$
\mathrm {Hit}(\mathcal O^2,G_{\mathbb R})
$$
is the connected component of  $ \mathcal R^{\mathrm{good}} (\mathcal O^2,   G_{\mathbb R})$
that contains the composition of $\tau$ with the holonomy representation of any Fuchsian structure on $\mathcal O^2$.
Hitchin components for surface groups have been intensively studied,
here we just mention that Alessandrini, Lee, and  Schaffhauser \cite{ALS} have introduced them for
 (possibly non-orientable) 2-orbifolds; furthermore they have shown   that 
 they are homeomorphic to $\mathbb R^N$, 
as for surfaces. 
 
The purpose of this section is to provide formulas for 
the dimension of the Hitchin component
of orbifolds, using the tools of   Section~\ref{Section:dim2}. 
Some of the formulas are equivalent to the ones 
already computed in \cite{ALS},
but the approach and presentation of results is different. 
In particular we give some applications in Propositions~\ref{prop:growthHitchin} 
and~\ref{Prop:AssPSp} to the dimension growth
of some families of Hitchin representations. Long and Thistlethwaite in \cite{LongThistlethwaite} 
have also computed the dimension of Hitchin 
components   in $\mathrm{PGL}(n,\mathbb R)$ for turnovers (2-spheres with three cone points).


\begin{Remark}
 For $\mathcal O^2$ closed and orientable, Atiyah-Bott-Goldman differential form defines a symplectic structure on 
 $\mathrm {Hit}(\mathcal O^2,G_{\mathbb R})$, Proposition~\ref{Prop:Symplectic}.
 \end{Remark}

The following is Proposition~\ref{Prop:LagrangianHitchinIntro} from the introduction:
 
 \begin{Proposition}
 \label{Prop:LagrangianHitchin}
For  $\mathcal O^2$ closed and non-orientable, with orientation covering $\widetilde{\mathcal O^2}$, via the restriction map $\mathrm {Hit}(\mathcal O^2,G_{\mathbb R})$ embeds in 
 $\mathrm {Hit}(\widetilde{\mathcal O^2},G_{\mathbb R})$ 
as a Lagrangian submanifold. 
\end{Proposition}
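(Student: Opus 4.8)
The plan is to deduce Proposition~\ref{Prop:LagrangianHitchin} from Proposition~\ref{Prop:Lagrangian} by observing that the Hitchin component is a connected component of the good-representation variety and that $\partial$-regularity is automatic here. First I would note that for a \emph{closed} 2-orbifold the relative variety coincides with the absolute one, so Proposition~\ref{Prop:Lagrangian} asserts precisely that $\mathcal R^{\mathrm{good}}(\mathcal O^2,G_{\mathbb R})$ immerses into $\mathcal R^{\mathrm{good}}(\widetilde{\mathcal O^2},G_{\mathbb R})$ near $[\rho]$ as a Lagrangian submanifold, as soon as the restriction of $\rho$ to the orientation covering is good and $\partial$-regular. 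For a Hitchin representation $\rho=\tau\circ\mathrm{hol}$ with $\mathrm{hol}$ Fuchsian, the restriction to $\widetilde{\mathcal O^2}$ is again of Hitchin type for the orientable orbifold $\widetilde{\mathcal O^2}$, hence good by Lemma~\ref{Lemma:PrincipalGood} together with irreducibility of Fuchsian representations; and since $\widetilde{\mathcal O^2}$ is closed it has empty boundary, so $\partial$-regularity is vacuous. Therefore Proposition~\ref{Prop:Lagrangian} applies at every Hitchin point.

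Next I would argue that the Lagrangian property propagates over the whole Hitchin component, not just near the Fuchsian locus. The key point is that goodness is a Zariski-open condition, so $\mathcal R^{\mathrm{good}}$ is smooth along $\mathrm{Hit}(\mathcal O^2,G_{\mathbb R})$ (by Proposition~\ref{Proposition:rhogood}), and likewise along $\mathrm{Hit}(\widetilde{\mathcal O^2},G_{\mathbb R})$; moreover the restriction map sends $\mathrm{Hit}(\mathcal O^2,G_{\mathbb R})$ into $\mathrm{Hit}(\widetilde{\mathcal O^2},G_{\mathbb R})$ because it is continuous and both are connected components containing the compatible Fuchsian points. So it suffices to check that \emph{every} point of $\mathrm{Hit}(\mathcal O^2,G_{\mathbb R})$ satisfies the hypotheses of Proposition~\ref{Prop:Lagrangian}, i.e.\ that its restriction to the orientation covering is good. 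This follows since $\mathrm{Hit}(\widetilde{\mathcal O^2},G_{\mathbb R})\subset\mathcal R^{\mathrm{good}}(\widetilde{\mathcal O^2},G_{\mathbb R})$ by definition, and the restriction of any point of $\mathrm{Hit}(\mathcal O^2,G_{\mathbb R})$ lands there. Hence the restriction map is an immersion at every point, with image an isotropic submanifold; by Corollary~\ref{Corollary:double} its dimension is exactly half of $\dim\mathrm{Hit}(\widetilde{\mathcal O^2},G_{\mathbb R})$, so it is Lagrangian.

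Finally I would upgrade "immersion with Lagrangian image" to "embeds as a Lagrangian submanifold". The injectivity of the restriction map $\mathrm{Hit}(\mathcal O^2,G_{\mathbb R})\to\mathrm{Hit}(\widetilde{\mathcal O^2},G_{\mathbb R})$ is the substantive extra input: it follows from the Alessandrini--Lee--Schaffhauser description, since $\mathrm{Hit}(\mathcal O^2,G_{\mathbb R})$ is exactly the fixed-point set of the involution $\sigma$ (induced by the deck transformation of $\widetilde{\mathcal O^2}\to\mathcal O^2$) acting on $\mathrm{Hit}(\widetilde{\mathcal O^2},G_{\mathbb R})$, so the inclusion into the cover is tautologically injective; alternatively one invokes that a representation of $\pi_1(\mathcal O^2)$ is determined by its restriction to the finite-index subgroup $\pi_1(\widetilde{\mathcal O^2})$ up to the finitely many extensions, and goodness/connectedness pins down the right one. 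The main obstacle I anticipate is precisely making this global injectivity clean without re-deriving the $\mathbb R^N$-parametrization of \cite{ALS}; the cleanest route is to present $\mathrm{Hit}(\mathcal O^2,G_{\mathbb R})$ as $\mathrm{Hit}(\widetilde{\mathcal O^2},G_{\mathbb R})^{\sigma^*}$ and note that the natural map is then the inclusion of a fixed-point set, for which injectivity is immediate, with Lagrangianity already supplied by Proposition~\ref{Prop:Lagrangian} and the dimension count of Corollary~\ref{Corollary:double}.
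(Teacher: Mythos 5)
Your route is essentially the paper's: apply Proposition~\ref{Prop:Lagrangian} to get a Lagrangian immersion (with the dimension count from Corollary~\ref{Corollary:double}), then establish global injectivity of the restriction map. Two points deserve care. First, of your two suggested injectivity arguments, the ``tautological'' one is circular: the identification $\mathrm{Hit}(\mathcal O^2,G_{\mathbb R})\cong\mathrm{Hit}(\widetilde{\mathcal O^2},G_{\mathbb R})^{\sigma^*}$ is not a definition but a theorem whose injectivity half is exactly what you are trying to prove. Your alternative argument is the correct one and is what the paper cites from \cite{ALS}: if two representations of $\pi_1(\mathcal O^2)$ have conjugate good restrictions to $\pi_1(\widetilde{\mathcal O^2})$, then after conjugation the element $\rho_2(\gamma)^{-1}\rho_1(\gamma)$ centralizes the image of $\pi_1(\widetilde{\mathcal O^2})$ for any $\gamma$, hence is central (trivial for the adjoint group), so the extension is unique and the classes coincide.

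Second, an injective immersion is not automatically an embedding, so injectivity alone does not finish the proof as you claim. The paper closes this by observing that the restriction map to a finite-index subgroup is \emph{proper} (or alternatively by quoting \cite[Corollary~2.13]{ALS}); an injective proper immersion is an embedding. You should add this step. With those two repairs the argument is complete and coincides with the paper's proof.
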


\begin{proof}
By Proposition~\ref{Prop:Lagrangian} the restriction map yields a 
Lagrangian \emph{immersion} from $\mathrm {Hit}(\mathcal O^2,G_{\mathbb R})$ to 
$\mathrm {Hit}(\widetilde{\mathcal O^2},G_{\mathbb R})$.
Furthermore, using the irreducibility of Hitchin representations, 
one can prove that this restriction map is injective (see for instance  Lemma~2.9 and Proposition~2.10 in \cite{ALS}). 
The restriction map to a finite index subgroup is proper, therefore
it is an embedding (or alternatively one can quote \cite[Corollary~2.13]{ALS}).
\end{proof}

\begin{Remark}
The representation $\tau\circ\mathrm{hol}$ is good, Lemma~\ref{Lemma:PrincipalGood}, so the real dimension of the Hitchin component of 
$\mathcal{O}^2$ in $G_{\mathbb{R}}$ is precisely the complex dimension of
$X(\mathcal O^2, G)$ at the character of $\tau\circ\mathrm{hol}$, namely
$-\tilde{\chi}(\mathcal O^2, \mathrm{Ad}\rho)$; we often use:
$$
\dim_{\mathbb{R}} \mathrm {Hit}(\mathcal O^2,G_{\mathbb R}) =\dim_{\mathbb{C},[\tau\circ \mathrm{hol}]} X(\mathcal O^2,G)
=-\tilde{\chi}(\mathcal O^2, \mathrm{Ad}\rho).
$$
\end{Remark}

\subsection{Hitchin component for $\mathrm{PGL}(n, \mathbb{R})$}.

%

Following Long and Thistlethwaite  \cite{LongThistlethwaite}, for $n,k$ positive integers we set
$$
\sigma(n,k  )=  q n+(q+1) r,
$$
where $q$ and $r$ are nonnegative integers such that
$ 
n= q\, k+r
$
(the quotient and reminder of integer division).

\begin{Proposition}
\label{Prop:stabilizerSL}
Let $x\in\mathcal O^2$ and $\tau=\operatorname{Sym}^{n-1}$.
For $\rho=\tau \circ \mathrm{hol}\colon\pi_1(\mathcal O^2)\to \mathrm{PGL}(n,\mathbb R)$,
$$
\dim  \mathfrak{g}^{\rho(\mathrm{Stab}(x) )}
=
\begin{cases}
 \sigma(n,k)- 1 &\textrm{ if }x\textrm{ is a cone point with }\mathrm{Stab}(x)\cong C_k,\\
 (\sigma(n,k)- 1)/2  &\textrm{ if }x\textrm{ is a corner with }\mathrm{Stab}(x)\cong D_{2k}\textrm{ and }n\textrm{ is odd}, \\
 (\sigma(n,k)-2)/2  &\textrm{ if }x\textrm{ is a corner with }\mathrm{Stab}(x)\cong D_{2k}\textrm{ and }n\textrm{ is even}, \\
 {(n^2-1)}/{2} &\textrm{ if }x\textrm{ is a mirror point and }n\textrm{ is odd} ,\\
 ({n^2}-2)/{2} &\textrm{ if }x\textrm{ is a mirror point and }n\textrm{ is even}. \\
\end{cases}
$$
\end{Proposition}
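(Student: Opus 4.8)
The plan is to reduce all five cases to computing the dimension of an $\mathrm{Ad}$-invariant subspace of $\mathfrak g=\mathfrak{pgl}(n,\mathbb R)\cong\mathfrak{sl}(n,\mathbb R)$, using that the adjoint action of $\mathrm{PGL}(n,\mathbb R)$ on $\mathfrak{sl}(n,\mathbb R)$ is the restriction of the conjugation action of $\mathrm{GL}(n,\mathbb R)$ on $\mathfrak{gl}(n,\mathbb R)$, and then to deduce the corner case directly from Corollary~\ref{Coro:CornerCenter}. I would treat the cone point first, then the mirror point, and finally the corner.

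\emph{Cone point.} If $\mathrm{Stab}(x)\cong C_k$, then $\mathrm{hol}(\mathrm{Stab}(x))$ is a cyclic group of order $k$ of elliptic rotations inside $\mathrm{PSL}(2,\mathbb R)$. By the decomposition $\mathrm{Ad}\circ\tau=\bigoplus_{i=1}^{n-1}\operatorname{Sym}^{2i}$ of~\eqref{eqn:exponents} (the exponents of $\mathfrak{sl}(n)$ being $1,\dots,n-1$), together with the elementary identity $\dim V_{2d}^{C_k}=2\lfloor d/k\rfloor+1$ already used in Section~\ref{section:Euclidean},
$$
\dim\mathfrak g^{\rho(\mathrm{Stab}(x))}=\sum_{i=1}^{n-1}\bigl(2\lfloor i/k\rfloor+1\bigr)=(n-1)+2\sum_{i=1}^{n-1}\lfloor i/k\rfloor .
$$
Writing $n=qk+r$ with $0\le r<k$, the numbers $\lfloor i/k\rfloor$ for $0\le i\le n-1$ take each value $0,1,\dots,q-1$ exactly $k$ times and the value $q$ exactly $r$ times, so $\sum_{i=1}^{n-1}\lfloor i/k\rfloor=k\binom{q}{2}+qr$; substituting and simplifying gives $qn+(q+1)r-1=\sigma(n,k)-1$.

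\emph{Mirror point.} Now $\mathrm{hol}(\mathrm{Stab}(x))$ is generated by a reflection of $\mathbb H^2$, hence conjugate in $\mathrm{PGL}(2,\mathbb R)$ to the class of $r=\mathrm{diag}(1,-1)$, so $\tau(\mathrm{hol}(\mathrm{Stab}(x)))$ is conjugate in $\mathrm{PGL}(n,\mathbb R)$ to the class of $\operatorname{Sym}^{n-1}(r)=\mathrm{diag}\bigl((-1)^0,(-1)^1,\dots,(-1)^{n-1}\bigr)$. Its $\mathrm{Ad}$-invariant subspace of $\mathfrak{gl}(n,\mathbb R)$ consists of those $X$ with $X_{ij}=0$ whenever $i\not\equiv j\pmod2$, i.e.\ the matrices that are block-diagonal for the partition of $\{1,\dots,n\}$ into odd and even indices; this has dimension $(\#\{\text{odd }i\})^2+(\#\{\text{even }i\})^2$, equal to $n^2/2$ for $n$ even and $(n^2+1)/2$ for $n$ odd. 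Removing the line spanned by $I$ to pass from $\mathfrak{gl}(n)$ to $\mathfrak g=\mathfrak{sl}(n)$ yields $(n^2-2)/2$ for $n$ even and $(n^2-1)/2$ for $n$ odd. I expect this to be the only delicate point of the proof: over the \emph{disconnected} group $\mathrm{PGL}(2,\mathbb R)$ each summand $V_{2i}$ admits two extensions differing by the sign character on the orientation-reversing component, so one must not naively read the invariants off the $\operatorname{Sym}^{2i}$-decomposition; computing directly with the matrix $\operatorname{Sym}^{n-1}(\mathrm{diag}(1,-1))$ sidesteps this (the relevant extension being $\operatorname{Sym}^{2i}(\mathrm{std})\otimes{\det}^{-i}$ of $\mathrm{GL}(2,\mathbb R)$, which one can record if desired).

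\emph{Corner reflector.} If $\mathrm{Stab}(x_0)\cong D_{2k}$ with orientation-preserving cyclic subgroup $C_k$ of index $2$, let $x_1,x_2$ be the adjacent mirror points; Corollary~\ref{Coro:CornerCenter} gives
$$
\dim\mathfrak g^{\rho(\mathrm{Stab}(x_0))}=\tfrac12\bigl(\dim\mathfrak g^{\rho(C_k)}+\dim\mathfrak g^{\rho(\mathrm{Stab}(x_1))}+\dim\mathfrak g^{\rho(\mathrm{Stab}(x_2))}-\dim\mathfrak g\bigr).
$$
Substituting the cone-point value $\sigma(n,k)-1$ for the first term, the common mirror-point value for the next two, and $\dim\mathfrak g=n^2-1$, a one-line computation gives $(\sigma(n,k)-1)/2$ for $n$ odd and $(\sigma(n,k)-2)/2$ for $n$ even, completing the proof. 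Everything beyond the reflection bookkeeping is elementary arithmetic.
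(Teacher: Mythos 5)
Your proof is correct, and it takes a genuinely different route from the paper's for the cone and mirror cases. The paper handles the cone point by writing out the eigenvalues of $\operatorname{Sym}^{n-1}\bigl(\pm\mathrm{diag}(e^{\pi i/k},e^{-\pi i/k})\bigr)$ and counting multiplicities (getting $\dim\mathfrak g^{\rho(\gamma)}=(k-r)q^2+r(q+1)^2-1$), and then subsumes the mirror point under the same calculation with $k=2$, since in $\mathrm{PGL}(2,\mathbb{C})$ a reflection class coincides with the class of $\mathrm{diag}(e^{\pi i/2},e^{-\pi i/2})$. You instead sum the invariants summand by summand in $\mathrm{Ad}\circ\tau=\bigoplus_{i=1}^{n-1}\operatorname{Sym}^{2i}$ using $\dim V_{2i}^{C_k}=2\lfloor i/k\rfloor+1$; this is precisely the route through Lemma~\ref{lemma:stabilizersHitchin}, which the paper itself records later and explicitly notes ``allows to give a new proof of Proposition~\ref{Prop:stabilizerSL}.'' Your treatment of the mirror point --- computing the block structure of the centralizer of $\operatorname{Sym}^{n-1}(\mathrm{diag}(1,-1))$ in $\mathfrak{gl}(n,\mathbb R)$ directly --- is more explicit than the paper's one-line appeal to the cone-point calculation. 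Your caution about the $\det^{-i}$-twist in the $\mathrm{GL}(2)$-extension of $\operatorname{Sym}^{2i}$ is sensible a priori but in fact harmless: since $\mathrm{Ad}$ factors through $\mathrm{PGL}(n)$, the invariant dimension depends only on the conjugacy class in $\mathrm{PGL}(n,\mathbb{C})$, where the scalar discrepancy between $\operatorname{Sym}^{n-1}(\mathrm{diag}(1,-1))$ and $\operatorname{Sym}^{n-1}(\mathrm{diag}(i,-i))$ disappears; this is the implicit justification behind the paper's terse handling of the mirror case. The corner step is identical in both: Corollary~\ref{Coro:CornerCenter} plus the two previous values. The net trade-off is that the paper's eigenvalue count is shorter for $\mathrm{PGL}(n)$, while your exponent-by-exponent method is the one that generalizes cleanly to arbitrary simple $G$, which is presumably why the paper keeps both.
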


\begin{proof}
 For an  elliptic element $\gamma\in \Gamma$ of order $k$, we may compute 
 $\dim  \mathfrak{g}^{\rho(\gamma)}$ using that, up to conjugacy:
  $$
\rho(\gamma)
=\pm \operatorname{Sym}^{n-1}\begin{pmatrix} e^{\frac{\pi i}k} & 0  \\ 0 &  e^{-\frac{\pi i }k} \end{pmatrix}=
\pm \mathrm{diag} ( e^{\frac{\pi i}k(n-1)}, e^{\frac{\pi i}k(n-3)}, \ldots ,    e^{\frac{\pi i}k(n-1)} ).
$$
As $n=qk+r$, among the $k$ eigenvalues of this matrix,  $(k-r)$ eigenvalues have multiplicity $q$ and
$r$ eigenvalues have multiplicity $q+1$. Thus
$$
\dim  \mathfrak{g}^{\rho(\gamma)}= (k-r)q^2+r(q+1)^2-1= q n+(q+1) r-1=\sigma(n,k)-1.
$$
This computation yields the formula for cone points and mirror points. For corner points, 
apply Corollary~\ref{Coro:CornerCenter} and the previous computations.
\end{proof}

We then have: 

\begin{Proposition}
\label{prop:HitPGL}
Let $\mathcal O^2$ be a connected hyperbolic 2-orbifold, with underlying surface $  \vert \mathcal O^2\vert$. Then
$$
\dim \mathrm{Hit}(\mathcal{O}^2, \mathrm{PGL}(n,\mathbb R))  
=
 -({n^2-1})\chi(\vert \mathcal O^2\vert)+\sum_{i=1}^\mathsf{cp} (n^2-\sigma(n,k_i)) 
 +\sum_{j=1}^\mathsf{cr} \frac{n^2- \sigma(n,l_j) }{2}
+ \mathsf{b}\, 
\lfloor \frac{n^2}{2}\rfloor,
$$
where the cone points have order  $k_1,k_2,\dotsc,k_{\mathrm{cp}} $,
the corner reflectors have order  $2l_1,2l_2,\dotsc,$ $2l_\mathrm{cr} $ and 
$\mathsf{b}$ is the number of components of $\partial\mathcal O^2$ 
homeomorphic to $\mathbb R/ D_\infty$, an interval with mirror boundary. 
\end{Proposition}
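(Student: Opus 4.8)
The plan is to assemble the dimension from the general formula for the dimension of the character variety of a good hyperbolic $2$-orbifold. Since $\mathcal O^2$ is hyperbolic, $\chi(\mathcal O^2)<0$, and $\rho=\tau\circ\mathrm{hol}$ is good by Lemma~\ref{Lemma:PrincipalGood}; moreover as noted in the remark preceding this subsection, $\dim_{\mathbb R}\mathrm{Hit}(\mathcal O^2,\mathrm{PGL}(n,\mathbb R))=\dim_{\mathbb C,[\rho]}X(\mathcal O^2,G)=-\widetilde\chi(\mathcal O^2,\mathrm{Ad}\rho)$ by Theorem~\ref{thm:dim2hyp} (Proposition~\ref{Proposition:rhogood}). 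So the entire task is to evaluate the integer $-\widetilde\chi(\mathcal O^2,\mathrm{Ad}\rho)$ using the definition
$$
\widetilde\chi(\mathcal O^2,\mathrm{Ad}\rho)=\sum_{e\text{ cell of }K}(-1)^{\dim e}\dim\mathfrak g^{\rho(\mathrm{Stab}(\tilde e))},
$$
for a suitable CW-structure $K$ on $\mathcal O^2$ compatible with the branching locus.

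**Key steps.** First I would choose $K$ so that each cone point, each corner reflector, and each mirror edge is a subcomplex, and so that the mirror boundary circles (the $[\![0,1]\!]$ components, $\mathsf b$ of them) and the $1$-dimensional mirror locus are handled cellwise. Then I would split the alternating sum into three contributions. (1) \emph{Cells with trivial stabilizer}: their contribution is $\dim G$ times the alternating count of such cells, which by an Euler-characteristic bookkeeping equals $-\chi(|\mathcal O^2|)\dim G=-\chi(|\mathcal O^2|)(n^2-1)$ after one subtracts off the over/undercounting at the singular cells; it is cleanest to first write $\widetilde\chi(\mathcal O^2,\mathrm{Ad}\rho)=\chi(|\mathcal O^2|)\dim G+\sum_{\text{singular }e}(-1)^{\dim e}\big(\dim\mathfrak g^{\rho(\mathrm{Stab}(\tilde e))}-\dim G\big)$, which is exactly formula~\eqref{eqn:intro} specialized here. (2) \emph{Cone points and corner reflectors} (the $0$-cells): these contribute $+\dim\mathfrak g^{\rho(\mathrm{Stab}(x))}-\dim G$; using Proposition~\ref{Prop:stabilizerSL} this is $\sigma(n,k_i)-1-(n^2-1)=-(n^2-\sigma(n,k_i))$ for a cone point of order $k_i$, and $\tfrac{\sigma(n,l_j)-1}{2}-(n^2-1)$ (resp.\ with $-2$ for $n$ even) for a corner reflector, giving $-\tfrac{n^2-\sigma(n,l_j)}{2}$ up to the parity correction—and one checks that the cone-point formula $\sigma(n,k)-1$ already accounts for the interior mirror $1$-cells via the same mechanism, since an interior mirror edge has $\dim\mathfrak g^{\rho(\mathrm{Stab})}=\lfloor (n^2-1)/2\rfloor$ or $(n^2-2)/2$ and enters with sign $(-1)^1$. (3) \emph{Mirror boundary components} $[\![0,1]\!]$: here I would use Lemma~\ref{Lemma:InfiniteDihedral}, or directly Corollary~\ref{Coro:CornerCenter}, together with $\widetilde\chi(S^1,\mathrm{Ad}\rho)=0$, to see that each such component contributes $-\lfloor n^2/2\rfloor$ to $-\widetilde\chi$; combined with $\mathsf b$ of them this gives the $+\mathsf b\lfloor n^2/2\rfloor$ term.

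**Main obstacle.** The genuinely delicate point is the bookkeeping of the \emph{$1$-dimensional} singular strata (interior mirror arcs and mirror boundary arcs) and the corner reflectors sitting on them: one must verify that the alternating sum over a chain of $0$-cells (mirror points, corner reflectors) and $1$-cells (mirror segments) telescopes correctly so that exactly the stated per-feature contributions survive, with no leftover boundary terms. Concretely, on a mirror boundary circle $[\![0,1]\!]$ subdivided into mirror edges and vertices, each interior vertex is a mirror point with $\dim\mathfrak g^{\rho}=\lfloor(n^2-1)/2\rfloor$ (odd $n$) and each edge has trivial stabilizer contributing $-(n^2-1)$, and the alternating sum must reduce to the single value $-\lfloor n^2/2\rfloor$ predicted by $\dim H^1(\mathbb R/D_\infty,\mathrm{Ad}\rho)$; this is where I would invoke Lemma~\ref{Lemma:InfiniteDihedral} to avoid a direct combinatorial collapse. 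I would also need to double-check the parity juggling between $(n^2-1)/2$, $(n^2-2)/2$, and $\lfloor n^2/2\rfloor$ so that the final formula is uniformly correct for $n$ odd and $n$ even. Once these reductions are in place, collecting the three contributions and negating yields precisely
$$
-({n^2-1})\chi(|\mathcal O^2|)+\sum_{i=1}^{\mathsf{cp}}\big(n^2-\sigma(n,k_i)\big)+\sum_{j=1}^{\mathsf{cr}}\frac{n^2-\sigma(n,l_j)}{2}+\mathsf b\left\lfloor\frac{n^2}{2}\right\rfloor,
$$
as claimed.
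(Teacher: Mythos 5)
Your strategy is essentially the paper's: reduce to computing $-\widetilde\chi(\mathcal O^2,\mathrm{Ad}\rho)$ via Proposition~\ref{Proposition:rhogood}, substitute the stabilizer dimensions from Proposition~\ref{Prop:stabilizerSL}, and collapse the alternating cell sum via Euler-characteristic identities.  The only organizational difference is that you normalize against $\chi(|\mathcal O^2|)\dim G$ from the outset, whereas the paper first writes the sum in terms of $\chi(\mathcal O^2\setminus\Sigma)$, $\mathsf{me}$, and $\mathsf{mp}$ (its formula~\eqref{eqn:dimHitSL}) and only then converts via $-\chi(\mathcal O^2\setminus\Sigma)=-\chi(|\mathcal O^2|)+\mathsf{cp}+\mathsf{b}$ and $\mathsf{me}-\mathsf{mp}=\mathsf{cr}-\mathsf{b}$; these are equivalent reorganizations.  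Two imprecisions worth correcting: your displayed identity centered on $\chi(|\mathcal O^2|)$ is valid, but it is \emph{not} the same as the paper's~\eqref{eqn:intro}, which is centered on $\chi(\mathcal O^2\setminus\Sigma)$ and stated only for closed orientable $2$-orbifolds; and the local attribution in step (3), that each $[\![0,1]\!]$ boundary component ``contributes $-\lfloor n^2/2\rfloor$ to $-\widetilde\chi$'', does not match Lemma~\ref{Lemma:InfiniteDihedral}, which gives $\widetilde\chi([\![0,1]\!],\mathrm{Ad}\rho)=0$ for odd $n$ and $-1$ for even $n$, and in any case the stated sign would yield $-\mathsf b\lfloor n^2/2\rfloor$ rather than $+\mathsf b\lfloor n^2/2\rfloor$.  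The $\lfloor n^2/2\rfloor$-term is in fact produced globally, by collecting the contributions of all corner reflectors, mirror edges, and boundary mirror points as $(\mathsf{cr}-\mathsf{me}+\mathsf{mp})\lfloor n^2/2\rfloor=\mathsf b\lfloor n^2/2\rfloor$, exactly the bookkeeping you flag as the ``main obstacle''; once done carefully, your plan reproduces the paper's proof.
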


\begin{proof}
We apply Proposition~\ref{Proposition:rhogood} to get that the dimension of the Hitchin component is
$-\chi( \mathcal O^2,\mathrm{Ad}\rho )$, which, by Proposition~\ref{Prop:stabilizerSL}, equals: 
\begin{equation}
 \label{eqn:dimHitSL}
 -({n^2-1})\chi(\mathcal O^2\setminus \Sigma)-\sum_{i=1}^\mathsf{cp} (\sigma(n,k_i)-1)+
\begin{cases}
 -\sum_{j=1}^\mathsf{cr} \frac{ \sigma(n,l_j)-2 }{2}  + (\mathsf{me} -\mathsf{mp})   \frac{n^2-2}{2}  &\textrm{if }n\textrm{ even}
 \\
 -\sum_{j=1}^\mathsf{cr} \frac{ \sigma(n,l_j) -1}{2}  + (\mathsf{me} -\mathsf{mp})  \frac{n^2-1}{2} & \textrm{if }n\textrm{ odd}
\end{cases}
\end{equation}
where  $\mathsf{me}$ is the number of  mirror edges (joining corner reflectors or mirror points in the boundary), and 
 $\mathsf{mp}$ the number of boundary points that are also mirrors.
Next we use the formulas (from additivity of the Euler characteristic):
 \begin{align*}
\chi(\vert\mathcal O^2\vert)=&\, \chi(\mathcal O^2\setminus\Sigma)+ \mathsf{cp}+\mathsf{cr}+\mathsf{mp}-\mathsf{me} ,\\
0= \chi(\partial \vert\mathcal O^2\vert)=&\,- \mathsf{me}-\mathsf{b}  +\mathsf{cr}+\mathsf{mp},
 \end{align*}
that are equivalent to
 \begin{align}
\label{eqn:xis}  
 -\chi(\mathcal O^2\setminus\Sigma)= &\,
- \chi(\vert\mathcal O^2\vert)+ \mathsf{cp}+\mathsf{b}  ,\\
\label{eqn:mimp}
\mathsf{me}-\mathsf{mp}=&\,\mathsf{cr}-\mathsf{b} .
 \end{align}
Using  \eqref{eqn:xis} and \eqref{eqn:mimp},   \eqref{eqn:dimHitSL} becomes:
$$
 -({n^2-1})\chi(\vert \mathcal O^2\vert)+\sum_{i=1}^\mathsf{cp} (n^2-\sigma(n,k_i))
 +\sum_{j=1}^\mathsf{cr} \frac{n^2- \sigma(n,l_j) }{2}
+
 \begin{cases}
 \mathsf{b}  \, \frac{n^2}{2}  &\textrm{if }n\textrm{ even}
 \\
 \mathsf{b} \, \frac{n^2-1}{2} & \textrm{if }n\textrm{ odd}
\end{cases}
$$
which proves the proposition.
\end{proof}

%

\begin{Example}[Proved in \cite{LongThistlethwaite} when $\mathsf{cp}=3$]
 \label{Exampe:spheres}  
Let $S^2(k_1,\dotsc,k_\mathsf{cp})$ be a sphere with $\mathsf{cp}\geq 3$ cone points of order $k_1,\dotsc, k_\mathsf{cp}$ respectively (and satisfying 
$\frac{1}{k_1}+\frac{1}{k_2}+\frac{1}{k_3}<1$ for $\mathsf{cp}=3$).
Then, the dimension of the Hitchin component of $S^2(k_1,\dotsc,k_\mathsf{cp})$ in $\mathrm{PGL}(n,\mathbb R)$ is
$$
n^2(\mathsf{cp}-2)+ 2-\sum_{i=1}^\mathsf{cp} \sigma(n,k_i).
$$
Consider next $P(k_1,\dotsc,k_\mathsf{cr})$  the orbifold generated by reflections on a 
hyperbolic polygon with angles $\frac{\pi}{k_1},\dotsc, \frac{\pi}{k_\mathsf{cr}}$. In particular $S^2(k_1,\dotsc,k_\mathsf{cr})$
is its orientation covering and therefore the 
dimension of the Hitchin component of $P(k_1,\dotsc,k_\mathsf{cr})$ in $\mathrm{PGL}(n,\mathbb R)$ is
$$
\frac{n^2(\mathsf{cr}-2)+ 2-\sum_{i=1}^\mathsf{cr} \sigma(n,k_i)}2.
$$
\end{Example}


\begin{Proposition}
\label{Prop:MaxHSL}
 Every component of $\mathcal R^{\mathrm{good}} (\mathcal O^2,\mathrm{PGL}(n,\mathbb R))$ that contains
 representations that are boundary regular has dimension at most the dimension of the Hitchin component.
\end{Proposition}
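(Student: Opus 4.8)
The plan is to reduce the statement to an inequality between twisted Euler characteristics and then to a purely arithmetic comparison of centralizer dimensions. Fix a component $\mathcal{C}$ of $\mathcal R^{\mathrm{good}}(\mathcal O^2,\mathrm{PGL}(n,\mathbb R))$ containing a boundary regular representation, and pick any good $\rho\in\mathcal{C}$ (when $\mathcal O^2$ is non-orientable we keep the standing convention that the restriction of $\rho$ to the orientation covering is good as well). By the real form of Weil's theorem (Theorem~\ref{Thm:Weil}) one has $T^{Zar}_{[\rho]}\mathcal{C}\cong H^1(\mathcal O^2,\mathrm{Ad}\rho)$, and since $\rho$ is good $H^0(\mathcal O^2,\mathrm{Ad}\rho)=H^2(\mathcal O^2,\mathrm{Ad}\rho)=0$, so $\dim\mathcal{C}\le\dim H^1(\mathcal O^2,\mathrm{Ad}\rho)=-\widetilde\chi(\mathcal O^2,\mathrm{Ad}\rho)$; moreover $\widetilde\chi(\mathcal O^2,\mathrm{Ad}\rho)$ depends only on the conjugacy classes of $\rho$ on the finite stabilizers, which cannot be deformed, hence it is constant on $\mathcal{C}$. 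For the Hitchin component, $\tau\circ\mathrm{hol}$ is good (Lemma~\ref{Lemma:PrincipalGood}) and Proposition~\ref{Proposition:rhogood} gives $\dim\mathrm{Hit}(\mathcal O^2,\mathrm{PGL}(n,\mathbb R))=-\widetilde\chi(\mathcal O^2,\mathrm{Ad}(\tau\circ\mathrm{hol}))$. Thus it suffices to prove
\[
\widetilde\chi(\mathcal O^2,\mathrm{Ad}\rho)\ \ge\ \widetilde\chi(\mathcal O^2,\mathrm{Ad}(\tau\circ\mathrm{hol}))
\]
for every good representation $\rho$.

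The key structural step is that in $\widetilde\chi$ the cells with trivial stabilizer contribute $\dim\mathfrak g=n^2-1$ times a topological quantity, so the dependence on $\rho$ is concentrated on the singular locus. I would run exactly the manipulation in the proof of Proposition~\ref{prop:HitPGL}: substitute for corner reflector stabilizers by means of Corollary~\ref{Coro:CornerCenter}, for the boundary components of type $\mathbb R/D_\infty$ by means of Lemma~\ref{Lemma:InfiniteDihedral}, and use additivity of the Euler characteristic, i.e.\ the identities \eqref{eqn:xis}--\eqref{eqn:mimp}. The effect of this reshuffling -- already implicit in Proposition~\ref{prop:HitPGL} -- is that the reflection contributions of the mirror edges cancel, leaving
\[
-\widetilde\chi(\mathcal O^2,\mathrm{Ad}\rho)=C(\mathcal O^2,n)-\sum_{i}\dim\mathfrak g^{\rho(C_{k_i})}-\tfrac12\sum_{j}\dim\mathfrak g^{\rho(C_{l_j})}-\sum_{s}\dim\mathfrak g^{\rho(\sigma_s)},
\]
where $C(\mathcal O^2,n)$ depends only on $\mathcal O^2$ and $n$, the first sum runs over the cyclic stabilizers $C_{k_i}$ of the cone points, the second over the rotation subgroups $C_{l_j}$ of the corner reflectors, and the last over the reflections $\sigma_s$ generating the stabilizers of the mirror points of $\partial\mathcal O^2$. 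The same computation applied to $\tau\circ\mathrm{hol}$ expresses $\dim\mathrm{Hit}(\mathcal O^2,\mathrm{PGL}(n,\mathbb R))$ by the identical formula with each $\dim\mathfrak g^{\rho(\cdot)}$ replaced by $\dim\mathfrak g^{\tau\circ\mathrm{hol}(\cdot)}$; since every coefficient is $\le 0$, it now suffices to check $\dim\mathfrak g^{\rho(\gamma)}\ge\dim\mathfrak g^{\tau\circ\mathrm{hol}(\gamma)}$ for each finite order element $\gamma$ appearing.

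This last inequality is elementary. If $\gamma$ has order dividing $k$, the eigenvalues of a lift of $\rho(\gamma)$ to $\mathrm{GL}(n,\mathbb C)$ are, up to a common scalar, $k$-th roots of unity, hence at most $k$ of them, with multiplicities $m_1,m_2,\dots$ summing to $n$; then $\dim\mathfrak g^{\rho(\gamma)}=\sum_p m_p^2-1$, and for a bounded number of parts $\sum_p m_p^2$ is minimized by the most balanced partition, which gives $\sigma(n,k)$. Since $\tau\circ\mathrm{hol}(\gamma)=\pm\operatorname{Sym}^{n-1}$ of an elliptic element of order $k$ realizes precisely that balanced pattern, $\dim\mathfrak g^{\tau\circ\mathrm{hol}(\gamma)}=\sigma(n,k)-1$ (the computation in the proof of Proposition~\ref{Prop:stabilizerSL}), so $\dim\mathfrak g^{\rho(\gamma)}\ge\sigma(n,k)-1$. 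For $\gamma$ of order dividing $2$, up to conjugacy and scalar $\rho(\gamma)$ is either $\operatorname{diag}(I_p,-I_q)$, with centralizer of dimension $p^2+q^2-1\ge\lceil n^2/2\rceil-1$, or, when $n$ is even, a complex structure, with centralizer of dimension $\tfrac{n^2}{2}-1=\lceil n^2/2\rceil-1$; in all cases $\dim\mathfrak g^{\rho(\gamma)}\ge\lceil n^2/2\rceil-1=\dim\mathfrak g^{\tau\circ\mathrm{hol}(\gamma)}$ by Proposition~\ref{Prop:stabilizerSL}. Combining the three steps gives $\dim\mathcal{C}\le\dim\mathrm{Hit}(\mathcal O^2,\mathrm{PGL}(n,\mathbb R))$.

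The main obstacle is the bookkeeping in the middle step: establishing the displayed formula with a $\rho$-independent constant, i.e.\ verifying that all mirror edge and extraneous mirror point contributions cancel against the corner reflector contributions uniformly over the possible configurations of the mirror $1$-complex (interior mirror circles, mirror arcs between corner reflectors, mirror arcs meeting the boundary, and boundary components of type $\mathbb R/D_\infty$). A cleaner alternative that sidesteps this case analysis is to handle the non-orientable case by passing to the orientation double cover $\widetilde{\mathcal O^2}$, which is orientable and therefore carries only cone points and circle boundary: Corollary~\ref{Corollary:double} and Proposition~\ref{Prop:boundaryNO} express $\dim\mathcal{C}$ and $\dim\mathrm{Hit}(\mathcal O^2,\mathrm{PGL}(n,\mathbb R))$ in terms of half the corresponding quantities for $\widetilde{\mathcal O^2}$ plus a fixed linear combination of the $\widetilde\chi(\mathbb R/D_\infty,\mathrm{Ad}\rho)=\dim\mathfrak g^{\rho(\sigma_1)}+\dim\mathfrak g^{\rho(\sigma_2)}-\dim\mathfrak g$ of Lemma~\ref{Lemma:InfiniteDihedral}; one then concludes from the easy orientable cone-point case together with the order two inequality above applied to the correction term.
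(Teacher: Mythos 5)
Your proof is essentially the paper's proof with the reduction spelled out more explicitly. Both arguments hinge on the same pointwise minimization: for an elliptic $\gamma$ of order $k$, $\dim\mathfrak g^{\rho(\gamma)}=\sum_p m_p^2-1$ where $m_1,\dots$ are the eigenvalue multiplicities (at most $k$ of them), and the balanced partition realized by $\operatorname{Sym}^{n-1}$ of a rotation minimizes this sum; the paper states exactly this claim and invokes it, first for the orientable case (only cone points) and then, via the orientation double cover, Proposition~\ref{Prop:boundaryNO} and Lemma~\ref{Lemma:InfiniteDihedral}, for the non-orientable case. Your ``cleaner alternative'' at the end is precisely the paper's treatment of the non-orientable case, so there is no genuine divergence in method.

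Two small corrections to your write-up. First, in your displayed formula the coefficient of the boundary mirror term should be $-\tfrac12$, not $-1$: tracing the cancellations after applying Corollary~\ref{Coro:CornerCenter} along each component of the mirror graph, each arc $[\![0,1]\!]$ contributes $+\tfrac12\bigl(\dim\mathfrak g^{\rho(\sigma_{\mathrm{start}})}+\dim\mathfrak g^{\rho(\sigma_{\mathrm{end}})}\bigr)+\tfrac12\sum_j\dim\mathfrak g^{\rho(C_{l_j})}-\tfrac{c}{2}\dim\mathfrak g$ to $\widetilde\chi$ (where $c$ is the number of corner reflectors on that arc), and mirror circles contribute $\tfrac12\sum_j\dim\mathfrak g^{\rho(C_{l_j})}-\tfrac{c}{2}\dim\mathfrak g$. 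The error is harmless because your argument only uses that the coefficients are nonpositive, which is still the case. Second, when invoking Proposition~\ref{Prop:boundaryNO} in the ``cleaner alternative,'' be aware that the sign in its statement appears to be reversed: carrying out the bookkeeping in its proof gives $\dim_{[\rho]}X(\mathcal O^2,G)=\tfrac12\dim_{[\rho]}X(\widetilde{\mathcal O^2},G)-\tfrac12\sum\widetilde\chi([\![0,1]\!],\mathrm{Ad}\rho)$. With the stated $+$ sign the two terms would be pulled in opposite directions by the Hitchin representation and the inequality would not follow; with the correct $-$ sign both terms are simultaneously maximized and the argument closes. Your primary route (the explicit negative-coefficient formula) sidesteps this issue entirely, which is a point in its favor.
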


\begin{proof}
Assume first that $\mathcal O^2$ is orientable, 
hence the singular locus is a finite union of cone points,
that have cyclic stabilizers.
We claim that for an elliptic element $\gamma$, 
$\dim(\mathfrak{g}^{\rho(\gamma)})$ is minimized for $\rho$ in the Hitchin
component.
Assuming the claim,  $\widetilde \chi(\mathcal O^2,\rho)$ is minimized for
$\rho$ in the Hitchin component, and apply Proposition~\ref{Proposition:rhogood}.
To prove the claim, since  $\rho(\gamma)$ has order $k$, its eigenvalues have
multiplicities $\{n_1,\ldots,n_k\}$, with $n_1+\cdots+n_k=n$.
Therefore  $\dim(\mathfrak{g}^{\rho(\gamma)})= n_1^2+\cdots+n_k^2-1$. This
function is minimized when $|n_i-n_j|\leq 1$, 
because if we replace $n_1$ by $n_1+1$ and $n_2$ by $n_2-1$, then $n_1^2+n_2^2$
increases by $2(n_1-n_2+1)$. 
As, for a rotation of angle $2\pi/k$, $\operatorname{Sym}^{n-1}(  \rho(\gamma))$
satisfies $\vert n_i-n_j\vert\leq 1$, it minimizes the required dimension.

In the non-orientable case we apply Proposition~\ref{Prop:boundaryNO} and Lemma~\ref{Lemma:InfiniteDihedral} and use the claim we have proved, 
that
for an elliptic element $\gamma$, 
$\dim(\mathfrak{g}^{\rho(\gamma)})$ is minimized for $\rho$ in the Hitchin
component.
%
\end{proof}

\begin{Example}
In Table~\ref{Table:dimHitchin334} we compute the   
dimension of  $\mathrm {Hit}(S^2(3,3,4),\mathrm{PSL}(n,\mathbb R) )$.
Notice that the difference with $n^2/12$ is bounded, this is a particular case of the next proposition.
\end{Example}

\begin{table}[h]
  \begin{center}
 \begin{tabular}{r|l}
  $n\mod 12$ & $\dim \mathrm {Hit}( S^2(3,3,4)  , \mathrm{PGL}(n,\mathbb R))$ \\
  \hline 
  0 & $n^2/12+2$ \\
  $\pm 1,\pm 5$ & $(n^2-1)/12$ \\
  $\pm 2$ & $(n^2-4)/12$ \\
  $\pm 3$ & $(n^2+15)/12$ \\
  $\pm 4$ & $(n^2+8)/12$ \\
  $ 6$  & $n^2/12+1$
 \end{tabular}
  \end{center}
  \caption{Dimension of the Hitchin component of $S^2(3,3,4)$.
  The dimension for the group generated by reflections on a triangle of angles  
  $(\frac\pi3,\frac\pi3,\frac\pi4)$ is half of it. } \label{Table:dimHitchin334}
\end{table}

\begin{Proposition}
\label{prop:growthHitchin}
There exists a uniform constant $ C(\mathcal O^2)$ depending only on $\mathcal O^2$ such that
$$
\left| 
\dim   
\mathrm {Hit}(\mathcal O^2, \mathrm{PGL}(n,\mathbb R))   
+\chi(\mathcal O^2) (n^2-1)
\right|
\leq C(\mathcal O^2).
$$
Here $0> \chi(\mathcal O^2)\in\mathbb{Q}$ denotes the (untwisted) orbifold-Euler characteristic.
\end{Proposition}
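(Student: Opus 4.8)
The plan is to reduce the statement to a uniform estimate on the twisted orbifold Euler characteristic. Since $\tau\circ\mathrm{hol}$ is good (Lemma~\ref{Lemma:PrincipalGood}) and $\mathcal O^2$ is aspherical (being hyperbolic), Proposition~\ref{Proposition:rhogood} together with the remark following Proposition~\ref{Prop:LagrangianHitchin} gives
$$
\dim\mathrm{Hit}(\mathcal O^2,\mathrm{PGL}(n,\mathbb R))=-\widetilde\chi(\mathcal O^2,\mathrm{Ad}\rho),\qquad \rho=\operatorname{Sym}^{n-1}\circ\mathrm{hol}.
$$
I would fix once and for all a CW-structure $K$ on $\mathcal O^2$ and recall that, with the sums ranging over the cells of $K$, one has $\chi(\mathcal O^2)=\sum_{e}(-1)^{\dim e}/|\mathrm{Stab}(\tilde e)|$ and $\widetilde\chi(\mathcal O^2,\mathrm{Ad}\rho)=\sum_{e}(-1)^{\dim e}\dim\mathfrak g^{\rho(\mathrm{Stab}(\tilde e))}$. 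Because $|n^2\chi(\mathcal O^2)-(n^2-1)\chi(\mathcal O^2)|=|\chi(\mathcal O^2)|$, it is enough to bound $|\widetilde\chi(\mathcal O^2,\mathrm{Ad}\rho)-n^2\chi(\mathcal O^2)|$ by a constant independent of $n$, and for that it suffices to bound, cell by cell, the quantity $\bigl|\dim\mathfrak g^{\rho(\mathrm{Stab}(\tilde e))}-n^2/|\mathrm{Stab}(\tilde e)|\bigr|$ by a constant depending only on $|\mathrm{Stab}(\tilde e)|$.

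The second step is this cell-by-cell estimate, which I would obtain from Proposition~\ref{Prop:stabilizerSL} after controlling the arithmetic function $\sigma$. Writing $n=qk+r$ with $0\le r<k$, a direct computation gives $\sigma(n,k)=\tfrac{n^2-r^2}{k}+r=\tfrac{n^2}{k}+\tfrac{r(k-r)}{k}$, so $0\le\sigma(n,k)-\tfrac{n^2}{k}\le\tfrac{k}{4}$. Feeding this into the five cases of Proposition~\ref{Prop:stabilizerSL} — trivial stabilizer ($n^2-1$), cyclic $C_k$ ($\sigma(n,k)-1$), mirror $C_2$ ($(n^2-\varepsilon)/2$ with $\varepsilon\in\{1,2\}$), and dihedral $D_{2k}$ ($(\sigma(n,k)-\varepsilon)/2$) — shows that in each case $\dim\mathfrak g^{\rho(\mathrm{Stab}(\tilde e))}$ differs from $n^2/|\mathrm{Stab}(\tilde e)|$ by at most $\tfrac14|\mathrm{Stab}(\tilde e)|+2$, independently of $n$.

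Summing over the finitely many cells of $K$ then yields $|\widetilde\chi(\mathcal O^2,\mathrm{Ad}\rho)-n^2\chi(\mathcal O^2)|\le\sum_e\bigl(\tfrac14|\mathrm{Stab}(\tilde e)|+2\bigr)=:C(\mathcal O^2)$, which depends only on $K$, hence only on $\mathcal O^2$, and the proposition follows with $C(\mathcal O^2)+|\chi(\mathcal O^2)|$ in the role of the constant. As an alternative route I could instead start from the explicit formula of Proposition~\ref{prop:HitPGL}, apply the same estimate $\sigma(n,k)=n^2/k+O_k(1)$ and $\lfloor n^2/2\rfloor=n^2/2+O(1)$, and identify the coefficient of $n^2$ with $-\chi(\mathcal O^2)$ through the orbifold Euler characteristic formula $\chi(\mathcal O^2)=\chi(|\mathcal O^2|)-\sum_i(1-1/k_i)-\tfrac12\sum_j(1-1/l_j)-\tfrac12\mathsf b$. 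In either form the argument is essentially routine; the one point that genuinely needs care is that the error term be controlled \emph{uniformly in $n$}, and this is exactly what the closed expression $\sigma(n,k)=\tfrac{n^2}{k}+\tfrac{r(k-r)}{k}$ provides.
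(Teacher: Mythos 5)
Your proof is correct, and the core idea coincides with the paper's: bound $\sigma(n,k)$ by $n^2/k$ up to an error depending only on $k$, and then sum over the finitely many singular strata. Your closed-form identity $\sigma(n,k)=n^2/k+r(k-r)/k$ is tidier than the paper's estimate $\bigl|\tfrac{n^2-1}{k}-\dim\mathfrak g^{\rho(\gamma)}\bigr|=(r-1)(1-\tfrac{r+1}{k})\leq k$ and gives a sharper constant, though this does not matter for the statement.

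The one genuine difference is your treatment of the non-orientable case. You run the cell-by-cell estimate uniformly over all cells of a CW-structure on $\mathcal O^2$ itself, covering trivial, cyclic, $C_2$-mirror, and $D_{2k}$-corner stabilizers via Proposition~\ref{Prop:stabilizerSL}, with Proposition~\ref{Proposition:rhogood} supplying the identity $\dim\mathrm{Hit}=-\widetilde\chi$ directly for non-orientable $\mathcal O^2$. The paper instead proves the orientable case (where only cyclic stabilizers occur) and then treats the non-orientable case by passing to the orientation covering $\widetilde{\mathcal O^2}\to\mathcal O^2$ and bounding the defect $\bigl|\dim\mathrm{Hit}(\mathcal O^2)-\tfrac12\dim\mathrm{Hit}(\widetilde{\mathcal O^2})\bigr|$ using Proposition~\ref{Prop:boundaryNO} and Lemma~\ref{Lemma:InfiniteDihedral}. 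Your route avoids that detour and handles both cases in one pass; the paper's route has the small advantage of re-using the halving formula it has already set up, but for the purpose of this proposition your direct argument is arguably cleaner. One small point worth making explicit: Proposition~\ref{Proposition:rhogood} also requires that the restriction of $\tau\circ\mathrm{hol}$ to the orientation covering be good, which does hold since that restriction is again the principal representation composed with a Fuchsian holonomy.
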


\begin{proof}
Let $\rho=\tau\circ\mathrm{hol}$.
Assume first that $\mathcal O^2$ is orientable.
If $\gamma$ is an elliptic element of order $k$, 
we shall prove that 
$
\left|\frac{n^2-1}{k}-\dim \mathfrak{g}^{ \rho(\gamma)}\right|
$
is bounded uniformly on $n$.
%
Namely, if $n=k q+ r$, then $
\dim \mathfrak{g}^{\rho(\gamma)}
= k q^2+ 2 r q +r-1
$. On the other hand $n^2= k^2 q^2+ 2 r k q + r^2$, thus:
$$
\left|\frac{n^2-1}{k}- \dim \mathfrak{g}^{\rho(\gamma)}  \right|= 
\left|\frac{r^2-1}{k}-r+1\right|=(r-1)\left(1-\frac{r+1}{k}\right)
\leq k .
$$
In the orientable case, as all stabilizers are cyclic this estimate tells that 
the difference $\tilde \chi(\mathcal O^2,\mathrm{Ad}\rho)-\chi(\mathcal O^2)(n^2-1)$
is uniformly bounded, for $\rho=\mathrm{Sym}^{n-1}\circ\mathrm{hol} $. With
Proposition~\ref{Proposition:rhogood}, this yields the proposition for $\mathcal O^2$ orientable.

In the non-orientable case, consider $\widetilde{\mathcal O^2}\to\mathcal O^2$ the orientation covering.
By Proposition~\ref{Prop:boundaryNO} and Lemma~\ref{Lemma:InfiniteDihedral}
$$
\left|\dim \mathrm {Hit}(\mathcal O^2, \mathrm{PGL}(n,\mathbb R)) -
\frac{1}{2} \dim \mathrm {Hit}(\widetilde{\mathcal O^2}, \mathrm{PGL}(n,\mathbb R))
\right|\leq
\frac12\big( \dim G-2\dim \mathfrak{g}^{\rho (C_2)}\big) \textsf{b}(\mathcal O^2)
$$
where $\rho( C_2)$ denotes the image of any cyclic stabilizer of order 2 and $\textsf{b}(\mathcal O^2)$ is the number 
of components of $\partial \mathcal O^2$ homeomorphic to $\mathbb R/ D_\infty=[ \![0,1]\! ]$ (an interval with mirror
end-points). As 
$$
\frac12\big( \dim G-2\dim \mathfrak{g}^{\rho (C_2)}\big)= \frac{n^2-1}2- (\sigma(n,2)-1)=
\begin{cases}
 1/2 & \textrm{for even }n\\
 0 & \textrm{for odd }n
\end{cases}
$$
the proposition follows.
\end{proof}

\subsection{Hitchin component for $\mathrm{PSp}^\pm(2m)$}

For   $G=\mathrm{PSp}^\pm(2m)$
the principal representation $\tau\colon\mathrm{PGL}(2,\mathbb R)\to G$  is the restriction of
$\operatorname{Sym}^{2m-1}\colon\mathrm{PGL}(2,\mathbb R)\to \mathrm{PGL}(2m-1,\mathbb R) $. 
Let 
$$J=\begin{pmatrix} 0 & 1 \\ -1 & 0 \end{pmatrix}.$$ 
As 
$$
A^tJA=\det(A) J,
$$ 
for every  $ A\in \mathrm{GL}(2,\mathbb R)$,
by restricting to matrices with determinant $\pm 1$, we get the inclusion 
of $\operatorname{Sym}^{2m-1}(\mathrm{PGL}(2,\mathbb R))$ in  $G=\mathrm{PSp}^\pm(2m)$.
Furthermore, the antisymmetric matrix is $\operatorname{Sym}^{2m-1}(J)$.


Write
$$
2m=k q+r
$$
with $q,m\in \mathbb{Z}$, $q,r\geq 0$ and $r< k$. Recall that:
$$
\sigma(2 m, k)= 2m q + r(q+1).
$$

\begin{Proposition}
\label{Prop:HitPsp}
 Let $\rho\in \mathrm{Hit}(\mathcal O^2, G)$, for $G=\mathrm{PSp}^\pm(2m)$ or $\mathrm{PO}(m,m+1)$
 and let  $x\in\mathcal O^2$.
 \begin{enumerate}
  \item If $x$ is a cone point with $\operatorname{Stab}(x)\cong C_k$ then
    $$
   \dim\mathfrak{g}^{\rho(C_k) }=
   \begin{cases}
\frac{\sigma(2 m, k)}{2} & \textrm{for } k \textrm{ even,} \\
\frac{\sigma(2 m, k)}{2}
 +\lfloor \frac{q+1}{2}\rfloor 
 & \textrm{for } k \textrm{ odd.} \
   \end{cases}
  $$
  \item If $x$ is a mirror point with $\operatorname{Stab}(x)\cong C_2$ then $ \dim\mathfrak{g}^{\rho(C_2) }= m^2$.
  \item If $x$ is a corner reflector with $\operatorname{Stab}(x)\cong D_{2k}$ then
$$
   \dim\mathfrak{g}^{\rho(D_k) }=
   \begin{cases}
\frac{\sigma(2 m, k)}{4}-\frac{m}{2} & \textrm{for } k \textrm{ even,} \\
\frac{\sigma(2 m, k)}{4}-\frac{m}{2}
 +\frac{1}{2}\lfloor \frac{q+1}{2}\rfloor 
 & \textrm{for } k \textrm{ odd.} \
   \end{cases}
  $$
 \end{enumerate}
\end{Proposition}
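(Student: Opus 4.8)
The plan is to compute each centralizer dimension $\dim\mathfrak g^{\rho(\operatorname{Stab}(x))}$ by restricting the decomposition $\mathrm{Ad}\circ\tau=\bigoplus_{j=1}^{m}\operatorname{Sym}^{2(2j-1)}$ from \eqref{eqn:exponents} — valid for both $\mathfrak{sp}(2m,\mathbb C)$ and $\mathfrak{so}(2m+1,\mathbb C)$, which by Table~\ref{Table:exponentsclassical} have rank $m$, dimension $2m^2+m$ and exponents $1,3,\dots,2m-1$ — to the finite group $\rho(\operatorname{Stab}(x))$, and then summing the dimensions of the invariants of each factor $V_{2(2j-1)}$. Since finite order elements cannot be deformed, $\rho(\operatorname{Stab}(x))$ is conjugate to $\tau(\mathrm{hol}(\operatorname{Stab}(x)))$ for the reference Fuchsian holonomy, so it suffices to treat that case.

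First I would handle the \emph{cone points}, where $\operatorname{Stab}(x)\cong C_k$ is generated by an orientation preserving elliptic element, so $\tau\circ\mathrm{hol}$ sends it into the identity component of $G_{\mathbb R}$ and it acts on $V_{2d}$ as $\operatorname{Sym}^{2d}$ of a conjugate of $\mathrm{diag}(e^{\pi i/k},e^{-\pi i/k})$, i.e.\ with eigenvalues $e^{2\pi i\ell/k}$ for $\ell=-d,\dots,d$; hence $\dim V_{2d}^{C_k}=2\lfloor d/k\rfloor+1$ and
$$
\dim\mathfrak g^{\rho(C_k)}=m+2\sum_{j=1}^{m}\Bigl\lfloor\tfrac{2j-1}{k}\Bigr\rfloor .
$$
It then remains to match this floor sum with the claimed closed form, and for that I would compare with Proposition~\ref{Prop:stabilizerSL}: writing $\mathfrak{sl}(2m,\mathbb C)=\mathfrak{sp}(2m,\mathbb C)\oplus\bigl(\bigoplus_{i=1}^{m-1}\operatorname{Sym}^{4i}\bigr)$ as $\mathrm{PSL}(2,\mathbb R)$-modules, the identity $\dim\mathfrak{sl}(2m)^{\rho(C_k)}=\sigma(2m,k)-1$ gives $\sum_{\ell=1}^{2m-1}\lfloor\ell/k\rfloor=\tfrac12\sigma(2m,k)-m$, and I would split this into its odd- and even-indexed parts: for $k$ even the two parts are equal (a short count of those $\ell$ at which consecutive floors jump), which gives $\dim\mathfrak g^{\rho(C_k)}=\tfrac12\sigma(2m,k)$; for $k$ odd the difference of the two parts accounts for the correction $\lfloor\tfrac{q+1}{2}\rfloor$.

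For a \emph{mirror point} $\operatorname{Stab}(x)=\langle\sigma\rangle\cong C_2$ the holonomy $\mathrm{hol}(\sigma)$ is an orientation reversing involution, conjugate to $g=\mathrm{diag}(1,-1)$ with $\det g=-1$, and the standard module $V$ of $G_{\mathbb R}$ splits under $\tau(g)$ as $V_+\oplus V_-$ with $(\dim V_+,\dim V_-)=(m,m)$ for $G=\mathrm{PSp}^\pm(2m)$ and $(m+1,m)$ for $G=\mathrm{PO}(m,m+1)$. In the orthogonal case $\mathfrak g=\wedge^2V$ and the invariants are $\wedge^2V_+\oplus\wedge^2V_-$, of dimension $\binom{m+1}{2}+\binom{m}{2}=m^2$. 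The symplectic case is the point requiring care, and I expect it to be the main obstacle: although $\mathfrak{sp}(2m,\mathbb C)=\operatorname{Sym}^2V$ as $\mathrm{PSL}(2,\mathbb R)$-modules, as a $\mathrm{GL}(2,\mathbb R)$-module via $\tau$ it is $\operatorname{Sym}^2V\otimes(\det)^{-(2m-1)}$ (the determinant twist being forced by triviality of $\mathrm{Ad}$ on scalars), so on the non-identity component each factor $\operatorname{Sym}^{2(2j-1)}$ picks up the extra sign $\det(g)=-1$; hence the invariant subspace of $\mathrm{Ad}(\tau(g))$ in $\operatorname{Sym}^2V_+\oplus(V_+\otimes V_-)\oplus\operatorname{Sym}^2V_-$ is the middle summand $V_+\otimes V_-$, again of dimension $m^2$ (equivalently $\dim V_{2d}^{\langle g\rangle}=d$ and $\sum_{j=1}^m(2j-1)=m^2$).

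Finally, for a \emph{corner reflector} $\operatorname{Stab}(x)\cong D_{2k}$ with rotation subgroup $C_k$ and two adjacent mirror points $x_1,x_2$, I would apply Corollary~\ref{Coro:CornerCenter}: using the previous two cases and $\dim\mathfrak g=2m^2+m$ this gives $\dim\mathfrak g^{\rho(\operatorname{Stab}(x))}=\tfrac12\bigl(\dim\mathfrak g^{\rho(C_k)}-m\bigr)$, and substituting the cone-point formula produces the two stated expressions. Apart from the determinant twist and the parity bookkeeping of the floor sums, every step is formal.
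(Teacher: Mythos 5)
Your proof is correct, but it follows a genuinely different route from the paper's for items (1) and (2). For the cone point the paper works entirely with the explicit antidiagonal realization of $\mathfrak{sp}(2m)$: it writes $\dim(\mathfrak{sp}(2m)^D)=\tfrac12\bigl(\dim(\mathfrak{gl}(2m)^D)+\dim(\mathfrak{ad}^D)\bigr)$ for a diagonal $D$, computes $\dim\mathfrak{gl}(2m)^D=\sigma(2m,k)$ by multiplicity counting and $\dim\mathfrak{ad}^D$ by listing the eigenvalues on the antidiagonal, and this directly produces the case distinction in $k$. You instead use the principal $\mathrm{SL}_2$ decomposition $\mathrm{Ad}\circ\tau=\bigoplus_j\operatorname{Sym}^{2(2j-1)}$, reduce to the floor sum $m+2\sum_{j=1}^m\lfloor(2j-1)/k\rfloor$, and close the arithmetic by comparing with Proposition~\ref{Prop:stabilizerSL} via $\mathfrak{sl}(2m)=\mathfrak{sp}(2m)\oplus\bigoplus_{i=1}^{m-1}\operatorname{Sym}^{4i}$; your observation that $S_{\mathrm{odd}}-S_{\mathrm{even}}=\#\{j\le m : k\mid 2j-1\}$ is exactly what produces the $0$ versus $\lfloor(q+1)/2\rfloor$ dichotomy, and it checks out. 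For the mirror point the paper simply notes that a reflection and an order-$2$ rotation are conjugate in $\mathrm{PSL}(2,\mathbb C)$, so item (2) is item (1) with $k=2$ (indeed $\tfrac12\sigma(2m,2)=m^2$); your $V=V_+\oplus V_-$ computation with $\mathfrak{so}=\wedge^2V$ and $\mathfrak{sp}=\operatorname{Sym}^2V\otimes\epsilon$ (with the similitude sign twist $\epsilon(\tau(g))=\det(g)^{2m-1}$) is an independent, correct, and more conceptual derivation of the same number. Item (3) is handled the same way in both, via Corollary~\ref{Coro:CornerCenter}. One small caveat you should flag in the parenthetical alternative: the identity $\dim V_{2d}^{\langle g\rangle}=d$ for a reflection $g$ holds only when $d$ is \emph{odd} (for even $d$ the invariants have dimension $d+1$); this is fine here because the exponents $1,3,\dots,2m-1$ of $\mathfrak{sp}(2m,\mathbb C)$ and $\mathfrak{so}(2m+1,\mathbb C)$ are all odd, but the claim as stated reads as if it were general.
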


  
\begin{proof}
By Remark~\ref{Remark:equalpPspO}, it is sufficient to make the computations for $G=\mathrm{PSp}(2m,\mathbb C)$.
The bilinear form $\operatorname{Sym}^{2m-1}(J)$ has matrix
$$
\begin{pmatrix}
 0 & \cdots & 0 & 0 & 1 \\
 0 & \cdots & 0 & -1 & 0 \\
 0 &  \cdots & 1 & 0 & 0 \\
 \vdots & \ddots  &   \vdots&  \vdots & \vdots \\
 -1& \cdots &0 & 0 & 0
\end{pmatrix}.
$$
thus  
$$
\mathfrak{sp}(2m)=\{ (a_{i,j}\in M_{2m,2m}(\mathbb{C}) \mid  a_{i,j}=(-1)^{i+j+1}a_{2m+1-j,2m+1-i}  \}.
$$ 
From this expression, as a diagonal matrix $D\in \mathrm{PSp}(2m,\mathbb C)$ may be written as 
$$
D=\operatorname{diag}(\lambda_1,\dotsc,\lambda_m,\lambda_m^{-1},\dotsc, \lambda_1^{-1} ),
$$
we have that 
$$
\dim(\mathfrak{sp}(2m)^D)=\frac{1}{2}\left(\dim(\mathfrak{gl}(2m)^D )+ \dim ( \mathfrak{ad}^D ) \right)
$$
where $\mathfrak{ad}=\{ (a_{i,j}\in M_{2m,2m}(\mathbb{C})\mid a_{ij}= 0 \textrm{ if } i+j\neq 2m+1\}  
\subset \mathfrak{sp}(2m)$ is the anti-diagonal, namely the subspace of matrices
$$
\begin{pmatrix}
 0 & \cdots & 0 & 0 & * \\
 0 & \cdots & 0 & * & 0 \\
 0 &  \cdots & * & 0 & 0 \\
 \vdots & \ddots  &   \vdots&  \vdots & \vdots \\
 *& \cdots &0 & 0 & 0
\end{pmatrix}.
$$
A rotation of angle $\frac{2\pi}k$ in $\mathrm{PGL}(2,\mathbb C)$  is conjugate
to $\pm \begin{pmatrix}  e^{\frac{\pi i}k} & 0 \\ 0 &   e^{-\frac{\pi i}k} \end{pmatrix}$ and
\begin{align*}
{D}&=
\operatorname{Sym}^{2m-1} \left( 
\pm \begin{pmatrix}  e^{\frac{\pi i}k} & 0 \\ 0 &   e^{-\frac{\pi i}k} \end{pmatrix}\right)
\\
&=\pm \operatorname{diag}( e^{\frac{\pi i}k(2m-1) }, e^{\frac{\pi i}k(2m-3) }, \ldots ,e^{\frac{\pi i}k} , 
e^{-\frac{\pi i}k }, \ldots , e^{-\frac{\pi i}k(2m-1) }) .
\end{align*}
Therefore,   the (adjoint) action of 
${D}$, 
on the antidiagonal $ \mathfrak{ad}$ has eigenvalues
$$
\{
e^{\frac{2\pi i}k(2m-1) }, e^{\frac{2\pi i}k(2m-3) }, \ldots ,e^{\frac{2\pi i}k} , 
e^{-\frac{2\pi i}k }, \ldots , e^{-\frac{2\pi i}k(2m-1) }
\}.
$$
Thus, $\dim  \mathfrak{ad}^{{D} }$ is the number appearances of  $1$ in this list of eigenvalues:
$$
\dim  \mathfrak{ad}^{{D} }=\begin{cases}
                               0 & \textrm{for }k\textrm{ even},\\
                               2\lfloor\frac{2 m-1 +k}{2k}\rfloor=2\lfloor\frac{q+1}{2}\rfloor & \textrm{for }k\textrm{ odd}.
                              \end{cases}
$$
On the other hand, 
$\mathfrak{gl}(2m)^{D}= 2mq+r(q+1)$ (because $D$ has $r$ eigenvalues with multiplicity $q+1$
and $k-r$ eigenvalues with multiplicity $q$), what concludes item 1. 

Item 2 is a particular case of item 1, and item 3 follows from Corollary~\ref{Coro:CornerCenter} and the previous cases.
\end{proof}

With a similar proof as for Proposition~\ref{prop:growthHitchin}, we can show:

\begin{Proposition}
\label{Prop:AssPSp} 
There exists a uniform constant $ C(\mathcal O^2)$ depending only on $\mathcal O^2$ such that
 $$
\Bigg|\dim {\mathrm{Hit}}(\mathcal O^2, \mathrm{PSp}^{\pm} (2m) )  +\chi(\mathcal O^2)  \dim  ( \mathrm{PSp}^{\pm} (2m)) 
+ \Bigg( \sum_{k\textrm{ even}}\tfrac{\mathsf{cp}_k}{k}+ 
\sum_{l\textrm{ even}}\tfrac{\mathsf{cr}_l}{2l}+\tfrac{\mathsf{b}}{2}\Bigg)  m
\Bigg| \leq  C(\mathcal O^2),
$$ 
where $ \chi(\mathcal O^2)$ denotes the untwisted Euler characteristic, 
$\mathsf{cp}_k$  the number of cone points with stabilizer of order $k$, 
$\mathsf{cr}_k$   the number of corner reflectors with stabilizer of order $2l$, 
and $\mathsf{b}$    the number of components of $\partial\mathcal O^2$ 
homeomorphic to $\mathbb R/ D_\infty$, an interval with mirror boundary. 
\end{Proposition}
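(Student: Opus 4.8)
The plan is to follow the proof of Proposition~\ref{prop:growthHitchin}, replacing the \emph{uniform} bounds used there for $\mathrm{PGL}(n,\mathbb R)$ by bounds that become uniform only after subtracting the displayed linear term in $m$. Put $\rho=\tau\circ\mathrm{hol}=\operatorname{Sym}^{2m-1}\circ\mathrm{hol}$; this representation is good and stays good on the orientation covering, so Proposition~\ref{Proposition:rhogood} gives $\dim\mathrm{Hit}(\mathcal O^2,\mathrm{PSp}^{\pm}(2m))=-\widetilde\chi(\mathcal O^2,\mathrm{Ad}\rho)$, and recall $\dim\mathrm{PSp}^{\pm}(2m)=\dim\mathfrak{sp}(2m,\mathbb C)=2m^2+m$. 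The argument then reduces, exactly as in the $\mathrm{PGL}$ case, to comparing the contribution of each singular point of $\mathcal O^2$ to $\widetilde\chi(\mathcal O^2,\mathrm{Ad}\rho)$ with the untwisted contribution $\dim\mathrm{PSp}^{\pm}(2m)/(\text{order of the stabilizer})$.

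The arithmetic heart is the following estimate for an elliptic element of order $k$: writing $2m=kq+r$ with $0\le r<k$, Proposition~\ref{Prop:HitPsp}(1) gives
$$
\dim\mathfrak g^{\rho(C_k)}-\frac{2m^2+m}{k}=\frac{\sigma(2m,k)}{2}-\frac{2m^2+m}{k}+\begin{cases}0&k\text{ even},\\ \lfloor\tfrac{q+1}{2}\rfloor&k\text{ odd}.\end{cases}
$$
Since $\sigma(2m,k)=\tfrac{4m^2}{k}+r(1-\tfrac rk)$ and, for $k$ odd, $\lfloor\tfrac{q+1}{2}\rfloor=\tfrac q2+O(1)=\tfrac mk+O(1)$, one obtains
$$
\dim\mathfrak g^{\rho(C_k)}-\frac{2m^2+m}{k}=\begin{cases}-\dfrac mk+O_k(1)&k\text{ even},\\ O_k(1)&k\text{ odd},\end{cases}
$$
with error depending only on $k$, hence only on $\mathcal O^2$. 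Thus odd orders behave as in the linear case of Proposition~\ref{prop:growthHitchin}, while each even order $k$ produces a genuine defect $-m/k$; in particular for $k=2$ this reads $\dim\mathfrak g^{\rho(C_2)}=m^2$, which will feed the $\tfrac{\mathsf b}{2}m$ term below.

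For orientable $\mathcal O^2$ I would argue as in Proposition~\ref{prop:HitPGL}: choosing a CW-structure with the cone points (orders $k_1,\dots,k_{\mathsf{cp}}$) as $0$-cells and using $\chi(\mathcal O^2)=\chi(\lvert\mathcal O^2\rvert)-\sum_i(1-\tfrac1{k_i})$, one gets $\widetilde\chi(\mathcal O^2,\mathrm{Ad}\rho)-\chi(\mathcal O^2)\dim\mathrm{PSp}^{\pm}(2m)=\sum_i(\dim\mathfrak g^{\rho(C_{k_i})}-\frac{2m^2+m}{k_i})=-m\sum_{k\text{ even}}\tfrac{\mathsf{cp}_k}{k}+O_{\mathcal O^2}(1)$, which is the proposition in the orientable case (there $\mathsf{cr}_l=\mathsf b=0$). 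In general I would pass to the orientation covering $\widetilde{\mathcal O^2}$, orientable with $\chi(\widetilde{\mathcal O^2})=2\chi(\mathcal O^2)$: by Proposition~\ref{Prop:boundaryNO},
$$
\dim\mathrm{Hit}(\mathcal O^2,\mathrm{PSp}^{\pm}(2m))=\tfrac12\dim\mathrm{Hit}(\widetilde{\mathcal O^2},\mathrm{PSp}^{\pm}(2m))+\tfrac12\sum_{[\![0,1]\!]\subset\partial\mathcal O^2}\widetilde\chi([\![0,1]\!],\mathrm{Ad}\rho),
$$
and Lemma~\ref{Lemma:InfiniteDihedral} with $\dim\mathfrak g^{\rho(C_2)}=m^2$ gives $\widetilde\chi([\![0,1]\!],\mathrm{Ad}\rho)=2m^2-(2m^2+m)=-m$. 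Finally, the orientation cover unfolds the mirror locus: each cone point of order $k$ of $\mathcal O^2$ lifts to two cone points of order $k$ of $\widetilde{\mathcal O^2}$, and each corner reflector of order $2l$ lifts to one cone point of order $l$; so the even cone points of $\widetilde{\mathcal O^2}$ carry total $\tfrac1k$-weight $2\sum_{k\text{ even}}\tfrac{\mathsf{cp}_k}{k}+\sum_{l\text{ even}}\tfrac{\mathsf{cr}_l}{l}$. Feeding the orientable estimate for $\widetilde{\mathcal O^2}$ and the $-m$ per $[\![0,1]\!]$ into the displayed identity and halving assembles, up to a bounded error, the three linear terms $m\bigl(\sum_{k\text{ even}}\tfrac{\mathsf{cp}_k}{k}+\sum_{l\text{ even}}\tfrac{\mathsf{cr}_l}{2l}+\tfrac{\mathsf b}{2}\bigr)$ of the statement; the circle components of $\partial\mathcal O^2$ contribute nothing since $\widetilde\chi(S^1,\mathrm{Ad}\rho)=0$, and everything bounded goes into $C(\mathcal O^2)$.

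The main obstacle is not the arithmetic (a one-line computation once Proposition~\ref{Prop:HitPsp} is available) but the combinatorial bookkeeping of the non-orientable step: establishing precisely how the cone points, corner reflectors and mirror-boundary intervals of $\mathcal O^2$ correspond to the cone points and boundary circles of $\widetilde{\mathcal O^2}$, treating uniformly the subcase in which $\widetilde{\mathcal O^2}$ itself has (circle) boundary, and keeping all signs coherent when the estimate for $\widetilde{\mathcal O^2}$ is divided by $2$ and combined with the mirror-boundary contribution.
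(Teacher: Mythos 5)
Your strategy is the intended one---the paper itself only says ``with a similar proof as for Proposition~\ref{prop:growthHitchin}''---and your arithmetic is correct: $\sigma(2m,k)=\tfrac{4m^2}{k}+r(1-\tfrac rk)$, so $\dim\mathfrak g^{\rho(C_k)}-\tfrac{2m^2+m}{k}$ is $-\tfrac mk+O_k(1)$ for $k$ even and $O_k(1)$ for $k$ odd, and $\widetilde\chi([\![0,1]\!],\mathrm{Ad}\rho)=2m^2-(2m^2+m)=-m$; the unfolding combinatorics of the orientation cover is also right. The gap is in the final identification. You correctly get $\widetilde\chi(\mathcal O^2,\mathrm{Ad}\rho)-\chi(\mathcal O^2)\dim G=-m\sum_{k\text{ even}}\tfrac{\mathsf{cp}_k}{k}+O(1)$ in the orientable case, but the quantity in the Proposition involves $\dim\mathrm{Hit}=-\widetilde\chi$, so $\dim\mathrm{Hit}+\chi\dim G=-\bigl(\widetilde\chi-\chi\dim G\bigr)=+m\sum_{k\text{ even}}\tfrac{\mathsf{cp}_k}{k}+O(1)$. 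Hence the expression inside the absolute value of the statement equals $2m\sum_{k\text{ even}}\tfrac{\mathsf{cp}_k}{k}+O(1)$, which is unbounded whenever there is an even-order cone point; your computation proves the inequality with the linear term \emph{subtracted}, not added. Concretely, for $S^2(2,3,7)$ and $m=21$ one has $\dim G=903$, $\dim\mathfrak g^{C_2}=441$, $\dim\mathfrak g^{C_3}=301$, $\dim\mathfrak g^{C_7}=129$, so $\dim\mathrm{Hit}=903-871=32$, while $\chi\dim G=-903/42=-21.5$; thus $\dim\mathrm{Hit}+\chi\dim G=m/2$ and the stated sum is $m$, not $O(1)$. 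The sentence ``which is the proposition in the orientable case'' is exactly where the sign is lost.

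There is a second, independent sign trap in the non-orientable step. The displayed formula of Proposition~\ref{Prop:boundaryNO} that you quote is inconsistent with its own proof, which actually yields $\dim X(\mathcal O^2,G)=\tfrac12\dim X(\widetilde{\mathcal O^2},G)-\tfrac12\sum\widetilde\chi([\![0,1]\!],\mathrm{Ad}\rho)$ (one can confirm the minus sign against Proposition~\ref{prop:HitPGL}, where the $\mathsf b$-term enters $\dim\mathrm{Hit}$ with coefficient $+\lfloor n^2/2\rfloor$). Using the plus sign as you do, the $\tfrac{\mathsf b}{2}m$ contribution to $\dim\mathrm{Hit}+\chi\dim G$ comes out with the opposite sign to the $\tfrac{\mathsf{cp}_k}{k}m$ and $\tfrac{\mathsf{cr}_l}{2l}m$ contributions, and then no single overall sign on the linear term makes the inequality true; with the corrected sign all three terms are coherent and the Proposition holds with the linear term subtracted. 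So the ``combinatorial bookkeeping'' you defer to at the end is not routine: it is precisely where the argument currently fails, and both signs must be fixed explicitly (or the statement amended) for the proof to close.
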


Using  the same argument as in the proof of Proposition~\ref{Prop:MaxHSL}, 
and longer computations,
one can also prove:

\begin{Proposition}
\label{Prop:MaxHPSp}
Every component of $\mathcal R^{\mathrm{good}}(\mathcal O^2,\mathrm{PSp}^{\pm}(2m))$
that contains representations that are boundary regular
has dimension at most the dimension of the Hitchin component.
\end{Proposition}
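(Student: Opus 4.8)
The plan is to follow the proof strategy of Proposition~\ref{Prop:MaxHSL} closely, reducing everything to the analogous claim about the dimension of the centralizer of the image of an elliptic element. Concretely, by Proposition~\ref{Proposition:rhogood} (orientable case) or Proposition~\ref{Prop:boundaryNO} together with Lemma~\ref{Lemma:InfiniteDihedral} (non-orientable case), for a good $\partial$-regular representation $\rho\colon\pi_1(\mathcal O^2)\to\mathrm{PSp}^\pm(2m)$ the dimension of the component through $[\rho]$ is governed by $-\widetilde\chi(\mathcal O^2,\mathrm{Ad}\rho)$, which in turn is a sum of local contributions $\dim\mathfrak g^{\rho(\mathrm{Stab}(x))}$ over cells, plus a boundary term coming from the intervals $[\![0,1]\!]$ that by Lemma~\ref{Lemma:InfiniteDihedral} is again expressed through $\dim\mathfrak g^{\rho(\sigma_i)}$ for the order-two mirror stabilizers. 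Since elements of finite order cannot be deformed, all these local terms are constant on each component, so it suffices to show that each local term $\dim\mathfrak g^{\rho(\gamma)}$, for $\gamma$ an elliptic element of a fixed order $k$, is \emph{minimized} when $\rho$ lies in the Hitchin component; the boundary contribution then follows since a mirror point has stabilizer $C_2$, a special case.

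First I would record, as in the proof of Proposition~\ref{Prop:HitPsp}, the identity
$$
\dim\big(\mathfrak{sp}(2m)^D\big)=\tfrac12\Big(\dim\big(\mathfrak{gl}(2m)^D\big)+\dim\big(\mathfrak{ad}^D\big)\Big)
$$
valid for any $D\in\mathrm{Sp}^\pm(2m)$ that is diagonalizable, where $\mathfrak{ad}$ is the antidiagonal subspace. Thus minimizing $\dim\mathfrak g^{\rho(\gamma)}$ splits into two subproblems. For the $\mathfrak{gl}$-part: if $\rho(\gamma)$ has order dividing $k$ its eigenvalues have multiplicities $n_1,\dots,n_k$ with $\sum n_i=2m$ and $\dim\mathfrak{gl}(2m)^{\rho(\gamma)}=\sum n_i^2$, and — exactly as in Proposition~\ref{Prop:MaxHSL} — this is minimized when all $|n_i-n_j|\le 1$, which is precisely what $\operatorname{Sym}^{2m-1}$ of a rotation of angle $2\pi/k$ achieves. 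For the antidiagonal part I would argue that $\dim\mathfrak{ad}^{\rho(\gamma)}\ge 0$ always and that, for $\rho$ in the Hitchin component, Proposition~\ref{Prop:HitPsp}'s computation shows $\dim\mathfrak{ad}^{\rho(\gamma)}$ equals $0$ for $k$ even and $2\lfloor (q+1)/2\rfloor$ for $k$ odd — so one must check that \emph{any} $\partial$-regular good $\rho$ has antidiagonal fixed space at least this large. For $k$ even this is automatic; for $k$ odd it should follow from a pigeonhole/parity count on which eigenvalue-pairs $(\lambda_j,\lambda_j^{-1})$ of $\rho(\gamma)$ can be $\pm1$, using that $\rho(\gamma)$ preserves the symplectic form, together with the multiplicity distribution constraint from the $\mathfrak{gl}$-minimization.

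For the non-orientable case I would invoke Proposition~\ref{Prop:boundaryNO}, writing $\dim_{[\rho]}X(\mathcal O^2,G)=\tfrac12\dim_{[\rho]}X(\widetilde{\mathcal O^2},G)+\tfrac12\sum_{[\![0,1]\!]}\widetilde\chi([\![0,1]\!],\mathrm{Ad}\rho)$; the first term is maximized over the orientation double cover by the orientable case just treated, and by Lemma~\ref{Lemma:InfiniteDihedral} each boundary summand equals $\dim\mathfrak g^{\rho(\sigma_1)}+\dim\mathfrak g^{\rho(\sigma_2)}-\dim\mathfrak g$ with $\sigma_1,\sigma_2$ of order $2$, which by part~(2) of Proposition~\ref{Prop:HitPsp} and the minimization of $\dim\mathfrak g^{\rho(C_2)}$ is again maximized on the Hitchin component. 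Assembling the pieces gives $\dim_{[\rho]}X(\mathcal O^2,G)\le\dim\mathrm{Hit}(\mathcal O^2,G)$, hence the same bound for the real dimension of any component of $\mathcal R^{\mathrm{good}}(\mathcal O^2,\mathrm{PSp}^\pm(2m))$ with boundary-regular representations. The main obstacle I anticipate is the antidiagonal estimate for odd $k$: showing $\dim\mathfrak{ad}^{\rho(\gamma)}$ cannot dip below the Hitchin value requires controlling how a symplectic automorphism of order dividing $k$ can distribute the eigenvalue $+1$ (and $-1$) among mutually inverse pairs, and handling the $\pm$ sign ambiguity coming from working in the adjoint group $\mathrm{PSp}^\pm$ rather than $\mathrm{Sp}$; this is the "longer computations" the statement alludes to, and it is where care is needed to make the pigeonhole argument airtight.
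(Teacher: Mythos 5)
The paper gives no detailed proof of this proposition (it just says ``the same argument\ldots\ and longer computations''), so I am comparing your plan to what such a proof must look like. Your overall framework is right --- reduce to showing that for a single elliptic $\gamma$ of order $k$, $\dim\mathfrak{g}^{\rho(\gamma)}$ is minimized on the Hitchin locus, then treat the non-orientable case via Proposition~\ref{Prop:boundaryNO} and Lemma~\ref{Lemma:InfiniteDihedral}. The decomposition $\dim\mathfrak{sp}(2m)^D=\tfrac12\bigl(\dim\mathfrak{gl}(2m)^D+\dim\mathfrak{ad}^D\bigr)$ is also the right tool. But there is a genuine gap in the middle step, and it is not a matter of ``making the pigeonhole argument airtight'': the claim you propose to prove is false.

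You propose to minimize the $\mathfrak{gl}$-term and the $\mathfrak{ad}$-term \emph{separately}, arguing in particular that every boundary-regular good $\rho$ satisfies $\dim\mathfrak{ad}^{\rho(\gamma)}\geq \dim\mathfrak{ad}^{\rho_{\mathrm{Hit}}(\gamma)}$. That inequality fails. Recall $\dim\mathfrak{ad}^D$ is the number of eigenvalues of $D$ equal to $\pm1$. Take $k=3$, $2m=6$, and $D=\operatorname{diag}(\omega,\omega,\omega,\omega^{-1},\omega^{-1},\omega^{-1})$ with $\omega$ a primitive cube root of unity; this is symplectic for the antidiagonal form, has order $3$ in $\mathrm{PSp}(6)$, and has no eigenvalue $\pm1$, so $\dim\mathfrak{ad}^D=0$. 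But the Hitchin element of order $3$ (namely $\operatorname{Sym}^5$ of a rotation) has eigenvalue $-1$ with multiplicity $2$, so $\dim\mathfrak{ad}^{D_{\mathrm{Hit}}}=2\lfloor(q+1)/2\rfloor=2$. So the $\mathfrak{ad}$-term \emph{can} dip strictly below the Hitchin value. What saves the proposition is the trade-off with the $\mathfrak{gl}$-term: for this $D$ one gets $\dim\mathfrak{gl}^D=18$ versus $12$ for Hitchin, so $\dim\mathfrak{sp}^D=9>7=\dim\mathfrak{sp}^{D_{\mathrm{Hit}}}$. The quantity you must minimize is the \emph{combined} function $\dim\mathfrak{gl}^D+\dim\mathfrak{ad}^D = \sum_\lambda n_\lambda^2 + (n_1+n_{-1})$ over symplectic multiplicity vectors (with $n_\lambda=n_{\lambda^{-1}}$ and $n_{\pm1}$ even), not the two summands independently. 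Doing that optimization --- showing Hitchin is the constrained minimum of the sum, using a swap/perturbation argument analogous to the one in Proposition~\ref{Prop:MaxHSL} but taking the linear term $n_1+n_{-1}$ and the parity constraint into account --- is exactly the ``longer computations'' the paper alludes to, and is where your proposal needs to be restructured.
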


\subsection{Exponents of a simple Lie algebra}

Recall from Subsection~\ref{subsection:principal}   that the exponents
$d_1,\ldots ,d_r\in\mathbb N$ of the Lie algebra $\mathfrak{g}$ are defined by the equation 
$$
 \mathrm{Ad}\circ\tau= \bigoplus_{\alpha=1}^r 
 \operatorname{Sym}^{2 d_\alpha}, 
$$
Equation~\eqref{eqn:exponents}, where $\tau $ is the principal representation. Here $r=\operatorname{rank}
\mathfrak{g}$ and $\sum_{\alpha=1}^r(2d_\alpha+1)=\dim \mathfrak{g}$.


\begin{Lemma}
\label{lemma:stabilizersHitchin}
For $\rho\in \mathrm{Hit}(\mathcal{O}^2, G)$ and $x\in\Sigma_{\mathcal{O}^2}$ a point 
in the branching locus with nontrivial stabilizer
$\Gamma_x$:
 $$
 \dim\mathfrak{g}^{\rho(\Gamma_x) }=
 \begin{cases}
  \sum_{\alpha=1}^{r } (2\lfloor \frac{d_{\alpha}}{k}\rfloor + 1) & 
  \textrm{ if } \Gamma_x\cong C_k \\
  \left( \sum_{\alpha=1}^{r } \lfloor \frac{d_{\alpha}}{k}\rfloor \right) 
  + \# \{d_{\alpha}\mid d_{\alpha}\textrm{ is even}\} & \textrm{ if } \Gamma_x\cong D_{2k} 
 \end{cases}
 $$
where $C_k$ is cyclic of order $k$, and $D_{2k}$ dihedral of order $2k$,  $r= \mathrm{rank} ( \mathfrak{g} )$, and
$\{d_1,\ldots, d_{r }\}$ are the exponents of the Lie algebra.  
 \end{Lemma}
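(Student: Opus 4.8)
The plan is to reduce the statement to a computation inside each irreducible summand $\operatorname{Sym}^{2d_\alpha}$ of $\mathrm{Ad}\circ\tau$, using the decomposition in Equation~\eqref{eqn:exponents}. Write $V_{2d_\alpha}$ for the representation space of $\operatorname{Sym}^{2d_\alpha}$, so that $\mathfrak{g}\cong\bigoplus_{\alpha=1}^r V_{2d_\alpha}$ as $\mathrm{PSL}(2,\mathbb C)$-modules via $\tau$. Since $\rho=\tau\circ\mathrm{hol}$ on a neighbourhood of $x$, the stabilizer $\Gamma_x$ acts on $\mathfrak g$ through $\tau$ applied to a finite subgroup of $\mathrm{PSL}(2,\mathbb C)$ --- a cyclic group of rotations $C_k$, or a dihedral group $D_{2k}$ generated by two reflections of $\mathbb{H}^2$ at angle $\pi/k$. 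Hence $\dim\mathfrak g^{\rho(\Gamma_x)}=\sum_{\alpha=1}^r \dim V_{2d_\alpha}^{\Gamma_x}$, and the whole lemma follows from two pointwise identities: $\dim V_{2d}^{C_k}=2\lfloor d/k\rfloor+1$, and $\dim V_{2d}^{D_{2k}}=\lfloor d/k\rfloor+[d\text{ even}]$.

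For the cyclic case, I would invoke fact (2) in the proof of the proposition on dimensions of $X(\mathcal O^2,G)$ at $[\tau\circ\mathrm{hol}]$ (the table for horospherical cusps): for $C_k<\mathrm{PSL}(2,\mathbb C)$ a group of rotations with hyperbolic rotation angles in $\frac{2\pi}{k}\mathbb Z$ one has $\dim V_{2d}^{C_k}=2\lfloor d/k\rfloor+1$. Concretely, $\operatorname{Sym}^{2d}$ of $\operatorname{diag}(e^{\pi i/k},e^{-\pi i/k})$ is diagonal with eigenvalues $e^{2\pi i j/k}$ for $j=-d,\dots,d$ on the adjoint (note the squaring, coming from $\mathrm{Ad}$), so the dimension of the fixed subspace is the number of $j\in\{-d,\dots,d\}$ with $k\mid j$, which is $2\lfloor d/k\rfloor+1$. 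Summing over $\alpha$ gives the first line.

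For the dihedral case I would use Corollary~\ref{Coro:CornerCenter}: with $x_0$ the corner reflector, $x_1,x_2$ the two adjacent mirror points, and $C_k\subset\operatorname{Stab}(x_0)$ the rotation subgroup of index $2$,
$$
\dim\mathfrak g^{\rho(\operatorname{Stab}(x_0))}=\tfrac12\bigl(\dim\mathfrak g^{\rho(C_k)}+\dim\mathfrak g^{\rho(\operatorname{Stab}(x_1))}+\dim\mathfrak g^{\rho(\operatorname{Stab}(x_2))}-\dim\mathfrak g\bigr).
$$
A mirror point has stabilizer $C_2$, and a reflection of $\mathbb{H}^2$ lifts to an element of $\mathrm{PGL}(2,\mathbb R)$ of determinant $-1$; its image under $\operatorname{Sym}^{2d}$ (acting on $V_{2d}$, an odd-dimensional space) is conjugate to $\operatorname{diag}(1,-1,1,\dots,(-1)^{2d})$, so $\dim V_{2d}^{C_2}=d+1$ and thus $\dim\mathfrak g^{\rho(C_2)}=\sum_\alpha(d_\alpha+1)$. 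Plugging $\dim\mathfrak g^{\rho(C_k)}=\sum_\alpha(2\lfloor d_\alpha/k\rfloor+1)$, $\dim\mathfrak g^{\rho(C_2)}=\sum_\alpha(d_\alpha+1)$ (twice, once for $x_1$ and once for $x_2$, but see the caveat below), and $\dim\mathfrak g=\sum_\alpha(2d_\alpha+1)$ into the corollary, the $d_\alpha$ terms and constants should collapse to $\sum_\alpha\lfloor d_\alpha/k\rfloor+\#\{\alpha: d_\alpha\text{ even}\}$ after rewriting $d_\alpha+1$ as $2\lfloor d_\alpha/2\rfloor+1+[d_\alpha\text{ even}]$ --- wait, more carefully, the combination $\frac12(\sum 2\lfloor d_\alpha/k\rfloor + 1 + 2(d_\alpha+1) - (2d_\alpha+1))=\frac12\sum(2\lfloor d_\alpha/k\rfloor + 2)=\sum\lfloor d_\alpha/k\rfloor + r$; so I should double-check whether the two mirror neighbours of a corner of $D_{2k}$ both contribute $C_2$-invariants counting $d_\alpha+1$, or whether the correct bookkeeping (the reflection fixing $V_{2d}$ pointwise only on its $+1$-eigenspace, of dimension $d+1$ when $d$ even versus $d$ when $d$ odd, depending on orientation conventions) forces the $\#\{d_\alpha\text{ even}\}$ correction.

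The main obstacle I expect is precisely this last bookkeeping: getting the parity conventions right for how a reflection in $\mathrm{PGL}(2,\mathbb R)\setminus\mathrm{PSL}(2,\mathbb R)$ acts on $V_{2d}$ under $\operatorname{Sym}^{2d-1}$ restricted and then composed with $\mathrm{Ad}$, so that the two mirror-point contributions in Corollary~\ref{Coro:CornerCenter} combine with the $C_k$-contribution to produce exactly $\lfloor d_\alpha/k\rfloor$ plus the indicator that $d_\alpha$ is even. One clean way to avoid sign ambiguities is to compute $\dim V_{2d}^{D_{2k}}$ directly: $D_{2k}$ acting on $V_{2d}=\operatorname{Sym}^{2d}(\mathbb C^2)$ decomposes into one-dimensional characters of the rotation subgroup paired up by the reflection, and one counts fixed vectors via the averaging formula $\frac{1}{|D_{2k}|}\sum_{g}\operatorname{tr}(g|V_{2d})$, where rotations contribute $\sum_j e^{2\pi i j \ell/k}$-type Dirichlet sums and reflections contribute $\operatorname{tr}$ of an involution on an odd-dimensional space (hence $\pm1$ with the sign determined by $d\bmod 2$). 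This makes the $\#\{d_\alpha\text{ even}\}$ term transparent as the count of summands on which the reflection has trace $+1$. I would present the cyclic formula via fact (2) as above, then run the character/averaging computation for the dihedral case, and finally sum over $\alpha$ to conclude.
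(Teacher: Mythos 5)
Your overall strategy matches the paper's: decompose $\mathrm{Ad}\circ\tau=\bigoplus_\alpha\operatorname{Sym}^{2d_\alpha}$ and compute invariants per summand, handling the cyclic case directly and the dihedral case via Corollary~\ref{Coro:CornerCenter}. The cyclic computation is correct. But there is a genuine error in your treatment of the mirror points, and it is exactly the place you yourself flag as worrisome.

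You compute a reflection's action on $V_{2d}$ by applying $\operatorname{Sym}^{2d}$ to $\operatorname{diag}(1,-1)$, obtaining $\operatorname{diag}(1,-1,\ldots,1)$ with $d+1$ eigenvalues equal to $+1$. This is wrong: $\operatorname{diag}(1,-1)$ is not in $\mathrm{SL}(2,\mathbb C)$, and the summand $V_{2d_\alpha}$ of $\mathrm{Ad}\circ\tau$ is a representation of $\mathrm{PSL}(2,\mathbb C)$, which must be evaluated using an $\mathrm{SL}(2,\mathbb C)$-lift. Since $[\operatorname{diag}(1,-1)]=[\operatorname{diag}(i,-i)]$ in $\mathrm{PGL}(2,\mathbb C)=\mathrm{PSL}(2,\mathbb C)$, the correct element to feed into $\operatorname{Sym}^{2d}$ is $\operatorname{diag}(i,-i)$, yielding eigenvalues $i^{2d},i^{2d-2},\ldots,i^{-2d}$, i.e.\ $(-1)^{d-j}$; the multiplicity of $+1$ is then $2\lfloor d/2\rfloor+1$, which is $d$ when $d$ is odd (not $d+1$). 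Equivalently and more conceptually: in $\mathrm{PGL}(2,\mathbb C)$ there is a single conjugacy class of order-$2$ elements, so a reflection in $\mathrm{PGL}(2,\mathbb R)$ and a rotation of order $2$ in $\mathrm{PSL}(2,\mathbb R)$ have conjugate images under $\tau$, and the cone-point formula with $k=2$ applies verbatim to mirror points. This is what the paper does implicitly when it substitutes the cyclic $k=2$ value into Corollary~\ref{Coro:CornerCenter}. With the corrected value $\dim\mathfrak g^{\rho(C_2)}=\sum_\alpha(2\lfloor d_\alpha/2\rfloor+1)$, the computation you set up collapses to $\sum_\alpha(2\lfloor d_\alpha/2\rfloor+1+\lfloor d_\alpha/k\rfloor-d_\alpha)=\sum_\alpha\lfloor d_\alpha/k\rfloor+\#\{\alpha:d_\alpha\text{ even}\}$, since $2\lfloor d/2\rfloor+1-d$ is the indicator of $d$ being even. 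Your fallback via the averaging formula would also work, but only if you use the same correct lift for the reflections; as written it inherits the same sign ambiguity.
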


\begin{proof}
For a Fuchsian representation, the  image of the cyclic group $C_k$ is generated by a matrix conjugate to
\begin{equation}
\label{eqn:cyclicmatrix}
\pm \begin{pmatrix}
     e^{\frac{\pi i}{k}} & 0 \\
     0 & e^{-\frac{\pi i}{k}}
    \end{pmatrix} .
\end{equation}
Using \eqref{eqn:exponents},
the contribution of the $\alpha$-th exponent to $ \dim\mathfrak{g}^{\rho(\Gamma_x) }$ is the
multiplicity of eigenvalue $1$ in the ${2 d_{\alpha}}$-th symmetric power of \eqref{eqn:cyclicmatrix}.
Namely,  the number of appearances of $1$ in 
$
\{
     e^{\frac{ 2\pi i}{k} d_{\alpha}},e^{\frac{2\pi i}{k}(d_{\alpha}-1)},\dotsc, e^{\frac{2\pi i}{k}(-d_{\alpha})}
\}
$
which is precisely $2\lfloor \frac{d_{\alpha}}{k}\rfloor + 1$.


For the dihedral group,
Corollary~\ref{Coro:CornerCenter}, the cyclic case, and \eqref{eqn:dimgexp} yield  that the dimension is
$$
\dim\mathfrak{g}^{\rho(D_k)}=
(2\dim\mathfrak{g}^{\rho(C_2)}+\dim\mathfrak{g}^{\rho(C_k)}-\dim\mathfrak{g}   )/2\\ 
=\sum_{\alpha=1}^r(2\lfloor\frac{d_{\alpha}}{2}\rfloor+ 1 +\lfloor\frac{d_{\alpha}}{k}\rfloor  -d_{\alpha})
$$
and $2\lfloor\frac{d_{\alpha}}{2}\rfloor+ 1   -d_{\alpha}=
\begin{cases}
 1 & \textrm{for } d_{\alpha} \textrm{ even,}\\
 0 & \textrm{for } d_{\alpha} \textrm{ odd.}
\end{cases}
$
 \end{proof}

Lemma~\ref{lemma:stabilizersHitchin} allows to give a new proof of Proposition~\ref{Prop:stabilizerSL}.

As the exponents for $\mathfrak{sp}(2 m)$ and $\mathfrak{s0}( m, m+1)$ are the same,
($1,3, 5\cdots,2m-1$), by Lemma~\ref{lemma:stabilizersHitchin}  we have:

\begin{Remark}
\label{Remark:equalpPspO}
Let $\rho_1\in \mathrm{Hit}(\mathcal O^2, \mathrm{PSp}^{\pm} (2m)) $ and 
$\rho_2\in \mathrm{Hit}(\mathcal O^2, \mathrm{PO}(m+1,m) ) $.
For any $x\in \mathcal O^2$,
$$
 \dim\mathfrak{g}^{\rho_1(\Gamma_x) }= \dim\mathfrak{g}^{\rho_2(\Gamma_x) }.
$$
\end{Remark}

Another consequence of Lemma~\ref{lemma:stabilizersHitchin} is the following.

\begin{Corollary}[\cite{ALS}]
Let $\mathcal O^2$ be a connected hyperbolic 2-orbifold. 
If $\{d_1,\ldots, d_{\mathrm{rank} G}\}$ are the exponents of $G$. 
Then the dimension of
${\mathrm{Hit}}(\mathcal O^2,G)$ is
$$
 -\chi(\vert \mathcal O^2\vert)\dim G
 +\sum_{\alpha=1}^{\mathrm{rank} G }\left(
 \sum_{i=1}^\mathsf{cp} 2( d_\alpha-\lfloor\frac{d_\alpha}{k_i}\rfloor)
+\sum_{j=1}^\mathsf{cr} (d_\alpha- \lfloor\frac{ d_\alpha}{l_j}\rfloor) 
+  2 \mathsf{b}\, \lfloor  \frac{d_\alpha+1}{2}  \rfloor 
\right)
$$
where the cone points have order  $k_1,k_2,\dotsc,k_\mathsf{cp} $, the corner reflectors have order  
$2l_1,2l_2,\dotsc,$ $2l_\mathsf{cr}$, 
and 
$\mathsf{b}$ is the number of components of $\partial\mathcal O^2$ homeomorphic to $\mathbb R/ D_\infty$, an interval with mirror boundary. 
\end{Corollary}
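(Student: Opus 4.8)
The plan is to reduce the statement to a cell-by-cell evaluation of the twisted orbifold Euler characteristic, following the proof of Proposition~\ref{prop:HitPGL} but feeding in the exponent formulas of Lemma~\ref{lemma:stabilizersHitchin} in place of the $\mathrm{PGL}(n,\mathbb R)$ stabilizer dimensions. Write $\rho=\tau\circ\mathrm{hol}$ for the composition of the principal representation with the holonomy of a Fuchsian structure on $\mathcal O^2$. By Lemma~\ref{Lemma:PrincipalGood} the representation $\rho$ is good, and its restriction to the orientation covering is again the composition of $\tau$ with a Fuchsian representation, hence also good; since $\chi(\mathcal O^2)<0$ the orbifold $\mathcal O^2$ is compact and aspherical, so Proposition~\ref{Proposition:rhogood} gives
$$
\dim\mathrm{Hit}(\mathcal O^2,G)=-\widetilde\chi(\mathcal O^2,\mathrm{Ad}\rho).
$$

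Next I would fix a CW-structure $K$ on $\mathcal O^2$ whose branching locus $\Sigma$ is a subcomplex, and split the defining sum of $\widetilde\chi(\mathcal O^2,\mathrm{Ad}\rho)$ into the cells with trivial stabilizer and the cells contained in $\Sigma$. The trivial-stabilizer cells contribute $\chi(\mathcal O^2\setminus\Sigma)\dim G$; the singular cells contribute $+\dim\mathfrak g^{\rho(C_{k_i})}$ for each cone point, $+\dim\mathfrak g^{\rho(D_{2l_j})}$ for each corner reflector, $+\dim\mathfrak g^{\rho(C_2)}$ for each boundary mirror point (these are $0$-cells, whence the sign $+$), and $-\dim\mathfrak g^{\rho(C_2)}$ for each mirror edge (a $1$-cell, whence the sign $-$). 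Writing $\mathsf{mp}$ for the number of boundary mirror points and $\mathsf{me}$ for the number of mirror edges, this gives
$$
\widetilde\chi(\mathcal O^2,\mathrm{Ad}\rho)=\chi(\mathcal O^2\setminus\Sigma)\dim G+\sum_{i=1}^{\mathsf{cp}}\dim\mathfrak g^{\rho(C_{k_i})}+\sum_{j=1}^{\mathsf{cr}}\dim\mathfrak g^{\rho(D_{2l_j})}+(\mathsf{mp}-\mathsf{me})\dim\mathfrak g^{\rho(C_2)}.
$$
Into this I would substitute $\dim G=\sum_\alpha(2d_\alpha+1)$ from \eqref{eqn:dimgexp}, the values $\dim\mathfrak g^{\rho(C_k)}=\sum_\alpha(2\lfloor d_\alpha/k\rfloor+1)$ and $\dim\mathfrak g^{\rho(D_{2l})}=\big(\sum_\alpha\lfloor d_\alpha/l\rfloor\big)+\#\{d_\alpha\textrm{ even}\}$ from Lemma~\ref{lemma:stabilizersHitchin}, together with the additivity identities $-\chi(\mathcal O^2\setminus\Sigma)=-\chi(\vert\mathcal O^2\vert)+\mathsf{cp}+\mathsf b$ and $\mathsf{mp}-\mathsf{me}=\mathsf b-\mathsf{cr}$, which are \eqref{eqn:xis} and \eqref{eqn:mimp} in the proof of Proposition~\ref{prop:HitPGL}.

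Finally I would reorganize $-\widetilde\chi$ exponent by exponent, using the identity $2\lfloor d_\alpha/2\rfloor+1=d_\alpha+\#\{d_\alpha\textrm{ even}\}$. For a fixed exponent $d_\alpha$: the coefficient of $-\chi(\vert\mathcal O^2\vert)$ collects to $2d_\alpha+1$; the terms attached to the $i$-th cone point collect to $2(d_\alpha-\lfloor d_\alpha/k_i\rfloor)$; the terms attached to the $j$-th corner reflector collect to $d_\alpha-\lfloor d_\alpha/l_j\rfloor$, the $\#\{d_\alpha\textrm{ even}\}$ coming from $\dim\mathfrak g^{\rho(D_{2l_j})}$ cancelling the one coming from $\mathsf{cr}\,\dim\mathfrak g^{\rho(C_2)}$; and the terms proportional to $\mathsf b$ collect to $\mathsf b\,(d_\alpha+1-\#\{d_\alpha\textrm{ even}\})=2\mathsf b\lfloor(d_\alpha+1)/2\rfloor$. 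Summing over $\alpha$ and using $\sum_\alpha(2d_\alpha+1)=\dim G$ then yields the asserted formula. Once the reduction to $-\widetilde\chi$ is in place the rest is pure bookkeeping, and the only delicate point is the non-orientable, bounded case: one must keep mirror edges, boundary mirror points and corner reflectors straight so that the $\#\{d_\alpha\textrm{ even}\}$ terms cancel correctly and the boundary contribution comes out exactly as $2\mathsf b\lfloor(d_\alpha+1)/2\rfloor$, rather than off by the parity of $d_\alpha$.
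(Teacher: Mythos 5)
Your proposal is correct and follows essentially the same route as the paper's own proof: reduce $\dim\mathrm{Hit}(\mathcal O^2,G)$ to $-\widetilde\chi(\mathcal O^2,\mathrm{Ad}\rho)$ via Proposition~\ref{Proposition:rhogood}, evaluate the twisted Euler characteristic cell by cell with Lemma~\ref{lemma:stabilizersHitchin}, and regroup using \eqref{eqn:xis} and \eqref{eqn:mimp}. You merely spell out the exponent-by-exponent bookkeeping and the cancellation of the $\delta^{d_\alpha}_{2\mathbb Z}$ terms that the paper leaves implicit in its last sentence.
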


\begin{proof}
We follow the same scheme as the proof of Proposition~\ref{prop:HitPGL}.
Set $\delta^i_{j\mathbb Z}=1$ if $i\in j\mathbb Z $ and 
$\delta^i_{j\mathbb Z}=0$ otherwise.
Firstly we apply Proposition~\ref{Proposition:rhogood} and  Lemma~\ref{lemma:stabilizersHitchin}
to get that the dimension of the Hitchin component is
$-\chi( \mathcal O^2,\mathrm{Ad}\rho )$, which equals: 
$$
 -\chi(\mathcal O\setminus \Sigma)\dim G
 +\sum_{\alpha=1}^{\mathrm{rank} G}\left(
- \sum_{i=1}^\mathsf{cp} (2\lfloor\frac{d_\alpha}{k_i}\rfloor+1)
-\sum_{j=1}^\mathsf{ cr} ( \lfloor\frac{ d_\alpha}{l_j}\rfloor +\delta^{d_\alpha}_{2\mathbb Z} ) 
+ (\mathsf{me} -\mathsf{mp}) (2 \lfloor  \frac{d_\alpha}{2}  \rfloor +1)
\right)
$$ 
where  $\mathsf{me}$ is the number of  mirror edges (joining corner reflectors or mirror points in the boundary), and 
$\mathsf{mp}$ the number of boundary points that are also mirrors, and $\delta^{d_\alpha}_{2\mathbb Z}= 1$ if $d_\alpha$
is even, and $0$ if $d_\alpha$ is odd. Then the corollary follows from \eqref{eqn:xis} and \eqref{eqn:mimp}.
\end{proof}

This computation of dimensions is contained in \cite[Theorem~1.2]{ALS}, where it is proved that the Hitchin component is a cell.

For exceptional groups the dimension can be directly computed from this corollary and  the exponents in Table~\ref{Table:exponents}.
For instance, for (the real split form of the adjoint group)  $G_2$, as the indices are $1$ and $5$ the dimension of the Hitchin component is
$$
-14\chi(\vert\mathcal O^2\vert) + 8 \mathsf{cp}_2 + 10 \mathsf{cp}_{3,4,5}  + 12 \mathsf{cp}_{\geq 6}
 + 4 \mathsf{cr}_4 + 5 \mathsf{cr}_{6,8,20} + 6 \mathsf{cr}_{\geq 12} + 8 b,
$$
where $\mathsf{cp}_i$ is the number of cone points with stabilizer $C_i$ and $\mathsf{cr}_{2i}$ of corner
reflectors with stabilizer $D_{2i}$.

\section{Representations of 3-orbifolds}
\label{Section:dim3}

We restrict to orientable 3-orbifolds. In particular their singular locus is a union of circles, 
proper segments, and trivalent graphs.
The isotropy groups or stabilizers of edges are cyclic, and the isotropy groups or stabilizers of trivalent
vertices are non-cyclic finite subgroups of $\mathrm{SO}(3)$ (hence dihedral, tetrahedral, octahedral or icosahedral). See Figure~\ref{Figure:local_models}.

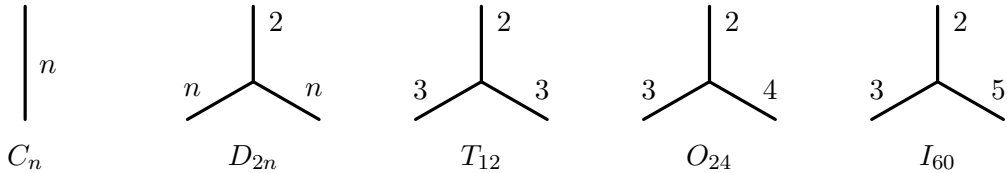
\begin{figure}[ht]
\begin{center}
 \begin{tikzpicture}[line join = round, very thick, line cap = round, scale=1]
   \begin{scope}[ shift={(-3,0)}]
  \draw (0,-.5)--(0,1);
   \draw (.3, 1/5) node{$n $};
   \draw (0,-1) node{$C_{n}$};
   \end{scope}
 \begin{scope}[shift={(0,0)}]
  \draw (0,0)--(0,1);
  \draw (0,0)--(-.866,-.5);
  \draw (0,0)--(.866, -.5);
   \draw (3/10, 4/5) node{$2 $};
   \draw (-4/5,-1/10) node{$n$};
   \draw (4/5,-1/10) node{$n$};
   \draw (0,-1) node{$D_{2n}$};
   \end{scope}
   \begin{scope}[shift={(3,0)}]
  \draw (0,0)--(0,1);
  \draw (0,0)--(-.866,-.5);
  \draw (0,0)--(.866, -.5);
   \draw (.3, 4/5) node{$2 $};
   \draw (-4/5,-1/10) node{$3$};
   \draw (4/5,-1/10) node{$3$};
   \draw (0,-1) node{$T_{12}$};
   \end{scope}
   \begin{scope}[shift={(6,0)}]
  \draw (0,0)--(0,1);
  \draw (0,0)--(-.866,-.5);
  \draw (0,0)--(.866, -.5);
   \draw (.3, 4/5) node{$2 $};
   \draw (-4/5,-1/10) node{$3$};
   \draw (4/5,-1/10) node{$4$};
   \draw (0,-1) node{$O_{24}$};
   \end{scope}
   \begin{scope}[shift={(9,0)}]
  \draw (0,0)--(0,1);
  \draw (0,0)--(-.866,-.5);
  \draw (0,0)--(.866, -.5);
   \draw (.3, 4/5) node{$2 $};
   \draw (-4/5,-1/10) node{$3$};
   \draw (4/5,-1/10) node{$5$};
   \draw (0,-1) node{$I_{60}$};
   \end{scope}
\end{tikzpicture}
\end{center}
\caption{Local models for the branching locus, with the corresponding isotropy
groups: the cyclic group $C_n$, the dihedral group $D_{2n}$, the tetrahedral group $T_{12}$, 
the octahedral group $O_{24}$, and the icosahedral group $I_{60}$. The subindex denotes the order of the group.
The label on an edge denotes the order of the cyclic isotropy group of points in the interior of the edge.}
\label{Figure:local_models}
\end{figure}

\subsection{Geometric representations.}
Let $\mathcal O^3$ denote an orientable hyperbolic 3-orbifold of finite type, possibly of infinite volume, and non-elementary as Klenian group
(hence the Zariski closure of its holonomy is $\mathrm{PSL}(2,\mathbb C)$). 
In particular it has a compactification 
$\overline{\mathcal O^3}$ that is an orbifold with boundary. Let
$$
\partial \overline{\mathcal O^3}= \partial_1 \overline{\mathcal O^3}\sqcup\cdots \sqcup\partial_k \overline{\mathcal O^3}
$$
denote its decomposition in connected components.

Consider the composition of the holonomy $\mathrm{hol}\colon \pi_1(\mathcal O^3)\to\mathrm{PSL}(2,\mathbb C)$ 
with the principal representation $\tau\colon \mathrm{PSL}(2,\mathbb C)\to G$. As we assume that $\mathcal{O}^3$ is non-elementary,
by Lemma~\ref{Lemma:PrincipalGood} and a standard argument on the Zariski closure, we have:

\begin{Remark}
\label{Remark:good}
The representation
 $\rho=\tau\circ \mathrm{hol}\in R(\mathcal O^3,G)$
 is good. 
\end{Remark}

The following is part of Theorem~\ref{thm:main}.

\begin{Theorem}
\label{Theorem:dimcanonical}
The character of  $\rho=\tau\circ \mathrm{hol}$ is a smooth point of $X(\mathcal O^3,G)$. Furthermore 
\begin{align*}
\dim_{[\rho]} X(\mathcal O^3,G)= &
-\frac{1}{2} \sum_{i=1}^k\widetilde\chi(\partial_i \overline{\mathcal O^3}, \mathrm{Ad}\rho)+  
\sum_{i=1}^k  \dim \mathfrak g^{\rho(\pi_1 ( \partial_i \overline{\mathcal O^3} ) )}
 \\
 =& \frac{1}{2}\sum_{i=1}^k  \dim_{[\rho\vert\partial_i]} X(\partial_i\overline{ \mathcal O^3},G) 
 = \frac{1}{2} \dim_{[\rho\vert\partial]} X( \partial \overline{ \mathcal O^3},G)  .
\end{align*}
\end{Theorem}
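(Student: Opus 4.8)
The plan is to follow the strategy of \cite{MFPABP, MFP12} for manifolds, now in the orbifold setting, using the combinatorial orbifold cohomology developed in Section~\ref{Section:cohomology}. Write $\rho = \tau \circ \mathrm{hol}$, which is good by Remark~\ref{Remark:good}. The first and central step is a \emph{vanishing theorem}: I claim that the image of the restriction map $H^1(\overline{\mathcal O^3}, \mathrm{Ad}\rho) \to H^1(\partial\overline{\mathcal O^3}, \mathrm{Ad}\rho)$ is a half-dimensional (Lagrangian) subspace, and that $H^1(\overline{\mathcal O^3}, \partial\overline{\mathcal O^3}, \mathrm{Ad}\rho) \to H^1(\overline{\mathcal O^3}, \mathrm{Ad}\rho)$ is zero — equivalently that $H^1_c(\mathrm{int}\,\mathcal O^3, \mathrm{Ad}\rho) \to H^1(\mathrm{int}\,\mathcal O^3, \mathrm{Ad}\rho)$ vanishes. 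For a hyperbolic manifold this is the classical half-lives-half-dies phenomenon together with the $L^2$-cohomology vanishing of Raghunathan/Matsushima--Murakami applied to each irreducible summand $\operatorname{Sym}^{2d_i}$ of $\mathrm{Ad}\circ\tau$; here one passes to a finite manifold cover $\mathcal O_0 \to \mathcal O^3$ (which exists since $\mathcal O^3$ is very good) and takes invariants under the deck group via Proposition~\ref{prop:regularcovering}. The key point is that $\mathrm{Ad}\rho$ restricted to $\pi_1$ of the manifold cover still has no invariant vectors and still satisfies the hypotheses of the $L^2$-vanishing theorem, because the holonomy of a finite cover is again a non-elementary hyperbolic holonomy composed with $\tau$. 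I expect \emph{this step to be the main obstacle}: one must carefully handle the cusped/infinite-volume case, controlling the behaviour at the ends (Euclidean boundary pieces, where the restriction of $\rho$ need not be good), and verify that passing to the cover and taking invariants is compatible with all the cohomology long exact sequences and with the cup-product pairing of Proposition~\ref{Prop:duality}(b).

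Granting the vanishing, the second step is a dimension count. By Weil's theorem (Theorem~\ref{Thm:Weil}) and goodness, $T^{Zar}_{[\rho]} X(\mathcal O^3, G) \cong H^1(\Gamma, \mathrm{Ad}\rho) \cong H^1(\overline{\mathcal O^3}, \mathrm{Ad}\rho)$ (using asphericity of $\mathcal O^3$ and Proposition~\ref{Prop:duality}(c)). The long exact sequence of the pair $(\overline{\mathcal O^3}, \partial\overline{\mathcal O^3})$, together with $H^0(\overline{\mathcal O^3},\mathrm{Ad}\rho) = 0$ (goodness) and the vanishing $H^1(\overline{\mathcal O^3},\partial\overline{\mathcal O^3},\mathrm{Ad}\rho) \to H^1(\overline{\mathcal O^3},\mathrm{Ad}\rho)$ being zero, shows that $H^1(\overline{\mathcal O^3},\mathrm{Ad}\rho)$ injects into $H^1(\partial\overline{\mathcal O^3},\mathrm{Ad}\rho)$ with image of dimension exactly $\tfrac12 \dim H^1(\partial\overline{\mathcal O^3},\mathrm{Ad}\rho)$, the Lagrangian being forced by the cup-product pairing. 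Hence $\dim H^1(\overline{\mathcal O^3}, \mathrm{Ad}\rho) = \tfrac12 \sum_i \dim H^1(\partial_i\overline{\mathcal O^3},\mathrm{Ad}\rho) = \tfrac12 \sum_i \dim_{[\rho|\partial_i]} X(\partial_i\overline{\mathcal O^3}, G)$, where the last equality uses Proposition~\ref{Proposition:rhogood} for hyperbolic boundary components and Theorem~\ref{Thm:EuclideanRed}(ii) for Euclidean ones (the restriction of $\tau\circ\mathrm{hol}$ to a cusp is strongly regular, as noted in the proof of the table of parabolic dimensions). Writing $\dim H^1(\partial_i\overline{\mathcal O^3},\mathrm{Ad}\rho) = -\widetilde\chi(\partial_i\overline{\mathcal O^3},\mathrm{Ad}\rho) + 2\dim\mathfrak g^{\rho(\pi_1\partial_i)}$ — valid since $H^0 = H^2 = \mathfrak g^{\rho(\pi_1\partial_i)}$ by duality on the closed orientable 2-orbifold $\partial_i$ — gives the first displayed formula.

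The third step is to upgrade "the Zariski tangent space has the predicted dimension" to "$[\rho]$ is a smooth point of that dimension." For this I would invoke Goldman--Millson/obstruction theory as in Section~\ref{Section:dim2}: since $X(\mathcal O^3, G)$ at $[\rho]$ has a local model cut out in the smooth chart $H^1$ by obstructions valued in $H^2(\Gamma, \mathrm{Ad}\rho)$, it suffices to bound $\dim X(\mathcal O^3,G)$ below by $\dim H^1$. This lower bound comes from Theorem~\ref{Thm:lowerbound} (whose hypotheses are met: $\mathcal O^3$ is good, $\rho$ is good, and each boundary restriction is good or strongly regular as just discussed), which gives $\dim_{[\rho]} X(\mathcal O^3,G) \geq \tfrac12 \dim X(\partial\overline{\mathcal O^3},G) = \dim H^1(\overline{\mathcal O^3},\mathrm{Ad}\rho) = \dim T^{Zar}_{[\rho]} X(\mathcal O^3,G)$. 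Since the Zariski tangent space dimension is always an upper bound for the local dimension, equality holds and the point is smooth; the scheme is reduced there as well. Finally, the equalities in the second line of the statement are just the reformulation of the first line via the boundary dimension formula and the product decomposition $X(\partial\overline{\mathcal O^3},G) = \prod_i X(\partial_i\overline{\mathcal O^3},G)$. Local injectivity of the restriction map at $[\rho]$ then follows from the injectivity of $H^1(\overline{\mathcal O^3},\mathrm{Ad}\rho) \hookrightarrow H^1(\partial\overline{\mathcal O^3},\mathrm{Ad}\rho)$ on tangent spaces, except possibly for Euclidean boundary components where $X(\partial_i,G)$ may be singular — there one argues separately using evaluation of characters, as indicated after the statement of Theorem~\ref{thm:main}.
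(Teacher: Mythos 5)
Your Steps 1 and 2 follow the paper's proof closely: pass to a finite manifold cover by Selberg's lemma, import the $L^2$-cohomology vanishing of \cite{MFPABP, MFP12} on the cover, descend to the orbifold by taking $\Gamma/\Gamma_0$-invariants via Proposition~\ref{prop:regularcovering}, and then run the standard half-lives-half-dies dimension count with Poincar\'e duality and the long exact sequence of the pair $(\overline{\mathcal O^3},\partial\overline{\mathcal O^3})$. You also correctly match the boundary dimensions to Proposition~\ref{Proposition:rhogood} (hyperbolic components) and Theorem~\ref{Thm:EuclideanRed}(ii) (Euclidean components). Two minor remarks here: the Lagrangian property of the image of $i^*$ is a consequence of, not a prerequisite for, the dimension count, which the paper derives purely from the nondegeneracy of the pairings and $\ker\beta=\operatorname{im}i^*$; and the claim that $H^1(\overline{\mathcal O^3},\partial\overline{\mathcal O^3},\mathrm{Ad}\rho)\to H^1(\overline{\mathcal O^3},\mathrm{Ad}\rho)$ is zero is equivalent to, and proved together with, the injectivity of $i^*$, so it does not add new information.

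Your Step 3 (smoothness) is genuinely different from the paper's, and it is valid, but it reverses the logical order of the paper. The paper passes to a torsion-free finite-index subgroup $\Gamma_0<\Gamma$, uses that $X(\Gamma_0,G)$ is analytically smooth at the restricted character (the manifold case), applies Cartan's linearization to linearize the $\Gamma/\Gamma_0$-action, and identifies $X(\Gamma,G)$ locally with the smooth fixed-point locus. You instead invoke Theorem~\ref{Thm:lowerbound} (the Falbel--Guilloux-type lower bound) to get $\dim_{[\rho]}X(\mathcal O^3,G)\geq\tfrac12\dim X(\partial\overline{\mathcal O^3},G)$, and combine this with the upper bound $\dim_{[\rho]}X\leq\dim T^{Zar}_{[\rho]}X=\dim H^1(\overline{\mathcal O^3},\mathrm{Ad}\rho)$ to get equality and hence smoothness. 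This works: the paper's proof of Theorem~\ref{Thm:lowerbound} (Theorem~\ref{Thm:lowerbounddim}) rests on Lemma~\ref{Lemma:FG}, Proposition~\ref{Prop:dimR}, Theorem~\ref{Thm:EuclideanRed}, and Remark~\ref{Rem:sphericalSmooth}, none of which depend on Theorem~\ref{Theorem:dimcanonical}, so there is no circularity — you just need to prove the lower bound first. Your route trades the equivariant obstruction/Cartan-linearization machinery for the cut-and-glue decomposition inside Theorem~\ref{Thm:lowerbound}'s proof; neither is shorter overall, but yours keeps the smoothness argument elementary once the lower bound is in hand. One small inefficiency: the appeal to Goldman--Millson obstruction theory in your Step 3 is a red herring — once the lower bound from Theorem~\ref{Thm:lowerbound} matches the Zariski tangent space dimension, smoothness and reducedness follow immediately from the basic fact the paper records after Theorem~\ref{Thm:Weil}, with no obstruction theory needed.
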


\begin{proof}
We follow the proof for the manifold case in  \cite{MFP12} for representations 
in $\mathrm{PSL}(n,\mathbb{C})$, 
or \cite{KapovichBook} in $\mathrm{PSL}(2,\mathbb{C})$.
In particular we use Selberg's lemma: there exists $\mathcal O_0^3\to\mathcal O^3$ a finite regular covering which is a manifold.
 
 The first step is to prove that the inclusion of $\partial \overline{\mathcal O^3}$ in $ \overline{\mathcal O^3}$ induces an injection in cohomology
 $$
 H^1(\overline{\mathcal O^3}, \mathrm{Ad}\rho)\hookrightarrow H^1(\partial \overline{\mathcal O^3}, \mathrm{Ad}\rho).
 $$
When $\mathcal O^3$ is a manifold, then this is proved in  \cite{MFP12} using a theorem of vanishing in $L^2$-cohomology 
(as, using de Rham cohomology and a metric on the bundle and differential forms, every element in the 
kernel is represented by an $L^2$-form), by using the decomposition~\eqref{eqn:exponents} of $ \mathrm{Ad}\circ\tau $.
For orbifolds in general,  using  Selberg's lemma, the inclusion follows from the manifold case 
and Proposition~\ref{prop:regularcovering}.
 
The next step is to prove that 
 $$
 \dim H^1(\overline{\mathcal O^3}, \mathrm{Ad}\rho)= \frac{1}{2} \dim H^1(\partial \overline{\mathcal O^3}, \mathrm{Ad}\rho).
 $$
As in the manifold case, this follows from Poincar\'e duality, Proposition~\ref{Prop:duality}, and the long exact sequence
of the pair $(\overline{\mathcal O^3}, \partial \overline{\mathcal O^3})$. Namely, if the morphisms in the long exact sequence in cohomology are
 $$
  H^1(\overline{\mathcal O^3}, \mathrm{Ad}\rho)\overset{i^*}\to H^1(\partial \overline{\mathcal O^3}, \mathrm{Ad}\rho)
  \overset\beta \to  H^2(\overline{\mathcal O^3}, \partial \overline{\mathcal O^3}; \mathrm{Ad}\rho)
 $$
 then
 $$
 \langle i^*(x) , y\rangle_{\partial \overline{\mathcal O^3}} = \langle x  , \beta(y)  \rangle_{\overline{\mathcal O^3}} 
 \qquad \forall x\in H^1(\overline{\mathcal O^3}, \mathrm{Ad}\rho),
  \forall y\in H^1(\partial \overline{\mathcal O^3}, \mathrm{Ad}\rho),
 $$
 where $\langle, \rangle$ denotes the perfect pairing in Proposition~\ref{Prop:duality}, and the pairing
 on $\partial \overline{\mathcal O^3}$ is the sum of pairings on each connected component.
 As the pairings are nondegenerate and $\ker\beta=\operatorname{im} i^*$, the claim on the dimension follows
 from a linear algebra argument.

Finally, as  $ H^1(\overline{\mathcal O^3}, \mathrm{Ad}\rho)\cong H^1(\Gamma, \mathrm{Ad}\rho)$ by Proposition~\ref{Prop:duality}, with
$\Gamma=\pi_1({\mathcal O^3})$,
we use Goldman's obstruction theory to integrability to prove that $[\rho]$ is a smooth point of the character variety of 
local dimension $ H^1(\overline{\mathcal O^3}, \mathrm{Ad}\rho)$. 
For $\Gamma_0<\Gamma$ a torsion-free subgroup, any infinitesimal deformation
in $H^1(\Gamma, \mathrm{Ad}\rho)$ yields a $\Gamma/\Gamma_0$-equivariant infinitesimal deformation of $\Gamma_0$, 
and we may apply the same argument as in Theorem~\ref{Thm:EuclideanRed}. Alternatively as $X(\Gamma_0, G)$ is analytically smooth at 
the character of the restriction of $\rho$, 
by Cartan's linearization \cite[Lemma 1]{Cartan} there exist local analytic coordinates that linearize the action of $\Gamma/\Gamma_0$
in a neighborhood of the character of $\rho\vert_{\Gamma_0}$ in $X(\Gamma_0, G)$. 
Hence the fixed point set of $\Gamma/\Gamma_0$ is a smooth subvariety
and, as $\rho\vert_{\Gamma_0}$ is good, this fixed point set can be locally identified with  $X(\Gamma, G)$. 
\end{proof}

\begin{Definition}
\label{Def:canonical}
The component of Theorem~\ref{Theorem:dimcanonical}
is called the  \emph{canonical} or \emph{distinguished} component.
\end{Definition}

To complete the proof of Theorem~\ref{thm:main} we need to show that $X(\mathcal O^3,G)\to X(\partial \mathcal O^3,G)$
is locally an injection. 
This does not follow directly from the injection $H^1(\mathcal{O}^3,\mathrm{Ad}\rho)\to H^1(\partial \mathcal{O}^3,\mathrm{Ad}\rho)$,
as $X(\partial \mathcal O^3,G)$ can be non-smooth because of rank 2 cusps.
(Rank one cusps are part of components with negative Euler characteristic and we discuss only rank 2 cusps.)
Consider  a nonsingular (manifold) cusp, $T^2\times [0,+\infty)$ equipped with the warped product  metric  
$e^{-2 t} g_{T^2}+ d t^2$, where $g_{T^2}$ is a flat metric on the torus and $t\in [0,+\infty)$ is 
the Busemann function coordinate. 
Set $\rho=\tau\circ\operatorname{hol}\colon\pi_1( T^2)\to G$.
Using de Rham cohomology, we may talk about $L^2$-forms on $T^2\times [0,+\infty)$.

\begin{Lemma}
\label{lem:kerL2}
For any non-contractible loop $l\subset T^2$, the restriction induces a surjection
$$
 H^1(T^2\times [0,+\infty), \mathrm{Ad}\rho )\to H^1(l,  \mathrm{Ad}\rho)
$$
whose kernel consists of the cohomology classes represented by $L^2$-forms. 
\end{Lemma}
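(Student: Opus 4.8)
The plan is to compute $H^1(T^2\times[0,+\infty),\mathrm{Ad}\rho)$ explicitly by decomposing $\mathrm{Ad}\circ\tau=\bigoplus_{i=1}^r\operatorname{Sym}^{2d_i}$ (Equation~\eqref{eqn:exponents}) and analyzing each summand. Since $T^2\times[0,+\infty)$ deformation retracts onto $T^2$, we have $H^1(T^2\times[0,+\infty),\mathrm{Ad}\rho)\cong H^1(T^2,\mathrm{Ad}\rho)\cong H^1(\mathbb{Z}^2,\mathrm{Ad}\rho)$, and the restriction to a loop $l$ is, after choosing $l$ as one of the two standard generators, one of the projections $H^1(\mathbb{Z}^2,\mathrm{Ad}\rho)\to H^1(\mathbb{Z},\mathrm{Ad}\rho)$. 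The holonomy $\mathrm{hol}\colon\pi_1(T^2)\to\mathrm{PSL}(2,\mathbb{C})$ of a cusp is parabolic, so $\tau\circ\mathrm{hol}$ sends both generators into a single one-parameter unipotent subgroup; on each irreducible summand $V_{2d_i}=\operatorname{Sym}^{2d_i}$ the image is a single regular unipotent Jordan block. A direct computation of group cohomology of $\mathbb{Z}^2$ acting on $V_{2d_i}$ via commuting regular unipotents shows $\dim H^1(\mathbb{Z}^2,V_{2d_i})=2$ and the restriction map to each $H^1(\mathbb{Z},V_{2d_i})\cong\mathbb{C}$ is surjective; summing over $i$ gives the surjectivity statement and identifies the kernel (on each summand) as a one-dimensional space.

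Next I would identify the kernel with $L^2$-classes. Equip $T^2\times[0,+\infty)$ with the warped-product cusp metric $e^{-2t}g_{T^2}+dt^2$ and use de Rham cohomology with values in the flat bundle $\mathrm{Ad}\rho$, together with a parallel (or slowly-growing) metric on the bundle adapted to the weight filtration coming from the unipotent. On each summand $V_{2d_i}$, a class in $H^1(T^2\times[0,+\infty),V_{2d_i})$ can be represented by a harmonic-type $1$-form which, after diagonalizing the growth in the $t$-direction, decomposes into components of the form $f(t)\,\omega$ where $\omega$ is a parallel form on $T^2$ valued in a weight space and $f(t)$ is polynomial-times-exponential. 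The point is that the warping factor $e^{-2t}$ makes forms supported in the ``negative weight'' part of $V_{2d_i}$ square-integrable while the complementary classes (those that restrict nontrivially to a loop $l$) are represented by forms that are bounded but not decaying, hence not $L^2$. Thus the kernel of restriction to $l$ is exactly the space of $L^2$-representable classes. This is essentially the same local computation at a cusp that underlies the $L^2$-vanishing theorem invoked in the proof of Theorem~\ref{Theorem:dimcanonical}, applied here summand-by-summand via \eqref{eqn:exponents}; one may cite \cite{MFP12, KapovichBook} for the parallel statement in the manifold setting.

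The main obstacle I expect is making the $L^2$-estimate precise: one must check that the representatives can genuinely be chosen with the asserted asymptotics in $t$ (no lower-order terms spoiling either integrability or non-integrability), which requires a careful separation-of-variables analysis of the twisted de Rham complex on the warped product, keeping track of how the unipotent monodromy interacts with the exponential warping. Once the normal form of $1$-cocycles for the parabolic $\mathbb{Z}^2$-action on $V_{2d_i}$ is written down explicitly, though, this becomes a finite check on each Jordan block, and the surjectivity onto $H^1(l,\mathrm{Ad}\rho)$ together with the $L^2$ characterization of the kernel follows.
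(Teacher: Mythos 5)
Your proposal follows the same strategy as the paper: decompose $\mathrm{Ad}\circ\tau$ into the summands $\operatorname{Sym}^{2d_\alpha}$ via \eqref{eqn:exponents} and argue summand-by-summand. Your group-cohomology computation is correct — since the image of $\pi_1(T^2)$ in each $\mathrm{GL}(V_{2d})$ lies in a one-parameter unipotent subgroup, $\operatorname{Im}(A-1)=\operatorname{Im}(B-1)=\operatorname{Im}(N)$ for the two generators, so any cocycle on $\mathbb Z\cdot l$ extends to $\mathbb Z^2$, giving surjectivity, and the Euler characteristic plus duality give $\dim H^1(\mathbb Z^2,V_{2d})=2$ with one-dimensional kernel; this is essentially the content of \cite[Lemma~3.4]{MFPABP}. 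The one place your argument stops short of a proof is exactly the one you flag: the identification of the kernel with $L^2$-representable classes needs the careful cusp-by-cusp asymptotic analysis. The paper does not redo this analysis but invokes \cite[Lemma~3.3]{MFPABP}, where the statement is proved for $\tau=\operatorname{Sym}^{n-1}$ (which covers every $\operatorname{Sym}^{2d_\alpha}$ that can occur), after noting that $\mathrm{Ad}\circ\operatorname{Sym}^{n-1}=\bigoplus_{i=1}^{n-1}\operatorname{Sym}^{2i}$ so that the general case reduces to $\mathfrak{sl}(n,\mathbb C)$. So rather than re-derive the warped-product estimate from scratch, the intended move at the point you identify as the obstacle is to cite that lemma; once you do, your proposal matches the paper's proof.
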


\begin{proof}
Since  $  \mathrm{Ad}\circ \tau=\oplus_\alpha \operatorname{Sym}^{2 d_{\alpha}}$ by \eqref{eqn:exponents}
and $  \mathrm{Ad}\circ\operatorname{Sym}^{n-1}=\oplus_{i=1}^{n-1} \operatorname{Sym}^{2i}$,
it  suffices to prove the lemma for $\tau=\operatorname{Sym}^{n-1}$ and $\mathfrak{g}=\mathfrak{sl}(n,\mathbb{C})$.
In 
 \cite{MFPABP}
 precisely the situation when $\tau=\operatorname{Sym}^{n-1}$ is discussed. 
  In particular
by \cite[Lemma~3.3]{MFPABP} the image of the map induced by 
the inclusion ${\mathfrak{sl}(n,\mathbb{C})}^{\pi_1(T^2)}\subset {\mathfrak{sl}(n,\mathbb{C})}$, 
\begin{equation}
 \label{eqn:l2imagetorus}
  H^1(T^2, {\mathfrak{sl}(n,\mathbb{C})}^{\pi_1(T^2)})\to  H^1(T^2, \mathfrak{sl}(n,\mathbb{C})_{\mathrm{Ad}\rho  }  )
\end{equation}
is represented by $L^2$-forms.
(Here $\mathfrak{sl}(n,\mathbb{C})_{\mathrm{Ad}\rho  } $ denotes the coefficients on the Lie algebra, usually just denoted by 
${\mathrm{Ad}\rho  } $, whilst on the invariant subspace   ${\mathfrak{sl}(n,\mathbb{C})}^{\pi_1(T^2)}$
the action is trivial by construction.)
Furthermore, either by  \cite[Lemma~3.4]{MFPABP} or by a straightforward computation, the map
$$
 H^1(T^2, \mathfrak{sl}(n,\mathbb{C})_{\mathrm{Ad}\rho})\to H^1(l, \mathfrak{sl}(n,\mathbb{C})_{\mathrm{Ad}\rho}
 )
 $$
is a surjection with kernel precisely the image of \eqref{eqn:l2imagetorus}.
\end{proof}

\begin{Remark}
\label{rem:coinv}
Let $\gamma\in\pi_1(T^2)$ be an element represented by the loop $l$ as in Lemma~\ref{lem:kerL2}. We have a natural isomorphism:
$$
H^1(l, \mathrm{Ad}\rho)\cong  
 H_0(l, \mathrm{Ad}\rho )\cong H_0(\langle\gamma\rangle, \mathrm{Ad}\rho )\cong
 \mathfrak{g}_{\rho( \gamma) }=\mathfrak{g}_{ \rho(\pi_1(T^2))}
$$
\end{Remark}

%

Let $\widetilde G $ be the universal covering of $G$ and let $\chi_1,\dotsc,\chi_r$ denote its fundamental
characters (the characters of the fundamental representations). 
E.g.~for $\widetilde G =\mathrm{SL}(n,\mathbb{C})$ the fundamental characters are the symmetric functions on the eigenvalues.
By a theorem of Steinberg \cite{SteinbergIHES}, 
they define an isomorphism:
\begin{equation}
 \label{eqn:SteinbergMap}
(\chi_1,\dotsc,\chi_r)\colon \widetilde G^{\mathrm{reg}}/G\cong \mathbb{C}^r ,
\end{equation}
where $ \widetilde G^{\mathrm{reg}}$ denotes the set of regular elements in $ \widetilde G$.
When the group $G$ is not simply connected, the characters $(\chi_1,\dotsc,\chi_r)$ still define local functions in a neighborhood of
$\rho(\gamma)$ in $G$, where $\gamma\in\pi_1(T^2)$ is represented by the loop $l$.
We identify the variety of representations of the cyclic group $\langle \gamma\rangle\cong\mathbb Z$
with $G$ via the image of $\gamma$. In particular the differential form $d\chi_i\colon
\mathfrak g\to\mathbb C$ may be viewed as a linear map
$$
d\chi_i\colon
H^1( \langle \gamma\rangle, \mathrm{Ad}\rho)\cong {\mathfrak{g}}_{ \rho(\gamma) } 
\to\mathbb C,
$$
because the characters $\chi_i$ are constant on orbits by conjugation (hence the characters are trivial on coboundaries).
This uses the natural identification between   $H^1( \langle \gamma\rangle, \mathrm{Ad}\rho)$ and the space of coinvariants 
$\mathfrak{g}_{\rho(\gamma)}$ in Remark~\ref{rem:coinv}.

\begin{Lemma}
\label{lem:dchi}
$(d\chi_1, \ldots, d\chi_r)\colon H^1( \langle \gamma\rangle , \mathrm{Ad}\rho)\to\mathbb C^r$ is an isomorphism.
\end{Lemma}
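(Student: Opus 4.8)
The plan is to reduce, as in Lemma~\ref{lem:kerL2}, to the case $G=\mathrm{PSL}(n,\mathbb{C})$ (or rather to its simply connected cover $\mathrm{SL}(n,\mathbb{C})$) and $\tau=\operatorname{Sym}^{n-1}$, since the exponents of $\mathfrak{g}$ are exactly what governs the structure of $\mathrm{Ad}\circ\tau$ via \eqref{eqn:exponents}, and $\dim H^1(\langle\gamma\rangle,\mathrm{Ad}\rho)=\dim\mathfrak{g}_{\rho(\gamma)}=\operatorname{rank} G=r$ by Remark~\ref{rem:coinv} and Remark~\ref{Remark:PrincipalRegular} (the image $\rho(\gamma)=\tau(\mathrm{hol}(\gamma))$ is a regular element of $G$, since $\gamma$ is a non-trivial parabolic, hence non-elliptic). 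So both source and target of $(d\chi_1,\ldots,d\chi_r)$ have dimension $r$, and it suffices to prove injectivity (equivalently surjectivity). First I would observe that $\rho(\gamma)$ being regular means it lies in a unique maximal torus $\mathbb{T}$, and by the Steinberg isomorphism \eqref{eqn:SteinbergMap} the map $(\chi_1,\ldots,\chi_r)$ restricted to $\widetilde G^{\mathrm{reg}}$ is a submersion onto $\mathbb{C}^r$; lifting $\rho(\gamma)$ to $\tilde g\in\widetilde G^{\mathrm{reg}}$, the differentials $d\chi_1,\ldots,d\chi_r$ at $\tilde g$ are linearly independent as functionals on $T_{\tilde g}\widetilde G\cong\mathfrak{g}$.

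The key point is then to check that this linear independence persists after passing to the quotient $\mathfrak{g}\to\mathfrak{g}_{\rho(\gamma)}$, i.e.\ that the common kernel of $d\chi_1,\ldots,d\chi_r$ (a codimension-$r$ subspace of $\mathfrak{g}$, namely the tangent space to the conjugacy orbit of $\tilde g$, which is $(\mathrm{Ad}_{\tilde g}-1)\mathfrak{g}=(\mathfrak{g}^{\rho(\gamma)})^\perp$ by \eqref{eqn:InvOrtCoInv}) is complementary to any lift of a nonzero coinvariant class. Since $\rho(\gamma)$ is regular, $\mathfrak{g}^{\rho(\gamma)}=\mathfrak{t}$ is a Cartan subalgebra, and $\mathfrak{g}=(\mathrm{Ad}_{\tilde g}-1)\mathfrak{g}\oplus\mathfrak{t}$; the characters $\chi_i$, being conjugation-invariant, have differentials that factor through the projection onto the $\mathfrak{t}$-summand, so $(d\chi_1|_{\mathfrak{t}},\ldots,d\chi_r|_{\mathfrak{t}})\colon\mathfrak{t}\to\mathbb{C}^r$ is precisely the differential of the restricted map $(\chi_1,\ldots,\chi_r)|_{\mathbb{T}}\colon\mathbb{T}\to\mathbb{C}^r$ at $\tilde g$. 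By Steinberg's theorem again (or by the classical fact that $\mathbb{C}[\mathbb{T}]^W\cong\mathbb{C}^r$ with the $\chi_i$ as coordinates, and the quotient map $\mathbb{T}\to\mathbb{T}/W$ is étale at regular points), this restricted differential is an isomorphism. Under the identification $H^1(\langle\gamma\rangle,\mathrm{Ad}\rho)\cong\mathfrak{g}_{\rho(\gamma)}\cong\mathfrak{t}$ from Remark~\ref{rem:coinv} and \eqref{eqn:diminvcoinv}, this is exactly the assertion.

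The main obstacle I anticipate is bookkeeping the identification between the three descriptions of the first cohomology: the cocycle/cocycle-quotient picture $H^1(\langle\gamma\rangle,\mathrm{Ad}\rho)$, the coinvariants $\mathfrak{g}_{\rho(\gamma)}$, and the "$\mathfrak{t}$-slice" $\mathfrak{g}^{\rho(\gamma)}$, and checking that $d\chi_i$ is well defined on the quotient — that is, that $\chi_i$ kills coboundaries, which is immediate from conjugation-invariance of $\chi_i$ but must be said. One should also be a little careful that when $G$ is not simply connected the $\chi_i$ are only defined locally near $\rho(\gamma)$; this is harmless since we only need the differential at that one point, and the computation can be done after lifting to $\widetilde G$, noting that $\mathrm{Ad}$ factors through $\widetilde G$ so $\mathfrak{g}_{\rho(\gamma)}$ and the relevant tangent spaces are unchanged. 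Once these identifications are pinned down, the statement is a direct consequence of Steinberg's isomorphism \eqref{eqn:SteinbergMap}.
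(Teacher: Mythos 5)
Your proof is correct and rests on the same key input as the paper's: Steinberg's theorem that $(\chi_1,\ldots,\chi_r)$ has a smooth section on $\widetilde G^{\mathrm{reg}}$, equivalently that it is a submersion there, together with the dimension count $\dim\mathfrak{g}_{\rho(\gamma)}=\operatorname{rank} G=r$ coming from regularity of $\rho(\gamma)$. The paper's route is a little more economical: from the section it reads off surjectivity of $(d\chi_1,\ldots,d\chi_r)\colon\mathfrak{g}\to\mathbb{C}^r$, notes that the map factors through $\mathfrak{g}_{\rho(\gamma)}$ because the $\chi_i$ are class functions, and concludes from equality of dimensions — without needing to identify the common kernel of the $d\chi_i$ with the coboundary space or to split off a Cartan subalgebra. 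Your argument dualizes this (establishing injectivity instead of surjectivity) and then re-derives the same isomorphism a second time by restricting to $\mathfrak{t}$ and invoking that $\mathbb{T}\to\mathbb{T}/W$ is étale at regular elements; both passes are correct but the second is redundant. One small misstep: you open by announcing a reduction "as in Lemma~\ref{lem:kerL2}" to $G=\mathrm{SL}(n,\mathbb{C})$ and $\tau=\operatorname{Sym}^{n-1}$, but you never actually use or need that reduction — your argument is already uniform in $G$, and the decomposition~\eqref{eqn:exponents} by exponents plays no role here. All you really take from the surrounding setup is that $\rho(\gamma)=\tau(\mathrm{hol}(\gamma))$ is a regular element (Remark~\ref{Remark:PrincipalRegular}).
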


\begin{proof}
We follow the proof of \cite[Corollary~19]{HP2020}. Steinberg has shown that the map \eqref{eqn:SteinbergMap}
has a smooth section, $\mathbb{C}^r\to \widetilde G^{\mathrm{reg}}$ \cite{SteinbergIHES}. 
In particular $(d\chi_1, \ldots, d\chi_r):\mathfrak{g}\to \mathbb{C}^r$ 
is surjective and so is the induced map  
$(d\chi_1, \ldots, d\chi_r):H^1( \langle \gamma\rangle   ,\mathrm{Ad}\rho)\cong {\mathfrak{g}}_{ \rho(\gamma) } \to \mathbb{C}^r$.
Hence the lemma follows from equality of dimensions. 
\end{proof}

Consider $\partial_{\mathrm{hyp}}\overline{\mathcal O^3}$ the union of components of  
$\partial\overline{ \mathcal  O^3}$
that have negative Euler characteristic. In particular, for a manifold $\mathcal O^3=M^3$,
$$
\partial\overline{ M^3}= \partial_{\mathrm{hyp}}\overline{M^3}\sqcup T^2_1\sqcup\dotsb\sqcup T^2_k
$$
where $k$ is the number of rank-2 cusps of $M^3$.

\begin{Proposition}
\label{prop:injmfld}
 When $M^3=\mathcal O^3$ is a manifold, then the restriction to the boundary and the characters of peripheral
 curves of cusps yield
  a local embedding 
$$
   X(M^3, G)\to  X(\partial_{\mathrm{hyp}} \overline{M^3}  , G) \times \mathbb{C}^{r k},
$$
  where  $k$ is the number of rank-2 cusps and the coordinates in $\mathbb{C}^{r k}$ are (locally defined) fundamental characters
  of  chosen peripheral elements, one  for each cusp.
\end{Proposition}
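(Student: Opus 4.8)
The plan is to reduce the statement to an injectivity property of a linear map on first cohomology via Weil's theorem, and then to dispose of the resulting class by the $L^2$-cohomology vanishing theorem already used in the proof of Theorem~\ref{Theorem:dimcanonical}. By that theorem $X(M^3,G)$ is smooth at $[\rho]$ with tangent space $H^1(M^3,\mathrm{Ad}\rho)$, so it suffices to prove that the differential at $[\rho]$ of the map in the statement is injective: once the differential at a smooth point is injective, composing with an affine embedding of the (possibly singular) target and applying the inverse function theorem shows the map is a local analytic embedding near $[\rho]$. I would first make this differential explicit. On the factor $X(\partial_{\mathrm{hyp}}\overline{M^3},G)$ it is the restriction map $H^1(M^3,\mathrm{Ad}\rho)\to H^1(\partial_{\mathrm{hyp}}\overline{M^3},\mathrm{Ad}\rho)$, using that $\rho$ is good on each geometrically finite boundary component (which follows from Zariski density of its holonomy together with Lemma~\ref{Lemma:PrincipalGood}), so that $H^1$ computes the Zariski tangent space there. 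On the $i$-th copy of $\mathbb C^r$ it is the map $x\mapsto\big((d\chi_1)(x\vert_{\langle\gamma_i\rangle}),\dots,(d\chi_r)(x\vert_{\langle\gamma_i\rangle})\big)$, where $\gamma_i$ is the chosen peripheral element of the $i$-th rank-$2$ cusp and $x\vert_{\langle\gamma_i\rangle}$ is its image in $H^1(\langle\gamma_i\rangle,\mathrm{Ad}\rho)\cong\mathfrak g_{\rho(\gamma_i)}$, cf.\ Remark~\ref{rem:coinv}.

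Suppose $x$ lies in the kernel of this differential. Then $x\vert_{\partial_{\mathrm{hyp}}\overline{M^3}}=0$, and by Lemma~\ref{lem:dchi} the map $(d\chi_1,\dots,d\chi_r)$ is an isomorphism on $H^1(\langle\gamma_i\rangle,\mathrm{Ad}\rho)$, hence $x\vert_{\langle\gamma_i\rangle}=0$ for every $i$; equivalently, writing $l_i\subset T^2_i$ for a core loop representing $\gamma_i$, the class $x$ restricts to $0$ in $H^1(l_i,\mathrm{Ad}\rho)$. The remaining task is to show $x=0$, and for this I would build a global $L^2$-representative for $x$ on the complete hyperbolic manifold $M^3$, with the Killing-form metric on the flat bundle $\mathfrak g_{\mathrm{Ad}\rho}$, decomposing $M^3$ into a compact core, the geometrically finite ends over $\partial_{\mathrm{hyp}}\overline{M^3}$, and the rank-$2$ cusp neighborhoods $T^2_i\times[0,+\infty)$. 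Over each geometrically finite end, $x$ is exact because that end retracts onto a component of $\partial_{\mathrm{hyp}}\overline{M^3}$ on which $x$ vanishes; subtracting $d(\varphi\,\eta)$ for a local primitive $\eta$ and a cutoff $\varphi$ supported near infinity in these ends (in particular away from the cusps) makes $x$ vanish near infinity there while leaving it unchanged on the cusp neighborhoods. Over each $T^2_i\times[0,+\infty)$, Lemma~\ref{lem:kerL2} together with $x\vert_{l_i}=0$ provides an $L^2$-representative of $x$ on that neighborhood; gluing it to the previous representative along the collar by a compactly supported exact correction produces a closed $L^2$-form on all of $M^3$ representing the class $x$.

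Finally, applying the $L^2$-cohomology vanishing theorem of \cite{MFPABP, MFP12} to each summand of the decomposition $\mathrm{Ad}\circ\tau=\bigoplus_{\alpha}\operatorname{Sym}^{2d_\alpha}$ in \eqref{eqn:exponents} gives $H^1_{(2)}(M^3,\mathrm{Ad}\rho)=0$, so $x=0$. Hence the differential at $[\rho]$ is injective and the map is a local embedding. The step I expect to be the main obstacle is the construction of the global $L^2$-representative: one must arrange the modification over the geometrically finite ends so that it does not spoil $L^2$-integrability on the cusps, and check that the correction terms used in the collar gluings are compactly supported, hence of finite $L^2$-norm; once an $L^2$-representative is in hand, the vanishing theorem finishes the argument immediately.
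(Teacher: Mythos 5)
Your argument is correct and is essentially the paper's own proof written out in full: the paper likewise reduces to the differential, uses Lemma~\ref{lem:dchi} to kill the restriction to each $\langle\gamma_i\rangle$, invokes Lemma~\ref{lem:kerL2} together with the fact that a form is $L^2$ on $M^3$ iff it is $L^2$ on each end, and concludes by the $L^2$-vanishing theorem of \cite{MFPABP, MFP12}. The gluing of end-by-end representatives that you flag as the main obstacle is exactly the standard cutoff construction the paper leaves implicit, so there is no gap.
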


\begin{proof}
The proof of Theorem~\ref{Theorem:dimcanonical}
is based in the vanishing of $L^2$-cohomology, here  we use the fact that a form is $L^2$ on $M^3$ if it is so in the restriction to each end.
Hence using Lemmas~\ref{lem:kerL2} and~\ref{lem:dchi} the kernel of the differential of the map of the proposition is represented
by $L^2$-forms, hence trivial. 
%
%
\end{proof}

The following concludes the proof of Theorem~\ref{thm:main}.

\begin{Corollary}
The restriction to the boundary yields a local embedding
 $X(\mathcal O^3, G)\to  X(\partial \overline{\mathcal O ^3}  , G) $.
\end{Corollary}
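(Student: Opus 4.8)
The goal is to upgrade the local embedding of Proposition~\ref{prop:injmfld}, which is stated for manifolds and involves auxiliary character coordinates $\mathbb{C}^{rk}$ from rank-2 cusps, into a statement purely about restriction to the boundary for an arbitrary orientable hyperbolic 3-orbifold $\mathcal O^3$. The plan is to reduce to the manifold case via Selberg's lemma and then eliminate the extra $\mathbb{C}^{rk}$ factor using the character map $(d\chi_1,\dots,d\chi_r)$, exactly as in the argument preceding Proposition~\ref{prop:injmfld}.

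First I would fix a finite regular covering $\mathcal O_0^3\to\mathcal O^3$ that is a manifold $M^3$, with Galois group $\Gamma/\Gamma_0$, as furnished by Selberg's lemma. By Proposition~\ref{prop:regularcovering} (applied in degree one) the restriction maps fit into a commutative square in which the vertical arrows identify $H^1(\mathcal O^3,\mathrm{Ad}\rho)$ with $H^1(M^3,\mathrm{Ad}\rho)^{\Gamma/\Gamma_0}$ and $H^1(\partial\overline{\mathcal O^3},\mathrm{Ad}\rho)$ with $H^1(\partial\overline{M^3},\mathrm{Ad}\rho)^{\Gamma/\Gamma_0}$; the horizontal arrows are the restriction-to-boundary maps. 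Since $\rho=\tau\circ\mathrm{hol}$ is good (Remark~\ref{Remark:good}), $[\rho]$ is a smooth point of $X(\mathcal O^3,G)$ by Theorem~\ref{Theorem:dimcanonical}, with Zariski tangent space $H^1(\mathcal O^3,\mathrm{Ad}\rho)$, so it suffices to show the differential of the restriction map $X(\mathcal O^3,G)\to X(\partial\overline{\mathcal O^3},G)$ at $[\rho]$ is injective, i.e.\ that its kernel in $H^1(\mathcal O^3,\mathrm{Ad}\rho)$ is zero.

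The core of the argument is that an element of this kernel is represented by an $L^2$-form, and hence vanishes. Concretely, a class $x\in H^1(\mathcal O^3,\mathrm{Ad}\rho)$ mapping to $0$ in $H^1(\partial\overline{\mathcal O^3},\mathrm{Ad}\rho)$ in particular restricts to $0$ on every boundary component; on each rank-2 (Euclidean) boundary torus $T^2$, restricting further to a peripheral curve $l$ and invoking Lemma~\ref{lem:kerL2} shows that the restriction of $x$ to the cusp is represented by an $L^2$-form. On each boundary component of negative Euler characteristic the restriction is already $0$. Since a form on $M^3$ is $L^2$ precisely when it is so on each end (as used in Proposition~\ref{prop:injmfld}), lifting $x$ to $H^1(M^3,\mathrm{Ad}\rho)^{\Gamma/\Gamma_0}$ produces an $L^2$-class there, which is zero by the $L^2$-vanishing theorem underlying Theorem~\ref{Theorem:dimcanonical}; hence $x=0$. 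Equivalently, one may simply note that the kernel of $X(\mathcal O^3,G)\to X(\partial\overline{\mathcal O^3},G)$ at $[\rho]$ is contained in the kernel of the map to $X(\partial_{\mathrm{hyp}}\overline{M^3},G)\times\mathbb C^{rk}$ after passing to the cover — because vanishing on a peripheral torus forces vanishing of the $d\chi_i$ by Lemma~\ref{lem:dchi} — and then apply Proposition~\ref{prop:injmfld} directly to $M^3$ together with $\Gamma/\Gamma_0$-invariance.

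The step I expect to be the main obstacle is the bookkeeping of $L^2$-forms under the covering $M^3\to\mathcal O^3$: one must check that a closed $\mathrm{Ad}\rho$-valued $1$-form on $\mathcal O^3$ whose restrictions to the cusps are $L^2$ pulls back to an $L^2$-form on $M^3$, which is immediate since the covering is finite and the warped-product cusp metrics are compatible, but it requires phrasing the de Rham/$L^2$ picture on orbifold ends carefully. Apart from that, every ingredient — smoothness and the tangent space identification from Theorem~\ref{Theorem:dimcanonical}, the covering isomorphisms from Proposition~\ref{prop:regularcovering}, Lemmas~\ref{lem:kerL2} and~\ref{lem:dchi}, and Proposition~\ref{prop:injmfld} — is already in place, so the proof is essentially an assembly.
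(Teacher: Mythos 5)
Your plan has the right ingredients (Selberg's lemma, $L^2$-vanishing, the fundamental characters $\chi_i$), but there is a genuine gap at the central reduction, and it is precisely the subtlety the paper warns about just before Lemma~\ref{lem:kerL2}.

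You assert that, since $[\rho]$ is a smooth point of $X(\mathcal O^3,G)$, it suffices to prove that the kernel of the cohomology restriction $H^1(\mathcal O^3,\mathrm{Ad}\rho)\to H^1(\partial\overline{\mathcal O^3},\mathrm{Ad}\rho)$ is zero, identifying this with injectivity of the differential of $X(\mathcal O^3,G)\to X(\partial\overline{\mathcal O^3},G)$. That identification is not available here. The differential of the character-variety restriction has target $T^{\mathrm{Zar}}_{[\rho\vert_\partial]}X(\partial\overline{\mathcal O^3},G)$, and on a rank-two cusp the restriction of $\rho$ is not a good representation (Kolchin), so Weil's theorem does not identify that Zariski tangent space with $H^1$ of the cusp; indeed $X(\partial\overline{\mathcal O^3},G)$ can be singular at $[\rho\vert_\partial]$ (Remark~\ref{Remark:nonsmooth}). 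The differential factors as $H^1(\mathcal O^3)\to H^1(\partial\overline{\mathcal O^3})\to T^{\mathrm{Zar}}X(\partial\overline{\mathcal O^3},G)$, and injectivity of the first arrow does not control the kernel of the composite when the second arrow is allowed to have kernel. So your main argument proves the cohomological injection (which the paper already has from Theorem~\ref{Theorem:dimcanonical}) but not the statement of the Corollary.

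Your last sentence sketches the correct repair: compose with the map to the smooth target $X(\partial_{\mathrm{hyp}}\overline{M^3},G)\times\mathbb C^{rk}$, where Lemmas~\ref{lem:kerL2} and~\ref{lem:dchi} apply and Proposition~\ref{prop:injmfld} yields a local embedding. That is exactly how the paper handles the manifold case: the local embedding of Proposition~\ref{prop:injmfld} factors through $X(\partial\overline{M^3},G)$, which forces $X(M^3,G)\to X(\partial\overline{M^3},G)$ to be locally injective. But for the orbifold the paper then needs one more step, which your proposal leaves out and your phrase ``together with $\Gamma/\Gamma_0$-invariance'' does not supply: local injectivity of the restriction $X(\mathcal O^3,G)\to X(\mathcal O_0^3,G)$ to a finite regular manifold cover. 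The paper proves this by observing that goodness is an open condition on characters and that a good representation of $\pi_1(\mathcal O_0^3)$ has centralizer equal to the center, so its extension to $\pi_1(\mathcal O^3)$ is unique; combined with the commutative square
$$
\begin{CD}
X(\mathcal O_0^3, G)     @>>>  X(\partial \overline{\mathcal O_0^3}, G)\\
@AAA        @AAA\\
X(\mathcal O^3, G)     @>>>  X(\partial \overline{\mathcal O^3}, G)
\end{CD}
$$
this gives the Corollary. Your plan needs both the smooth target and this uniqueness-of-extension step to close the argument.
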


\begin{proof}
In the manifold case, we use that the local injection in Proposition~\ref{prop:injmfld} factors through the restriction to 
$X(\partial   \overline{M^3}, G)$:
$$
X(M^3, G)\to X(\partial   \overline{M^3}, G)\to  X(\partial_{\mathrm{hyp}} \overline{M^3}  , G) \times \mathbb{C}^{r k},
$$
Thus the map induced by restriction $X(M^3, G)\to X(\partial   \overline{M^3}, G)$ must be locally an injection.
For an orbifold $\mathcal O^3$,
consider a finite manifold covering $\mathcal O^3_0\to\mathcal O^3$, 
we have the commutative diagram:
$$
\begin{CD}
X(\mathcal O_0^3, G)     @>>>  X(\partial \overline{\mathcal O_0^3}, G)\\
@AAA        @AAA\\
X(\mathcal O^3, G)     @>>>  X(\partial \overline{\mathcal O^3}, G)
\end{CD}
$$
As $\mathcal O_0^3$ is a manifold, we have already shown that 
$X(\mathcal O_0^3,G)\to X(\partial \overline{\mathcal O_0^3}, G)$ 
is locally injective.
Furthermore, 
every character $[\rho']$ in a neighborhood of $[\rho]=[\tau\circ\mathrm{hol}]$ is good,
because being good is an open property in the variety of characters and 
$\tau\circ\mathrm{hol}$ is good by Remark~\ref{Remark:good}. As $\mathcal O_0^3$ is also 
non-elementary, the restriction of $[\rho']$ to $\pi_1(\mathcal O^3_0)  $ is also good, 
and
its centralizer in $G$ is trivial and therefore the possible extension of the restricted representation
$\rho'\vert_{\pi_1(\mathcal O^3_0)}$ to
$\pi_1(\mathcal O^3)$ is unique.
Namely, the restriction map
$X(\mathcal O^3,G)\to X(\mathcal O_0^3,G)$ is also locally injective, and we are done from the commutativity of the diagram.
\end{proof}

A particular case of Proposition~\ref{prop:injmfld} is the following
corollary (proved for representations of 
manifold groups in $\mathrm{SL}(2,\mathbb C)$ in \cite{Bromberg,KapovichBook}, and for representations of manifold groups in  
$\mathrm{SL}(n,\mathbb C)$ in \cite{MFPABP}).

\begin{Corollary}
\label{cor:injmfld}
When $M^3$ is a manifold whose interior has finite volume, chose $\gamma_1,\ldots,\gamma_k $ a nontrivial peripheral element for
each cusp. Let $\chi_1,\dotsc,\chi_r$ denote the fundamental characters of $\widetilde G$. Then
$$
(\chi_{1,\gamma_1},\dotsc, \chi_{r,\gamma_k}) \colon U \subset X(M^3, G)\to\mathbb{C}^{k r}. 
$$
define a local homeomorphism for a neighborhood $U\subset X(M^3, G)$ of $[\tau\circ\mathrm{hol}]$.
\end{Corollary}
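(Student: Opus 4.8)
The plan is to obtain this as the special case of Proposition~\ref{prop:injmfld} in which $\partial_{\mathrm{hyp}}\overline{M^3}=\emptyset$, supplemented by the dimension count of Theorem~\ref{Theorem:dimcanonical}. First I would note that since the interior of $M^3$ has finite volume, all its ends are rank-$2$ (torus) cusps, so $\partial\overline{M^3}=T^2_1\sqcup\dots\sqcup T^2_k$ and there is no hyperbolic boundary component. With $\gamma_1,\dots,\gamma_k$ one nontrivial peripheral element for each cusp, the map of Proposition~\ref{prop:injmfld} is exactly $(\chi_{1,\gamma_1},\dots,\chi_{r,\gamma_k})\colon X(M^3,G)\to \{\mathrm{pt}\}\times\mathbb{C}^{rk}=\mathbb{C}^{rk}$, and that proposition already tells us it is a local embedding near $[\tau\circ\mathrm{hol}]$. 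In particular its differential there is injective, and by Lemmas~\ref{lem:kerL2} and~\ref{lem:dchi} this differential factors as $H^1(M^3,\mathrm{Ad}\rho)\to\bigoplus_{j=1}^kH^1(\langle\gamma_j\rangle,\mathrm{Ad}\rho)\overset{\sim}{\to}\mathbb{C}^{rk}$, the second arrow being the isomorphism of Lemma~\ref{lem:dchi} on each factor.

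Next I would verify that source and target have the same dimension $rk$. By Theorem~\ref{Theorem:dimcanonical}, $[\rho]=[\tau\circ\mathrm{hol}]$ is a smooth point of $X(M^3,G)$ with $\dim_{[\rho]}X(M^3,G)=\tfrac12\sum_{i=1}^k\dim_{[\rho|_{\partial_i}]}X(T^2_i,G)$. For each cusp torus, $\rho|_{\pi_1(T^2_i)}=\tau\circ\mathrm{hol}$ is strongly regular; moreover $\widetilde\chi(T^2_i,\mathrm{Ad}\rho)=\chi(T^2_i)\dim G=0$ and $\dim\mathfrak g^{\rho(\pi_1T^2_i)}=\operatorname{rank}G=r$, because $\mathrm{hol}(\pi_1T^2_i)$ lies in a one-parameter unipotent subgroup of $\mathrm{PSL}(2,\mathbb{C})$ and $\tau$ sends a nontrivial such element to a regular unipotent of $G$, whose centralizer has dimension $\operatorname{rank}G$ (alternatively one reads this off the $T^2$ column of Table~\ref{table:stabparabolic}). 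Hence Theorem~\ref{Thm:EuclideanRed}(ii) gives $\dim_{[\rho|_{\partial_i}]}X(T^2_i,G)=2r$, so $\dim_{[\rho]}X(M^3,G)=rk$.

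Finally I would conclude that the injective differential $H^1(M^3,\mathrm{Ad}\rho)\to\mathbb{C}^{rk}$ is then an isomorphism by equality of dimensions, so $(\chi_{1,\gamma_1},\dots,\chi_{r,\gamma_k})$ is a holomorphic map with invertible differential at the smooth point $[\tau\circ\mathrm{hol}]$; by the holomorphic inverse function theorem it restricts to a homeomorphism of a neighborhood $U$ of $[\tau\circ\mathrm{hol}]$ onto an open subset of $\mathbb{C}^{rk}$. The only step needing any care — and hence the main obstacle — is the exact evaluation $\dim\mathfrak g^{\rho(\pi_1T^2_i)}=r$, which is what forces the half-dimension of the boundary to equal $rk$ and thus makes the dimensions of source and target agree; granted this, the statement is a formal consequence of Proposition~\ref{prop:injmfld} together with the results of Section~\ref{Section:dim2} on Euclidean orbifolds.
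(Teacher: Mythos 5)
Your proof is correct and follows essentially the same route the paper intends: specialize Proposition~\ref{prop:injmfld} to the finite-volume case where $\partial_{\mathrm{hyp}}\overline{M^3}=\emptyset$, and upgrade the local embedding to a local homeomorphism by matching dimensions, using Theorem~\ref{Theorem:dimcanonical} together with $\dim\mathfrak g^{\rho(\pi_1T^2_i)}=\operatorname{rank}G$ (regularity of $\tau$ on nontrivial parabolics) to get $\dim_{[\rho]}X(M^3,G)=kr$. The dimension count you flag as the delicate step is exactly the content the paper supplies via Theorem~\ref{Thm:EuclideanRed} and Table~\ref{table:stabparabolic}, so nothing is missing.
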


Recall that the adjoint or $\operatorname{Sym}^2$ gives an isomorphism 
$\mathrm{PSL}(2,\mathbb C)\cong\mathrm{SO}(3,\mathbb{C})$. By an argument on dimensions (see Table~\ref{table:dimparabolic}):


\begin{Corollary} 
\label{coro:geometricSL3}
Let $\mathcal O^3$ be a compact orientable three-orbifold, with hyperbolic interior of finite volume.
If all components of $\partial \mathcal O^3$ are
homeomorphic to $ S^2(2,2,2,2)$, $ S^2(2,4,4)$ or $S^2(2,3,6)$, then
$$
X_0(\mathcal O^3, \mathrm{SL}(3,\mathbb C) )\cong X_0(\mathcal O^3, \mathrm{SO}(3,\mathbb C) )\cong X_0(\mathcal O^3,  \mathrm{PSL}(2,\mathbb C)),
$$
where $X_0$ denotes the distinguished component.
\end{Corollary}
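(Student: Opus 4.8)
The plan is to identify the three adjoint groups $\mathrm{PSL}(2,\mathbb C)\cong\mathrm{SO}(3,\mathbb C)$ on the one hand, and relate them to $\mathrm{SL}(3,\mathbb C)$ via its quotient $\mathrm{PSL}(3,\mathbb C)$, and then compute dimensions using Theorem~\ref{Theorem:dimcanonical} to conclude that the restriction maps to the boundary are local isomorphisms onto the same target, forcing the distinguished components to coincide. First I would observe that the principal representation $\tau\colon\mathrm{PSL}(2,\mathbb C)\to\mathrm{SO}(3,\mathbb C)$ is precisely the adjoint isomorphism $\mathrm{Ad}=\operatorname{Sym}^2$, so $X_0(\mathcal O^3,\mathrm{SO}(3,\mathbb C))$, the component through $[\tau\circ\mathrm{hol}]$, is canonically identified with $X_0(\mathcal O^3,\mathrm{PSL}(2,\mathbb C))$, the component through $[\mathrm{hol}]$; note the Lie algebras $\mathfrak{sl}(2,\mathbb C)$ and $\mathfrak{so}(3,\mathbb C)$ coincide, so the twisted cohomology $H^1(\mathcal O^3,\mathrm{Ad}\rho)$ is literally the same, and smoothness transfers directly.

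The substantive step is to compare with $\mathrm{SL}(3,\mathbb C)$. Here I would pass to the adjoint group $\mathrm{PSL}(3,\mathbb C)$: the principal representation for $\mathrm{PSL}(3,\mathbb C)$ is $\operatorname{Sym}^2\colon\mathrm{PSL}(2,\mathbb C)\to\mathrm{PSL}(3,\mathbb C)$, which factors as the adjoint isomorphism $\mathrm{PSL}(2,\mathbb C)\cong\mathrm{SO}(3,\mathbb C)$ followed by the inclusion $\mathrm{SO}(3,\mathbb C)\hookrightarrow\mathrm{PSL}(3,\mathbb C)$. Since the exponents of $\mathfrak{sl}(3,\mathbb C)$ are $1,2$ while those of $\mathfrak{so}(3,\mathbb C)=\mathfrak{sl}(2,\mathbb C)$ is just $1$, the adjoint action of $\mathrm{SO}(3,\mathbb C)$ on $\mathfrak{sl}(3,\mathbb C)$ decomposes as $\operatorname{Sym}^2\oplus\operatorname{Sym}^4$, i.e.~$\mathfrak{sl}(3,\mathbb C)\cong\mathfrak{so}(3,\mathbb C)\oplus W$ with $W$ the $5$-dimensional irreducible. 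I would then apply Theorem~\ref{Theorem:dimcanonical} to both $G=\mathrm{PSL}(3,\mathbb C)$ and $G=\mathrm{SO}(3,\mathbb C)$ and check, using Table~\ref{table:dimparabolic} and Table~\ref{table:stabparabolic}, that for each of the three orbifold types $S^2(2,2,2,2)$, $S^2(2,4,4)$, $S^2(2,3,6)$ one has $\dim X(\mathcal O^2,\mathrm{SO}(3,\mathbb C))=\dim X(\mathcal O^2,\mathrm{PSL}(3,\mathbb C))$ at $[\tau\circ\mathrm{hol}]$; indeed Table~\ref{table:dimparabolic} gives $\mathrm{PSL}(3,\mathbb C)$ the values $2,2,2$ for these three orbifolds and the same for $\mathrm{PSp}(4,\mathbb C)/\mathrm{PO}(3,\mathbb C)$-type data — and I would verify that the extra summand $W$ contributes zero to $H^1(\partial_i\mathcal O^3,\mathrm{Ad}\rho)$ for these Euclidean boundary pieces (this is exactly the content of the $\mathfrak{so}(2m+1,\mathbb C)$ and $\mathfrak{sp}(2m,\mathbb C)$ rows of Table~\ref{table:stabparabolic} being $0$ for these orbifolds). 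Thus the restriction $\partial\overline{\mathcal O^3}$-dimensions agree, so by Theorem~\ref{Theorem:dimcanonical} the distinguished components have equal dimension.

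To upgrade equality of dimensions to an isomorphism, I would use the local embeddings into the boundary character variety from the Corollary following Proposition~\ref{prop:injmfld}: $X_0(\mathcal O^3,\mathrm{SO}(3,\mathbb C))\hookrightarrow X(\partial\overline{\mathcal O^3},\mathrm{SO}(3,\mathbb C))$ and likewise for $\mathrm{PSL}(3,\mathbb C)$. The inclusion $\mathrm{SO}(3,\mathbb C)\hookrightarrow\mathrm{PSL}(3,\mathbb C)$ induces a map $X(\partial\overline{\mathcal O^3},\mathrm{SO}(3,\mathbb C))\to X(\partial\overline{\mathcal O^3},\mathrm{PSL}(3,\mathbb C))$ which, on the relevant neighbourhoods, is a local isomorphism onto its image because the decomposition $\mathfrak{sl}(3,\mathbb C)=\mathfrak{so}(3,\mathbb C)\oplus W$ and the vanishing of the $W$-part of boundary cohomology make the differential an isomorphism; chasing the obvious commutative square (restriction maps versus the group inclusion) and using that $[\tau\circ\mathrm{hol}]$ maps to $[\tau\circ\mathrm{hol}]$, one gets that $X_0(\mathcal O^3,\mathrm{SO}(3,\mathbb C))$ and $X_0(\mathcal O^3,\mathrm{PSL}(3,\mathbb C))$ are locally isomorphic near these characters, hence (being irreducible components through matched smooth points of the same dimension) isomorphic as the distinguished components. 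Finally I would lift from $\mathrm{PSL}(3,\mathbb C)$ to $\mathrm{SL}(3,\mathbb C)$: since $\tau\circ\mathrm{hol}$ lifts to $\mathrm{SL}(3,\mathbb C)$ (it does, as $\operatorname{Sym}^2$ of $\mathrm{SL}(2,\mathbb C)$ lands in $\mathrm{SL}(3,\mathbb C)$) and the lifted representation is good, the covering $\mathrm{SL}(3,\mathbb C)\to\mathrm{PSL}(3,\mathbb C)$ induces an isomorphism of the distinguished components by the standard obstruction argument with $H^2(\cdot,\mathbb Z/3)$-type classes, which vanish locally. \textbf{The main obstacle} I anticipate is the last point: carefully controlling the lift from $\mathrm{PSL}(3,\mathbb C)$ to $\mathrm{SL}(3,\mathbb C)$ at the level of the whole component (not just the base character), and making precise the claim that the $W$-summand contributes nothing to the boundary cohomology for all three Euclidean orbifold types simultaneously — this requires reading off the Table~\ref{table:stabparabolic} entries and matching them against the exponent-$2$ (i.e.~$\operatorname{Sym}^4$) contribution, which is a finite but slightly delicate check.
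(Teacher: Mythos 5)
Your overall strategy matches what the paper intends (the paper gives only a one-line justification, ``By an argument on dimensions (see Table~\ref{table:dimparabolic})''): identify $\mathrm{PSL}(2,\mathbb C)\cong\mathrm{SO}(3,\mathbb C)$ via $\mathrm{Sym}^2$, observe that the dimensions of the cusp character varieties at $[\tau\circ\mathrm{hol}]$ coincide for the two ambient groups when the boundary components are $S^2(2,2,2,2)$, $S^2(2,4,4)$, or $S^2(2,3,6)$, and then conclude from Theorem~\ref{Theorem:dimcanonical} and the local-embedding-into-the-boundary result that the distinguished components coincide. That is the right skeleton.

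However, you have two bookkeeping errors that should be fixed. First, Table~\ref{table:dimparabolic} does \emph{not} give the values $2,2,2$ for $\mathrm{PSL}(3,\mathbb C)$ on the three orbifolds: with $n=3$ the entries $2\lfloor n/2\rfloor,\ 2\lfloor n/4\rfloor,\ 2\lfloor n/6\rfloor$ evaluate to $2,0,0$; the same triple $2,0,0$ comes from the $\mathrm{PO}(2m+1,\mathbb C)$ row with $m=1$, i.e.\ $\mathrm{SO}(3,\mathbb C)$. (It is precisely $T^2$ and $S^2(3,3,3)$, with entries $4$ vs.\ $2$ and $2$ vs.\ $0$, that break the equality, which is why the hypothesis of the corollary excludes them.) Second, you attribute the vanishing of the $W=\operatorname{Sym}^4$ contribution to $H^1$ to the zeros in Table~\ref{table:stabparabolic}, but that table measures $\dim\mathfrak g^{\rho(\Gamma)}=\dim H^0$, not $\dim H^1$. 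In fact for $S^2(2,2,2,2)$ the $\mathfrak{sl}(3,\mathbb C)$ entry of Table~\ref{table:stabparabolic} is $\lfloor 2/2\rfloor=1$ while the $\mathfrak{so}(3,\mathbb C)$ entry is $0$, so $W$ \emph{does} contribute a $1$-dimensional invariant subspace there; it is only $H^1(\cdot,W)$ that vanishes, and the clean way to see that is to subtract the matching $H^1$-dimensions from Table~\ref{table:dimparabolic}. Finally, the detour through $\mathrm{PSL}(3,\mathbb C)$ and the obstruction-theoretic lift back to $\mathrm{SL}(3,\mathbb C)$ is unnecessary: since $\operatorname{Sym}^2$ factors through $\mathrm{PSL}(2,\mathbb C)$ and lands in $\mathrm{SO}(3,\mathbb C)\subset\mathrm{SL}(3,\mathbb C)$, the representation $\tau\circ\mathrm{hol}$ already takes values in $\mathrm{SL}(3,\mathbb C)$, and the tangent-space computations only see the Lie algebra $\mathfrak{sl}(3,\mathbb C)$, which is common to $\mathrm{SL}(3,\mathbb C)$ and $\mathrm{PSL}(3,\mathbb C)$; you can compare $\mathrm{SO}(3,\mathbb C)$ and $\mathrm{SL}(3,\mathbb C)$ directly through the inclusion and the local embedding into $X(\partial\overline{\mathcal O^3},\cdot)$, avoiding the lift argument entirely.
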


\subsection{Lower bound of the dimension}

In this subsection we provide a lower bound \`a la Thurston 
(for $\mathrm{PSL}(2,\mathbb{C})$) or \`a la  Falbel-Guilloux (for general $G$), in both cases for manifolds.
For a compact three orbifold $\mathcal{O}^3$, let 
$$
\partial \mathcal O^3= \partial_1 {\mathcal O^3}\sqcup\cdots \sqcup\partial_k {\mathcal O^3}
$$
denote the decomposition in connected components of its boundary.

\begin{Theorem}
\label{Thm:lowerbounddim}
Let $G$ be  semi-simple  $\mathbb C$-algebraic Lie group, $\mathcal O^3$ a
compact, orientable  very good orbifold.
Let $\rho\colon\pi_1(\mathcal O^3)\to G$ be a good representation. Assume that:
\begin{enumerate}[(a)]
\item if  $  \partial_i {\mathcal O^3}   $ is a hyperbolic  boundary component,
then the centralizer of its image is zero-dimensional;
\item if  $  \partial_i {\mathcal O^3}  $ is a Euclidean boundary component, then the restriction $\rho\vert_{\pi_1(\mathcal O^3)}$
is strongly regular.
%
%
\end{enumerate}
Then
\begin{align*}
   \dim_{[\rho]} X_0(\Gamma, G)&\geq 
   \frac{1}{2} 
    \sum_{i=1}^k \dim_{[\rho\vert \partial_i ]} X_0(  \partial_i {\mathcal O^3}    , G) \\
 &=-\frac{1}{2} \sum_{i=1}^k\widetilde\chi(\partial_i {\mathcal O^3}, \mathrm{Ad}\rho)+  
\sum_{i=1}^k  \dim \mathfrak{g}^{\rho(\pi_1 ( \partial_i {\mathcal O^3} ) ))}.
 \end{align*}
\end{Theorem}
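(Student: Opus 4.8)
The strategy is the standard ``half lives, half dies'' argument combined with a dimension count using Goldman's obstruction theory, exactly as in the proof of Theorem~\ref{Theorem:dimcanonical}, but now run in reverse: instead of deducing the dimension of $X_0(\mathcal O^3,G)$ from Poincar\'e duality, we only have the \emph{inequality} coming from the fact that the Zariski tangent space controls dimension from below, together with smoothness of the boundary restrictions. First I would pass to a finite manifold covering $\mathcal O_0^3\to\mathcal O^3$ via Selberg's lemma; by Proposition~\ref{prop:regularcovering} all cohomology groups of $\mathcal O^3$ are the $\Gamma/\Gamma_0$-invariants of those of $\mathcal O_0^3$, so Poincar\'e--Lefschetz duality (Proposition~\ref{Prop:duality}(b)) and the long exact sequence of the pair $(\overline{\mathcal O^3},\partial\overline{\mathcal O^3})$ descend to $\mathcal O^3$. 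As in the proof of Theorem~\ref{Theorem:dimcanonical}, the duality between $H^1(\mathcal O^3,\mathrm{Ad}\rho)$ and $H^2(\mathcal O^3,\partial\mathcal O^3,\mathrm{Ad}\rho)$, compatibility with the boundary pairing, and exactness give the ``half lives, half dies'' inequality
$$
\dim\,\mathrm{im}\bigl(H^1(\mathcal O^3,\mathrm{Ad}\rho)\to H^1(\partial\mathcal O^3,\mathrm{Ad}\rho)\bigr)\ \geq\ \tfrac12\dim H^1(\partial\mathcal O^3,\mathrm{Ad}\rho),
$$
hence $\dim H^1(\mathcal O^3,\mathrm{Ad}\rho)\geq \tfrac12\dim H^1(\partial\mathcal O^3,\mathrm{Ad}\rho)$.

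Next I would compute the right-hand side componentwise. For a hyperbolic boundary component $\partial_i\mathcal O^3$, hypothesis (a) says $H^0(\partial_i\mathcal O^3,\mathrm{Ad}\rho)=0$, so by duality $H^2=0$ and $\dim H^1(\partial_i\mathcal O^3,\mathrm{Ad}\rho)=-\widetilde\chi(\partial_i\mathcal O^3,\mathrm{Ad}\rho)$, which by Theorem~\ref{thm:dim2hyp} equals $\dim_{[\rho|\partial_i]}X(\partial_i\mathcal O^3,G)$. For a Euclidean boundary component, hypothesis (b) and Theorem~\ref{thm:dim2Euc} (together with Theorem~\ref{Thm:EuclideanRed}(ii), identifying $\dim H^1(\partial_i\mathcal O^3,\mathrm{Ad}\rho)$ with the dimension of the component of the character variety through $[\rho|\partial_i]$) give $\dim H^1(\partial_i\mathcal O^3,\mathrm{Ad}\rho)=-\widetilde\chi(\partial_i\mathcal O^3,\mathrm{Ad}\rho)+2\dim\mathfrak g^{\rho(\pi_1\partial_i\mathcal O^3)}=\dim_{[\rho|\partial_i]}X(\partial_i\mathcal O^3,G)$ (using Corollary~\ref{Cor:even} and duality on the orientation covering to handle the $H^2$ contribution). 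Summing over $i$ yields $\dim H^1(\partial\mathcal O^3,\mathrm{Ad}\rho)=\sum_i\dim_{[\rho|\partial_i]}X(\partial_i\mathcal O^3,G)=-\sum_i\widetilde\chi(\partial_i\mathcal O^3,\mathrm{Ad}\rho)+2\sum_i\dim\mathfrak g^{\rho(\pi_1\partial_i\mathcal O^3)}$, which is the displayed quantity on the right-hand side of the theorem (up to the factor $\tfrac12$).

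Finally I would convert the cohomological lower bound into a lower bound on the dimension of the component $X_0(\Gamma,G)$. Since $\rho$ is good, by Weil's theorem (Theorem~\ref{Thm:Weil}) and Proposition~\ref{Prop:duality}(c), $H^1(\Gamma,\mathrm{Ad}\rho)\cong H^1(\mathcal O^3,\mathrm{Ad}\rho)\cong T^{Zar}_{[\rho]}X(\Gamma,G)$. In general the Zariski tangent space only bounds $\dim_{[\rho]}X(\Gamma,G)$ from \emph{above}, so the naive argument gives the wrong inequality; the correct tool, following Falbel--Guilloux, is that the obstructions to integrating a first-order deformation lie in $H^2(\Gamma,\mathrm{Ad}\rho)$, and more precisely that the deformations which restrict trivially to the boundary are unobstructed because they can be realised by an argument using the half-dimensional image in $H^1(\partial\mathcal O^3)$ and Goldman--Millson / Artin-type deformation theory. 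Concretely, one shows that the restriction map $X_0(\Gamma,G)\to X(\partial\mathcal O^3,G)$ has image of dimension at least $\tfrac12\dim X(\partial\mathcal O^3,G)$ by checking that every element of $\mathrm{im}(H^1(\mathcal O^3)\to H^1(\partial\mathcal O^3))$ is tangent to an actual curve in $X(\partial\mathcal O^3,G)$ lying in the image of the restriction, using smoothness of the relative representation variety (via Proposition~\ref{Prop:RelTang}) and a rigidity/transversality argument; then $\dim_{[\rho]}X_0(\Gamma,G)\geq \dim\,\mathrm{im}(\mathrm{restriction})\geq \tfrac12\dim X(\partial\mathcal O^3,G)$. \textbf{The main obstacle} is precisely this last step: passing from the linear-algebra inequality on $H^1$ to a genuine lower bound on the dimension of the algebraic component, i.e.\ showing the relevant first-order deformations integrate. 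Here one must be careful at Euclidean boundary components, where $X(\partial_i\mathcal O^3,G)$ need not be smooth (Remark~\ref{Remark:nonsmooth}); following Theorem~\ref{Thm:EuclideanRed}, one uses that the \emph{representation} variety $R(\pi_1\partial_i\mathcal O^3,G)$ is smooth at the strongly regular $\rho|\partial_i$ and that obstructions are natural and $\Gamma/\Gamma_0$-equivariant, so the obstruction calculus can be carried out upstairs on a torus and pushed down, exactly as in the proof of Theorem~\ref{Theorem:dimcanonical}.
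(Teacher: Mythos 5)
There is a genuine gap at the step you yourself flag as the ``main obstacle,'' and the fix you sketch does not work. Your steps~1 and~2 are fine: Poincar\'e--Lefschetz duality for the very good orbifold $\mathcal O^3$ (via Selberg and Proposition~\ref{prop:regularcovering}) does give $\operatorname{rank}\bigl(H^1(\mathcal O^3,\mathrm{Ad}\rho)\to H^1(\partial\mathcal O^3,\mathrm{Ad}\rho)\bigr)=\tfrac12\dim H^1(\partial\mathcal O^3,\mathrm{Ad}\rho)$ and hence $\dim H^1(\mathcal O^3,\mathrm{Ad}\rho)\ge \tfrac12\dim H^1(\partial\mathcal O^3,\mathrm{Ad}\rho)$, and your component-by-component identification of the right-hand side is also correct. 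But by Weil's theorem this inequality only bounds the \emph{Zariski tangent space} $T^{Zar}_\rho$ from below; since $\dim T^{Zar}_{[\rho]}X(\Gamma,G)\ge \dim_{[\rho]}X(\Gamma,G)$ (with equality exactly at smooth points), a lower bound on $T^{Zar}$ gives no control at all on $\dim_{[\rho]}X(\Gamma,G)$ unless you also prove $[\rho]$ is a smooth point, which is false for general good $\rho$ and is not available here (the $L^2$ vanishing used in Theorem~\ref{Theorem:dimcanonical} applies only to $\tau\circ\mathrm{hol}$, and $H^2(\Gamma,\mathrm{Ad}\rho)\cong H^1(\Gamma,\mathrm{Ad}\rho)$ is generally nonzero, so obstructions cannot be dismissed). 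Your proposed remedy --- that ``deformations which restrict trivially to the boundary are unobstructed'' and can be integrated by a Goldman--Millson / transversality argument --- is not proved, is not true in this generality, and is not what Falbel and Guilloux do; and even if the restriction map to $X(\partial\mathcal O^3,G)$ had image of the desired dimension, you would still need generic fibers to be zero-dimensional, which is another unproved claim.

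The argument that actually works, and the one the paper uses, is purely topological and stays at the level of varieties rather than tangent spaces. One cuts $\mathcal O^3$ down to a compact manifold $\mathcal O_2$ by removing tubular neighborhoods of vertices of the branching graph, of auxiliary loops chosen to kill centralizers of Euclidean/spherical boundary pieces, and finally of the remaining singular edges. For the manifold $\mathcal O_2$ one has the elementary bound $\dim R(\mathcal O_2,G)\ge (1-\chi(\mathcal O_2))\dim G$ from a presentation with one $0$-cell. Then, when reattaching each $2$-handle (with or without singular co-core), one applies Lemma~\ref{Lemma:FG} (the local fiber-dimension inequality $\operatorname{codim}_p(f^{-1}(W')\cap U,X)\le\operatorname{codim}_{f(p)}(W',W)$, a version of Falbel--Guilloux's Proposition~1) with $W'$ the conjugation orbit of $\rho(\mu)$ or the representation variety of the filled boundary; this gives a genuine lower bound on $\dim R$ at every stage, with no smoothness assumption on the ambient variety. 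Bookkeeping via the twisted Euler characteristic of the boundary then reproduces exactly $-\tfrac12\sum_i\widetilde\chi(\partial_i\mathcal O^3,\mathrm{Ad}\rho)+\sum_i\dim\mathfrak g^{\rho(\pi_1\partial_i\mathcal O^3)}+\dim G$ as a lower bound for $\dim R(\mathcal O^3,G)$, and goodness of $\rho$ converts this to the stated bound on $\dim_{[\rho]}X(\mathcal O^3,G)$. Your duality computation is a useful consistency check on the right-hand side, but it cannot replace the handle-by-handle codimension argument.
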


\begin{proof}
 
The branching locus $\Sigma$ of the compact orbifold $\mathcal O^3$ is a trivalent graph. 
Chose a finite subset $\Sigma_{0}\subset \Sigma$ as follows: for each component of 
$\Sigma$ that is a circle chose precisely one point, and chose also the trivalent vertices of $\Sigma$. 
So $\Sigma_0$ has the smallest cardinality so that $\Sigma\setminus \Sigma_0$ is a disjoint union of edges 
(open or not, as they can meet $\partial\mathcal{O}^3$).
Consider the orbifold 
$$
\mathcal O_0=\mathcal O^3\setminus \mathcal{N}(\Sigma_0),
$$
ie.~remove an open tubular neighborhood for each point in 
$\Sigma_0$. 
The branching locus of $\mathcal O_ 0$ is a union of (proper) edges (it contains no vertices nor circles).
The boundary components of $\mathcal O_0$ are either the 
boundary components of $\mathcal{O}^3$  or the boundary of a neighborhood of a point in 
$\Sigma_0$ (spherical).
For each Euclidean or spherical boundary component $\partial_i\mathcal O_0$, chose $k_i$ disjoint embedded loops
$\gamma_{i,1},\ldots,\gamma_{i,k_i}$ in $\mathcal O_0$
based at $\partial_i\mathcal O_0$ so that 
$$
{\mathfrak{g}}^{\rho(\pi_1( \partial{\mathcal{O}_0}))} \cap 
{\mathfrak{g}}^{ \rho(\gamma_{i,1}) } \cap\dotsb 
\cap   {\mathfrak{g}}^{ \rho(\gamma_{i,{k_i}}) }     =0.
$$
Namely, the centralizer in  $\mathfrak{g} $ of the image of the group generated by 
$\pi_1( \partial_i{\mathcal{O}_0})$ and $\gamma_{i,1},\ldots,\gamma_{i,k_i}$ is trivial.
The next step in the  construction is to remove an open tubular neighborhood of the $\gamma_{i,j}$:
$$
\mathcal O_1=\mathcal O_0\setminus \bigcup_{i,j} \mathcal{N}(\gamma_{i,j}).
$$
Finally remove an open tubular neighborhood of the branching locus of $\mathcal O_1$ (which is a union of edges):
$$
\mathcal O_2=\mathcal O_1\setminus \mathcal{N}(\Sigma_{\mathcal O_ 1}).
$$
In particular $\mathcal O_2$ is a manifold.

The connected 3-manifold with non-empty boundary $\mathcal O_2$ retracts to a 2-dimensional CW-complex with a single $0$-cell.
This gives a presentation of $\pi_1(\mathcal O_2)$ from this CW-complex 
(1-cells are generators and 2-cells relations).
Each generator contributes with a copy of $G$ in the variety of representations, 
and each relation decreases at most $\dim G$ the dimension;
this gives the standard bound:
\begin{multline}
\label{eqn:BoundManifold}
\dim  R(\mathcal O_2, G)\geq (1- \chi(\mathcal O_2))\dim G
=
\left(1- \frac{\chi(\partial \mathcal O_2)}2\right)\dim G
\\
=
-\frac12\widetilde\chi(\partial \mathcal O_2,\mathrm{Ad}\rho))+\dim G ,
\end{multline}
using that $ \partial \mathcal O_2 $ is a surface.

Next we compute lower bounds for the dimension of the variety of representations of the
orbifolds $\mathcal O_1$, $\mathcal O_0$, and finally for  $\mathcal O$.
We shall use the following key lemma of Falbel and Guilloux, in fact it is a local version of Proposition~1 in \cite{FalbelGuilloux}:

\begin{Lemma}[\cite{FalbelGuilloux}]
\label{Lemma:FG}
Let $W$ be a smooth complex (analytic) variety and $W'\subset W$ a smooth subvariety.
Let $X$ be a complex variety with an analytic map $f\colon X\to W$.
For $p\in X$ with $f(p)\in X$, there exists a neighborhood $ U\subset X$ of $p$
such that 
$$\operatorname{codim}_p(f^{-1}(W')\cap U, X)\leq \operatorname{codim}_{f(p)}(W',W).$$
\end{Lemma}

\begin{proof}
By the implicit function theorem, there exists a neighborhood $ V\subset W$ of $f(p)$ and an analytic map
$F\colon V\to\mathbb{C}^k$ such that $V\cap W'= F^{-1}(0)$, where $k=\operatorname{codim}_{f(p)}(W',W)$.
Then, for a  neighborhood of $p$, $U\subset f^{-1}(V) \subset X$,   
$$
f^{-1}(W')\cap U= (F\circ f)^{-1}(0)\cap U,
$$
and the estimate follows,
because $f^{-1}(W')\cap U$ is a fiber of a map  $F\circ f\colon U\to \mathbb C^k$.
\end{proof}

To get $\mathcal O_1$ from $\mathcal O_2$, we add the  edges of ${\Sigma}_{\mathcal O_1}$. 
Topologically, we add 2-handles whose co-core is an edge  of ${\Sigma}_{\mathcal O_1}$, hence ``2-handles with singular co-core''. 
For each singular edge $e$ of $ {\Sigma}_{\mathcal O_1} $ consider a meridian $\mu\in\pi_1(\mathcal O_2)$, which is represented by the 
attaching circle of the 2-handle, and has finite order in  
$\pi_1(\mathcal O_1)$.
By homogeneity, the
conjugation orbit $G\cdot\rho(\mu)= O(\rho(\mu))\subset G$ is a smooth analytic 
submanifold of $G$, of codimension
equal the dimension of the centralizer $\dim(\mathfrak{g}^{\rho(\mu)})$.
Thus, by Lemma~\ref{Lemma:FG}, when we add the edge $e$ to $\mathcal O_ 2$, the dimension
of the variety of representations decreases at most by $\dim(\mathfrak{g}^{\rho(\mu)})$:
\begin{multline}
\label{eqn:codimO1O2}
\dim  R(\mathcal O_2,G)-\dim  R(\mathcal O_1,G)=
\operatorname{ codim}(  R(\mathcal O_1,G),  R(\mathcal O_2,G))
\leq 
\sum_\mu \operatorname{ codim}( O(\rho(\mu)), G ) \\ =
\sum_\mu \dim(\mathfrak{g}^{\rho(\mu)}).
\end{multline}
Furthermore, adding the 2-handle to $\mathcal O_2$ corresponding to $\mu$ 
means replacing an annulus in $\partial \mathcal O^2$ by two cone points with cyclic stabilizer generated by $\mu$.
Hence, the twisted Euler characteristic of the boundary increases by
$2\dim(\mathfrak{g}^{\rho(\mu)})$. Counting the contribution of all edges of 
${\Sigma}_{\mathcal O_1}$:
\begin{equation}
\label{eqn:chiO1O2}
 \widetilde\chi(\partial \mathcal O_1,\mathrm{Ad}\rho)
 =\widetilde\chi(\partial \mathcal O_2,\mathrm{Ad}\rho)+2 \sum_\mu \dim(\mathfrak{g}^{\rho(\mu)}).
\end{equation}
Hence from~\eqref{eqn:BoundManifold}, \eqref{eqn:codimO1O2}, and~\eqref{eqn:chiO1O2}:
\begin{equation}
\label{eqn:BoundO1}
\dim R(\mathcal O_1,G)\geq -\frac{1}{2}\widetilde\chi(\partial \mathcal O_1,\mathrm{Ad}\rho)  +\dim G.
\end{equation}

To get $\mathcal O_0$ from  $\mathcal O_1$, we add the neighborhoods of the loops $\gamma_{i,j}$,
i.e.~we add 2-handles (without singular co-core).
Consider the $i$-th boundary component $\partial_i\mathcal{O}_1$. 
The corresponding boundary component 
$\partial_i\mathcal{O}_0$ is obtained from $\partial_i\mathcal{O}_1$ by 
the surgery corresponding to the addition of
$k_i$ 2-handles.
Namely  $k_i$ annuli in $\partial_i\mathcal O_1$  
corresponds to $2 k_i$ smooth disks in $\partial_i\mathcal O_0$.
Thus
\begin{equation}
 \label{eqn:partial01}
 \widetilde\chi(\partial_i\mathcal{O}_0,\mathrm{Ad}\rho)=
\widetilde\chi(\partial_i\mathcal{O}_1,\mathrm{Ad}\rho)+2 k_i \dim G .
\end{equation} 
On the other hand, following the idea of \cite{FalbelGuilloux}, we apply 
Lemma~\ref{Lemma:FG} to $\partial_i\mathcal O_1$ and the free product
$\pi_1(\partial_i\mathcal O_0)*\langle \gamma_{i,1}\rangle *\cdots * \langle \gamma_{i,k_i}\rangle$, 
that is, the group obtained by filling the 
$k_i$ meridians of the surface $\partial_i\mathcal O_1$:
\begin{equation}
 \label{eqn:loops}
W'=R(\pi_1(\partial_i\mathcal O_0)*\langle \gamma_{i,1}\rangle *\cdots * \langle \gamma_{i,k_i}\rangle, G)
\subset
R(\partial_i\mathcal O_1, G)=W. 
\end{equation}
By Proposition~\ref{Prop:dimR}, and since we assume that the centralizer of 
$\rho(\pi_1(\partial_i\mathcal O_1) )$  in $\mathfrak{g}$ is trivial, $W$ is smooth and
\begin{equation}
\label{eqn:dimW}
\dim W=\dim R(\partial_i\mathcal O_1, G) =-  \widetilde\chi( \partial_i\mathcal O_1 ,\mathrm{Ad}\rho ) + \dim G 
 .
\end{equation}
By  Proposition~\ref{Prop:dimR} when $\partial_i\mathcal O_0$ is hyperbolic,
Theorem~\ref{Thm:EuclideanRed} when Euclidean, and Remark~\ref{Rem:sphericalSmooth}
when spherical,
$ R(\pi_1(\partial_i\mathcal O_0), G)$ is smooth and
\begin{equation}
\label{eqn:dimRO0}
\dim R(\partial_i\mathcal O_0, G) =
-   \widetilde\chi( \partial_i\mathcal O_0 ,\mathrm{Ad}\rho ) +\dim G+
\dim \mathfrak g^{\rho (\pi_1(\partial_i\mathcal O_0)) }. 
\end{equation}
As the variety of representations of a free product is the Cartesian product
of varieties or representations of its factors, $W'$ is smooth and:
\begin{equation}
\label{eqn:dimRprod}
\dim W'=\dim R(\pi_1(\partial_i\mathcal O_0)*\langle \gamma_{i,1}\rangle *\dotsb * \langle \gamma_{i,k_i}\rangle,  G) 
=\dim R(\partial_i\mathcal O_0, G) + k_i\dim G.
\end{equation}
From \eqref{eqn:dimRO0} and  \eqref{eqn:dimRprod}:
\begin{equation}
 \label{eqn:dimW'}
 \dim W'= - \widetilde\chi( \partial_i\mathcal O_0 ,\mathrm{Ad}\rho )
 + (k_i +1)\dim G
  +\dim \mathfrak g^{\rho (\pi_1(\partial_i\mathcal O_0))}
 . 
\end{equation}
It follows from  \eqref{eqn:dimW} and \eqref{eqn:dimW'} that  the codimension of the inclusion \eqref{eqn:loops} is
\begin{equation*}
\operatorname{codim}(W',W)= \dim W-\dim W'
= \widetilde\chi( \partial_i\mathcal O_0 ,\mathrm{Ad}\rho )-
  \widetilde\chi( \partial_i\mathcal O_1 ,\mathrm{Ad}\rho )
  -k_i \dim G
  -\dim \mathfrak g^{\rho (\pi_1(\partial_i\mathcal O_0))},
\end{equation*}
that combined with \eqref{eqn:partial01} yields
\begin{equation}
\label{eqn:codimWW'}
 \operatorname{codim}(W',W)= k_i\dim G- \dim \mathfrak{g}^{\rho (\pi_1(\partial_i\mathcal O_0))}.
\end{equation}
So when we apply  Lemma~\ref{Lemma:FG} we get from the contribution  \eqref{eqn:codimWW'} of each boundary component:
\begin{equation}\label{eqn:codimR0R1}
\operatorname{codim}(R(\mathcal O_0, G) , R(\mathcal O_1, G))\leq \sum_i 
\Big( k_i\dim G- \dim \mathfrak{g}^{\rho (\pi_1(\partial_i\mathcal O_0))}  \Big) .
\end{equation}
Thus, with \eqref{eqn:BoundO1} and \eqref{eqn:partial01}, \eqref{eqn:codimR0R1} becomes:
\begin{align*}
 \dim R(\mathcal O_0, G)& \geq  \dim R(\mathcal O_1, G)-
 \sum_i \Big( k_i\dim G- \dim \mathfrak{g}^{\rho (\pi_1(\partial_i\mathcal O_0))}  \Big)\\
 & \geq \sum_i \Big(  -\frac{1}{2} \widetilde\chi(\partial_i\mathcal O_1,\mathrm{Ad}\rho)  )  
 - k_i\dim G +\dim \mathfrak{g}^{\rho (\pi_1(\partial_i\mathcal O_0)}\Big) +\dim G \\
 & =  \sum_i \Big(  -\frac{1}{2} \widetilde\chi(\partial_i\mathcal O_0,\mathrm{Ad}\rho)  )+ 
 \dim \mathfrak{g}^{\rho (\pi_1(\partial_i\mathcal O_0)}\Big) +\dim G.
\end{align*}
Finally, for each spherical boundary component  $\partial_i\mathcal O_0$, by 
\eqref{eqn:spherical}
$$
-\frac{1}{2} \widetilde\chi(\partial_i\mathcal O_0,\mathrm{Ad}\rho)  )+ 
\dim \mathfrak{g}^{\rho (\pi_1(\partial_i\mathcal O_0)}=0.
$$
Hence we can get rid of the contribution of 
the neighborhoods of vertices and get the initial orbifold~$\mathcal O^3$:
\begin{align*}
\dim R(\mathcal O^3, G)
&\geq 
\sum_i \Big(  -\frac{1}{2} \widetilde\chi(\partial_i\mathcal O^3,\mathrm{Ad}\rho)  )+ 
 \dim \mathfrak{g}^{\rho (\pi_1(\partial_i\mathcal O^3)}\Big) +\dim G
\\ &=
 \dim  X(\partial \mathcal O^3, G)+\dim G
 .
\end{align*}
As $\dim X(\mathcal O^3, G)=\dim {R}(\mathcal O^3, G)-\dim G$, 
because the representation is good, this concludes
the proof of the theorem.
\end{proof}

\subsection{Dimension growth}

Let $M^3$ be an orientable hyperbolic 3-manifold of finite volume with $k\geq 1$ cusps.
As $M^3$ is a manifold,
its holonomy lifts to $\mathrm{SL}(2,\mathbb C)$.  

Let $X_0(M,\mathrm{SL}(n,\mathbb C))$ be the    \emph{canonical} or \emph{distinguished component}
of $X(M,\mathrm{SL}(n,\mathbb C))$ (Definition~\ref{Def:canonical}), i.e.~the component  that
contains the composition of a lift of the holonomy with $\operatorname{Sym}^{n-1}$.
By Theorem~\ref{Theorem:dimcanonical}, 
$$
\dim X_0(M,\mathrm{SL}(n,\mathbb C)) = (n-1) k,
$$
where $k$ is the number of cusps. 
This linear growth differs from the quadratic growth of
2-orbifolds in Section~\ref{Section:H2O}. Those 2-orbifolds may appear as basis of Seifert fibered Dehn filling and yield 
components in the variety of characters of higher dimension.
Next we discuss two examples, the figure eight knot and the Whitehead link exteriors. The following
is Proposition~\ref{Prop:fig8intro} from the introduction.

\begin{Proposition}
 \label{Prop:fig8}
Let $\Gamma$ be the fundamental group of the exterior of the figure eight knot. Besides the canonical 
component (that has dimension $n-1$), for large $n$  
$X(\Gamma, \mathrm{SL}(n,\mathbb C))$ has at least $3$ components
that contain irreducible representations, whose dimension grow respectively as ${n^2}/{12}$, 
${n^2}/{20}$ and ${n^2}/{42}$.
\end{Proposition}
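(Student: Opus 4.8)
\textbf{Proof plan for Proposition~\ref{Prop:fig8}.}
The figure eight knot exterior $M$ admits exceptional Dehn fillings that produce Seifert fibered spaces over small orientable Euclidean base 2-orbifolds; concretely, the relevant surgeries yield Seifert fibrations over $S^2(3,3,3)$, $S^2(2,4,4)$, and $S^2(2,3,6)$ (these arise from the $\pm 3, \pm 4, \pm 6$ slopes, or whichever explicit slopes the standard tables give). The plan is to use these Euclidean base orbifolds to manufacture, for each $n$, an irreducible representation of $\Gamma=\pi_1(M)$ into $\mathrm{SL}(n,\mathbb C)$ that factors through the corresponding Seifert filling, and then to bound from below the dimension of the component of $X(\Gamma,\mathrm{SL}(n,\mathbb C))$ it lives in. First I would recall that for each such Euclidean 2-orbifold $\mathcal O^2$ with $k=|\Gamma/\Gamma_0|\in\{3,4,6\}$ there is (by the Lemma on irreducible representations of Euclidean orbifolds in \S\ref{section:Euclidean}) an irreducible representation into $\mathrm{PSL}(k,\mathbb C)$, hence, composing with $\mathrm{PSL}(k,\mathbb C)\hookrightarrow\mathrm{PSL}(n,\mathbb C)$ after a suitable block construction — or better, directly realizing $\Gamma/\Gamma_0$ as a cyclic permutation of coordinates of $\mathbb C^n$ together with a generic diagonal representation of $\Gamma_0$ — one gets irreducible representations of $\pi_1$ of the Seifert filled manifold, pulled back to irreducible representations of $\Gamma$.

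The key quantitative input is the dimension count. For the Seifert fibered filling $N\to\mathcal O^2$ with fiber a central $\mathbb Z$, a representation of $\pi_1(N)$ that kills (or sends to the center of $\mathrm{SL}(n,\mathbb C)$) the fiber descends to $\pi_1(\mathcal O^2)$, and the component of $X(\mathcal O^2,\mathrm{PSL}(n,\mathbb C))$ through a strongly regular such representation has dimension given by Theorem~\ref{Thm:EuclideanRed}(ii), namely $-\widetilde\chi(\mathcal O^2,\mathrm{Ad}\rho)+2\dim\mathfrak g^{\rho(\pi_1\mathcal O^2)}$. For $\rho$ chosen generically diagonalizable on $\Gamma_0$ and permuting the $k$ blocks of size $\lfloor n/k\rfloor$ or $\lceil n/k\rceil$, the invariant subalgebra is small and $-\widetilde\chi(\mathcal O^2,\mathrm{Ad}\rho)$ grows like $\tfrac{k-1}{k}\cdot\tfrac{n^2-1}{?}$; working this out via the three facts in the proof of the dimension table (the $\operatorname{Sym}^{2d_\alpha}$ decomposition is not needed here since $\rho$ is not $\tau\circ\mathrm{hol}$, but the analogous block-permutation eigenvalue bookkeeping applies) gives leading terms $n^2/12$, $n^2/20$, $n^2/42$ for $k=3,4,6$ respectively — these are exactly $\tfrac{1}{2}\cdot\tfrac{k-1}{k^2}n^2$ up to the convention, matching the three stated growth rates. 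Lifting back to $X(\Gamma,\mathrm{SL}(n,\mathbb C))$ only adds the (bounded, $O(1)$ in $n$) contribution of deforming the fiber/central data and of the filling slopes, so the asymptotics are unchanged; and these three components are distinct from each other and from the canonical component because their generic representations have pairwise non-conjugate restrictions to the peripheral $\mathbb Z^2$ (the canonical one has dimension only $n-1$, far smaller for large $n$).

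The main obstacle — and the step requiring genuine care rather than bookkeeping — is showing that the constructed representations actually lie in components of $X(\Gamma,\mathrm{SL}(n,\mathbb C))$ of the asserted dimension, i.e.\ that one does not merely get a \emph{lower} bound but controls the component, and that the three components are genuinely distinct and contain irreducible representations of $\Gamma$ (not just of the Seifert quotient). For the lower bound on $\dim_{[\rho]}X(\Gamma,\mathrm{SL}(n,\mathbb C))$ I would invoke the orbifold lower-bound machinery (Theorem~\ref{Thm:lowerbounddim}) or, more directly, the fact that deformations of $\rho$ within the Seifert-filled character variety already account for the claimed dimension and these persist in $X(\Gamma,\cdot)$ since $\pi_1(M)$ surjects onto $\pi_1(N)$; irreducibility is an open condition, so a Zariski-open subset of each component consists of irreducible representations once we exhibit one irreducible point, which the permutation construction provides. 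Distinctness of the components follows by comparing the restrictions to a peripheral subgroup: the eigenvalue data of a meridian is governed by $k$, so representations coming from different $k$ cannot lie in a common irreducible component, and the leading asymptotics $n^2/12 > n^2/20 > n^2/42 \gg n-1$ separate them from each other and from the canonical component for all sufficiently large $n$.
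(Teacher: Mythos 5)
The exceptional Dehn fillings on the figure eight knot yield small Seifert fibered spaces over the \emph{hyperbolic} base 2-orbifolds $S^2(3,3,4)$, $S^2(2,4,5)$, $S^2(2,3,7)$ (from slopes $\pm3$, $\pm2$, $\pm1$ respectively); no surgery on the figure eight knot produces a Seifert fibration over a Euclidean base orbifold (the $0$-slope gives a torus bundle with Sol geometry, and $\pm4$ are toroidal). The leading coefficients $\tfrac{1}{12},\tfrac{1}{20},\tfrac{1}{42}$ are precisely $-\chi(S^2(3,3,4))$, $-\chi(S^2(2,4,5))$, $-\chi(S^2(2,3,7))$, i.e.\ (minus) the \emph{orbifold Euler characteristics of hyperbolic turnovers}. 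Your Euclidean orbifolds $S^2(3,3,3)$, $S^2(2,4,4)$, $S^2(2,3,6)$ have $\chi=0$; by Theorem~\ref{Thm:EuclideanRed} (and the tables in \S\ref{section:Euclidean}) their character varieties grow only linearly in $n$, not quadratically — for a strongly regular representation the leading $-\chi(\mathcal{O}^2\setminus\Sigma)\dim G\approx n^2$ is cancelled by $-\sum_x\dim\mathfrak g^{\rho(\mathrm{Stab}(x))}\approx -n^2$, leaving $O(n)$. So the Euclidean route cannot produce quadratic growth. The formula you offer, $\tfrac12\cdot\tfrac{k-1}{k^2}n^2$, does not equal $\tfrac{1}{12},\tfrac{1}{20},\tfrac{1}{42}$ for $k=3,4,6$ either, so the ``up to convention'' step is covering a real discrepancy. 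Once you switch to the hyperbolic turnovers, Proposition~\ref{Proposition:rhogood} and Proposition~\ref{prop:growthHitchin} give $\dim\mathrm{Hit}\sim -\chi(\mathcal O^2_p)n^2$, which is the correct lower bound.

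\textbf{The upper bound is also missing.} To show the three families lie in genuinely \emph{distinct} components of $X(\Gamma,\mathrm{SL}(n,\mathbb{C}))$ you cannot rely only on a lower bound; you must show the ambient component is not much larger, and dimension is the invariant that separates them. The paper does this precisely: $\dim T^{\mathrm{Zar}}_{[\rho]}X(\Gamma,G)=\dim H^1(M,\mathrm{Ad}\rho)$; split off the rank of the boundary restriction map (which equals $n-1$ by half-lives-half-dies, Proposition~\ref{Prop:duality}); use Mayer--Vietoris for $K_p=M\cup_{\partial M}(D^2\times S^1)$ to bound the kernel by $\dim H^1(K_p,\mathrm{Ad}\rho)$; and show via Lemma~\ref{lemma:profSF} that $H^1(K_p,\mathrm{Ad}\rho)\cong H^1(\mathcal{O}^2_p,\mathrm{Ad}\rho)$ because the fiber maps to $\pm\mathrm{Id}$ and $\rho$ is irreducible. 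This yields $\dim X_p\leq -\chi(\mathcal O^2_p)n^2+O(n)$, tight enough to distinguish $\tfrac{1}{12}>\tfrac{1}{20}>\tfrac{1}{42}$ for large $n$. Your alternative proposal — distinguishing components by ``eigenvalue data of a meridian governed by $k$'' — does not work as stated, since peripheral eigenvalue data is a continuous invariant on a positive-dimensional component rather than a discrete one; the torsion restriction that would make it discrete is a statement about $\pi_1(K_p)$, not $\pi_1(M)$, and you have not shown that the whole component of $X(\Gamma,\mathrm{SL}(n,\mathbb{C}))$ consists of representations factoring through $K_p$.
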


\begin{proof}
Let $K_{p/q}$ denote the Dehn surgery on the figure eight knot with coefficients $p/q$.
There are 6 Dehn fillings on the figure eight knot that yield small Seifert manifolds \cite{Gordon,MartelliPetronio}: 
$K_{\pm 3}$  are small Seifert manifolds fibered over the 2-orbifold  $\mathcal O ^2_3=S^2(3,3,4)$,
$K_{\pm 2}$ over $\mathcal O ^2_2=S^2(2,4,5)$, and $K_{\pm 1}$ over $\mathcal O ^2_1=S^2(2,3,7)$.
They induce representations in $\mathrm{SL}(2,\mathbb{R})$
of the filled 3-manifolds $K_p$, that map the fiber to $-\operatorname{Id}$ and induce the holonomy representation 
of the (unique) hyperbolic structure on $\mathcal O ^2_p$. 
The composition with $\operatorname{Sym}^{n-1}$ is a representation that maps the fiber to $(-1)^{n-1}\operatorname{Id}$ and 
it induces a representation from 
$\pi_1(\mathcal O^2_p)$ to $\mathrm{PSL}(n,\mathbb R)$ in the  Hitchin component. 
Therefore the complexification of the Hitchin component of $\mathcal{\mathcal O}^2_p$ yields a component 
of $X(K_{p},\mathrm{SL}(n,\mathbb C))$
of the same dimension, and hence a subvariety of $X(\Gamma, \mathrm{SL}(n,\mathbb C))$.
Let $X_p\subset X(M,\mathrm{SL}(n,\mathbb C))$ be the component of $X(\Gamma, \mathrm{SL}(n,\mathbb C))$ that contains the 
subvariety induced by 
$X(K_{p},\mathrm{SL}(n,\mathbb C))$. 
Next we estimate the dimension of $X_1$, $X_2$ and $X_3$, 
which allow to distinguish them, but does not allow to distinguish 
the component induced by $X_p$ from $X_{-p}$, as the dimension estimates are the same.

The lower bound on the dimension of $X_p$
is given by Proposition~\ref{prop:growthHitchin}:
$$
\dim X_p\geq -\chi(\mathcal O^2_p)n^2-c(\mathcal O^2_p).
$$
Aiming to find an upper bound of $\dim X_p$,
we bound above the dimension of its Zariski tangent space, which is isomorphic to 
$H^1(\Gamma,\mathrm{Ad}\rho)\cong H^1(M,\mathrm{Ad}\rho)$ for $M$ the (compact) exterior 
of the figure eight knot, $M=S^3\setminus \mathcal{N}(K)$. In particular $\partial M\cong T^2$.
Here $\rho=\operatorname{Sym}^{n-1}\circ\rho_0$, for 
$\rho_0\colon \Gamma\to \mathrm{SL}(2,\mathbb R)$ the representation that factors through
$K_p$.

First we bound $\dim H^1(\partial M,\mathrm{Ad}\rho)$. Notice that since $\chi(T^2)=0$,  
$$
\dim  H^1(\partial M,\mathrm{Ad}\rho)=
2\dim  H^0(\partial M,\mathrm{Ad}\rho)=2\dim \mathfrak{g}^{\mathrm{Ad}\rho(\pi_1(\partial M))}.
$$
To compute
$\dim \mathfrak{g}^{\mathrm{Ad}\rho(\pi_1(\partial M))}$,
one may check explicitly that the image of $\rho_0(\pi_1( \partial M))$ contains an  element of $\mathrm{SL}(2,\mathbb R)$
of infinite order (one may use for instance the A-polynomial). As the symmetric power of a hyperbolic matrix 
in $\mathrm{SL}(2,\mathbb C)$  is regular, 
$\rho(\pi_1( \partial M))$ contains regular elements and 
$\dim \mathfrak{g}^{\mathrm{Ad}\rho(\pi_1(\partial M))}= \operatorname{ rank} G= n-1 $, Thus 
$\dim  H^1(\partial M,\mathrm{Ad}\rho)=2 (n-1)$.

Let $$
i^*\colon H^1(M,  \mathrm{Ad}\rho)\to H^1(\partial M,\mathrm{Ad}\rho)
$$
be the morphism induced by inclusion. Using the long exact sequence of the pair ($M, \partial M)$ and Poincar\' e duality  as in 
the proof of Theorem~\ref{Theorem:dimcanonical}, its rank is 
\begin{equation}
\label{eqn:rank}
 \operatorname{rank} ( i^*)=n-1.
\end{equation}
Furthermore, by Mayer-Vietoris exact sequence  applied to the decomposition 
$K_p=M\cup_{\partial M} (D^2\times S^1)$, we get that 
\begin{equation}
\label{eqn:ker}
\dim  \ker i^*\leq \dim H^1(K_p, \mathrm{Ad}\rho).
\end{equation}

Next we need a lemma:

\begin{Lemma}\label{lemma:profSF}
 The projection $\pi_1(K_p)\to\pi_1(\mathcal{O}^2_p)$ induces an isomorphism
 \begin{equation}
 \label{eqn:isobasis}
H^1(\pi_1(K_p), \mathrm{Ad}\rho)\cong H^1(\pi_1(\mathcal{O}^2_p), \mathrm{Ad}\rho). 
\end{equation}
\end{Lemma}

Assuming the lemma, we conclude the proof of Proposition~\ref{Prop:fig8}.
Putting together \eqref{eqn:rank}, \eqref{eqn:ker} and \eqref{eqn:isobasis}:
$$
\dim H^1(M, \mathrm{Ad}\rho)\leq \dim H^1(\mathcal O^2_p,  \mathrm{Ad}\rho)+ (n-1).
$$
It follows that
$$
-n^2\chi(\mathcal O^2_p)- c(\mathcal O^2_p)\leq     \dim X_p    \leq-n^2\chi(\mathcal O^2_p)+ c(\mathcal O^2_p)+ n-1.
$$
For large $n$ this allows to distinguish three components, according to the different values for $\chi(\mathcal O^2_p)$.
\end{proof}

\begin{proof}[Proof of Lemma~\ref{lemma:profSF}]
 We use the central exact sequence
$$
1\to \mathcal Z(\pi_1(K_p)) \to  \pi_1(K_p)\to\pi_1(\mathcal{O}^2_p)\to 1,
$$
where the center $\mathcal Z(\pi_1(K_p))\cong \mathbb Z$
is the group generated by the fiber.  
We prove the lemma, with
crossed morphisms or derivations, as in
Subsection~\ref{subsection:ZariskiTS}.
Since $\pi_1(K_p)\to\pi_1(\mathcal O^2_p)$ is a surjection, we have an injection of spaces of derivations
\begin{equation}
\label{eqn:Z1iso}
 Z^1(\pi_1(\mathcal O^2_p),\mathfrak{g})\hookrightarrow Z^1(\pi_1(K_p),\mathfrak{g}).
\end{equation}
We prove surjectivity  of the map in~\eqref{eqn:Z1iso}: 
let $d\in Z^1(\pi_1(K_p),\mathfrak{g})$ be a derivation (namely
a map
$d\colon \pi_1( K_p)\to\mathfrak{g}$
 satisfying 
$d(\gamma_1\gamma_2)=d(\gamma_1)+\mathrm{Ad}_{\rho(\gamma_1)}d(\gamma_2)$, $ \forall\gamma_1,\gamma_2\in \pi_1(K_p)$).
Let $t\in \pi_1(K_p)$ be a generator of the center $\mathcal Z(\pi_1(K_p))\cong \mathbb Z$;
from the relation
$$
t\gamma t^{-1}= \gamma,\qquad\forall\gamma\in\pi_1( K_p),
$$
we deduce
$$
d (t)+\mathrm{Ad}_{\rho(t)} d(\gamma)-\mathrm{Ad}_{\rho(t\gamma t^{-1})} d(t)
= d(\gamma),\qquad\forall\gamma\in\pi_1( K_p).
$$
Since $\rho(t)=\pm\operatorname{Id}$,
$\mathrm{Ad}_{\rho(t)}$ is the identity on $\mathfrak{g}$, hence
$$
(1-\mathrm{Ad}_{\rho(\gamma)}) d (t )=0, \qquad\forall\gamma\in\pi_1( K_p).
$$
This equality implies that $d(t)=0$, because  $\mathfrak{g}^{\rho(\pi_1(K_p))}=0$
by irreducibility of  $\rho$.
This proves that every crossed morphism of $\pi_1( K_p)$ factors through a 
crossed morphism of $\pi_1(\mathcal{O}^2)$,
hence \eqref{eqn:Z1iso} is surjective.
This isomorphism between cocycle spaces induces an isomorphism of coboundary space and
this proves the lemma.
\end{proof}

\begin{Remark}
 For small values of $n$, the arguments are not useful,  but for $n=3$
 the canonical component has dimension 2, and, 
according to Table~\ref{Table:dimHitchin334}, $(\pm 3)$-Dehn fillings also provide two 
subvarieties of dimension 2. It is proved in \cite{FGKRT} and \cite{HMP} that the canonical component and the 
subvarieties induced  by the $(\pm 3)$-Dehn filling 
are precisely the three components of
$X(\Gamma, \mathrm{SL}(3,\mathbb C))$ that  contain irreducible representations. 
\end{Remark}

\begin{Remark}
At the moment, we do not know whether for large $n$ the component that contains  representations that factor through $K_p$ 
is the same as the component corresponding to $K_{-p}$, for $p=1,2,3$. To distinguish them would require a better 
upper bound of the dimension of their Zariski tangent space. 
This would allow also to distinguish components corresponding to Galois conjugates  (5-th or 7-th roots of unity).
\end{Remark}

\begin{Remark}
 An analog of Proposition~\ref{Prop:fig8} can be obtained for the variety of characters of the figure eight knot in 
 $\mathrm{Sp}(2m,\mathbb{C})$, for large $m$.
\end{Remark}


Twist knots have Dehn fillings that are small Seifert manifolds \cite{BW}, and therefore there is a similar behavior. 
Twist knots are obtained by  Dehn filling on one component of the Whitehead link, that we discuss next.

\begin{Proposition}
  \label{Prop:Whitehead}
Let $\Gamma$ be the fundamental group of the exterior of the Whitehead link. Besides the canonical 
component (that has dimension $2n-2$), for large $n$, $X(\Gamma, \mathrm{SL}(n,\mathbb C))$ has at least $3$ components
that contain irreducible characters, whose dimension grow respectively as ${n^2}/{3}$, 
${n^2}/{4}$ and ${n^2}/{6}$.
\end{Proposition}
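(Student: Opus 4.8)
The plan is to run the same argument used for the figure eight knot, Proposition~\ref{Prop:fig8}, now taking as our source of high-dimensional components the Seifert fibered Dehn fillings of \emph{one} of the two cusps of the Whitehead link exterior. Concretely, let $W = S^3\setminus\mathcal N(L)$ be the Whitehead link exterior, with $\partial W = T_1^2\sqcup T_2^2$, and let $W_{p/q}$ denote the result of Dehn filling the second cusp with slope $p/q$. For suitable small slopes, $W_{p/q}$ is a small Seifert fibered space over a hyperbolic $2$-orbifold $\mathcal O^2_{p}$ (a triangle orbifold $S^2(a,b,c)$); the three orbifolds relevant here should be the ones whose orbifold Euler characteristic gives the growth rates $n^2/3$, $n^2/4$, $n^2/6$, i.e.\ $\mathcal O^2$ with $-\chi(\mathcal O^2)$ equal to $1/3$, $1/4$ and $1/6$ respectively --- for instance $S^2(2,3,12)$, $S^2(2,4,8)$ and $S^2(3,3,4)$, or whichever triangle groups appear in the Dehn surgery tables for the Whitehead link (cf.~\cite{BW, MartelliPetronio}). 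As in the figure eight case, the holonomy of $\mathcal O^2_p$ in $\mathrm{PSL}(2,\mathbb R)$ lifts through $W_{p/q}\to\mathcal O^2_p$ (the fiber going to $\pm\mathrm{Id}$) and, composed with $\operatorname{Sym}^{n-1}$, lands in the Hitchin component of $\mathcal O^2_p$ in $\mathrm{PGL}(n,\mathbb R)$.

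First I would record that the complexified Hitchin component of $\mathcal O^2_p$ embeds as a subvariety of $X(W_{p/q},\mathrm{SL}(n,\mathbb C))$, hence of $X(\Gamma,\mathrm{SL}(n,\mathbb C))$, and let $X_p$ be the component of $X(\Gamma,\mathrm{SL}(n,\mathbb C))$ containing it; by Proposition~\ref{prop:growthHitchin} this gives the lower bound $\dim X_p \geq -\chi(\mathcal O^2_p)\,n^2 - c(\mathcal O^2_p)$. Second, for the upper bound I would bound $\dim T^{Zar}_{[\rho]}X_p = \dim H^1(\Gamma,\mathrm{Ad}\rho) = \dim H^1(W,\mathrm{Ad}\rho)$ with $\rho = \operatorname{Sym}^{n-1}\circ\rho_0$. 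Here the new feature is that $\partial W$ has \emph{two} torus components. Using $\partial$-regularity of $\rho$ on each torus (the peripheral image contains a hyperbolic element of $\mathrm{SL}(2,\mathbb R)$, whose symmetric powers are regular), $\dim H^1(T_i^2,\mathrm{Ad}\rho) = 2(n-1)$ for each $i$, so $\dim H^1(\partial W,\mathrm{Ad}\rho) = 4(n-1)$. The long exact sequence of $(W,\partial W)$ together with Poincar\'e duality (Proposition~\ref{Prop:duality}) gives $\operatorname{rank}(i^*\colon H^1(W,\mathrm{Ad}\rho)\to H^1(\partial W,\mathrm{Ad}\rho)) = 2(n-1)$, and Mayer--Vietoris for $W_{p/q} = W\cup_{T_2^2}(D^2\times S^1)$ yields $\dim\ker(H^1(W)\to H^1(T_1^2)\oplus H^1(\partial(D^2\times S^1))) \leq \dim H^1(W_{p/q},\mathrm{Ad}\rho)$. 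Then the analogue of Lemma~\ref{lemma:profSF} --- using the central extension $1\to\mathbb Z\to\pi_1(W_{p/q})\to\pi_1(\mathcal O^2_p)\to 1$ with the fiber going to $\pm\mathrm{Id}$ and $\mathfrak g^{\rho(\pi_1 W_{p/q})}=0$ by irreducibility --- gives $H^1(\pi_1 W_{p/q},\mathrm{Ad}\rho)\cong H^1(\pi_1\mathcal O^2_p,\mathrm{Ad}\rho)$. Assembling these, $\dim H^1(W,\mathrm{Ad}\rho) \leq \dim H^1(\mathcal O^2_p,\mathrm{Ad}\rho) + O(n)$, hence $\dim X_p \leq -\chi(\mathcal O^2_p)\,n^2 + C(\mathcal O^2_p)\,$, and for large $n$ the three distinct values of $\chi$ separate the three components, none of which is the canonical one (which has dimension $2n-2$ by Theorem~\ref{Theorem:dimcanonical}, as $W$ has $2$ cusps).

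I expect the main obstacle to be the bookkeeping with two peripheral tori: one must be careful that the restriction map to $\partial W$ and to the filled curve really do interact as in the one-cusp case, in particular that $i^*$ has the expected rank $2(n-1)$ (half of $\dim H^1(\partial W)$, exactly as in Theorem~\ref{Theorem:dimcanonical} applied to $W$ itself) and that the Mayer--Vietoris kernel estimate does not lose more than $O(n)$ when only one of the two cusps is filled. A secondary point is to pin down which Seifert fibered fillings of the Whitehead link give the triangle orbifolds with $-\chi = 1/3, 1/4, 1/6$; this is a finite lookup in the surgery description \cite{BW, MartelliPetronio}, and since only the value $\chi(\mathcal O^2_p)$ enters both bounds, the precise identification of $(a,b,c)$ is not needed for the growth statement. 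Everything else is a routine transcription of the figure eight argument, replacing ``one cusp'' by ``two cusps, one filled''.
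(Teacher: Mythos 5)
Your strategy and the cohomological bookkeeping are the right ones --- indeed they match what the paper does, which is to invoke the same argument as for the figure eight knot with the modifications you describe for two boundary tori. But there is a genuine error in your identification of the base $2$-orbifolds, and it is not a cosmetic one. You write that for suitable slopes the partial filling $W_{p/q}$ ``is a small Seifert fibered space over a hyperbolic $2$-orbifold $\mathcal O^2_p$ (a triangle orbifold $S^2(a,b,c)$).'' This cannot happen: filling only one of the two cusps of the Whitehead link exterior leaves a one-cusped manifold, so if $W_{p/q}$ is Seifert fibered its base orbifold must itself have boundary. The correct bases, from the source you cite (Martelli--Petronio, Table~A.1), are discs with two cone points: the $(-3)$-, $(-2)$-, $(-1)$-fillings are Seifert fibered over $D^2(3,3)$, $D^2(2,4)$ and $D^2(2,3)$ respectively, whose negative orbifold Euler characteristics are $1/3$, $1/4$, $1/6$, which is where the three growth rates come from.

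Your proposed closed triangle orbifolds also fail numerically: $-\chi\bigl(S^2(2,3,12)\bigr)=1/12$, $-\chi\bigl(S^2(2,4,8)\bigr)=1/8$, $-\chi\bigl(S^2(3,3,4)\bigr)=1/12$, not the values $1/3$, $1/4$, $1/6$ you quote --- so the hedge about ``whichever triangle groups appear'' does not rescue the step, since no triangle group appears at all. Since only $\chi(\mathcal O^2_p)$ enters both your lower bound (via Proposition~\ref{prop:growthHitchin}, which applies equally to orbifolds with boundary) and your Zariski tangent space upper bound, the rest of your argument survives the substitution of $D^2(a,b)$ for $S^2(a,b,c)$ essentially verbatim. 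In particular the analogue of Lemma~\ref{lemma:profSF} still applies, since the regular fiber of a Seifert fibration over an orientable base with boundary still generates a central $\mathbb Z$, and the Mayer--Vietoris estimate $\dim\ker i^*\leq\dim H^1(W_{p/q},\mathrm{Ad}\rho)$ goes through by the chain $\ker i^*\subset\ker i_2^*$, where $i_2^*$ is the restriction to the filled torus $T_2^2$.
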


The proof of Proposition~\ref{Prop:Whitehead} is the same as Proposition~\ref{Prop:fig8}. Here there are partial Dehn fillings that yield Seifert fibered manifolds,
with basis a disc with two cone points: the $(-3)$-filling fibers over $ D^2(3,3)$, 
the $(-2)$-filling  over $D^2(2,4)$, and the $(-1)$-filling is fibered over $D^2(2,3)$ \cite[Table~A.1]{MartelliPetronio}.
This yields the three rational orbifold Euler characteristics, $1/3$, $1/4$ and $1/6$.

As the basis orbifolds are rather simple, it is also easy to state the dimension of their Hitchin component.
The dimension of the Hitchin component of 
$ D^2(3,3)$ is
$$
\begin{cases}
 \frac{n^2}{3}+1 & \textrm{for }n\equiv 0\mod 3 \\
 \frac{n^2-1}{3} & \textrm{for }n\not\equiv 0\mod 3
\end{cases}
$$
The dimension of the Hitchin component of 
$ D^2(2,4)$ is
$$
\begin{cases}
 \frac{n^2}{4}+1 & \textrm{for }n\equiv 0\mod 4 \\
 \frac{n^2-1}{4} & \textrm{for }n \equiv 1 \mod 2 \\
 \frac{n^2}{4} & \textrm{for }n \equiv 2 \mod 4 
\end{cases}
$$
The dimension of the Hitchin component of 
$ D^2(2,3)$ is
$$
\begin{cases}
 \frac{n^2}{6}+1 & \textrm{for }n\equiv 0\mod 6 \\
 \frac{n^2-1}{6} & \textrm{for }n \equiv \pm  1 \mod 6 \\
 \frac{n^2+2}{6} & \textrm{for }n \equiv 2 \mod 6 \\
 \frac{n^2+3}{6} & \textrm{for }n \equiv 3 \mod 6 
\end{cases}
$$

\begin{Remark}
When we apply these computation to $\mathrm{SL}(3,\mathbb C)$, for $ D^2(2,4)$ and  $D^2(2,3)$ it
yields dimension $2$, but for $ D^2(3,3)$ it has a component of dimension $4$, the same as the dimension
of the canonical component.
It has been proved by Guilloux and Will \cite{GuillouxWill} that this is in fact a whole component of $X(\Gamma,  \mathrm{SL}(3,\mathbb C))$.
\end{Remark}

For Montesinos links, the same arguments yield the following:

\begin{Proposition}
 Let $L\subset S^3$ be a Montesinos link. Assume that either it has at least 4 tangles, 
 or that it has three tangles (eg.~a pretzel link)
 with indices $\alpha_1$, $\alpha_2$, and $\alpha_3$ satisfying 
 $\frac{1}{\alpha_1}+\frac{1}{\alpha_2}+ \frac{1}{\alpha_3}\leq 1$. Then
 $\dim X(S^3\setminus L,\mathrm{SL}(n,\mathbb C))$ grows quadratically with $n$.
\end{Proposition}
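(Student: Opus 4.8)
The plan is to reduce the statement about $X(S^3\setminus L,\mathrm{SL}(n,\mathbb C))$ to an application of Theorem~\ref{Thm:lowerbounddim} after performing suitable Dehn fillings that exhibit Seifert fibered pieces whose base 2-orbifolds have negative Euler characteristic. Recall that a Montesinos link $L$ with tangles of indices $\alpha_1,\dotsc,\alpha_\ell$ is obtained by gluing rational tangles; Dehn filling the exterior $S^3\setminus L$ along appropriate slopes on (a subset of) the link components produces a Seifert fibered space $W$ whose base orbifold $\mathcal B$ is a $2$-sphere (or disc) with cone points of orders related to $\alpha_1,\dotsc,\alpha_\ell$. Under the hypotheses — at least $4$ tangles, or exactly $3$ tangles with $\frac1{\alpha_1}+\frac1{\alpha_2}+\frac1{\alpha_3}\le 1$ — one can choose the fillings so that $\chi(\mathcal B)<0$, i.e.\ $\mathcal B$ is a hyperbolic $2$-orbifold. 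This is exactly the point where the arithmetic hypotheses on the tangle data enter.

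First I would set up, as in the proof of Proposition~\ref{Prop:fig8}, representations $\rho=\operatorname{Sym}^{n-1}\circ\rho_0$ where $\rho_0\colon\pi_1(S^3\setminus L)\to \mathrm{SL}(2,\mathbb R)$ factors through $\pi_1(W)$ and, modulo the central fiber, induces the holonomy of a hyperbolic structure on $\mathcal B$; then $\operatorname{Sym}^{n-1}$ sends the fiber to $\pm\operatorname{Id}$, so $\rho$ descends (projectively) to a Hitchin representation of $\pi_1(\mathcal B)$ into $\mathrm{PGL}(n,\mathbb R)$. By Proposition~\ref{prop:growthHitchin} (together with the analogue for discs with mirror boundary if a partial filling is used, Example~\ref{Exampe:spheres}), the Hitchin component of $\mathcal B$ has dimension $-\chi(\mathcal B)(n^2-1)+O(1)$, hence grows quadratically in $n$ since $\chi(\mathcal B)<0$. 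The complexification of this Hitchin component embeds in $X(W,\mathrm{SL}(n,\mathbb C))$ (after choosing a lift compatible with the fiber going to $(-1)^{n-1}\operatorname{Id}$), and restriction along $S^3\setminus L\twoheadrightarrow \pi_1(W)$ realizes it inside $X(S^3\setminus L,\mathrm{SL}(n,\mathbb C))$. Thus $\dim X(S^3\setminus L,\mathrm{SL}(n,\mathbb C))$ is bounded below by $\dim\mathrm{Hit}(\mathcal B,\mathrm{PGL}(n,\mathbb R))$, which is $\gtrsim c\,n^2$ for an explicit $c=-\chi(\mathcal B)>0$.

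The second ingredient, showing this is the growth rate and not merely a lower bound — or, more modestly, just confirming quadratic growth — follows the upper-bound strategy of Proposition~\ref{Prop:fig8}: the Zariski tangent space to any component is $H^1(\pi_1(S^3\setminus L),\mathrm{Ad}\rho)\cong H^1(S^3\setminus L,\mathrm{Ad}\rho)$, and using the long exact sequence of the pair and Poincar\'e duality (Proposition~\ref{Prop:duality}) together with the boundary estimate $\dim H^1(\partial,\mathrm{Ad}\rho)=O(n)$ on torus boundary components, plus Lemma~\ref{lemma:profSF} (the Seifert projection induces an isomorphism $H^1(\pi_1(W),\mathrm{Ad}\rho)\cong H^1(\pi_1(\mathcal B),\mathrm{Ad}\rho)$ when the fiber maps to $\pm\operatorname{Id}$), one gets $\dim H^1(S^3\setminus L,\mathrm{Ad}\rho)\le \dim H^1(\mathcal B,\mathrm{Ad}\rho)+O(n)=O(n^2)$. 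Since for the statement as written one only needs to assert quadratic growth, it suffices to combine the quadratic lower bound with the quadratic upper bound coming from $\dim X\le \dim H^1(\pi_1(S^3\setminus L),\mathrm{Ad}\rho)= O(n^2)$ at a generic point of the relevant component.

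The main obstacle, and the part requiring genuine care rather than routine work, is the combinatorial/geometric verification that under precisely the stated hypotheses one can choose Dehn fillings of $S^3\setminus L$ producing a Seifert fibered space over a base $2$-orbifold with $\chi<0$: for $3$-tangle Montesinos (pretzel) links the condition $\frac1{\alpha_1}+\frac1{\alpha_2}+\frac1{\alpha_3}\le 1$ is exactly the threshold for the base triangle orbifold $S^2(\alpha_1,\alpha_2,\alpha_3)$ to be Euclidean or hyperbolic (and one needs strict inequality, or a mild additional perturbation, to land in the hyperbolic range), while for $\ell\ge 4$ tangles the base $S^2(\alpha_1,\dotsc,\alpha_\ell)$ automatically has $\chi<0$. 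One must also check that the filling slopes can be chosen so that $\rho_0$ genuinely has image a \emph{Fuchsian} representation of $\pi_1(\mathcal B)$ (not reducible or finite), which is where one invokes the classification of Seifert fillings of Montesinos link exteriors from the literature (e.g.\ \cite{MartelliPetronio}) and the existence of the hyperbolic structure on $\mathcal B$. Once that geometric input is in place, the rest is an application of the machinery already developed, exactly as in Propositions~\ref{Prop:fig8} and~\ref{Prop:Whitehead}.
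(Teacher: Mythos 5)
Your overall strategy is the right one: locate a surjection of $\pi_1(S^3\setminus L)$ onto the fundamental group of a hyperbolic $2$-orbifold whose cone/corner data is governed by the tangle indices, pull back Hitchin representations, and invoke Proposition~\ref{prop:growthHitchin} (or Example~\ref{Exampe:spheres}) for the quadratic dimension count, together with the trivial $O(n^2)$ upper bound. That is exactly the scheme of Propositions~\ref{Prop:fig8} and~\ref{Prop:Whitehead}. The gap is in the step you yourself single out as the main obstacle, and it is not a ``routine combinatorial verification'' but a false claim at the stated level of generality: manifold Dehn fillings of $S^3\setminus L$ do not, in general, produce a Seifert fibered manifold whose base orbifold carries the cone data $(\alpha_1,\dots,\alpha_\ell)$. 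A Montesinos \emph{knot} -- for instance a pretzel knot with three odd indices, which the proposition explicitly allows -- has a hyperbolic exterior with only finitely many exceptional surgeries; those that happen to be small Seifert fibered have bases $S^2(p,q,r)$ dictated by the exceptional surgery classification of that particular knot, not by $(\alpha_1,\alpha_2,\alpha_3)$, and for $\ell\geq 4$ tangles no Dehn filling along a single boundary torus can yield a small Seifert fibered space with four or more exceptional fibers at all. The figure-eight and Whitehead cases succeed because an explicit short list of fillings is available for those two manifolds from \cite{MartelliPetronio}; this does not propagate to arbitrary Montesinos links by fillings.

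What does work, with no filling to choose, is the $\pi$-orbifold $\mathcal O_L=(S^3,L,\pi)$, with underlying space $S^3$ and local group $\mathbb Z/2$ along $L$. It is the defining property of a Montesinos link that the double branched cover $\Sigma_2(L)$ is Seifert fibered over $S^2(\alpha_1,\dots,\alpha_\ell)$; the covering involution is fiber-preserving, so $\mathcal O_L$ is a Seifert fibered \emph{orbifold} with base a $2$-orbifold $\mathcal B$ (a disc with mirror boundary and $\ell$ corner reflectors of orders $\alpha_1,\dots,\alpha_\ell$) satisfying $\chi(\mathcal B)=\tfrac12\,\chi\bigl(S^2(\alpha_1,\dots,\alpha_\ell)\bigr)=1-\tfrac12\sum_i(1-1/\alpha_i)$. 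One then has the surjection
$$
\pi_1(S^3\setminus L)\ \twoheadrightarrow\ \pi_1(\mathcal O_L)\ \twoheadrightarrow\ \pi_1(\mathcal B),
$$
the first arrow killing the squares of the meridians and the second projecting along the Seifert fibration. The hypotheses of the proposition are precisely what make $\chi(\mathcal B)\leq 0$: for $\ell\geq 4$ this is automatic, and for $\ell=3$ it is the condition $\sum 1/\alpha_i\leq 1$. Pulling back a Hitchin representation of $\mathcal B$ along this surjection, and lifting/complexifying exactly as you propose, produces an irreducible family of dimension $-\chi(\mathcal B)\,n^2+O(n)$ by Example~\ref{Exampe:spheres}, whence the quadratic lower bound when $\chi(\mathcal B)<0$. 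Once this surjection replaces your filling step, the rest of your write-up -- injectivity of the pullback on $H^1$ because the $\pi_1$-map is onto, and the $O(n^2)$ upper bound from a finite generating set -- is correct. (One small caveat, which your proposal inherits from the statement itself: in the Euclidean equality cases $\chi(\mathcal B)=0$ the base character variety is only $O(n)$-dimensional, so those cases are not covered by this pullback and would need a separate argument or a strict inequality.)
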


\section{Representations in $\mathbf{SL}(3,\mathbb C)$}
\label{Section:SL3}

In this section we give explicit computations of some varieties of characters in $\mathrm{SL}(3,\mathbb C)$. 
We compute varieties of characters of groups generated by two elements, using
the description of $X(F_ 2,\mathrm{SL}(3,\mathbb{C}))$ due to Lawton~\cite{LawtonJA}, where
$F_2=\langle a, b\mid\rangle$ is the free group of rank two.

Setting coordinates 
\begin{equation}
 \label{eqn:coordinatesSL3}
\begin{array}{lllll}
 x= t_{a}, 	& y= t_{b},   	&  z= t_{ab},  		& r= t_{ab^{-1}}, 	 & \tau=t_{[a,b]}, \\  
 u= t_{a^{-1}},	& v= t_{b^{-1}}, 	& w= t_{(ab)^{-1}},  	& s= t_{a^{-1}b}, &
\end{array} 
\end{equation}
Lawton has proved:

\begin{Theorem}[\cite{LawtonJA}]
\label{Thm:Lawton}
 $X(F_ 2,\mathrm{SL}(3,\mathbb{C}) )$ is isomorphic to the hypersurface of $\mathbb{C}^9$ defined by
$$
\tau^2-P \tau+Q=0
$$
for some polynomials $P,Q\in \mathbb{Z}[x,y,z,u,v,w,r,s]$.
The solutions of $\tau^2-P \tau+Q=0$ are
$t_{[a,b]}$ and $t_{[b,a]}$. Namely $t_{[a,b]}+t_{[b,a]}=P$ and $t_{[a,b]}t_{[b,a]}=Q$.
\end{Theorem}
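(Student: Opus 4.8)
The plan is to realize $X(F_2,\mathrm{SL}(3,\mathbb C))$ as the affine GIT quotient $\mathrm{SL}(3,\mathbb C)^2/\!\!/\mathrm{SL}(3,\mathbb C)$ for simultaneous conjugation, so that $\mathbb C[X(F_2,\mathrm{SL}(3,\mathbb C))]=\mathbb C[\mathrm{SL}(3,\mathbb C)^2]^{\mathrm{SL}(3,\mathbb C)}$, and then to present this invariant ring by nine generators and one relation. First I would record that $R(F_2,\mathrm{SL}(3,\mathbb C))=\mathrm{SL}(3,\mathbb C)^2$ is irreducible of dimension $16$, that a generic pair is an irreducible representation with centralizer the center, hence $X(F_2,\mathrm{SL}(3,\mathbb C))$ is irreducible of dimension $16-8=8$; this is the dimension the hypersurface in $\mathbb C^9$ must have. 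For generation I would invoke Procesi's first fundamental theorem: $\mathbb C[M_3(\mathbb C)^2]^{\mathrm{GL}(3,\mathbb C)}$ is generated by traces of words, and after imposing $\det A=\det B=1$ one gets generation of the $\mathrm{SL}$-invariants by traces of words in $A^{\pm1},B^{\pm1}$. The next step is a reduction: using the polarized Cayley--Hamilton identity for $3\times3$ matrices together with $\det=1$ (so that $A^{-1}=A^2-\mathrm{tr}(A)A+\mathrm{tr}(A^{-1})I$ with $\mathrm{tr}(A^{-1})=\tfrac12(\mathrm{tr}(A)^2-\mathrm{tr}(A^2))$), one brings the trace of every word down to a polynomial in the nine functions of \eqref{eqn:coordinatesSL3}. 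The reduction stalls exactly at the length-four alternating words, whose traces reduce to polynomials in $x,\dots,s$ together with $\tau=t_{[a,b]}$ (equivalently, together with $t_{[b,a]}=t_{[a,b]^{-1}}$); all longer words reduce further. This establishes that the nine listed functions generate $\mathbb C[X(F_2,\mathrm{SL}(3,\mathbb C))]$.

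For the relation, I would apply this reduction to $t_{[a,b]}$ and to $t_{[b,a]}$, expressing each in the quotient ring; since the defining equation will be monic quadratic in $\tau$, the set $\{1,\tau\}$ is an $S$-basis over $S=\mathbb C[x,y,z,u,v,w,r,s]$, so one writes $t_{[b,a]}=\alpha+\beta\tau$ with $\alpha,\beta\in S$. Carrying the polarized Cayley--Hamilton computation through, one checks $\beta=-1$ and $\alpha=:P\in S$, i.e. $t_{[a,b]}+t_{[b,a]}=P$, and one likewise obtains $t_{[a,b]}\,t_{[b,a]}=:Q\in S$; consequently $\tau^2-P\tau+Q=0$ and the two roots are, by construction, $t_{[a,b]}$ and $t_{[b,a]}$. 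The symmetry $a\leftrightarrow b$ of $F_2$ (which swaps $\tau\leftrightarrow t_{[b,a]}$ while permuting $x\leftrightarrow y$, $u\leftrightarrow v$, $r\leftrightarrow s$ and fixing $z,w$) forces $P$ and $Q$ to be invariant under this permutation of $S$, a useful consistency check. Producing the explicit $P,Q\in\mathbb Z[x,\dots,s]$ is the long calculation here.

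It remains to see that the resulting surjection $\phi\colon\mathbb C[x,y,z,u,v,w,r,s,\tau]/(\tau^2-P\tau+Q)\twoheadrightarrow\mathbb C[X(F_2,\mathrm{SL}(3,\mathbb C))]$ is an isomorphism, i.e. that there are no further relations. The hypersurface $H=V(\tau^2-P\tau+Q)\subset\mathbb C^9$ is irreducible of dimension $8$: the defining polynomial is monic of degree two in $\tau$ and is not a square, because the discriminant $P^2-4Q$ is a nonzero non-square in $S$ — which I would verify by exhibiting a single pair $(A,B)$ for which $[A,B]$ is not conjugate to its inverse, so $t_{[a,b]}\neq t_{[b,a]}$. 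Since $X(F_2,\mathrm{SL}(3,\mathbb C))$ is also irreducible of dimension $8$ and $\phi$ is surjective, $\phi$ is an isomorphism provided it induces an isomorphism of function fields, i.e. provided the nine invariants separate generic conjugacy classes. For this I would prove a reconstruction statement: an irreducible pair $(A,B)$ in $\mathrm{SL}(3,\mathbb C)$ is determined up to conjugacy by the nine trace values, obtained by normalizing $A$ and $B$ using irreducibility (e.g. fixing the form of $A$ and pinning down the entries of $B$ from $\mathrm{tr}(AB),\mathrm{tr}(A^2B),\dots$). Hence the generic fibre of $\mathrm{SL}(3,\mathbb C)^2\to H$ is a single closed orbit, $\phi$ is injective, and therefore an isomorphism.

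The two places where genuine work is hidden are: (i) carrying the polarized Cayley--Hamilton reduction far enough both to confirm generation and to extract the explicit $P,Q$ and to verify that $\tau^2-P\tau+Q$ is the \emph{only} relation — this elimination/Gröbner-basis step is the computational heart of \cite{LawtonJA}; and (ii) the reconstruction of an irreducible pair from its nine traces, needed for injectivity of $\phi$. I expect step (i) to be the principal obstacle, since the polynomials $P$ and $Q$ are large and proving that no relation of higher degree survives requires a careful elimination argument rather than a structural one.
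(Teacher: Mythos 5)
The paper does not prove this theorem: it is quoted directly from Lawton's thesis/paper and used as a black box to set up the explicit coordinates for the $\mathrm{SL}(3,\mathbb{C})$ computations in Section~7. So there is no internal proof to compare your proposal against; what you have written is in effect a reconstruction of Lawton's own argument, and at the level of structure your outline (Procesi's first fundamental theorem for generation, Cayley--Hamilton reduction down to the nine coordinates, extraction of the quadratic relation, then a separate argument that no further relations survive) is the right one.

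There is, however, a genuine gap in your final step. You argue that, since $\phi\colon\mathbb{C}[x,\dots,s,\tau]/(\tau^2-P\tau+Q)\to\mathbb{C}[X(F_2,\mathrm{SL}(3,\mathbb C))]$ is surjective and both sides have dimension $8$, it suffices that the nine invariants separate generic conjugacy classes. That would show $X$ maps birationally onto \emph{one} component of $H=V(\tau^2-P\tau+Q)$; it does not by itself rule out a spurious second component of $H$, i.e.\ it does not prove $\ker\phi=0$. Your backup plan (check that $P^2-4Q$ is not a square in $S=\mathbb{Z}[x,\dots,s]$, so that the quadratic is irreducible and $H$ has a single component) is the right idea, but the verification you propose only establishes that $P^2-4Q$ is nonzero, which is weaker. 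The cleanest way to close this, and essentially what Lawton does, is a division argument that needs only nonvanishing: any $f\in\ker\phi$ can be reduced modulo $\tau^2-P\tau+Q$ to a linear form $a+b\tau$ with $a,b\in S$; the projection $X\to\mathbb{C}^8$ onto the first eight coordinates is dominant with generic fibre containing the two characters with $\tau=t_{[a,b]}$ and $\tau=t_{[b,a]}$, and since $P^2-4Q\neq 0$ these are generically distinct, forcing $a=b=0$. This gives $\ker\phi=(\tau^2-P\tau+Q)$ directly, without needing $H$ irreducible a priori or the separation-of-generic-orbits lemma (though the latter remains a pleasant independent sanity check). I would replace your ``isomorphism of function fields'' paragraph with this Euclidean-division argument; the rest of the sketch is sound.
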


The polynomials  $P$ and $Q$ in Theorem~\ref{Thm:Lawton} are:
%
$$
P=xuyv-uyr-xvs-uvz-xyw+rs+xu+yv+zw-3
$$
and
\begin{small}
 \begin{align*}
Q= \,
& 
v{u}^{2}{x}^{2}y+u{v}^{2}{y}^{2}x-ryx{u}^{2}-uxvrw-r{y}^{2}vu+r{x}^{2}
{v}^{2}-rzyxv+s{u}^{2}{y}^{2}-swvuy-svu{x}^{2}
\\
&
-uxysz-s{v}^{2}yx-{u}^{3
}vy+{u}^{2}{v}^{2}w-z{u}^{2}xv-u{v}^{3}x-z{v}^{2}yu-w{x}^{2}uy
-u{y}^{3
}x
-w{y}^{2}vx
\\
&
-{x}^{3}vy+{x}^{2}{y}^{2}z+uw{r}^{2}-2\,{r}^{2}xv+{r}^{2}
zy+surx+svry+wszr
+rz{u}^{2}+ru{v}^{2}+rv{z}^{2}
\\
&
+xr{w}^{2}+rw{y}^{2}+{x
}^{2}yr-2\,{s}^{2}uy+{s}^{2}wv+xz{s}^{2}+{u}^{2}vs
+us{z}^{2}+sz{v}^{2}
+sy{w}^{2}
+sw{x}^{2}
\\ 
&
+sx{y}^{2}+{u}^{2}wy-2\,{w}^{2}vu+xuyv+wuzx+u{y}^{
2}z+x{v}^{2}w+wvzy
+{x}^{2}zv-2\,{z}^{2}yx
\\ 
&
+{r}^{3}+3\,uyr-3\,rvw-3\,rzx
+{s}^{3}-3\,swu+3\,xvs-3\,syz
+{u}^{3}+3\,uvz+{v}^{3}+{w}^{3}
\\
&+3\,xyw+{x
}^{3}+{y}^{3}+{z}^{3}-6\,rs-6\,xu-6\,yv-6\,zw+9 
\end{align*}
\end{small}
%
%
%
This theorem tells that an $\mathrm{SL}(3,\mathbb{C})$-character of
$F_2$ is a polynomial on the
coordinates $x$, $y$, $z$, $u$, $v$, $w$, $r$, $s$, $\tau$. Its
expression can be computed by using basic identities on traces,
including Cayley-Hamilton's formula
\begin{equation}
 \label{eqn:CayleyHamilton}
A^3-\operatorname{tr}(A) A^2+\operatorname{tr}(A^{-1}) A -\operatorname{Id}=0,\qquad
\forall A\in\mathrm{SL}(3,\mathbb C),
 \end{equation}
and  elementary identities on traces.

\subsection{Two dimensional examples}

Let us compute the variety of $\mathrm{SL}(3,\mathbb C) $-characters of some 2-orbifolds.

We start with the turnover $S^2( 3,3,4)$, Figure~\ref{Figure:S2334}. Its fundamental group has presentation 
$$
\pi_1( S^2( 3,3,4) )\cong \langle a, b\mid a^3=b^3= (ab)^4=1\rangle-
$$
We look at the possible eigenvalues of the elements of finite order:
\begin{enumerate}[(a)]
 \item A matrix in  $\mathrm{SL}(3,\mathbb C) $ of order $3$ is either central or has eigenvalues $\{1,\omega,\omega^2\}$, for 
 $\omega\in\mathbb C$ a primitive third root of unity:
  $\omega^3=1$ and 
 $\omega\neq 1$.
 \item A matrix in  $\mathrm{SL}(3,\mathbb C) $ of order $4$ is either trivial or has eigenvalues 
 $$
 \{1,i, -i\}, \ \{-1,i, i\}, \ \{-1, -i, -i\}, \textrm{ or } \ \{1,-1, -1\} .
 $$
\end{enumerate}

\begin{figure}[ht]
\begin{center}
 \begin{tikzpicture}[line join = round, line cap = round, scale=1]
   \begin{scope}[shift={(-4,0)}]
 \draw (-1,0) to[out=5, in=180-5](1,0) to[out=120+5, in=-60-10] (0, 1.73) to[out=240+10, in=60-5] cycle; 
     \draw[red, fill=red] (-1,0) circle(0.04);
     \draw[red, fill=red] (1,0) circle(0.04);
     \draw[red, fill=red] (0,1.73) circle(0.04);
     \draw (-1+.4,0.03) to[out=80, in=0] (-1+.4*.6  , 0+.4*.87); 
      \draw[dashed, very thin] (-1+.4,0.03) to[out=160, in=-100] (-1+.4*.6  , 0+.4*.87); 
     \draw (1-.4,0.03) to[out=110, in=180+0] (1-.4*.6  , 0+.4*.87); 
       \draw[ dashed, very thin] (1-.4,0.03) to[out=20, in=-80] (1-.4*.6  , 0+.4*.87); 
    \draw  (0-.4*.37, 1.73-.4*.87) to[out=-60, in=180+60] (0+.4*.37, 1.73-.4*.87);
     \draw[ dashed, very thin]  (0-.4*.37, 1.73-.4*.87) to[out=30, in=180-30] (0+.4*.37, 1.73-.4*.87);
     \draw (-6/5,0) node{$3 $};
     \draw (6/5,0) node{$3 $};
     \draw (1/5,1.8) node{$4 $};
     \draw (0,-.75) node{$S^2(3,3,4)$};
   \end{scope}
      \begin{scope}[shift={(0,0)}]
  \draw (-1,0) to[out=5, in=180-5] (1,0) to[out=120+5, in=-60-10] (0, 1.73) to[out=240+10, in=60-5] cycle; 
 \draw[black, fill=gray!30!white, opacity=.3] (-1*.9,0+.1*.57)  to[out=5, in=180-5]  (1*.9,0+.1*.57) to[out=120+5, in=-60-10] (0, 1.73*.9+.1*.57)
  to[out=240+10, in=60-5] cycle; 
 \draw[black] (-1*.9,0+.1*.57)  to[out=5, in=180-5]  (1*.9,0+.1*.57) to[out=120+5, in=-60-10] (0, 1.73*.9+.1*.57)
  to[out=240+10, in=60-5] cycle; 
     \draw[red, fill=red] (-1*.95,0+.05*.57) circle(0.06);
     \draw[red, fill=red] (1*.95,0+.05*.57) circle(0.06);
     \draw[red, fill=red] (0 , 1.73*.95+.05*.57 ) circle(0.06);
     \draw (-6/5,0) node{$3 $};
     \draw (6/5,0) node{$3 $};
     \draw (1/5,1.8) node{$4 $};
     \draw (0,-.75) node{$T(3,3,4)$};
   \end{scope}
      \begin{scope}[shift={(5,.8)}]
        \draw (0,0) to[out=120, in=90 ] (-2.5,0);
        \draw (0,0) to[out=-120, in=-90 ] (-2.5,0);
        \draw[fill=gray!30!white, opacity=.3] (-.1,0) to[out=120, in=90 ] (-2.4,0);
        \draw[fill=gray!30!white, opacity=.3] (-.1,0) to[out=-120, in=-90 ] (-2.4,0);
        \draw (-.1,0) to[out=120, in=90 ] (-2.4,0);
        \draw (-.1,0) to[out=-120, in=-90 ] (-2.4,0);
         \draw[red, fill=red] (-1.5,0) circle(0.04);
         \draw[red, fill=red] (-.05,0) circle(0.06);
         \draw (-1.4,0.2) node{$3 $};
         \draw (0.13,0) node{$4 $};
     \draw (-1.25,-1.5) node{$D(3;4)$};
      \end{scope}
\end{tikzpicture}
\end{center}
\caption{The turnover $S^2(3,3,4)$ and two non-orientable quotients: a triangle with mirror boundary $T(3,3,4)$, which  is a Coxeter group,
and a disc with a cone point, mirror boundary, 
and a corner reflector, $D(3;4)$.}
\label{Figure:S2334}
\end{figure}
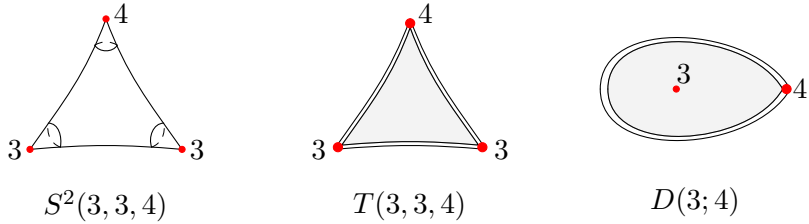

\begin{Remark}
Let $\rho\colon\pi_1( S^2( 3,3,4) )\to  \mathrm{SL}(3,\mathbb C) $ be an irreducible representation. The eigenvalues of $\rho(a)$
and $\rho(b)$ are  $\{1,\omega,\omega^2\}$ and the eigenvalues of $\rho(ab)$ are 
$\{1,i, -i\}$, $\{-1,i, i\}$, $\{-1, -i, -i\}$,  or $\{1,-1, -1\}$. Furthermore,
using that for an irreducible representation
$$
\dim_{[\rho]} X( S^2( 3,3,4) , \mathrm{SL}(3,\mathbb C) )=
- \widetilde \chi(S^2(3,3,4),\mathrm{Ad}\rho)
,
$$
and  $\dim \mathfrak{g}^{\rho(ab)} $ depends on whether $\rho(ab)$ has repeated eigenvalues or not, we have:
$$
\dim_{[\rho]} X( S^2( 3,3,4) , \mathrm{SL}(3,\mathbb C) )=
\begin{cases} 2 \textrm{ if the eigenvalues of }\rho(ab)\textrm{ are } \{1,i, -i\} ,\\
 0\textrm{ otherwise.}
\end{cases}
$$ 
\end{Remark}

Using the coordinates \eqref{eqn:coordinatesSL3}, 
the two dimensional components of the variety of characters are obtained by setting $x=y=u=v=0$ and $z=w=1$,
because this fixes the eigenvalues of $\rho(a)$, $\rho(b)$ and $\rho(ab)$. By replacing those values in 
Theorem~\ref{Thm:Lawton}, we obtain:

 \begin{Example}
\label{Example:S2334}
 The component of $X(S^2(3,3,4),  \mathrm{SL}(3,\mathbb{C}))$ that has positive dimension is
 isomorphic to
 $$
\{(r,s,\tau)\in\mathbb C^3 \mid \tau^2-( rs-2 ) \tau + ( {r}^{3}+{s}^{3}-5\,rs+5 ) =0 \} .
 $$
\end{Example}

For the components  that are isolated points, 
we set, again $x=y=u=v=0$ and, 
according to the eigenvalues of $\rho(ab)$:
$$
(z,w)= (-1+2\, i, -1-2\, i), \quad  (z,w)= (-1-2\, i, -1+2\, i), \quad\textrm{ or } \quad (z,w)= (-1, -1).
$$
Notice that  this does not fix the conjugacy class of $\rho(ab)$, because it has eigenvalues of multiplicity $2$. We need to 
find further restrictions on the traces: by 
taking traces on the relation $(ab)^{-2} a=(ab)^2a$, using \eqref{eqn:CayleyHamilton} and replacing $x=y=u=v=0$ we get
\begin{equation}
 \label{eqn:sr}
s= z s\qquad\textrm{ and }\quad r=wr .  
\end{equation}
For the 2-dimensional component it holds $z=w=1$, hence 
\eqref{eqn:sr} do not give any further information. For the isolated components, \eqref{eqn:sr} yield
$r=s=0$. For these values $P^2-4Q=0$ and therefore there is a unique value for $\tau$. Hence we get:

 \begin{Example}
\label{Example:S2334isolated}
There are three components of $X(S^2(3,3,4),  \mathrm{SL}(3,\mathbb{C}))$ that contain irreducible representations and are 
zero-dimensional. They have coordinates $x=y=u=v=r=s=0$ and
$$
(z,w, \tau)= (-1+2\, i, -1-2\, i, 1), \quad   (-1-2\, i, -1+2\, i, 1), \quad\textrm{ or } \quad  (-1, -1, -1).
$$
\end{Example}

Next we describe the Hitchin component $\operatorname{Hit}(S^2(3,3,4),  \mathrm{PSL}(3,\mathbb{R})) $.
Notice that the representation $\mathrm{Sym}^2\colon \mathrm{SL}(2,\mathbb R)\to \mathrm{SL}(3,\mathbb R) $ factors through
$\mathrm{PSL}(2,\mathbb R)$, hence $\operatorname{Hit}(S^2(3,3,4),  \mathrm{PSL}(3,\mathbb{R})) $ lifts to a component
of $X(S^2(3,3,4),  \mathrm{SL}(3,\mathbb{R}))$. 
We consider the 2-dimensional component of Example~\ref{Example:S2334} and we require that its coordinates  are real:
 $$
\{(r,s,\tau)\in\mathbb R^3 \mid \tau^2-( rs-2 ) \tau + ( {r}^{3}+{s}^{3}-5\,rs+5 ) =0 \} .
 $$
 By looking at the discriminant of this equation, this real set has
has three components. 
One of them is the isolated point $(r,s,\tau)=(2,2,1)$, which is in fact a singular point of the defining complex surface.
The other two components are homeomorphic to $\mathbb{R}^2$, one of them is the lift of the Hitchin component. By looking at the
symmetric power of the holonomy, we deduce:

 \begin{Example}
\label{Example:S2334Hitchin}
The Hitchin component  $\operatorname{Hit}(S^2(3,3,4),  \mathrm{PSL}(3,\mathbb{R})) $
is isomorphic to the component of 
 $$
\{(r,s,\tau)\in\mathbb R^3 \mid \tau^2-( rs-2 ) \tau + ( {r}^{3}+{s}^{3}-5\,rs+5 ) =0 \} 
 $$
 that contains
the point of coordinates $(r,s,\tau)=(2+2\sqrt{2},2+2\sqrt{2},5+4\sqrt{2})$.
\end{Example}

Next we consider the turnover $T(3,3,4)$, Figure~\ref{Figure:S2334}. Its fundamental group is an extension of $\pi_1(S^2(3,3,4))$
by an involution $\sigma$, that satisfies 
\begin{equation}
\label{eqn:relssigma}
\sigma^2=1, \qquad \sigma a\sigma= a^{-1},\qquad\textrm{ and }\qquad\sigma b\sigma= b^{-1}.
\end{equation}
Denote by $\sigma_*$ the involution induced on the fundamental group: 
$$
\qquad\qquad\sigma_*(\gamma)=\sigma\gamma\sigma^{-1},\qquad\qquad\forall\gamma\in\pi_1( S^2(3,3,4)).
$$
This involution $\sigma_*$
permutes the 
coordinates $r$ and $s$ and preserves $\tau$, the trace of the commutator. Therefore, the component of
$X(S^2(3,3,4),  \mathrm{SL}(3,\mathbb{C}))^{\sigma_*}$ 
of positive dimension is
$$
 \{(r,\tau)\in\mathbb C^2 \mid \tau^2-( r^2-2 ) \tau + ( 2{r}^{3}-5\,r^2+5 ) =0 \} .
$$
This curve is singular   precisely at $(r,\tau)=(2,1)$, and the singularity is an ordinary double point
(a self intersection with transverse tangents).

\begin{Lemma}
\label{Lemma:T2334}
 The restriction map
 $$
 X(T(3,3,4),  \mathrm{SL}(3,\mathbb{C})) \to X(S^2(3,3,4),  \mathrm{SL}(3,\mathbb{C}))^{\sigma_*}
 $$
 desingularizes the curve $\tau^2-( r^2-2 ) \tau + ( 2{r}^{3}-5\,r^2+5 ) =0 $. 
 
The component of $ X(T(3,3,4),  \mathrm{SL}(3,\mathbb{C}))$ of positive dimension  that contains irreducible representations is 
 this desingularization.
\end{Lemma}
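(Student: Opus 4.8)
The plan is to read the lemma as the statement that, on the relevant component, the restriction morphism is the normalization of the nodal curve
$C=\{(r,\tau)\in\mathbb C^2\mid \tau^2-(r^2-2)\tau+2r^3-5r^2+5=0\}$, i.e.\ it is an isomorphism away from the node $(r,\tau)=(2,1)$ and is two–to–one over it. Recall $\pi_1(T(3,3,4))=\langle a,b,\sigma\mid a^3=b^3=(ab)^4=\sigma^2=1,\ \sigma a\sigma=a^{-1},\ \sigma b\sigma=b^{-1}\rangle$, with $\pi_1(S^2(3,3,4))=\langle a,b\rangle$ of index $2$ and $\sigma_*=$ conjugation by $\sigma$ acting on $X(S^2(3,3,4),\mathrm{SL}(3,\mathbb C))$; as noted before the lemma, $\sigma_*$ swaps $r\leftrightarrow s$ and fixes $\tau$, and the positive–dimensional part of the fixed locus is $C$, whose only singularity is the stated ordinary node. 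First I would record two preliminary facts: $C$ is irreducible (the $\tau$–discriminant $r^4-8r^3+16r^2-16$ is not a perfect square), and the representation $\rho_0$ of $\pi_1(S^2(3,3,4))$ sitting over the node is \emph{reducible} — indeed $X(S^2(3,3,4),\mathrm{SL}(3,\mathbb C))$ is smooth at every irreducible character (Theorem~\ref{thm:dim2hyp}) but singular at $(2,2,1)$; an elementary eigenvalue/trace computation then identifies $\rho_0\cong \mathbf 1\oplus\rho_2$, where $\rho_2$ is the unique irreducible two–dimensional representation with $\operatorname{tr}\rho_2(a)=\operatorname{tr}\rho_2(b)=-1$, $\operatorname{tr}\rho_2(ab)=0$.

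\textbf{Over the smooth locus.} Let $C^{\mathrm{irr}}=C\setminus\{(2,1)\}$, which equals the locus of irreducible $\rho_0$. For $[\rho_0]\in C^{\mathrm{irr}}$, since $\rho_0\circ\sigma_*\cong\rho_0$ there is, by Schur's lemma, a matrix $C_0$, unique up to scalar, with $C_0\rho_0(a)C_0^{-1}=\rho_0(a)^{-1}$ and $C_0\rho_0(b)C_0^{-1}=\rho_0(b)^{-1}$; then $C_0^2=\mu I$ and $\mu^3=\det(\mu I)=\det(C_0^2)=(\det C_0)^2$, so there is a \emph{unique} scalar $\lambda=\mu(\det C_0)^{-1}$ with $\lambda C_0\in\mathrm{SL}(3,\mathbb C)$ and $(\lambda C_0)^2=I$. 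Setting $\rho(\sigma)=\lambda C_0$ extends $\rho_0$; as $\rho_0$ is good its centralizer (the center $\mathbb Z/3$) commutes with $C_0$, so the extension is well defined as a point of $X(T(3,3,4),\mathrm{SL}(3,\mathbb C))$, and it is the only one over $[\rho_0]$. Moreover this $\rho$ is good with good restriction to the orientation cover, so by Proposition~\ref{Proposition:rhogood} and Corollary~\ref{Corollary:double} it is a smooth point of dimension $\tfrac12\dim_{[\rho]}X(S^2(3,3,4),\mathrm{SL}(3,\mathbb C))=1$. Let $Y$ be the component of $X(T(3,3,4),\mathrm{SL}(3,\mathbb C))$ through these characters; the restriction $Y\to C$ is quasi–finite, birational and bijective over $C^{\mathrm{irr}}$, so by Zariski's Main Theorem it induces an isomorphism $Y|_{C^{\mathrm{irr}}}\xrightarrow{\ \sim\ }C^{\mathrm{irr}}$.

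\textbf{Over the node.} Now I would classify the characters of $\pi_1(T(3,3,4))$ lying over $(2,1)$. Writing $\rho_0=\mathbf 1\oplus\rho_2$ in $1\oplus2$ block form, any matrix intertwining $\rho_0$ with $\rho_0\circ\sigma_*$ is block diagonal (the two blocks are non–isomorphic irreducibles), of the form $\operatorname{diag}(c,\lambda D_0)$ with $D_0$ the unique intertwiner of $\rho_2$; the conditions $\rho(\sigma)^2=I$ and $\det\rho(\sigma)=1$ fix $c$ and leave exactly two values of $\lambda$, giving two characters $[\rho^\pm]$, which are distinct because the centralizer $\cong\mathbb C^*$ of $\rho_0$ acts trivially on these block matrices. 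Each $\rho^\pm$ is polystable ($\rho^\pm=\mathbf 1'\oplus\rho_2'$, both summands irreducible as $\pi_1(T)$–modules). For smoothness I would run the Luna slice argument: first $H^2(\pi_1(T),\operatorname{Ad}\rho^\pm)=0$ by Lemma~\ref{Lemma:orientation} (it reduces to $\dim H^0(S^2(3,3,4),\operatorname{Ad}\rho_0)-\dim H^0(T(3,3,4),\operatorname{Ad}\rho^\pm)=1-1$), so $\rho^\pm$ is a smooth point of $R(T(3,3,4),\mathrm{SL}(3,\mathbb C))$; decomposing $\mathfrak{sl}_3=\mathfrak g_{-3}\oplus\mathfrak g_0\oplus\mathfrak g_{+3}$ under $\mathrm{Ad}$ of the stabilizer $\mathbb C^*$, the weight–$0$ part of $H^1(\pi_1(T),\operatorname{Ad}\rho^\pm)$ is $H^1(\pi_1(T),\mathbb C)\oplus H^1(\pi_1(T),\operatorname{Ad}\bar\rho_2')$, which vanishes ($\pi_1(T(3,3,4))^{\mathrm{ab}}$ is torsion; and $\bar\rho_2'$ is rigid, its restriction to $\pi_1(S^2(3,3,4))$ having finite image and $X(S^2(3,3,4),\mathrm{PGL}(2,\mathbb C))$ being $0$–dimensional there), while the weight $\pm3$ parts are one–dimensional each (again by Lemma~\ref{Lemma:orientation}, via $\dim H^1(S^2(3,3,4),\mathbb C^2_{\rho_2})=2$). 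Hence the slice is $\mathbb C^2$ with opposite $\mathbb C^*$–weights, its quotient is $\mathbb C$, and $X(T(3,3,4),\mathrm{SL}(3,\mathbb C))$ is smooth of dimension $1$ at each $[\rho^\pm]$.

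\textbf{Conclusion and the main obstacle.} To finish, note the two local analytic branches of $C$ at the node (complex conjugate lines, since the tangent cone $v^2-4uv+6u^2$ has non–real slopes) pull back under $Y|_{C^{\mathrm{irr}}}\cong C^{\mathrm{irr}}$ to two punctured arcs in $Y$, whose closures add points of $Y$ over $(2,1)$; these must lie among $\{[\rho^+],[\rho^-]\}$, they are smooth points of $Y$, and they are distinct — otherwise $Y$ would have two analytic branches at a single smooth point, contradicting that $Y\to C$ is bijective over $C^{\mathrm{irr}}$. Therefore $Y\to C$ is an isomorphism over $C^{\mathrm{irr}}$ and two–to–one over the node, i.e.\ it is the normalization, which resolves the ordinary double point; and $Y$, containing irreducible characters by construction, is exactly the positive–dimensional component of the statement. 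The hard part is the analysis over the node in the third paragraph: pinning down that the fiber has exactly two points and proving smoothness of $X(T(3,3,4),\mathrm{SL}(3,\mathbb C))$ there, since the general smoothness input (Proposition~\ref{Proposition:rhogood}) is unavailable for these reducible representations and one must instead go through the polystable slice computation.
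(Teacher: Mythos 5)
Your proof reaches the right conclusion, and its core skeleton — count the fibre (one point over each irreducible character, two points over the node), plus a characterization of which characters are reducible — is exactly the paper's. The difference is in how the two arguments turn the fibre count into the desingularization claim.

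The paper's proof is shorter and stops after establishing the three fibre facts (a), (b), (c), concluding ``because the singularity is an ordinary double point.'' The implicit argument is that a node has $\delta$-invariant~$1$: the only rings between $\mathcal O(C)$ and its normalization $\mathcal O(\tilde C)$ are those two, so any irreducible curve mapping finitely and birationally to $C$ with two points in the fibre over the node must already \emph{be} $\tilde C$. No smoothness of the component $Y \subset X(T(3,3,4),\mathrm{SL}(3,\mathbb C))$ at the two preimages of the node has to be proved separately; it is forced by the fibre count. You correctly observed that Proposition~\ref{Proposition:rhogood} does not apply at those two points, since the extensions $\rho^\pm$ of $\mathbf 1\oplus\rho_2$ are still reducible as $\pi_1(T(3,3,4))$-representations — but then you supply a substitute smoothness argument (polystability of $\rho^\pm$, $H^2 = 0$ so $R$ is smooth, a Luna slice isomorphic to $\mathbb C^2$ with opposite $\mathbb C^*$-weights whose GIT quotient is $\mathbb C$), and only afterwards identify $Y$ with the normalization via the branch-separation argument. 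That is a genuinely different and more labor-intensive route. It buys you something: an explicit proof that the character variety of $T(3,3,4)$ is smooth at these reducible characters, which the paper's $\delta$-invariant argument yields only a posteriori. The cost is that your argument leans on the Luna slice theorem and several twisted-cohomology computations (the claims $H^1(\pi_1T,\mathrm{Ad}\bar\rho_2')=0$, $\dim H^1(\pi_1T,\mathfrak g_{\pm3})=1$, etc.; these check out, modulo the small point that Lemma~\ref{Lemma:orientation} as stated pairs $\mathfrak g_{+3}$ with $\mathfrak g_{-3}$ rather than each with itself, so the clean conclusion is the sum $\dim H^1(\mathfrak g_{3})+\dim H^1(\mathfrak g_{-3})=2$). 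You also quietly take for granted the paper's item~(b) — that $(2,1)$ is the \emph{only} reducible character on $C$ — which the paper proves via the $\tau$-discriminant; you should make that step explicit.
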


\begin{proof}
We look at the fibre of the restriction map. We show that:
\begin{enumerate}[(a)]
 \item  the fibre of an irreducible character consists precisely of one point,
\item $(r,\tau)=(2,1)$ is the only reducible character in $X(S^2(3,3,4),  \mathrm{SL}(3,\mathbb{C}))^{\sigma_*}$, and 
\item the fibre of $(r,\tau)=(2,1)$ consists precisely of  $2$ points.
\end{enumerate}
Those three items prove the lemma, because the singularity is an ordinary double point.

We prove (a):
For an irreducible representation $\rho$   of $\pi_1(S^2(3,3,4))$ in $  \mathrm{SL}(3,\mathbb{C}) $ 
we show that there is a unique choice of $A\in \mathrm{SL}(3,\mathbb{C})$
such that mapping $\sigma$ to $A$ defines an extension of $\rho$ to $\pi_1(T(3,3,4))$.
By irreducibility of  $\rho$, as $\rho$ and $\rho\circ\sigma_*$ have the same character,
there exists a matrix 
$A\in  \mathrm{SL}(3,\mathbb{C})$ such that $A\rho A^{-1}=\rho\circ\sigma_*$.
Furthermore, $A$ is unique up multiplication by a matrix in the center 
(the center of  $\mathrm{SL}(3,\mathbb{C})$ 
is $\{\mathrm{Id},\omega \mathrm{Id},\omega^2\mathrm{Id} \}$). As $\sigma$ is an involution,  $A^2$ commutes with $\rho$, that is irreducible,
and therefore 
$A^2\in \{\mathrm{Id},\omega \mathrm{Id},\omega^2\mathrm{Id} \}$. Hence among $A$, $\omega A$ 
or $\omega^2 A$ there is a unique choice whose square is the identity. Thus there is a unique choice that satisfies 
\eqref{eqn:relssigma}.

To prove (b) we use that for a reducible representation the trace  of a commutator and its inverse are the same, 
thus it must be a zero of the discriminant of the
defining equation. This discriminant is 
\begin{equation}
 \label{eqn:disc}
(r-2)^2(r^2-4r-4) 
\end{equation}
and its zeros are $r=2$ and $r=2\pm 2\sqrt{2}$.  The value $r=2\pm 2\sqrt{2}$ corresponds to the symmetrization of the 
Fuchsian holonomy (and its Galois conjugate), hence it is irreducible. Thus $r=2$ is the only reducible character.

To prove (c), we check first that $r=2$ corresponds to a representation $\rho=\rho_2\oplus \mathrm{Id}$,
for $\rho_2\colon\pi_1( S^2(3,3,4 ))\to  \mathrm{SL}(2,\mathbb{C}) $
an irreducible representation
(for a reducible representation in $\mathrm{SL}(2,\mathbb{C})$
the trace of any commutator is $2$ and, as $\tau=1$, 
$\operatorname{trace}(\rho_2([a,b]))=\tau-1\neq 2$). 
Then, reproducing the argument of the irreducible case (a), there exist a matrix
$A\in  \mathrm{SL}(2,\mathbb{C})$ such that $A\rho_2 A^{-1}=\rho_2\circ\sigma_*$ 
and, by irreducibility of $\rho_2$, $A$ is unique up to sign. Furthermore, again by irreducibility of $\rho_2$,
$A^2$ is central in $\mathrm{SL}(2,\mathbb{C}) $, namely 
$A^2=\pm \operatorname{Id}$. 
The case $A^2=+ \operatorname{Id}$ does not occur, because this 
would imply that $A=\pm \operatorname{Id}$ and with 
\eqref{eqn:relssigma} this would yield that $\rho_2$ itself would be central, but it is irreducible. Therefore
%
%
$A^2=- \operatorname{Id}$ and the choices 
for $\rho(\sigma)$  are
$$
\rho(\sigma)=\begin{pmatrix}
              \pm  i\, A & \begin{matrix}
                       0\\ 0
                      \end{matrix}
 \\
              \begin{matrix}
                       0 & 0
                      \end{matrix}
               & -1
             \end{pmatrix}. 
$$
This concludes the proof of (c) and of the lemma.
\end{proof}

Next we describe the Hitchin component of the turnover. We look therefore at the real points
$$
 \{(r,\tau)\in\mathbb R^2 \mid \tau^2-( r^2-2 ) \tau + ( 2{r}^{3}-5\,r^2+5 ) =0 \} .
$$
By looking at the discriminant \eqref{eqn:disc}, the set of real points has three components, 
the isolated point $(r,\tau)=(2,1)$ an two lines, defined by $r\leq  2-2\sqrt{2}$ and 
$r\geq  2+2\sqrt{2}$. As $r=2+2\sqrt{2}$ is the symmetric power of the holonomy:

\begin{Example}
 \label{Example:T2334Hitchin}
The Hitchin component  $\operatorname{Hit}(T(3,3,4),  \mathrm{PSL}(3,\mathbb{R})) $
is isomorphic (via the restriction to $S^2(3,4,4)$) to the line
$$
 \{(r,\tau)\in\mathbb R^2 \mid \tau^2-( r^2-2 ) \tau + ( 2{r}^{3}-5\,r^2+5 ) =0,\ r\geq 2+2\sqrt{2}  \} .
$$
\end{Example}

Finally, we consider $D(3;4)$, the 
 disc with a cone point of order $3$, mirror boundary, 
and a corner reflector of order $4$ (with isotropy group the dihedral group of 8 elements), Figure~\ref{Figure:S2334}.
It is again the quotient of $S^2(3,3,4)$ by an involution. This involution maps $a$ to $b^{-1}$ and $b$
to $a^{-1}$. 
Therefore it fixes the coordinates $r$ and $s$ but permutes the trace of $[a,b]$ with the trace
of its inverse $[b,a]$. 
The  fixed point set of this involution in $X(S^2(3,3,4),  \mathrm{SL}(3,\mathbb{C}))$ 
is obtained by looking 
at the zero locus of the discriminant of the variable $\tau$:
$$
\{(r,s)\in\mathbb C^2 \mid {r}^{2}{s}^{2}-4\,{r}^{3}-4\,{s}^{3}+16\,rs-16  =0 \} .
$$
With the very same discussion as in Lemma~\ref{Lemma:T2334}:

\begin{Lemma}
 The restriction map
 $$
 X(D(3;4),  \mathrm{SL}(3,\mathbb{C})) \to X(S^2(3,3,4),  \mathrm{SL}(3,\mathbb{C}))^{\sigma'}
 $$
 desingularizes the curve $ {r}^{2}{s}^{2}-4\,{r}^{3}-4\,{s}^{3}+16\,rs-16   =0 $. 
 
The component of $ X( D(3;4) ,  \mathrm{SL}(3,\mathbb{C}))$ of positive dimension  that contains irreducible characters is 
 this desingularization.
\end{Lemma}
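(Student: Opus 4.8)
The plan is to reproduce the proof of Lemma~\ref{Lemma:T2334} almost verbatim, with the involution $\sigma_*$ replaced by the involution $\sigma'_*$ induced by $a\mapsto b^{-1}$, $b\mapsto a^{-1}$, and to analyse the fibres of the restriction map. First I would record the group theory: since $S^2(3,3,4)$ is the orientation double cover of $D(3;4)$, the group $\pi_1(D(3;4))$ is generated by $\pi_1(S^2(3,3,4))$ together with a reflection $\sigma'$ satisfying ${\sigma'}^2=1$, $\sigma' a\sigma'=b^{-1}$, $\sigma' b\sigma'=a^{-1}$. On the coordinates~\eqref{eqn:coordinatesSL3} the induced involution $\sigma'_*$ fixes $r$ and $s$ and interchanges $\tau=t_{[a,b]}$ with $t_{[b,a]}$ (and interchanges $x\leftrightarrow v$, $y\leftrightarrow u$, $z\leftrightarrow w$); hence on the two-dimensional component of $X(S^2(3,3,4),\mathrm{SL}(3,\mathbb C))$, where $x=y=u=v=0$, $z=w=1$ and $P=rs-2$, $Q=r^3+s^3-5rs+5$ by Example~\ref{Example:S2334}, the fixed locus $X(S^2(3,3,4),\mathrm{SL}(3,\mathbb C))^{\sigma'}$ (the fixed-point set of $\sigma'_*$) is exactly the plane curve
$$
C\colon\quad P^2-4Q \;=\; r^2s^2-4r^3-4s^3+16rs-16 \;=\;0 ,
$$
with $\tau=P/2$ forced; this is the curve in the statement.

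Second, I would show that the restriction map $\mathrm{Res}\colon X(D(3;4),\mathrm{SL}(3,\mathbb C))\to C$ is a bijection over the irreducible locus, exactly as in part~(a) of Lemma~\ref{Lemma:T2334}: for an irreducible $\rho$ of $\pi_1(S^2(3,3,4))$ with $[\rho]=[\rho\circ\sigma'_*]$ there is $A\in\mathrm{SL}(3,\mathbb C)$ with $A\rho A^{-1}=\rho\circ\sigma'_*$, unique up to the center $\{\operatorname{Id},\omega\operatorname{Id},\omega^2\operatorname{Id}\}$; as $\sigma'$ is an involution $A^2$ centralizes $\rho$, hence is central, so exactly one of $A,\omega A,\omega^2 A$ squares to $\operatorname{Id}$ and yields the unique extension $\sigma'\mapsto A$. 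By Proposition~\ref{Proposition:rhogood} (the restriction of such a $\rho$ to $\widetilde{D(3;4)}=S^2(3,3,4)$ being irreducible, hence good) the component of $X(D(3;4),\mathrm{SL}(3,\mathbb C))$ through the irreducible characters is smooth, and by Corollary~\ref{Corollary:double} it has dimension $\tfrac12\dim X(S^2(3,3,4),\mathrm{SL}(3,\mathbb C))=1$; since irreducible characters are dense in $C$ (the reducible ones being finitely many), $\mathrm{Res}$ restricted to this component is a finite birational morphism onto $C$, so it factors through the normalization $\widetilde C\to C$. It remains to check that $\mathrm{Res}$ \emph{is} $\widetilde C\to C$, i.e.\ that it resolves the singularities of $C$.

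Third, I would compute $\operatorname{Sing}(C)$ by solving $f=f_r=f_s=0$ for $f=r^2s^2-4r^3-4s^3+16rs-16$. Writing $f_r-f_s=(s-r)\bigl(2rs+12(r+s)+16\bigr)$ one splits into the case $r=s$ (giving $2r(r-2)(r-4)=0$, of which only $(2,2)$ lies on $C$) and the case $rs+6(r+s)+8=0$, which together with $f=f_r+f_s=0$ and the substitution $p=r+s$, $q=rs$ forces $p=-2$, $q=4$, i.e.\ $(r,s)=(-1\pm i\sqrt3,\,-1\mp i\sqrt3)$. So $C$ has exactly three singular points, each an ordinary double point (the Hessian of $f$ there has determinant $576\neq0$), and $C$ is an irreducible (rational) plane quartic. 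Then, as in parts~(b) and~(c) of Lemma~\ref{Lemma:T2334}, one checks that each node is a reducible character --- e.g.\ $(2,2)$ is the character of $\rho_2\oplus\operatorname{Id}$ for an irreducible $\rho_2\colon\pi_1(S^2(3,3,4))\to\mathrm{SL}(2,\mathbb C)$ --- and that $\mathrm{Res}$ has a two-point fibre over it: there is $A\in\mathrm{SL}(2,\mathbb C)$ with $A\rho_2A^{-1}=\rho_2\circ\sigma'_*$, unique up to sign, necessarily with $A^2=-\operatorname{Id}$ (since $A^2=+\operatorname{Id}$ would make $\rho_2$ central), so $\rho(\sigma')$ takes the two values $\pm\operatorname{diag}(iA,-1)$, separating the two local branches. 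Hence on the component through the irreducible characters $\mathrm{Res}$ is precisely the normalization of $C$, which is the desingularization asserted; and $\widetilde C$ is irreducible, so this is a single component.

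The main obstacle will be the explicit local analysis of the quartic $C$: confirming that $\operatorname{Sing}(C)$ is exactly those three points with no hidden cusp or tacnode, verifying irreducibility of $C$ so that ``this desingularization'' is one component rather than several, and the $\mathrm{SL}(2)$-versus-$\mathrm{SL}(3)$ bookkeeping of reducible characters at each node needed to count the fibres correctly. The abstract intertwiner argument and the Poincar\'e-duality/Mayer--Vietoris inputs are routine once Lemma~\ref{Lemma:T2334} is in hand; the genuinely new labour is this elementary but unavoidable computation on $r^2s^2-4r^3-4s^3+16rs-16=0$.
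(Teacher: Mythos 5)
Your proposal is correct and follows exactly the route the paper intends: the paper's entire proof of this lemma is the single line ``With the very same discussion as in Lemma~\ref{Lemma:T2334},'' and you spell out what that discussion amounts to for $D(3;4)$. The genuinely new content you supply --- and the place where the analogy is not literally ``verbatim'' --- is the singular-locus computation: unlike the $T(3,3,4)$ curve, which has a single node at $(r,\tau)=(2,1)$, the quartic $r^2s^2-4r^3-4s^3+16rs-16=0$ has \emph{three} nodes, $(2,2)$ and the Galois-conjugate pair $(-1\pm i\sqrt3,\,-1\mp i\sqrt3)$, all ordinary (Hessian determinant $576$). Your flagged ``obstacles'' do check out, but let me record why, since they are not entirely routine. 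Irreducibility of $f$: looking at $F|_{t=0}=r^2s^2$ in the projective closure rules out both a linear factor (its restriction to $t=0$ would have to be $r$ or $s$, which cannot produce both $-4r^3t$ and $-4s^3t$) and a conic-times-conic factorization (either $(r^2,s^2)$ at infinity, which forces all $t^2$-coefficients to vanish and kills the $-16t^4$ term, or $(rs,rs)$ at infinity, which cannot reproduce $-4(r^3+s^3)t$). Reducibility of the complex nodes: the representation there is $\rho_2\oplus\rho_1$ with $\rho_1$ a \emph{nontrivial} $\mathbb Z/3$-character ($\rho_1(a)=\omega$, $\rho_1(b)=\omega^2$) and $\rho_2$ a $\mathrm{GL}(2)$-representation, not $\mathrm{SL}(2)$; a direct trace computation confirms $(t_{ab^{-1}},t_{a^{-1}b})=(-1\mp i\sqrt3,-1\pm i\sqrt3)$. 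Consequently the two-point fibre count needs a slight variant of your $(2,2)$ argument: the intertwiner $A$ is block-diagonal $\operatorname{diag}(\lambda B_0,c)$ with $B_0^2=\mu\operatorname{Id}_2$ (Schur, using $\sigma'^2_*=\mathrm{id}$), and imposing $A^2=\operatorname{Id}$ and $\det A=1$ leaves exactly the two solutions $\lambda^2=1/\mu$, so the fibre is again two points. With these checks your argument is complete and matches the paper's intent.
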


The set of real points of $ {r}^{2}{s}^{2}-4\,{r}^{3}-4\,{s}^{3}+16\,rs-16   =0 $ is the discriminant locus of
the set of real points for $S^2(3,3,4)$:  
it has three components, the isolated point $r=s=2$ and two unbounded curves. Thus, similarly to $S^2(3,3,4)$  we have:

\begin{Example}
 \label{Example:D234Hitchin}
The Hitchin component  $\operatorname{Hit}(D(3;4),  \mathrm{PSL}(3,\mathbb{R})) $
is isomorphic (via the restriction to $S^2(3,4,4)$) to the component of 
$$
 \{(r,s)\in\mathbb R^2 \mid    {r}^{2}{s}^{2}-4\,{r}^{3}-4\,{s}^{3}+16\,rs-16   =0   \} 
$$
that contains $r=s=2+2\sqrt{2} $.
\end{Example}

\subsection{A family of 3-dimensional orbifolds}
 
Consider the family of examples with underlying space $S^3$ (perhaps punctured twice) and three 
branching arcs as in Figure~\ref{Figure:Hopf},
with branching labels $n_0$, $n_1$ and $n_2\geq 2$. Whether a singular vertex
is included or not depends on the orders of the adjacent branching locus: if $1/n_0+2/n_1> 1$, then the three edges meet at a vertex,
otherwise the vertex is removed, and similarly for the other vertex. In other words, 
we consider the twice punctured 3-sphere, and we add the vertices needed for the orbifold to be irreducible.

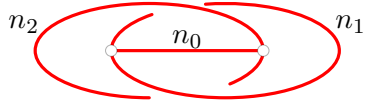
\begin{figure}[h]
\begin{center}

\begin{tikzpicture}[line join = round, line cap = round, scale=.5]
\draw[red, very thick] (-2,0)--(2,0);
\draw(2,0)[red, very thick] arc[x radius = 3cm, y radius = 1.25cm, start angle=0, end angle= 270];
\draw(2,0)[red, very thick] arc[x radius = 3cm, y radius = 1.25cm, start angle=0, end angle= -45];

\draw(-2,0)[red, very thick] arc[x radius = 3cm, y radius = 1.25cm, start angle=180, end angle= 360+100];
\draw(-2,0)[red, very thick] arc[x radius = 3cm, y radius = 1.25cm, start angle=180, end angle= 130];

\draw (0,0.3) node{$n_0$};
\draw (4.3,0.7) node{$n_1$};
\draw (-4.3,0.7) node{$n_2$};

\fill[white]  (2,0) circle (.15);
\draw[thin, black, opacity=.3] (2,0) circle (.15);
\fill[white]  (-2,0) circle (.15);
\draw[thin, black, opacity=.3] (-2,0) circle (.15);
\end{tikzpicture}

\end{center}
\caption{The underlying space of the orbifold
is a 3-sphere, possibly punctured once or  twice, according to the branching indices,  the branching locus consists of three arcs as in the picture.}
\label{Figure:Hopf}
\end{figure}

Let $\mathcal O^3$ denote this orbifold.
The geometry of $\mathcal O^3$ depends on the indices:
\begin{enumerate}[(a)]
 \item When $n_2=2$ and $1/n_0+2/n_1\leq 1$, it is an  interval bundle, over a disc with mirror boundary,
 with an interior cone 
 point of order $n_1$ and a corner
 reflector of order $2n_0$. It is doubly covered by the product of a turnover
 $S^2(n_0,n_1,n_1)$
 by an interval.
 \item When $n_2=2$ and $1/n_0+2/n_1> 1$, it is obtained by coning the boundary of the previous example, and it is spherical. 
 It is doubly covered by the suspension over a turnover $S^2(n_0,n_1,n_1)$.
 \item When $(n_0, n_1, n_2)=(2,3,3)$ the orbifold is spherical \cite{Dunbar}.
 \item If $n_1,n_2\neq 2$ and $(n_0,n_1,n_2)\neq (2,3,3)$, then it is hyperbolic. 
\end{enumerate}

The hyperbolicity is realized by a tetrahedron as in Figure~\ref{Figure:HopfFD}. The vertices are possibly finite, ideal or exterior
(hence truncated by geodesic triangles perpendicular to the other sides). Poincar\' e fundamental domain theorem yields the following presentation
for the fundamental group
$$
\pi_1(\mathcal O^3)\cong\langle a, b\mid a^{n_1}=b^{n_2}= [a,b]^{n_0}=1\rangle.
$$

\begin{figure}[h]
\begin{center}

\begin{tikzpicture}[line join = round, line cap = round, scale=1.4]

 \draw[thick] (0,0) -- (2.7,-.2) -- (1.2,2.5) -- cycle;
 \draw[thick,->](0,0)--(2.7*2/3,-.4/3);
 \draw[thick,->](1.2,2.5)--(2.2, 0.7); 
 \draw[thick,->](1.2,2.5)--(7/3, 1.7); 
 \draw[thick, ->] (2.9*19/30,1.3*19/30)--(2.9*2/3,1.3*2/3);
 \draw[thick]  (2.7,-.2)-- (2.9,1.3) -- (1.2,2.5);
 \draw[dashed, opacity=.7] (2.9,1.3)--(0,0);
 \draw[blue, thick, fill=white] (0,0) circle(.06);
 \draw[blue, thick, fill=white] (1.2,2.5) circle(.06);
 \draw[red, thick, fill=white] (2.9,1.3) circle(.06);
 \draw[red, thick, fill=white] (2.7,-.2) circle(.06);
 \draw (2.1,2.2) node{$\frac{\pi}{2n_0} $};
 \draw (3.2,0.8) node{$\frac{2\pi}{n_1} $};
 \draw (0.5,1.5) node{$\frac{2\pi}{n_2} $};
 \draw (1.5,-.4) node{$\frac{\pi}{2n_0} $};
 \draw (2.1,1.3) node{$\frac{\pi}{2n_0} $};
  \draw (1.05,0.7) node{$\frac{\pi}{2n_0} $};
  \draw[  <-] (0.3,0.8) arc[x radius = .24, y radius = .15, start angle =50, end angle= 260 ];
  \draw (-.4,.8) node{$A$};
  \draw[  <-] (2.8,-0.1) arc[x radius = .21, y radius = .24, start angle =225, end angle= 360+90];
  \draw (3.4,0) node{$B$};

\end{tikzpicture}

\end{center}
\caption{The fundamental domain of the  orbifold 
in this subsection. 
It is an ideal tetrahedron in hyperbolic 3-space, with vertices perhaps ideal or hyperideal. The side parings, $a$ and $b$,  are rotations 
of order $n_1$ and $n_ 2$ around two opposite edges. Those edges have dihedral angle $2\pi/n_1$ and 
$2\pi/n_2$ respectively, the remaining four edges have dihedral angle $\pi/(2 n_0)$. }
\label{Figure:HopfFD}
\end{figure}
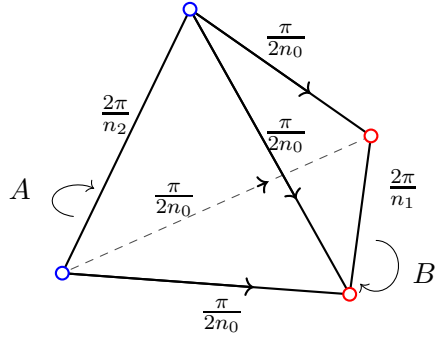

In case (a) the variety of characters 
of $\mathcal O^3$ 
in $\mathrm{SL}(3,\mathbb C)$
is the variety of characters of a non-orientable 2-orbifold. It has dimension $1$ or $0$ according
to the values of $n_0$ and $n_1$.
Cases (b) and (c) are
spherical, hence the variety of characters is finite. In the hyperbolic case, (d), when $n_0=2$ then the canonical component is an isolated point, by 
Theorem~\ref{Theorem:dimcanonical}. For $n_i\geq 3$, it 
has dimension 2.

\begin{Example}
\label{ex:hopf}
Consider the case $n_0=n_1=n_2=3$. The fundamental group is
$$
\pi_1(\mathcal O^3)=\langle a, b\mid a^3=b^3= [a,b]^3=1\rangle.
$$
On the canonical component, the image of any of  $a$, $b$ and $[a,b]$ is not central, therefore
by
Theorem~\ref{Thm:Lawton}, using the notation in  \eqref{eqn:coordinatesSL3} we impose the 
following equalities:
$$
x=y=u=v=P=Q=0.
$$
Thus  the canonical component $X_0(\mathcal O^3,  \mathrm{SL}(3,\mathbb C))$ is the following complex surface:
$$
\left\{
\begin{array}{l}
 r s+z w- 3=0\\
 r^3+s^3+z^3+w^3+rszw-6rs-6zw+9=0
\end{array}
\right.
$$
The symmetric power of the complete structure has coordinates
$$
r=s= \frac{-3+\sqrt{-3}}{2}, \qquad z=w= \frac{-3-\sqrt{-3}}{2}
$$
(The complex conjugate corresponds to a change of orientation).

There are other components of $X(\mathcal O^3,  \mathrm{SL}(3,\mathbb C))$ but it may be checked that they are isolated points.
For instance, if $\rho(a)$ is central, then $\rho(b)$ has order three at this gives finitely many conjugacy classes for $\rho$ 
(whether $\rho(b)$ is central or not). 
If $\rho(a)$ and $\rho(b)$ are noncentral but  $\rho([a,b])$ is central, then it can be computed that
$x=y=z=r=u=v=w=s=0$. This is realized by the representation
$$
\rho(a)=\begin{pmatrix}
 1 & 0 & 0\\ 0 & \omega & 0 \\ 0 & 0& \omega^2
\end{pmatrix}
\qquad\textrm{ and }\qquad
\rho(b)=\begin{pmatrix}
 0 & 0 & 1\\ 1 & 0 & 0 \\ 0 & 1& 0
\end{pmatrix}
$$
\end{Example}

When $n_0=n_1=n_2=3$, 
a component of the variety of characters in $\mathrm{SL}(4,\mathbb{R})$ corresponding to projective structures
has been computed in \cite{PTillmann}.

\bibliographystyle{plain}
\bibliography{reps}

\begin{thebibliography}{10}

\bibitem{ALS}
Daniele Alessandrini, Gye-Seon Lee, and Florent Schaffhauser.
\newblock Hitchin components for orbifolds.
\newblock arXiv:1811.05366, 2018.

\bibitem{BMP}
Michel Boileau, Sylvain Maillot, and Joan Porti.
\newblock {\em Three-dimensional orbifolds and their geometric structures},
  volume~15 of {\em Panoramas et Synth\`eses [Panoramas and Syntheses]}.
\newblock Soci\'et\'e Math\'ematique de France, Paris, 2003.

\bibitem{BW}
Mark Brittenham and Ying-Qing Wu.
\newblock The classification of exceptional {D}ehn surgeries on 2-bridge knots.
\newblock {\em Comm. Anal. Geom.}, 9(1):97--113, 2001.

\bibitem{Bromberg}
K.~Bromberg.
\newblock Rigidity of geometrically finite hyperbolic cone-manifolds.
\newblock {\em Geom. Dedicata}, 105:143--170, 2004.

\bibitem{Cartan}
Henri Cartan.
\newblock Quotient d'une vari\'et\'e analytique par un groupe discret
  d'automorphismes.
\newblock {\em S\'eminaire Henri Cartan}, 6, 1953-1954.
\newblock talk:12.

\bibitem{ChoiLeeMarquis}
Suhyoung Choi, Gye-Seon Lee, and Ludovic Marquis.
\newblock Deformations of convex real projective manifolds and orbifolds.
\newblock In {\em Handbook of group actions. {V}ol. {III}}, volume~40 of {\em
  Adv. Lect. Math. (ALM)}, pages 263--310. Int. Press, Somerville, MA, 2018.

\bibitem{Dolgachev}
Igor Dolgachev.
\newblock {\em Lectures on invariant theory}, volume 296 of {\em London
  Mathematical Society Lecture Note Series}.
\newblock Cambridge University Press, Cambridge, 2003.

\bibitem{Dunbar}
William~D. Dunbar.
\newblock Nonfibering spherical {$3$}-orbifolds.
\newblock {\em Trans. Amer. Math. Soc.}, 341(1):121--142, 1994.

\bibitem{FalbelGuilloux}
E.~Falbel and A.~Guilloux.
\newblock Dimension of character varieties for 3-manifolds.
\newblock {\em Proc. Amer. Math. Soc.}, 145(6):2727--2737, 2017.

\bibitem{FGKRT}
E.~Falbel, A.~Guilloux, P.-V. Koseleff, F.~Rouillier, and M.~Thistlethwaite.
\newblock Character varieties for {${\rm SL}({\bf 3},\Bbb C)$}: the figure
  eight knot.
\newblock {\em Exp. Math.}, 25(2):219--235, 2016.

\bibitem{Goldman}
William~M. Goldman.
\newblock The symplectic nature of fundamental groups of surfaces.
\newblock {\em Adv. in Math.}, 54(2):200--225, 1984.

\bibitem{Gordon}
Cameron Gordon.
\newblock Dehn surgery and 3-manifolds.
\newblock In {\em Low dimensional topology}, volume~15 of {\em IAS/Park City
  Math. Ser.}, pages 21--71. Amer. Math. Soc., Providence, RI, 2009.

\bibitem{GuillouxWill}
Antonin Guilloux and Pierre Will.
\newblock On {$\rm SL(3,\Bbb C)$}-representations of the {W}hitehead link
  group.
\newblock {\em Geom. Dedicata}, 202:81--101, 2019.

\bibitem{GHJW}
K.~Guruprasad, J.~Huebschmann, L.~Jeffrey, and A.~Weinstein.
\newblock Group systems, groupoids, and moduli spaces of parabolic bundles.
\newblock {\em Duke Math. J.}, 89(2):377--412, 1997.

\bibitem{Hartshorne}
Robin Hartshorne.
\newblock {\em Algebraic geometry}.
\newblock Springer-Verlag, New York-Heidelberg, 1977.
\newblock Graduate Texts in Mathematics, No. 52.

\bibitem{HMP}
Michael Heusener, Vicente Mu\~{n}oz, and Joan Porti.
\newblock The {${\rm SL}(3,\Bbb{C})$}-character variety of the figure eight
  knot.
\newblock {\em Illinois J. Math.}, 60(1):55--98, 2016.

\bibitem{HeusenerPorti15}
Michael Heusener and Joan Porti.
\newblock Representations of knot groups into {${\rm SL}_n(\Bbb C)$} and
  twisted {A}lexander polynomials.
\newblock {\em Pacific J. Math.}, 277(2):313--354, 2015.

\bibitem{HP2020}
Michael Heusener and Joan Porti.
\newblock Holomorphic volume forms on representation varieties of surfaces with
  boundary.
\newblock {\em Annales Henri Lebesgue}, 3:341--380, 2020.

\bibitem{HPS}
Michael Heusener, Joan Porti, and Eva Su{\'a}rez~Peir{\'o}.
\newblock Deformations of reducible representations of 3-manifold groups into
  {${\rm SL}\sb{2}(\mathbf{C})$}.
\newblock {\em J. Reine Angew. Math.}, 530:191--227, 2001.

\bibitem{JohnsonMillson}
Dennis Johnson and John~J. Millson.
\newblock Deformation spaces associated to compact hyperbolic manifolds.
\newblock In {\em Discrete groups in geometry and analysis ({N}ew {H}aven,
  {C}onn., 1984)}, volume~67 of {\em Progr. Math.}, pages 48--106. Birkh\"auser
  Boston, Boston, MA, 1987.

\bibitem{KapovichBook}
Michael Kapovich.
\newblock {\em Hyperbolic manifolds and discrete groups}, volume 183 of {\em
  Progress in Mathematics}.
\newblock Birkh\"auser Boston Inc., Boston, MA, 2001.

\bibitem{LawtonJA}
Sean Lawton.
\newblock Generators, relations and symmetries in pairs of {$3\times 3$}
  unimodular matrices.
\newblock {\em J. Algebra}, 313(2):782--801, 2007.

\bibitem{LawtonP}
Sean Lawton.
\newblock Poisson geometry of {${\rm SL}(3,\Bbb C)$}-character varieties
  relative to a surface with boundary.
\newblock {\em Trans. Amer. Math. Soc.}, 361(5):2397--2429, 2009.

\bibitem{LongThistlethwaite}
D.~D. Long and M.~B. Thistlethwaite.
\newblock The dimension of the {H}itchin component for triangle groups.
\newblock {\em Geom. Dedicata}, 200:363--370, 2019.

\bibitem{MartelliPetronio}
Bruno Martelli and Carlo Petronio.
\newblock Dehn filling of the ``magic'' 3-manifold.
\newblock {\em Comm. Anal. Geom.}, 14(5):969--1026, 2006.

\bibitem{MFPABP}
Pere Menal-Ferrer and Joan Porti.
\newblock Local coordinates for {${\rm SL}(n,\bold C)$}-character varieties of
  finite-volume hyperbolic 3-manifolds.
\newblock {\em Ann. Math. Blaise Pascal}, 19(1):107--122, 2012.

\bibitem{MFP12}
Pere Menal-Ferrer and Joan Porti.
\newblock Twisted cohomology for hyperbolic three manifolds.
\newblock {\em Osaka J. Math.}, 49(3):741--769, 2012.

\bibitem{Memo}
Joan Porti.
\newblock Torsion de {R}eidemeister pour les vari\'{e}t\'{e}s hyperboliques.
\newblock {\em Mem. Amer. Math. Soc.}, 128(612):x+139, 1997.

\bibitem{PTillmann}
Joan Porti and Stephan Tillmann.
\newblock Projective structures on a hyperbolic 3-orbifold.
\newblock arXiv 2004.06242, 2020.

\bibitem{Richardson}
R.~W. Richardson.
\newblock Commuting varieties of semisimple {L}ie algebras and algebraic
  groups.
\newblock {\em Compositio Math.}, 38(3):311--327, 1979.

\bibitem{SteinbergIHES}
Robert Steinberg.
\newblock Regular elements of semisimple algebraic groups.
\newblock {\em Inst. Hautes \'Etudes Sci. Publ. Math.}, (25):49--80, 1965.

\bibitem{TY}
Patrice Tauvel and Rupert W.~T. Yu.
\newblock {\em Lie algebras and algebraic groups}.
\newblock Springer Monographs in Mathematics. Springer-Verlag, Berlin, 2005.

\bibitem{ThurstonNotes}
William~P. Thurston.
\newblock {The Geometry and Topology of Three-Manifolds}.
\newblock (Electronic version from 2002 available at
  http://www.msri.org/publications/books/gt3m/), 1980.

\bibitem{Weil}
Andr{\'e} Weil.
\newblock Remarks on the cohomology of groups.
\newblock {\em Ann. of Math. (2)}, 80:149--157, 1964.

\end{thebibliography}

\noindent \textsc{Departament de Matem\`atiques, Universitat Aut\`onoma de Barcelona,
and Centre de Recerca Matem\`atica (UAB-CRM),
08193 Cerdanyola del Vall\`es, Spain }

\noindent \textsf{porti@mat.uab.cat}

\end{document}